\documentclass[12pt,notitlepage]{amsart}
\usepackage{latexsym,amsfonts,amssymb,amsmath,amsthm}
\usepackage{graphicx}
\usepackage{color}
\usepackage{mathrsfs}
\usepackage{amsmath, amsthm, amsfonts, amssymb, cite}
\usepackage{wrapfig}
\usepackage{stackrel}
\usepackage{color}
\usepackage{float}
\usepackage{enumitem}
\usepackage{mathtools}
\usepackage{multicol}
\usepackage[normalem]{ulem}

\pagestyle{headings}

\usepackage[inner=1.0in,outer=1.0in,bottom=1.0in, top=1.0in]{geometry}

\newcommand{\R}{\ensuremath{\mathbb{R}}}
\newcommand{\mz}{\ensuremath{\mathbb Z}}
\newcommand{\mn}{\ensuremath{\mathbb N}}
\newcommand{\mr}{\ensuremath{\mathbb R}}
\newcommand{\mh}{\ensuremath{\mathbb H}}

\newcommand{\shortmod}{\ensuremath{\negthickspace \negthickspace \negthickspace \pmod}}

\newcommand{\intR}{\int_{-\infty}^{\infty}}

\newcommand{\sumstar}{\sideset{}{^*}\sum}

\newcommand{\real}{\mathop{\rm Re}}

\newcommand{\lcm}{\mathop{\rm lcm}}

\renewcommand{\bf}[1]{\mathbf{#1}}

\newcommand{\eps}{\varepsilon}
\newcommand{\es}[1]{\begin{equation}\begin{split}#1\end{split}\end{equation}}
\newcommand{\est}[1]{\begin{equation*}\begin{split}#1\end{split}\end{equation*}}

\newcommand{\legen}[2]{\left(\frac{#1}{#2}\right)}

\theoremstyle{plain}		
	\newtheorem{mytheo}{Theorem} [section]
	
	\newtheorem{myprop}[mytheo]{Proposition}
	\newtheorem{mycoro}[mytheo]{Corollary}
     \newtheorem{mylemma}[mytheo]{Lemma}
	\newtheorem{mydefi}[mytheo]{Definition}

\theoremstyle{remark}

\numberwithin{equation}{section}
\numberwithin{figure}{section}

\begin{document}
\title{A generalized cubic moment and the Petersson formula for newforms}
\author{Ian Petrow} 
\email{ian.petrow@math.ethz.ch}
\address{Section de Math\'ematiques\\
	Ecole Polytechnique F\'ed\'erale de Lausanne\\
	B\^atiment MA, Station 8\\
	1015 Lausanne\\
	Switzerland}
\curraddr{ ETH Z\"urich\\
Department of Mathematics\\ 
R\"amistrasse 101\\
8092 Z\"urich\\
Switzerland}

\author{Matthew P. Young} 
\email{myoung@math.tamu.edu}
\address{Department of Mathematics\\
	  Texas A\&M University\\
	  College Station\\
	  TX 77843-3368\\
		U.S.A.}


 \thanks{This material is based upon work supported by the National Science Foundation under agreement No. DMS-1401008 (M.Y.).  Any opinions, findings and conclusions or recommendations expressed in this material are those of the authors and do not necessarily reflect the views of the National Science Foundation. \\
 The first author was partially supported by Swiss national science foundation grants 200021\_137488 and PZ00P2\_168164 and an AMS-Simons travel grant.}

\begin{abstract}
 Using a cubic moment, we prove a Weyl-type subconvexity bound for the quadratic twists of a  holomorphic  newform of square-free level, trivial nebentypus, and arbitrary even weight.  This generalizes work of Conrey and Iwaniec in that the newform that is being twisted may have arbitrary square-free level, and also that the quadratic character may have even conductor.  One of the new tools developed in this paper is a more general Petersson formula for newforms of square-free level.
 \end{abstract}

\maketitle

\section{Introduction}

\subsection{Cubic moments}
\label{section:introCubic}
Let $\chi_q$ be a real, primitive character of conductor $q$ and $\tilde{q}= \operatorname{rad}(	q)$ its square-free kernel. 
Let $H^*_\kappa(N)$ be the set of  Hecke-normalized  holomorphic newforms for $\Gamma_0(N)$, of weight $\kappa$, and trivial central character.  
Our main result is
\begin{mytheo}
\label{thm:cubicmoment}
 For any square-free $r$ with $(r,q)=1$ we have
 \begin{equation}
 \label{eq:cubicmoment}
  \sum_{\substack{ f \in H^*_\kappa(rq') \\  q' | \tilde{q}}}  L(1/2,f \otimes \chi_q)^3 \ll_{\kappa, \varepsilon} (qr)^{ 1+ \varepsilon}.
 \end{equation}
The estimate  holds for any even $\kappa \geq 2$, and depends polynomially on $\kappa$. \end{mytheo}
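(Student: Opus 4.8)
The plan is to establish \eqref{eq:cubicmoment} by the Conrey--Iwaniec cubic moment method, adapted to two new features: the level of $f$ may contain a square-free factor $r$ coprime to $q$, which forces the use of the generalized Petersson formula for newforms of square-free level developed in this paper, and the conductor $q$ may be even, which requires care with $2$-adic Gauss sums. The first step is to pass to a harmonic, multi-level average. Since $\chi_q$ is real and primitive it is the Kronecker symbol of a fundamental discriminant, so $L(1/2,f\otimes\chi_q)\geq 0$ for every holomorphic newform $f$ (a consequence of Waldspurger's formula: either the root number is $-1$ and the value vanishes, or it is $+1$ and the value is a non-negative multiple of a squared half-integral weight Fourier coefficient). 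Hence, using that the harmonic weight of the Petersson formula satisfies $\omega_f^{-1}\ll_\kappa(\text{level})\cdot L(1,\operatorname{sym}^2 f)\ll_\kappa rq'\,(qr)^\varepsilon$, for each $q'\mid\tilde q$ we get $\sum_{f\in H^*_\kappa(rq')}L(1/2,f\otimes\chi_q)^3\ll_\kappa rq'\,(qr)^\varepsilon\sum_{f\in H^*_\kappa(rq')}\omega_f\,L(1/2,f\otimes\chi_q)^3$. Summing over $q'\mid\tilde q$ and using $q'\leq q$, it suffices to prove that the harmonic moment $\mathcal M:=\sum_{q'\mid\tilde q}\sum_{f\in H^*_\kappa(rq')}\omega_f\,L(1/2,f\otimes\chi_q)^3$ satisfies $\mathcal M\ll_{\kappa,\varepsilon}(qr)^\varepsilon$. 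The union $\bigcup_{q'\mid\tilde q}H^*_\kappa(rq')$ is the correct family here for two reasons: a local computation shows $\operatorname{cond}(f\otimes\chi_q)=q^2r$ for \emph{every} such $f$ (at $p\mid r$ the local conductor is $p$, at $p\mid q$ it is the square of the conductor of the $p$-component of $\chi_q$, irrespective of whether $p\mid q'$), so all of these $L$-functions share one approximate functional equation; and it is exactly the family for which the generalized Petersson formula is clean.

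Next I would open the cube by the approximate functional equation. Since $\operatorname{cond}(f\otimes\chi_q)=q^2r$, each factor becomes $\sum_n\lambda_f(n)\chi_q(n)n^{-1/2}V_\kappa\bigl(n/(q\sqrt r)\bigr)$ plus $\eta_f$ times a dual sum of the same shape, where $\eta_f=\pm1$ is the root number and $V_\kappa$ (with all archimedean factors) is $\kappa^{O(1)}$. Cubing produces the factors $\eta_f^{\,j}$, $0\leq j\leq 3$; as $\eta_f^2=1$, only the cases weight $1$ and weight $\eta_f$ survive, and $\eta_f$ equals an explicit constant depending on $q$ and $\kappa$ times $\sqrt r\,\lambda_f(r)$ (the product of Atkin--Lehner signs at the primes dividing $r$), so it is absorbed into the Hecke eigenvalue sums. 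After separating variables by Mellin inversion and collapsing $\lambda_f(n_1)\lambda_f(n_2)\lambda_f(n_3)$ to $\sum_d\lambda_f\bigl(m(\mathbf{n},d)\bigr)$ by Hecke multiplicativity, I would apply the generalized Petersson formula for $\bigcup_{q'\mid\tilde q}H^*_\kappa(rq')$ against the second index equal to $1$. The diagonal term (merged index $1$) contributes only $O_{\kappa,\varepsilon}\bigl((qr)^\varepsilon\bigr)$ and is harmless. The off-diagonal is a sum of Kloosterman sums $S\bigl(m(\mathbf{n},d),1;c\bigr)$, with $c$ running over an explicit arithmetic progression dictated by $r$ and $q$ and weighted by Bessel transforms effectively supported on $c\ll q^{3/2}r^{3/4}(qr)^\varepsilon$, together with additive characters and smooth weights in $n_1,n_2,n_3$, each ranging up to $\asymp q\sqrt r$.

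The heart of the matter, and the step I expect to be the main obstacle, is the treatment of this off-diagonal term. Following Conrey--Iwaniec I would apply Poisson summation to one of the variables $n_i$ \emph{modulo} $cq$, converting the smooth, $\chi_q$-twisted sum in $n_i$ (with the additive character coming from opening the Kloosterman sum) into a complete character sum modulo $cq$. By the Chinese Remainder Theorem this factors as an elementary Ramanujan-type sum modulo $c$ times a Gauss sum for $\chi_q$ modulo $q$, of absolute value $\sqrt q$. This square-root saving in $q$ is precisely what converts a convexity-strength estimate into a Weyl-strength one; the quadratic nature of $\chi_q$ is what makes the residual complete sums --- of Sali\'e type --- explicitly evaluable and free of any loss from multiplicities, and it is here that the even part of $q$ must be handled by a separate, more delicate analysis at the prime $2$. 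Maintaining uniformity in $r$ at this stage (the moduli $c$ are multiples of $r$, and the dual variables interact with $r$) is an additional bookkeeping burden.

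Finally, what remains after the Poisson step is a multiple sum over $c$, the dual variable $h$ produced by Poisson, the Hecke parameters $d$, and the surviving $n_i$'s, in which every arithmetic ingredient is elementary. Estimating the residual Kloosterman sums by Weil's bound, and all ranges of summation by divisor bounds (and, where needed, large-sieve type inequalities), yields $\mathcal M\ll_{\kappa,\varepsilon}(qr)^\varepsilon$, which by the first step gives \eqref{eq:cubicmoment}. Since the $\kappa$-dependence enters only through the archimedean factors and the Bessel transforms in the Petersson formula, all of which are $\kappa^{O(1)}$, the final bound is polynomial in $\kappa$.
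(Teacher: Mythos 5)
Your opening moves coincide with the paper's: the reduction to the harmonic average via $L(1/2,f\otimes\chi_q)\ge 0$ and $\omega_f^{-1}\ll (qr)^{1+o(1)}$, the observation that $\operatorname{cond}(f\otimes\chi_q)=q^2r$ uniformly over the family $\bigcup_{q'\mid\tilde q}H^*_\kappa(rq')$, and the use of the generalized (hybrid) Petersson formula are exactly how the argument is set up. Your handling of the root numbers differs slightly (the paper writes $L^3=L\cdot L^2$, uses one approximate functional equation for $L$ and one for $L^2$, and replaces $1+\epsilon_{f\otimes\chi_q}$ by $2$ since $L^2$ vanishes when $\epsilon=-1$, thereby never having to carry $\epsilon_{f\otimes\chi_q}=i^\kappa\chi_q(-r)\mu(r)r^{1/2}\lambda_f(r)$ into the Hecke sums); your version is workable but costlier in bookkeeping. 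These are not the problem.

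The genuine gap is in the off-diagonal analysis, which is where all the difficulty of the theorem lives. A single Poisson summation in one variable $n_i$ modulo $cq$, yielding one Gauss sum of size $\sqrt q$, followed by "Weil's bound and divisor bounds'' on what remains, does not come close to closing the argument: after the Petersson formula the off-diagonal is, on the dominant dyadic ranges ($n_1n_2n_3\asymp q^3r^{3/2}$, $c\asymp\sqrt{n_1n_2n_3}$, $qR\mid c$), of trivial size about $q^{3/2}r^{3/2}R^{-3/2}(qr)^\varepsilon$ even after Weil, so one must save a full power $q^{3/2}$, not $q^{1/2}$. The actual mechanism is: Poisson in \emph{all three} variables modulo $[c,q]$; a stationary-phase evaluation of the triple Bessel integral which produces the additive character $e\bigl(-m_1m_2m_3c^2/(AB[c,q]^3)\bigr)$ in the dual variables; the key structural identity that this additive character recombines with the complete character sum $G_{A,B}(m_1,m_2,m_3;c)$ into a short linear combination of multiplicative characters $\psi(m_1m_2m_3)\overline{\psi}(s)$ (this evaluation requires Deligne's bound for the two-variable complete sums $g(\chi_{D_2},\psi)$ --- they are not merely "Sali\'e type, explicitly evaluable''); and finally the conversion of the remaining $m_i,c$ sums into a fourth moment of Dirichlet $L$-functions on the critical line, bounded by the (hybrid) large sieve. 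Your proposal names none of these steps except a parenthetical "large-sieve type inequalities,'' and asserts a mechanism (one Gauss sum giving the Weyl saving) that is quantitatively insufficient. Separately, the zero frequencies $m_i=0$ and the non-oscillatory range of the Bessel transform need their own treatment, and the auxiliary integers $A,B\mid L^\infty$ introduced by the newform sieve force a new evaluation of the complete sums to moduli dividing $(AB)^\infty$ and at the prime $2$; you flag the latter two issues but they are downstream of the missing core.
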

\begin{mycoro}
\label{coro:subconvexity}
For any  holomorphic  newform $f$ of square-free level $s$ and $\chi_q$ any real primitive character of conductor $q$ we have  \begin{equation}
 \label{eq:subconvexity}
  L(1/2,f \otimes \chi_q)\ll \left(\frac{sq}{(s,q)}\right)^{1/3+\varepsilon}.
 \end{equation}
\end{mycoro}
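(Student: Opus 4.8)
The plan is to deduce Corollary~\ref{coro:subconvexity} from Theorem~\ref{thm:cubicmoment} by isolating a single, nonnegative term in the cubic moment. Let $f$ be a holomorphic newform of square-free level $s$ and (as throughout the paper) trivial nebentypus; then its weight $\kappa$ is forced to be even and $\geq 2$, so Theorem~\ref{thm:cubicmoment} applies with this $\kappa$. The first step is to split the level of $f$ according to its interaction with the conductor $q$: put
\begin{equation}
q' := (s,q), \qquad r := s/q'.
\end{equation}
Since $s$ is square-free, $r$ and $q'$ are square-free and coprime, one has $(r,q)=1$ and $q'\mid \operatorname{rad}(q)=\tilde q$, and $rq'=s$. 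Consequently $f$ lies in $H^*_\kappa(rq')$ for this admissible pair $(r,q')$, so $L(1/2,f\otimes\chi_q)^3$ is precisely one of the summands on the left-hand side of \eqref{eq:cubicmoment}.

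The second step is to invoke the nonnegativity of the central value: $L(1/2,g\otimes\chi_q)\geq 0$ for every holomorphic newform $g$ with trivial nebentypus, since $g\otimes\chi_q$ is then self-dual (here $\chi_q$ real primitive means $q$ is the absolute value of a fundamental discriminant, so this is the classical quadratic-twist setting). This goes back to Waldspurger and its refinements (Kohnen--Zagier, Guo, and others), which express a nonnegative multiple of the central value as the square of a Fourier coefficient of a form of half-integral weight. Hence every term in \eqref{eq:cubicmoment} is $\geq 0$, and keeping only the term attached to our fixed $f$ gives
\begin{equation}
L(1/2,f\otimes\chi_q)^3 \ll_{\kappa,\varepsilon} (qr)^{1+\varepsilon}.
\end{equation}
Taking cube roots and substituting $r = s/(s,q)$ produces
\begin{equation}
L(1/2,f\otimes\chi_q) \ll_{\kappa,\varepsilon} \Bigl(\tfrac{sq}{(s,q)}\Bigr)^{1/3+\varepsilon},
\end{equation}
which is \eqref{eq:subconvexity}. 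As the weight is fixed in the corollary, the polynomial dependence on $\kappa$ in Theorem~\ref{thm:cubicmoment} plays no role here beyond making the implied constant effective in $\kappa$.

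All of the real content is in Theorem~\ref{thm:cubicmoment}; the corollary is a formal consequence, so there is no genuine analytic obstacle in this deduction. The \emph{one} point that must be handled with care is the nonnegativity input: I would cite a version of Waldspurger's theorem valid in exactly the generality needed, namely real primitive characters $\chi_q$ of arbitrary conductor (in particular even conductor) twisting newforms of arbitrary square-free level, so that no summand of \eqref{eq:cubicmoment} is negative and discarding all but one term is legitimate; a weaker positivity statement would not suffice. Finally, it is worth recording that \eqref{eq:subconvexity} does improve on the convexity bound in the ranges of interest: with $f$ fixed the analytic conductor of $f\otimes\chi_q$ is of size $\asymp q^2$, so convexity gives $q^{1/2+\varepsilon}$ whereas \eqref{eq:subconvexity} reads $L(1/2,f\otimes\chi_q)\ll q^{1/3+\varepsilon}$, the Weyl-type exponent asserted in the abstract.
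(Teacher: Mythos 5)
Your deduction is correct and is exactly the intended route: the paper leaves the corollary's proof implicit, but the sum in Theorem \ref{thm:cubicmoment} is structured precisely so that one sets $q'=(s,q)$, $r=s/(s,q)$ (whence $r$ is square-free, $(r,q)=1$, and $q'\mid\tilde q$ because $s$ is square-free), invokes nonnegativity of $L(1/2,g\otimes\chi_q)$ for self-dual twists via Waldspurger/Guo/Kohnen--Zagier, drops all terms but the one attached to $f$, and takes cube roots. You have also correctly identified the one nontrivial input (positivity in the full generality of real primitive $\chi_q$, including even conductor), so nothing is missing.
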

Remark.  The conductor of $L(1/2, f \otimes \chi_q)$ is $sq^2/(s,q)$.  
Therefore, the bound \eqref{eq:subconvexity} is a Weyl-type subconvexity bound in $q$-aspect, but does not reach the convexity bound in the $s$-aspect.
Note that the corollary holds without a relatively prime hypothesis on $s,q$.  

Corollary \ref{coro:subconvexity} gives a non-trivial bound when the root number $\epsilon_{f \otimes \chi_q} = +1$ (since otherwise $L(1/2, f \otimes \chi_q) = 0$).  See Section \ref{section:rootnumber}, specifically equation \eqref{eq:rootnumber} for a concrete formula for the root number.

Our work here is a generalization of the cubic moment studied by Conrey and Iwaniec \cite{CI}, who obtained, in our notation, the case $r=1$, $\kappa \geq 12$, and $q$ odd.  The extension of their work to $\kappa \geq 2$ was obtained by the first-named author \cite{Petrow}.  It may be somewhat surprising that, prior to Corollary \ref{coro:subconvexity},  a Weyl-type subconvex bound in the $q$-aspect was previously not known for any values of $r$ besides $1$ (nor for even $q$).  
The case $r=1$ has some pleasant simplifications; for one, the conductor of $L(1/2, f \otimes \chi_q)$ is $q^2$ for all $f$ of level dividing $q$.  Furthermore, the $n$th Fourier coefficient of $f \otimes \chi_q$ vanishes unless $(n,q) = 1$.  For these reasons, Conrey and Iwaniec could use a formula of
 Iwaniec, Luo, and Sarnak \cite{ILS}, who proved a Petersson formula that is applicable to \eqref{eq:cubicmoment} with $r=1$.   The case $r\neq 1$ lacks these simplifications, so in  order to approach the proof of Theorem \ref{thm:cubicmoment} we developed a more general form of the Petersson formula that is applicable to \eqref{eq:cubicmoment} with any square-free $r$.  This formula, which is of independent interest, is described in Section \ref{section:introPetersson}.

Corollary \ref{coro:subconvexity} improves on a hybrid subconvexity result of Blomer and Harcos  \cite[Theorem 2']{BlomerHarcos}, which holds more generally for $f$ of arbitrary level and nebentype character. In our notation (and assuming $(q,r) = 1$) the result of Blomer and Harcos takes the form 
\begin{equation}
\label{eq:BlomerHarcosSubconvexityBound}
 L(1/2, f \otimes \chi_q) \ll (r^{1/4} q^{3/8} + r^{1/2} q^{1/4}) (rq)^{\varepsilon}.
\end{equation}
One may check that Corollary \ref{coro:subconvexity} is superior to \eqref{eq:BlomerHarcosSubconvexityBound}, except in the range $r \asymp q^{1/2 + o(1)}$ where all the bounds are equalized.
This result of Blomer and Harcos is more general in that $\chi_q$ can be replaced by an arbitrary  primitive  Dirichlet character, $f$ may be a Maass form, and 
it is not restricted to the central point.  In addition, the Blomer-Harcos bound proceeds by bounding an amplified second moment, and is Burgess-quality in the $q$-aspect for $r$ fixed.  If $q$ is fixed and $r$ is large, then the cubic moment is not the appropriate moment to use, and both Corollary \ref{coro:subconvexity} and \eqref{eq:BlomerHarcosSubconvexityBound} are weaker than the convexity bound (specifically, \eqref{eq:subconvexity} is superior to the convexity bound of $(rq^2)^{1/4+\varepsilon}$ for $r \ll q^{2-\varepsilon}$).

The work of \cite{CI} treats both holomorphic forms and Maass forms, with similar proofs.  Provided one generalizes our newform Petersson formula to the setting of the Bruggeman-Kuznetsov formula, then our methods should carry over to the Maass case, as in \cite{CI}. 
Note added May 31, 2018: the Bruggeman-Kuznetsov formula for newforms has now appeared in \cite{YoungEisenstein}.

The type of sum appearing in Theorem \ref{thm:cubicmoment} may look somewhat unusual, but it is very important for the proof.  It is crucial in \cite{CI} that, after applying the Petersson formula, the moduli of the Kloosterman sums are all divisible by $q$.  The form of \eqref{eq:cubicmoment} is chosen to group together the terms with $q' \mid q$ to give a sum of Kloosterman sums with $c \equiv 0 \pmod{q}$.  As a rough sketch of what this means, and why it is important, one may consider the case of prime level $q$.  Very roughly, one naively expects the Petersson formula to say
\begin{equation}
\label{eq:fakeNewformFormula}
\sum_{f \text{ level } 1} \frac{a_f(m) a_f(n)}{\langle f, f \rangle} 
+ \sum_{f \text{ new of level } q} \frac{a_f(m) a_f(n)}{\langle f, f \rangle}
\\
\leftrightarrow \delta_{m,n} + 2 \pi i^{-\kappa} \sum_{c \equiv 0 \shortmod{q}} \frac{S(m,n;c)}{c}.
\end{equation}
This is not quite correct because there are other types of oldforms not appearing on the left hand side, but that does not affect the broader thrust of this discussion (the reader interested in the correct version of this formula will find abundant discussion throughout this paper!).    Meanwhile, the sum over $f$ of level $1$ has a Petersson formula in which \emph{all} $c \geq 1$ appear.  Thus, by rearranging these expressions, we see that a newform formula for $f$ of level $q$ should have all $c \geq 1$ present.  With the cubic moment, one also has a factor $\chi_q(mn)$, and one wishes to apply Poisson in these variables.  The total modulus of $\chi_q(mn) S(m,n;c)$ is $[q,c]$ which for $q \mid c$ is still $c$, but if $(q,c) = 1$ it is $qc$ which is much larger.  In this latter case, Poisson summation is practically ineffectual.  

Our proof of Theorem \ref{thm:cubicmoment} in fact shows a stronger asymptotic result of the form
\begin{equation}
\label{eq:cubicmomentasymptotic}
\sum_{\substack{f \in H_{\kappa}^*(rq') \\ q' | \tilde{q}}} \omega_f L(1/2, f \otimes \chi_q)^3 = R_{r,q} + O((qr)^{\varepsilon}(r^{-1/2} + q^{-1/2} r^{-1/4})),
\end{equation}
where $\omega_f$ are certain positive weights satisfying $\omega_f = (qr)^{-1+o(1)}$ and $R_{r,q}$ is a complicated main term arising from a residue calculation (see Section \ref{section:Diagonalterms} for details).  The error term here is seen to be $o(1)$ provided $r \gg q^{\delta}$ for some fixed $\delta > 0$.  Conrey and Iwaniec \cite{CI} express interest in finding the asymptotic of the cubic moment in their case $r=1$; it is perhaps surprising that deforming the problem slightly in the $r$-aspect allows us to solve this problem in a hybrid range.  In light of \eqref{eq:cubicmomentasymptotic}, perhaps it is possible to amplify the moment in the $r$-aspect, and thereby improve the exponent of $s$ in \eqref{eq:subconvexity}.

\subsection{Arithmetical applications of the cubic moment}
The bound from Corollary \ref{coro:subconvexity} implies a bound on the Fourier coefficients of half-integral weight cusp forms,  
as we now describe.  Suppose that $g(z) = \sum_n c(n) e(nz)$ is a weight $\frac{\kappa + 1}{2}$ Hecke eigenform of  nebentypus $\psi$ and  level $4r$ where $r$ is odd and square-free, and $\kappa$ is even.  The Shimura correspondence links $g$ to a form $f$ of weight $\kappa,$ level $2r$,  and nebentypus $\psi^2$ , and Waldspurger's formula gives  under some local conditions (see \cite[Th\'eor\`eme 1]{Waldspurger})  that $c(|D|)^2 = c_0 |D|^{\frac{\kappa-1}{2}} L(1/2, f \otimes \psi^{-1}\chi_D)$ where $D$ is a fundamental discriminant, and $c_0$ is some constant of proportionality depending on $g$.   Note $f \otimes \psi^{-1}$ has trivial nebentypus.  Since $2r$ is square-free, Corollary \ref{coro:subconvexity} applies, and we deduce:
\begin{mytheo}
\label{thm:FourierCoefficientBound}
 Assume that $\psi$ is a real character.  With notation as above, we have
\begin{equation}
c(|D|) \ll_g |D|^{\frac{\kappa -1}{4} + \frac16 + \varepsilon}.
\end{equation}
\end{mytheo}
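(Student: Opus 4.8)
The plan is to deduce Theorem~\ref{thm:FourierCoefficientBound} directly from Corollary~\ref{coro:subconvexity}; once the Shimura--Waldspurger dictionary is in place the statement is essentially a translation, and the only real work is keeping track of conductors and of finitely many bad Euler factors. First I would record the chain sketched above. By the Shimura correspondence $g$ lifts to a holomorphic Hecke newform $f$ of weight $\kappa$, level $2r$, and nebentypus $\psi^2$, and Waldspurger's formula \cite{Waldspurger} gives, for those fundamental discriminants $D$ in the range where its local hypotheses hold,
\[
c(|D|)^2 = c_0\,|D|^{\frac{\kappa-1}{2}}\,L\!\left(\tfrac12,\,f\otimes\psi^{-1}\chi_D\right).
\]
Here I would invoke the hypothesis that $\psi$ is real: then $\psi^2$ is the trivial character, so $f$ has \emph{trivial} nebentypus, and its level $2r$ is square-free because $r$ is odd and square-free. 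Thus $f$ is precisely of the type to which Corollary~\ref{coro:subconvexity} applies.

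Next I would rewrite the $L$-value as a twist of $f$ by a primitive real character. Since $\psi^{-1}=\psi$, the character $\psi\chi_D$ is real; let $\chi^{*}$ be the primitive real character it induces and $q^{*}$ the conductor of $\chi^{*}$. Because the conductor of $\psi$ divides $4r$ and is fixed, $q^{*}\ll_r |D|$, and $\chi^{*}$ agrees with $\psi\chi_D$ as a completely multiplicative function at every prime outside the fixed set of primes dividing the conductor of $\psi$ (all of which divide $2r$). Consequently $L(\tfrac12,f\otimes\psi^{-1}\chi_D)$ and $L(\tfrac12,f\otimes\chi^{*})$ differ only by a product of Euler factors at the $O_r(1)$ primes dividing $2r$; since $f$ has square-free level, each of these factors is $O_r(1)$ at the central point, and hence so is their product. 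Applying Corollary~\ref{coro:subconvexity} to $f$, of square-free level $2r$, and to the real primitive character $\chi^{*}$ of conductor $q^{*}$ gives
\[
L\!\left(\tfrac12,\,f\otimes\chi^{*}\right)\ll\left(\frac{2r\,q^{*}}{(2r,\,q^{*})}\right)^{1/3+\varepsilon}\ll (2r\,q^{*})^{1/3+\varepsilon}\ll_r |D|^{1/3+\varepsilon},
\]
so $L(\tfrac12,f\otimes\psi^{-1}\chi_D)\ll_g |D|^{1/3+\varepsilon}$. Substituting into the Waldspurger identity gives $c(|D|)^2\ll_g |D|^{\frac{\kappa-1}{2}+\frac13+\varepsilon}$, and taking square roots yields the asserted bound $c(|D|)\ll_g |D|^{\frac{\kappa-1}{4}+\frac16+\varepsilon}$.

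The analytic content is entirely contained in Corollary~\ref{coro:subconvexity}, so I do not expect a genuine obstacle; the one point requiring care is purely organisational, namely ensuring that the passage from $g$ to $f$ and from $\psi\chi_D$ to its primitive inducing character $\chi^{*}$ costs only a constant depending on $g$ (equivalently on $\kappa$, $r$, $\psi$ and $c_0$) and nothing that grows with $|D|$, together with stating precisely the family of $D$ for which the clean form of Waldspurger's formula is being used. It is worth noting in the write-up that the role of the hypothesis ``$\psi$ real'' is exactly to make the nebentypus $\psi^2$ of $f$ trivial, which is what allows Corollary~\ref{coro:subconvexity} to be invoked; without it one would need subconvexity for quadratic twists of newforms with non-trivial nebentypus.
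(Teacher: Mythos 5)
Your proposal is correct and is essentially the paper's own argument: the paper deduces Theorem \ref{thm:FourierCoefficientBound} in one line from the Waldspurger identity $c(|D|)^2 = c_0 |D|^{\frac{\kappa-1}{2}} L(1/2, f\otimes\psi^{-1}\chi_D)$ together with Corollary \ref{coro:subconvexity}, using exactly the observation that reality of $\psi$ makes the nebentypus trivial and keeps the level $2r$ square-free. The extra bookkeeping you supply (passing from $\psi\chi_D$ to its primitive real inducing character and absorbing the $O_r(1)$ bad Euler factors into the implied constant) is left implicit in the paper but is the right way to make the deduction precise.
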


Theorem \ref{thm:FourierCoefficientBound} has applications to the representation problem for ternary quadratic forms which has been studied by a number of authors, including \cite{DukeSchulze-Pillot}\cite{DukeTernary}  \cite{Kane}  \cite{BlomerUniformTheta} \cite{BlomerTernary} from which we have drawn some of the following background material.  Suppose that $Q$ is a  positive  ternary quadratic form with associated theta function $\theta_Q$ of level dividing $4N$ with $N$ odd and square-free.  For instance, any diagonal form $ax^2 + by^2 + cz^2$ with $abc$ odd and square-free satisfies these conditions.  Then $\theta_Q = E + U + S$ where $E$ is a linear combination of Eisenstein series, $U$ is a linear combination of unary theta functions, and $S$ is a linear combination of Hecke cusp forms.  According to this decomposition, write $r_Q(n) = c_E(n) + c_U(n) + c_S(n)$ where $r_Q(n)$ is the number of representations of $n$ by $Q$, and $c_*(n)$ is the $n$th Fourier coefficient of $* = E, U, S$.
For ease of exposition, suppose that $n$ is square-free and coprime to the level, which implies $c_U(n) = 0$.  If $n$ is locally represented everywhere by $Q$, then $c_E(n) \gg_{Q} n^{1/2-\varepsilon}$.  Theorem \ref{thm:FourierCoefficientBound} implies $c_S(n) \ll_Q n^{5/12 + \varepsilon}$, which is an improvement over that derived from the Burgess-quality subconvex bound of  \cite{BlomerHarcos} .

For some more advanced questions, one may desire to explicate the dependence on $g$ in Theorem \ref{thm:FourierCoefficientBound}.  Blomer \cite{BlomerTernary} remarks that in general this is difficult, and that Mao \cite[Appendix 2]{BlomerHarcosMichel} has done this but at the expense of relating the Fourier coefficients to twisted $L$-values of an auxiliary form  $f \otimes \psi^{-1} \chi_{-4}^{\kappa/2}$, which is of level dividing $16r^2$.  Our results here then may not apply to  this auxiliary form . 

However, if $g$ is in Kohnen's plus space, then the constant of proportionality is given explicitly by the Kohnen-Zagier formula, and our results apply, as we now explain.
We gather some notation from Kohnen's paper \cite{Kohnen}, paying careful attention to normalizations.  Let $g$ be as defined in this subsection, in Kohnen's plus space, and write
$f(z) = \sum_{n=1}^{\infty} n^{\frac{\kappa-1}{2}} \lambda_f(n) e(nz)$ with $\lambda_f(1) = 1$.  Define the Petersson inner product by
\begin{equation*}
 \langle f, f \rangle_{\text{Kohnen}} = \frac{1}{[\Gamma_0(1):\Gamma_0(r)]} \int_{\Gamma_0(r) \backslash \mh} y^{\kappa} |f(z)|^2 \frac{dx dy}{y^2}.
\end{equation*}
(For the rest of the paper we will mainly use a different normalization of the inner product.)  
Using \cite[Lemma 2.5]{ILS} \cite{HoffsteinLockhart} \cite{IwaniecSmallEigenvalues}, we have $\langle f, f \rangle_{\text{Kohnen}} = r^{o(1)}$.  
Let $D$ be a fundamental discriminant with $(-1)^{\kappa/2} D > 0$, coprime to $r$.  By \cite[Corollary 1]{Kohnen}, we have
\begin{equation*}
 \frac{|c(|D|)|^2}{\langle g, g \rangle_{\text{Kohnen}}} = 2^{\nu(r)} \frac{(\frac{\kappa}{2} -1)!}{\pi^{\kappa/2}} |D|^{\frac{\kappa-1}{2}}
 \frac{L(1/2, f \otimes \chi_D)}{\langle f, f \rangle_{\text{Kohnen}}},
\end{equation*}
under the assumption  $\chi_D(p) = \eta_p(f)$ for all $p \mid r$ (here $\eta_p(f)$ is the eigenvalue of the Atkin-Lehner operator).  If $\chi_D(p) = - \eta_p(f)$ for some $p  \mid  r$ then $c(|D|) = 0$ while the right hand side may not vanish.  As an aside, we mention that Baruch and Mao \cite{BaruchMao} have generalized the Kohnen-Zagier/Waldspurger formula by removing these conditions on $D$, relating the central value to a Fourier coefficient of a different half-integral weight cusp form.
By Corollary \ref{coro:subconvexity}, we have
\begin{equation}
 \frac{|c(q)|^2}{\langle g, g \rangle_{\text{Kohnen}}} \ll_{\kappa} 
 r^{\frac13 + \varepsilon} q^{\frac{\kappa-1}{2} + \frac13 + \varepsilon}.
\end{equation}
It is also natural to inquire into the normalization of the form $g$.  There is a slight difficulty here in that we cannot scale $g$ by taking $c(1) = 1$, since $c(1)$ may vanish.  
There exists a $D_0$, polynomially bounded in $r$, so that $L(1/2, f \otimes \chi_{D_0}) \gg r^{o(1)}$ (e.g., see \cite{HoffsteinKontorovich}).  Then we may choose the constant of normalization so that $|c(|D_0|)|^2 = |D_0|^{\frac{\kappa - 1}{2}}$.  Then with this normalization, $\langle g, g \rangle_{\text{Kohnen}} \ll r^{o(1)}$, and hence
\begin{equation}
|c(q)| \ll_{\kappa, \varepsilon} r^{\frac16+\varepsilon} q^{\frac{\kappa-1}{4} + \frac{1}{6} + \varepsilon}.
\end{equation}

Theorem \ref{thm:cubicmoment} itself can be used to improve many exponents in the results of \cite{LMY2}.  In particular, we improve the rate of equidistribution of the reductions of CM elliptic curves (see \cite{LMY2} for a full description of this arithmetical problem).  For brevity, we shall not repeat any material from \cite{LMY2}, but will instead indicate which exponents may be improved.  The bound $q^{1/8+\varepsilon} D^{7/16 + \varepsilon}$ in \cite[(1.5)]{LMY2} may be replaced by $q^{\varepsilon} D^{5/12+\varepsilon}$.  In \cite[Corollary 1.3]{LMY2}, the bound $D \gg q^{18+\varepsilon}$ may be replaced by $D \gg q^{12+\varepsilon}$.  In \cite[(1.10), (1.12)]{LMY2}, the bound $q^{7/8} D^{7/16}$ may be replaced by $q^{3/4} D^{5/12}$.  All these changes result from a use of Theorem \ref{thm:cubicmoment} to bound $M_2$ defined by \cite[(4.7), (3.1)]{LMY2} with 
\begin{equation}
 M_2 \ll \frac{D^{1/2+\varepsilon}}{q^{1/3-\varepsilon}} \Big(\sum_{f \in H_2^*(q)}
 L(1/2, f \otimes \chi_D)^3 \Big)^{1/3} \ll q^{\varepsilon} D^{5/6+\varepsilon}.
\end{equation}

If one can generalize Theorem \ref{thm:cubicmoment} (and hence Corollary \ref{coro:subconvexity}) to allow  $f$ to be a Hecke-Maass cusp form, then there are additional applications.  This is the setting required for equidistribution of integral points on ellipsoids \cite{DukeEquidistribution}.  The various exponents appearing in \cite{LMY1} would be updated similarly to the improvements to \cite{LMY2} described in the previous paragraph.
As another example in this vein, Folsom and  Masri \cite{FolsomMasri} \cite{Masri} have improved the error term in the asymptotic formula for the partition function which requires subconvexity for quadratic twists of a cusp form of level $6$; the previous bounds of \cite{CI} do not apply, and so the methods developed in this paper pave the way for further improvements.  

The second-named author \cite{Young} generalized the \cite{CI} method allowing for large weights (or spectral parameters, in the Maass case) giving a Weyl-type hybrid subconvexity bound.  This had applications to equidistribution problems on shrinking sets.  For simplicity, in this paper we have kept the weight $\kappa$ fixed but it seems likely that the methods of \cite{Young} could be combined with those in this paper to allow $\kappa$ to vary.  

\subsection{Petersson formula for newforms}
\label{section:introPetersson}
We begin with an expanded discussion on why a newform Petersson formula is relevant for Theorem \ref{thm:cubicmoment}.

One encounters a significant difficulty when attempting to generalize the \cite{CI} method to allow level structure of the base form, as we describe.  One begins by using an approximate functional equation of the $L$-function $L(1/2, f \otimes \chi_q)$, which has conductor $rq^2$ when $f$ is a newform of level $rq'$ with $(r,q) = 1$ and $q' \mid q$.  Next one would wish to apply the Petersson formula to average over an orthogonal basis of cusp forms.  The problem is that this basis consists of oldforms as well as newforms, which causes a variety of problems.  Firstly, it is not clear what Dirichlet series to attach to $f \otimes \chi_q$ when $f$ is an oldform.  One could take this to mean that if $f$ is induced from a newform $f^*$ of lower level, then we take $L(1/2, f^* \otimes \chi_q)$.  However, with this definition the conductor of this $L$-function may be a divisor of $rq^2$, in which case there is some dependence on the level of $f^*$ in the approximate functional equation.  The classical Petersson formula is unable to distinguish between these forms.

It is plausible that there is some trick that lets one set up the problem to prove Theorem \ref{thm:cubicmoment} using the classical Petersson formula, but 
the authors are not aware of one (if the moment was an even power, this would be easy because of positivity; the fact that the moment is an odd power in this application makes this more difficult).  

The robust solution is to prove a Petersson formula for the newforms only, similarly to the existence of averaging formulas for primitive Dirichlet characters of a given modulus (see \cite[(3.8)]{IK}).  Iwaniec, Luo, and Sarnak have proven a Petersson formula for newforms of square-free level \cite[Proposition 2.8]{ILS}, but with some coprimality conditions on the level and the Fourier coefficients of the modular forms, which in our application are crucial to avoid.  When working with $1$-level density of zeros of $L$-functions, it is easy to ensure coprimality because the log derivative of an $L$-function is a sum over  prime powers.  However, the $L$-function itself is not so easily treated, because altering a single Euler factor will ruin the functional equation.  For this reason, we have generalized the \cite{ILS} formula to hold with square-free level and arbitrary Fourier coefficients.

Suppose that $N$ is a positive integer, and let $B_\kappa(N)$ denote an orthogonal basis for the space of weight $\kappa$ cusp forms for $\Gamma_0(N)$.  For $f \in B_\kappa(N)$, write $f(z) = \sum_{n=1}^{\infty} a_f(n) e(nz)$, and $a_f(n) = \lambda_f(n) n^{\frac{\kappa-1}{2}}$.  
Let
\begin{equation}
\label{eq:DeltaDef}
 \Delta_{N}(m,n) = c_\kappa \sum_{f \in B_\kappa(N)} \frac{\overline{\lambda_f(m)} \lambda_f(n)}{\langle f, f \rangle_N}, \quad
 \text{where} 
 \quad
 c_\kappa = \frac{\Gamma(\kappa-1)}{(4\pi)^{\kappa-1}},
\end{equation}
and where
\begin{equation*}
 \langle f, g \rangle_N = \int_{\Gamma_0(N) \backslash \mh} y^\kappa f(z) \overline{g(z)} \frac{dx dy}{y^2}.
\end{equation*}
Since the main interest here is in the level aspect, we often suppress the dependence on the weight $\kappa$ in the notation.
The Petersson formula states
\begin{equation}
\label{eq:PeterssonFormulaWithKloosterman}
 \Delta_{N}(m,n) = \delta_{m=n} + 2 \pi i^{-\kappa} \sum_{c \equiv 0 \shortmod{N}} \frac{S(m,n;c)}{c} J_{\kappa-1}\Big(\frac{4 \pi \sqrt{mn}}{c} \Big).
\end{equation}

We have
\begin{mytheo}
\label{thm:PeterssonNewforms}
 Let $N$ be square-free, and let $H_\kappa^*(N)$ denote the set of Hecke-normalized newforms on $\Gamma_0(N)$  of trivial central character.  Let 
 \begin{equation}
 \Delta_{N}^*(m,n) = c_\kappa \sum_{f \in H_\kappa^*(N)} \frac{\overline{\lambda_f(m)} \lambda_f(n)}{\langle f, f \rangle_N}.
\end{equation}
Then with $\nu(L)$ defined to be the completely multiplicative function satisfying $\nu(p) = p+1$ for $p$ prime, we have
 \begin{multline}
 \label{eq:DeltakN*Formula}
 \Delta_{N}^*(m,n) = \sum_{LM=N}\frac{\mu(L)}{\nu(L)} \sum_{\ell | L^{\infty}}  \frac{\ell}{\nu(\ell)^2 } \sum_{d_1, d_2 | \ell} c_{\ell}(d_1) c_{\ell}(d_2) 
 \sum_{\substack{u|(m,L) \\ v | (n,L)}} \frac{u v }{(u, v)} \frac{\mu(\frac{u v}{(u, v)^2})}{\nu(\frac{u v}{(u, v)^2})}
 \\
\sum_{\substack{a | (\frac{m}{u}, \frac{u}{(u, v)}) \\ b | (\frac{n}{v}, \frac{v}{(u, v)})}}   
\sum_{\substack{e_1 | (d_1, \frac{m}{a^2 (u,v)}) \\ e_2 | (d_2, \frac{n}{b^2 (u,v)})} }
 \Delta_{M}\Big(
\frac{m d_1}{a^2 e_1^2 (u, v)}, 
 \frac{n d_2}{b^2 e_2^2 (u, v)}\Big).
\end{multline}
Here $c_{\ell}(d)$  with $d\mid \ell$ is  
jointly  multiplicative, and $c_{p^n}(p^j) = c_{j,n}$  where
\begin{equation}
 x^n = \sum_{j=0}^{n} c_{j,n} U_j\Big(\frac{x}{2}\Big),
\end{equation}
and $U_j(x)$ are the Chebyshev polynomials of the second kind.
\end{mytheo}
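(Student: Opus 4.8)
The plan is to sieve $\Delta^*_N$ out of the full Petersson average $\Delta_N$ using the oldform decomposition, and then to invert the resulting relation divisor-by-divisor. For square-free $N$ the space $S_\kappa(\Gamma_0(N))$ decomposes into mutually orthogonal oldclasses
\[
S_\kappa(\Gamma_0(N)) = \bigoplus_{M \mid N}\ \bigoplus_{f \in H^*_\kappa(M)} \mathcal V_f,
\qquad
\mathcal V_f := \operatorname{span}\{\, f(\ell z) : \ell \mid N/M \,\}.
\]
The quantity $\Delta_N(m,n)$ is independent of the choice of orthogonal basis; in fact, for any basis $\{b_i\}$ of $S_\kappa(\Gamma_0(N))$ with Gram matrix $G = (\langle b_i, b_j\rangle_N)$ one has $\Delta_N(m,n) = c_\kappa \sum_{i,j} (G^{-1})_{ij}\, \overline{\lambda_{b_i}(m)}\,\lambda_{b_j}(n)$. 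Choosing a basis adapted to the decomposition above, $\Delta_N(m,n) = \sum_{M \mid N}\sum_{f \in H^*_\kappa(M)} \Delta_{f,N}(m,n)$, where $\Delta_{f,N}$ is the contribution of the single oldclass $\mathcal V_f$. The blocks with $M = N$ are one-dimensional and contribute exactly $\Delta^*_N(m,n)$, so it suffices to (i) evaluate $\Delta_{f,N}$ for $M \subsetneq N$, and (ii) invert the relation obtained.

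For (i), fix $f \in H^*_\kappa(M)$ and put $L = N/M$. Use the spanning set $\widehat g_\ell(z) = \ell^{\kappa/2} f(\ell z)$, $\ell \mid L$, whose Hecke-normalized coefficients are $\lambda_{\widehat g_\ell}(n) = \sqrt{\ell}\,\lambda_f(n/\ell)\,\mathbf 1_{\ell \mid n}$; then $\Delta_{f,N}(m,n) = c_\kappa \sum_{u,v \mid L}(G_f^{-1})_{u,v}\,\overline{\lambda_{\widehat g_u}(m)}\,\lambda_{\widehat g_v}(n)$, where $G_f$ is the Gram matrix of $(\widehat g_\ell)_{\ell \mid L}$. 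A Rankin--Selberg unfolding (generalizing \cite[Lemma 2.5]{ILS}) shows that $G_f$ equals $\langle f,f\rangle_N$ times a matrix which factors as a tensor product over the primes $p \mid L$ of $2 \times 2$ blocks $\left(\begin{smallmatrix} 1 & y_p \\ y_p & 1 \end{smallmatrix}\right)$ with $y_p = \tfrac{\sqrt p}{p+1}\lambda_f(p)$, while $\langle f,f\rangle_N = \nu(L)\langle f,f\rangle_M$. Inverting each $2\times 2$ block produces $(1-y_p^2)^{-1}$ and the off-diagonal entry $-y_p(1-y_p^2)^{-1}$; expanding the factors $(1-y_p^2)^{-1}$ as geometric series generates the sum $\sum_{\ell \mid L^\infty}\ell/\nu(\ell)^2$ together with powers $\lambda_f(p)^{2k}$. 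Writing $\lambda_f(p)^{2k} = \lambda_f(p)^k\cdot\lambda_f(p)^k$ and re-expanding each factor by the Chebyshev identity $\lambda_f(p)^k = \sum_j c_{j,k}\lambda_f(p^j)$ --- which is legitimate because $p \nmid M$, so that $\lambda_f(p^j) = U_j(\lambda_f(p)/2)$ --- produces the two weights $c_\ell(d_1)$ and $c_\ell(d_2)$, attached respectively to the $m$- and $n$-variables; the remaining eigenvalue products are then collapsed by iterating the Hecke relation $\lambda_f(x)\lambda_f(p^j) = \sum_{e\mid(x,p^j)}\lambda_f(xp^j/e^2)$, which is precisely what the divisor sums over $u,v$, over $a,b$, and over $e_1,e_2$ record. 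The off-diagonal entries, together with the factors $\sqrt\ell$ from the normalization of $\widehat g_\ell$, combine into the weight $\frac{uv}{(u,v)}\frac{\mu(uv/(u,v)^2)}{\nu(uv/(u,v)^2)}$. Summing over $f\in H^*_\kappa(M)$ turns the whole expression into a linear combination of values of $\Delta^*_M$, so one arrives at a clean identity $\Delta_N = \sum_{LM=N}\mathcal K_L\,\Delta^*_M$, in which $\mathcal K_L$ is an explicit operator, multiplicative in the square-free parameter $L$ (that is, $\mathcal K_{L_1 L_2} = \mathcal K_{L_1}\mathcal K_{L_2}$ for coprime $L_1, L_2$), carrying an overall factor $1/\nu(L)$, with $\mathcal K_1 = \mathrm{id}$.

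For (ii), since $(\mathcal K_L)$ is multiplicative over square-free $L$ with $\mathcal K_1 = \mathrm{id}$, its convolution inverse is simply $\widetilde{\mathcal K}_L = \mu(L)\,\mathcal K_L$ (a one-line check using $\sum_{d\mid L}\mu(d) = \mathbf 1_{L=1}$), so $\widetilde{\mathcal K}_L$ carries exactly the weight $\mu(L)/\nu(L)$. Hence $\Delta^*_N = \sum_{LM=N}\widetilde{\mathcal K}_L\,\Delta_M$, and unwinding $\mathcal K_L$ from step (i) gives \eqref{eq:DeltakN*Formula}. I expect the main obstacle to be essentially bookkeeping: carrying out the Gram-matrix inversion, the geometric-series and Chebyshev expansions, the iterated Hecke collapses, and the Möbius inversion simultaneously, all while tracking the divisibility constraints, the $p$-power factors, and the $\nu$-weights, and in particular checking that the various normalization square roots (from $\ell^{\kappa/2}$, from $y_p$, and from the index $\nu(L)$) cancel to leave the purely rational coefficients displayed in the theorem. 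A secondary technical point is to pin down the constant $\tfrac{\sqrt p}{p+1}$ in $y_p$ from the unfolding computation, since every downstream coefficient depends on it.
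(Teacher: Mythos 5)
Your proposal is correct, and it reaches the theorem by a route that differs from the paper's in its key computational step while sharing the same overall architecture (oldclass decomposition, an identity $\Delta_N=\sum_{LM=N}\mathcal K_L\Delta_M^*$, then inversion). The paper does \emph{not} invert a Gram matrix: it constructs an explicit \emph{orthogonal} basis of each oldclass by diagonalizing the Atkin--Lehner involutions $W_d$, computes $\langle f_\phi,f_\phi\rangle=\tau(L)\langle f,f\rangle\prod_{p\mid L}(1+\phi(p)\lambda_f(p)p^{1/2}/\nu(p))$, rationalizes the denominator, and uses orthogonality of the characters $\phi$ on $(\mz/2\mz)^{\omega(L)}$ to detect the condition $uvt=\square$. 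Your tensor-product Gram-matrix inversion of the non-orthogonal spanning set $\{\ell^{\kappa/2}f(\ell z)\}$ produces exactly the same intermediate expression (the paper's \eqref{eq:TmnNiceFormula}): the per-prime determinants $1-y_p^2$ are precisely the Euler factors of $\rho_f(L)$, and the off-diagonal entries give the $\mu(\frac{uv}{(u,v)^2})\lambda_f(\frac{uv}{(u,v)^2})$ weight. From that point on — geometric expansion of $\rho_f(L)^{-1}$, Chebyshev coefficients, iterated Hecke relations — the two arguments coincide. For the inversion, the paper proves a general Lemma \ref{lemma:inversion} by induction on $\omega(N)$; your M\"obius-inverse observation is the same fact and is valid, but note that it rests entirely on the composition law $\mathcal K_{L_1}\mathcal K_{L_2}=\mathcal K_{L_1L_2}$ for coprime $L_1,L_2$, which you assert rather than verify — the verification (that the substitutions $(m,n)\mapsto(\frac{md_1}{a^2e_1^2(u,v)},\dots)$ touch only the $L$-parts of $m,n$ and that all coefficients and divisibility conditions factor over coprime moduli) is exactly the content the paper's multiplicativity hypotheses on $A$ and $\alpha$ are designed to capture, so you should spell it out. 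What each approach buys: yours makes the origin of $\rho_f(L)^{-1}$ transparent as a determinant and avoids constructing any orthogonal basis, while the paper's Atkin--Lehner eigenforms are genuinely orthogonal objects with clean functional equations that the authors exploit elsewhere. One small correction: the inner products $\langle f_{|A_{\ell_1}},f_{|A_{\ell_2}}\rangle$ come from \cite[Lemma 2.4]{ILS}, not Lemma 2.5 (the latter bounds $\langle f,f\rangle$).
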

The constants $c_{j,n}$ arise from repeated application of the Hecke multiplicativity relations and we call them the \emph{Chebyshev coefficients}.  We describe some of their relevant properties in Section \ref{section:Chebyshev}, for instance, we shall show $c_{j,n} \geq 0$, and derive sharp bounds on $c_{j,n}$.  Many of the bounds on the Chebyshev coefficients appearing in Section \ref{section:Chebyshev} arose out of necessity for the proof of Theorem \ref{thm:cubicmoment}.

In Theorem \ref{thm:PeterssonNewformsTruncatedVersion}, we give an approximate version of \eqref{eq:DeltakN*Formula} with the additional restriction $\ell \leq Y$, which makes the right hand side a finite sum.  For our application to the cubic moment, we have found the approximate version most suitable.

The method of proof of \cite{ILS} is to explicitly choose a basis $B_k(N)$ (see \cite[Proposition 2.6]{ILS}) that relates the oldforms to the newforms, and thereby deduce an arithmetic formula for $\Delta_N(m,n)$ in terms of $\Delta_M^*(m',n')$'s, with $M  \mid  N$.  An inversion of this formula then gives their formula for $\Delta_N^*(m,n)$.
As mentioned in \cite{ILS}, there are many interesting choices of basis and it could be argued that their choice is ad-hoc.  Other authors have also constructed various bases.  Choie and Kohnen \cite[Proposition 2]{ChoieKohnen} use the same basis that we will use here.  Rouymi \cite{Rouymi} gave a basis for prime power level and derived a newform Petersson formula from it.  Building on Rouymi,  Ng \cite{NgBasis}  and Blomer and Mili{\'c}evi{\'c} \cite[(3.7)]{BlomerMili}  gave  a basis for arbitrary level, and   Ng \cite{NgBasis} and  five authors \cite{BBDDM}  have  used \ this basis  to give  newform Petersson formulas for arbitrary level (but with restrictive coprimality conditions on the level and the Fourier coefficients).  It is important for our work that there is no restriction on $m,n$ appearing in Theorem \ref{thm:PeterssonNewforms}.

Nelson \cite{Nelson} has described a method for proving a Petersson formula for newforms without explicitly choosing a basis, and gives such a formula when the level $N$ is divisible by the cube of each prime dividing it.  

Our proof takes a different path from \cite{ILS,Rouymi,NgBasis,BlomerMili,BBDDM} in that we choose our basis to be eigenfunctions of the Atkin-Lehner operators, which for square-free level is enough to diagonalize the basis.  This choice is natural and leads to many pleasant simplifications.
Our method of proof of Theorem \ref{thm:PeterssonNewforms} most naturally shows
\begin{multline}
\label{eq:PeterssonOldforms}
 \Delta_{N}(m,n) = \sum_{LM=N}\frac{1}{\nu(L)} \sum_{\ell | L^{\infty}}  \frac{\ell}{\nu(\ell)^2 } \sum_{d_1, d_2 | \ell} c_{\ell}(d_1) c_{\ell}(d_2) 
 \sum_{\substack{u|(m,L) \\ v | (n,L)}} \frac{u v }{(u, v)} \frac{\mu(\frac{u v}{(u, v)^2})}{\nu(\frac{u v}{(u, v)^2})}
 \\
\sum_{\substack{a | (\frac{m}{u}, \frac{u}{(u, v)}) \\ b | (\frac{n}{v}, \frac{v}{(u, v)})}}   
\sum_{\substack{e_1 | (d_1, \frac{m}{a^2 (u,v)}) \\ e_2 | (d_2, \frac{n}{b^2 (u,v)})} }
 \Delta_{M}^*\Big(
\frac{m d_1}{a^2 e_1^2 (u, v)}, 
 \frac{n d_2}{b^2 e_2^2 (u, v)}\Big).
\end{multline}
We deduce Theorem \ref{thm:PeterssonNewforms} from \eqref{eq:PeterssonOldforms} in Section \ref{section:sieve} below.

In this introduction, we have not presented the Petersson formula that is required for the proof of Theorem \ref{thm:cubicmoment}.  What we need is a kind of hybrid formula for modular forms of level $rq$ that in the $r$-aspect restricts to newforms of level $r$, and in the $q$-aspect groups together all the newforms of level dividing $q$, in accordance with the setup of Theorem \ref{thm:cubicmoment}.  This formula appears in Section \ref{section:PeterssonHybrid}.

The newform formula of \cite[Proposition 2.8]{ILS} has coprimality assumptions of the form $(m,N) = 1$ and $(n, N^2)  \mid  N$, which on the face of it is rather restrictive, however, one may reduce to this case as follows.  Firstly, using that $\lambda_f(d) \lambda_f(p) = \lambda_f(dp)$ for any $d \in \mathbb{N}$, and $p  \mid  N$, one may write $\lambda_f(m) \lambda_f(n) = \lambda_f(m') \lambda_f(n')$ where $mn = m' n'$, and $(m',N) = 1$.  
Secondly, we have $\lambda_f(p^2 d) = \lambda_f(p)^2 \lambda_f(d) = p^{-1} \lambda_f(d)$ (see \eqref{eq:etapfFormula}), which by repeated applications  allows one to reach the case $(n,N^2)  \mid  N$.   
It is not obvious how to use the \cite{ILS} newform formula to derive our hybrid version presented in Section \ref{section:PeterssonHybrid} below.
The problem is that the above factorizations of $m$ and $n$ depend on the ambient level, and so summing over different levels introduces some complications.

\subsection{Structure of the paper}
Sections \ref{section:AtkinLehnerTheory}--\ref{section:PeterssonApproximate} are devoted to proving a number of versions of the Petersson formula with newforms as well as some estimates for the Chebyshev coefficients.  This part of the paper is self-contained.

In Sections \ref{section:Structure}--\ref{section:recombination}, we prove the cubic moment bound, that is, Theorem \ref{thm:cubicmoment}.

\section{Atkin-Lehner theory}
\label{section:AtkinLehnerTheory}

\subsection{Construction of basis}
\label{section:AtkinLehner}
We briefly review some of the theory developed by Atkin and Lehner \cite{AtkinLehner}.  Throughout we assume that the level $N$ is square-free.
For a matrix in $GL_2^{+}(\mz)$, define
\begin{equation*}
 f_{\vert \left(\begin{smallmatrix} a & b \\ c & d \end{smallmatrix}\right)}(z) = (ad-bc)^{\kappa/2} (cz+d)^{-\kappa} f\Big(\frac{az+b}{cz+d}\Big).
\end{equation*}
Atkin and Lehner showed that
\begin{equation*}
 S_\kappa(N) = \bigoplus_{LM = N} \bigoplus_{f \in H_\kappa^*(M)} S_\kappa(L;f),
\end{equation*}
where $S_\kappa(L;f)$ is the span of forms $f_{\vert A_{\ell}}$, with $\ell  \mid  L$, where $A_{\ell} = \begin{pmatrix} \ell & 0 \\ 0 & 1 \end{pmatrix}$.  They call $S_\kappa(L;f)$ the \emph{oldclass} associated to $f$.
Observe
$f_{\vert A_\ell} (z) = \ell^{\kappa/2} f(\ell z)$, so $S_\kappa(L;f) = \text{span}\{ f(\ell z) : \ell  \mid  L\}$.
Our goal here is to construct an explicit orthogonal basis of $S_\kappa(L;f)$, in the case that $N$ is square-free.

We turn to the Atkin-Lehner operators $W_d$.  
Suppose that $d \mid N$, $N=LM$, and let
\begin{equation}
\label{eq:Wqdef}
 W_d = \begin{pmatrix} dx  & y \\ Nz  & d w \end{pmatrix},
\end{equation}
where $x,y,z,w \in \mz$ are chosen so that $\det(W_d) = d$ (such a choice exists because $(d,N/d) = 1$, since $N$ is square-free, and the forthcoming properties of $W_d$ are independent of the choices of $x,y,z,w$).  If $d\mid M$ and $f \in H_\kappa^*(M)$ then $f$ is an eigenfunction of $W_d$ (see \cite[Theorem 3 (iii)]{AtkinLehner}),   so suppose now that $d\mid L$.  Let
\begin{equation}
V = \begin{pmatrix} 1 & 0 \\ \frac{-LM}{d} & 1 \end{pmatrix} \in \Gamma_0(M).
\end{equation}
Note that taking $x=z=1$ in the definition of $W_d$ we have 
\begin{equation*}
V W_d = \begin{pmatrix} d & y \\ 0 & 1 \end{pmatrix}.
\end{equation*}
Therefore, if $f \in H_\kappa^*(M)$ and $d\mid L$, then
\begin{equation}
\label{eq:fWqcalculation}
f_{\vert W_d} = f_{\vert V}{}_{\vert W_d} = f_{\vert V W_d} = d^{\kappa/2} f(dz+y) = f_{\vert A_d}.
\end{equation}
This calculation easily shows that $S_\kappa(L;f)$ is preserved by all $W_d$ with $d  \mid  L$, that each $W_d$ is an involution, and that the $W_d$ commute with each other.  Therefore, the group of transformations of $S_\kappa(L;f)$ generated by the $W_d$ is isomorphic to $(\mz/2\mz)^{\omega(L)}$, where $\omega(n)$ is the number of prime divisors of $n$.  Note $2^{\omega(L)} = \tau(L)$.
Furthermore, the $W_d$ are Hermitian with respect to the Petersson inner product (see \cite[Lemma 25]{AtkinLehner}).
By some simple character theory, we can show that $S_\kappa(L;f)$ has an explicit orthogonal basis of common eigenfunctions of the $W_d$.

We briefly describe a more abstract statement. 
Let $G$ be a group isomorphic to $(\mz/ 2\mz )^n$, and let $\phi$ be a character on $G$, which we denote by $\phi \in \widehat{G}$.  There are $2^n$ such characters.  For each $g \in G$, suppose there exists an involution $W_g$ acting on some vector space of functions, and such that $W_{g_1} W_{g_2} = W_{g_1 g_2}$.  For each $f$ in the vector space and character $\phi \in \widehat{G}$, define
\begin{equation}
 f_{\phi} = \sum_{g \in G} \phi(g) W_g f.
\end{equation}
It is easy to see that
\begin{equation*}
 W_{g} f_{\phi} = \phi(g) f_{\phi}.
\end{equation*}
Therefore, each $f_{\phi}$ is an eigenfunction of all the $W_{g}$.  Also, the $f_{\phi}$ are distinct because any two choices of $f_{\phi}$ have a different eigenvalue for some $W_{g}$.  This also means that if the $W_g$ are Hermitian with respect to some inner product, then all the $f_{\phi}$ are orthogonal.  In the case of $S_\kappa(L;f)$, which has dimension $2^{\omega(L)} = \tau(L)$, there are $2^{\omega(L)}$ eigenfunctions $f_{\phi}$, so by dimension counting, the $f_{\phi}$ form a basis. 
Finally, we derive a useful formula for $\langle f_{\phi}, f_{\phi} \rangle$:
\begin{multline}
 \langle f_{\phi}, f_{\phi} \rangle = \sum_{g_1, g_2 \in G} \phi(g_1) \phi(g_2) \langle W_{g_1} f , W_{g_2} f \rangle 
 \\
 = \sum_{g_1, g_2 \in G} \phi(g_1 g_2) \langle W_{g_1 g_2} f ,  f  \rangle = |G| \sum_{g \in G} \phi(g) \langle W_g f, f \rangle.
\end{multline}

Returning to $S_\kappa(L;f)$, by \cite[Lemma 2.4]{ILS} (which in turn follows closely a proof in \cite{AbbesUllmo}), we have
\begin{equation*}
 \langle f_{\vert W_d}, f \rangle = \langle f_{\vert A_d}, f \rangle = \frac{\lambda_f(d)}{\nu(d)} d^{1/2} \langle f, f \rangle.
\end{equation*}
 We endow the set of divisors $d \mid L$ with the group structure $(\mz/ 2\mz )^{\omega(L)}$ and define characters on it by $\phi(d) = \prod_{p | d} \phi(p)$, where $\phi(p)$ is chosen to be $+1$ or $-1$ independently for each prime divisor of $L$.   In this way, we obtain
\begin{equation}
\label{eq:fphiPeterssonNorm}
 \langle f_{\phi}, f_{\phi} \rangle = |G| \langle f, f \rangle \sum_{d | L} \phi(d) \frac{\lambda_f(d)}{\nu(d)} d^{1/2} = \tau(L) \langle f, f \rangle \prod_{p | L} \Big(1 + \frac{\phi(p) \lambda_f(p)p^{1/2}}{\nu(p)} \Big).
\end{equation}
All of the above inner products are $\langle\, , \rangle_N$.

\subsection{Dirichlet series of the basis of oldforms}
To lend some support to the assertion that our choice of basis of $S_\kappa(L;f)$ given above is natural, here we describe some pleasant features of the Dirichlet series corresponding to these modular forms.  Let $\lambda_{f_\phi}(n)$ be the Fourier coefficients of $f_\phi$ and  
define 
\begin{equation}
D(s,f_{\phi}) = \sum_{n=1}^{\infty} \frac{\lambda_{f_{\phi}}(n)}{n^s}.
\end{equation}
The reader should beware that this is not a character twist of $f$, because $\phi$ is not a Dirichlet character (in fact $\phi$ is only defined on the divisors of $L$).  We show here that $D(s, f_{\phi})$ satisfies a functional equation similar to that of a level $N$ newform.

By a direct calculation with the Fourier expansion, we have
\begin{equation}
\label{eq:fphiFourierCoefficients}
 \lambda_{f_{\phi}}(m) = \sum_{u  | (m,L)} \phi(u) u^{1/2} \lambda_f(m/u),
\end{equation}
  Therefore, we have
\begin{equation}
D(s, f_{\phi})  =  L(s,f) \prod_{p | L} \Big(1 + \frac{\phi(p)}{p^{s-1/2}} \Big).
\end{equation}
Then define the ``completed" Dirichlet series
\begin{equation}
N^{s/2} \Gamma_{f}(s) D(s, f_{\phi}) = 
\Lambda(s,f) L^{s/2} \prod_{p | L} \Big(1 + \frac{\phi(p)}{p^{s-1/2}} \Big),
\end{equation}
where $$\Gamma_f(s) = \pi^{-s} \Gamma\left( \frac{s+(\kappa-1)/2}{2}\right)\Gamma\left( \frac{s+(\kappa+1)/2}{2}\right)$$ is the gamma factor associated to  $L(s,f)$  and $ \Lambda(s,f)  =  M^{s/2} \Gamma_f(s)  L(s,f) $.  This satisfies the functional equation  $ \Lambda(s,f)  = \epsilon_f  \Lambda(1-s,f) $.  Meanwhile, the secondary factor satisfies
\begin{equation*}
g(s) := L^{s/2} \prod_{p | L} \Big(1 + \frac{\phi(p)}{p^{s-1/2}} \Big) = \prod_{p | L} \Big( p^{s/2} + \phi(p) p^{\frac{1-s}{2}} \Big) = \phi(L) g(1-s).
\end{equation*}
Therefore, $D(s, f_{\phi})$ satisfies the functional equation
\begin{equation}
N^{\frac{s}{2}} \Gamma_{f}(s) D(s, f_{\phi}) = \epsilon_f \phi(L) N^{\frac{1-s}{2}} \Gamma_{f}(1-s) D(1-s,f_{\phi}).
\end{equation}

\section{Manipulations with sums of Fourier coefficients}
\label{section:FourierManipulations}
The goal of this section is to prove \eqref{eq:PeterssonOldforms}.

We begin by describing \eqref{eq:DeltaDef} for the basis chosen in Section \ref{section:AtkinLehner}.  We have
\begin{equation}
\label{eq:PeterssonWithPhis}
\Delta_{N}(m,n) = c_\kappa \sum_{LM=N} \sum_{f \in H_\kappa^*(M)} \sum_{\phi}\
 \frac{\overline{\lambda_{f_{\phi}}(m)} \lambda_{f_{\phi}}(n)}{\langle  f_{\phi}, f_{\phi} \rangle_N}.
\end{equation}
We therefore need to evaluate the inner sum over $\phi$, namely
\begin{multline}
\label{eq:chiinnerproductsum}
T(m,n):= \sum_{\phi} \frac{\overline{\lambda_{f_{\phi}}(m)} \lambda_{f_{\phi}}(n)}{\langle  f_{\phi}, f_{\phi} \rangle_N} \\ = \frac{1}{\tau(L) \langle f, f \rangle}_N \sum_{\phi} \overline{\lambda_{f_{\phi}}(m)} \lambda_{f_{\phi}}(n) \prod_{p | L} \Big(1 + \frac{\phi(p) \lambda_f(p)p^{1/2}}{\nu(p)} \Big)^{-1},
\end{multline}
where we have used \eqref{eq:fphiPeterssonNorm}.
We multiply and divide by $\prod_{p | L} (1-\frac{\phi(p)\lambda_f(p) p^{1/2}}{\nu(p)})$, giving that
\begin{equation*}
T(m,n)= \frac{1}{\tau(L) \rho_f(L) \langle f, f \rangle}_N \sum_{\phi} \overline{\lambda_{f_{\phi}}(m)} \lambda_{f_{\phi}}(n) \sum_{t | L} \frac{\mu(t) \phi(t) \lambda_f(t) t^{1/2}}{\nu(t)},
\end{equation*}
where as in \cite{ILS}, we define
\begin{equation}\label{eq:rhof}
 \rho_f(L) = \prod_{p  |  L} \Big(1 - p\frac{ \lambda_f(p)^2}{(p+1)^2}\Big).
\end{equation}

The formula \eqref{eq:fphiFourierCoefficients} implies
\begin{multline*}
T(m,n) =  \frac{1}{\tau(L) \rho_f(L) \langle f, f \rangle}_N \sum_{\phi} \sum_{u  | (m,L)}
 \sum_{v  | (n,L)} 
 \\
 \sum_{t | L} 
 \phi(u) u^{1/2} \lambda_f(m/u) \phi(v) v^{1/2} \lambda_f(n/v)  \frac{\mu(t) \phi(t) \lambda_f(t) t^{1/2}}{\nu(t)},
\end{multline*}
where we have used that $\lambda_f(n)$ is real to remove the complex conjugate symbols.
The sum over $\phi$ detects if $uvt$ is a square, precisely
\begin{equation*}
 \sum_{\phi} \phi(u) \phi(v) \phi(t) = \begin{cases}
                                         \tau(L), \qquad &\text{if } uvt = \square, \\
                                         0, \qquad &\text{otherwise}.
                                       \end{cases}
\end{equation*}
Since $u$ and $v$ are square-free, and so is $t$, the condition $uvt = \square$ determines $t$ uniquely, namely
\begin{equation*}
 t = \frac{uv}{(u, v)^2} = \frac{u}{(u , v)} \frac{v}{(u , v)}.
\end{equation*}
One may compare this with Lemma 2.4 of \cite{ILS}.  Therefore, $T(m,n)$ equals
\begin{equation}
\label{eq:TmnNiceFormula}
 \frac{1}{\rho_f(L) \langle f, f \rangle}_N  \sum_{u   | (m,L)}
 \sum_{v   | (n,L)}
  u^{1/2} \lambda_f(m/u) v^{1/2} \lambda_f(n/v)  \\ \frac{\mu(\frac{u v}{(u, v)^2})  \lambda_f(\frac{u v}{(u , v)^2}) (\frac{u  v}{(u , v)^2})^{1/2}}{\nu(\frac{uv}{(u , v)^2})}.
\end{equation}

To check this against \cite{ILS}, suppose that $(m,n,N) = 1$.  This means $((m,L), (n,L)) = (m,n,L)  \mid  (m,n,N) = 1$, so in particular, $(u, v) = 1$.  Hence 
\begin{equation}
T(m,n) = \\ \frac{1}{\rho_f(L) \langle f, f \rangle}_N  \sum_{u   | (m,L)} \frac{\mu(u ) u   }{\nu(u )} \lambda_f(u ) \lambda_f(m/u )
 \sum_{v  | (n,L)} 
 \frac{\mu(v) v}{\nu(v)}
\lambda_f(v) \lambda_f(n/v),
\end{equation}
which equals
\begin{equation*}
 \frac{A_f(m,L) A_f(n,L)}{\rho_f(L) \langle f, f \rangle_N}, \quad \text{where}  \quad A_f(m,L) := \sum_{u | (m,L)}  \frac{\mu(u) u}{\nu(u)} \lambda_f(u) \lambda_f(m/u),
\end{equation*}
as in \cite[p.76]{ILS}.   Since $f$ is on $\Gamma_0(M)$, we have $\langle f, f \rangle_N = \frac{\nu(N)}{\nu(M)} \langle f, f \rangle_M = \nu(L) \langle f, f \rangle_M$. So,  if $(m,n,N) = 1$, then
\begin{equation}
\label{eq:coprimeFormulaFromILS}
 \Delta_{N}(m,n) = c_\kappa \sum_{LM=N} \sum_{f \in H_\kappa^*(M)} \frac{A_f(m,L) A_f(n,L)}{\rho_f(L) \nu(L) \langle f, f \rangle_M}.
\end{equation}
From this we may quickly derive (2.48) of \cite{ILS}.

We continue with the calculation of $\Delta_N(m,n)$, without the assumption $(m,n,N) = 1$. The formula \eqref{eq:TmnNiceFormula} shows
\begin{multline*}
 \Delta_{N}(m,n) = c_\kappa \sum_{LM=N} \sum_{f \in H_\kappa^*(M)} \frac{1}{\nu(L) \rho_f(L) \langle f, f\rangle_M} 
 \\
 \sum_{\substack{u | (m,L) \\ v  |  (n,L)}} \frac{u v }{(u, v)} \frac{\mu(\frac{uv}{(u, v)^2})}{\nu(\frac{uv}{(u, v)^2})}
 \lambda_f\Big(\frac{m}{u}\Big) \lambda_f\Big(\frac{u}{(u,v)}\Big)
 \lambda_f\Big(\frac{n}{v}\Big) \lambda_f\Big(\frac{v}{(u,v)}\Big).
\end{multline*}

Recall that the Hecke relation for a newform of level $M$ with trivial nebentypus is
\begin{equation*}
 \lambda_f(m) \lambda_f(n) = \sum_{\substack{d | (m,n) \\ (d,M) = 1}} \lambda_f\Big(\frac{mn}{d^2}\Big).
\end{equation*}
In our desired application, $u$ and $v$ divide $L$ and $(M,L) = 1$, so any divisor of $u$ or $v$ is automatically coprime to $M$.
Using the Hecke relation, we then deduce
\begin{multline}
\label{eq:DeltakmnformulaWithRho}
 \Delta_{N}(m,n) = c_\kappa \sum_{LM=N} \sum_{f \in H_\kappa^*(M)} \frac{1}{\nu(L) \rho_f(L) \langle f, f\rangle_M} 
 \\
 \sum_{\substack{u | (m,L) \\ v  |  (n,L)}} \frac{u v }{(u, v)} \frac{\mu(\frac{u v}{(u, v)^2})}{\nu(\frac{u v}{(u, v)^2})}
\sum_{\substack{a  |  (\frac{m}{u}, \frac{u}{(u, v)}) \\ b  |  (\frac{n}{v}, \frac{v}{(u, v)})}}   
 \lambda_f\Big(\frac{m}{a^2 (u, v)}\Big)
 \lambda_f\Big(\frac{n}{b^2 (u, v)}\Big).
\end{multline}

The tricky part of our analysis of $\Delta_N(m,n)$ is to express $\rho_f(L)^{-1}$ in terms of Fourier coefficients of $f$.
We have
\begin{equation}
\label{eq:rhoinverseformula1}
 \frac{1}{\rho_f(L)} = \prod_{p  |  L} \Big(1- p \frac{\lambda_f(p)^2}{(p+1)^2}\Big)^{-1} = \sum_{\ell  |  L^{\infty}} \ell \frac{\lambda_f^*(\ell)^2}{\nu(\ell)^2},
\end{equation}
where $\lambda_f^*(\ell)$ is the completely multiplicative version of $\lambda_f(n)$, that is,
\begin{equation*}
 \lambda_f^*(\ell) = \prod_{p^n || \ell} \lambda_f(p)^n.
\end{equation*}
Using only the weak bound $|\lambda_f(p)| \leq p^{\theta} + p^{-\theta}$ with some $\theta < 1/2$ shows that the product and sum in \eqref{eq:rhoinverseformula1}
converge absolutely.

Define the \emph{Chebyshev coefficients} $c_{j,n}$ by
\begin{equation}
 \lambda_f(p)^n = \sum_{j=0}^{n} c_{j,n} \lambda_f(p^j),
\end{equation}
where $p$ is coprime to the level of $f$.
Let $U_k(x)$ denote the degree $k$ Chebyshev polynomial of the second kind  (defined below). Then 
\begin{equation*}
 x^n = \sum_{j=0}^{n} c_{j,n} U_j\Big(\frac{x}{2}\Big),
\end{equation*}
where the $c_{j,n}$ can be written in various ways using that the $U_j$ form a system of orthogonal polynomials. Here the $U_j$ can be defined concisely by the generating function
\begin{equation*}
 (1 - 2yx + x^2)^{-1} = \sum_{j=0}^{\infty} U_j(y) x^j.
\end{equation*}
For instance, since $U_4(x/2) = x^4 - 3x^2 + 1$, $U_2(x/2) = x^2-1$, and $U_0(x) = 1$, we get
\begin{equation*}
 x^4 = U_4(x/2) + 3 U_2(x/2) + 2 U_0(x/2).
\end{equation*}
An alternative formula is $U_j(\cos(\theta)) = \frac{\sin((j+1)\theta)}{\sin \theta}$.  
The orthogonality of the $U_j$ implies that
\begin{equation}
\label{eq:chebyshevCoefficientIntegralFormula}
 c_{j,n} = \int_0^{\pi} U_j(\cos \theta) (2 \cos \theta)^n \tfrac{2}{\pi} \sin^2 \theta d \theta.
\end{equation}
We will develop some properties of the Chebyshev coefficients in Section \ref{section:Chebyshev}.

With this notation in hand, we have for $f$ a newform of level $M$ with $(\ell, M) = 1$, that
\begin{equation}
 \lambda_f^*(\ell) = \prod_{p^n || \ell} (\sum_{j=0}^{n} c_{j,n} \lambda_f(p^j)) =: \sum_{d | \ell} c_{\ell}(d) \lambda_f(d),
\end{equation}
where 
\begin{equation*}
 c_{\ell}(d) = \prod_{\substack{p^j || d \\ p^n || \ell}} c_{j, n}.
\end{equation*} 
Moreover, we have
\begin{equation}
\label{eq:rhoinverseformula}
 \frac{1}{\rho_f(L)} = \sum_{\ell | L^{\infty}}  \frac{\ell}{\nu(\ell)^2} \sum_{d_1, d_2 | \ell} c_{\ell}(d_1) c_{\ell}(d_2) \lambda_f(d_1) \lambda_f(d_2).
\end{equation}

Inserting \eqref{eq:rhoinverseformula} into \eqref{eq:DeltakmnformulaWithRho}, we get
\begin{multline*}
 \Delta_{N}(m,n) = c_\kappa \sum_{LM=N} \sum_{\ell | L^{\infty}}  \frac{\ell}{\nu(\ell)^2 \nu(L)} \sum_{d_1, d_2 | \ell} c_{\ell}(d_1) c_{\ell}(d_2) \sum_{f \in H_\kappa^*(M)} \frac{1}{\langle f, f\rangle_M} \\ \lambda_f(d_1) \lambda_f(d_2)
 \sum_{\substack{u|(m,L) \\ v | (n,L)}} \frac{u v }{(u, v)} \frac{\mu(\frac{u v}{(u, v)^2})}{\nu(\frac{u v}{(u, v)^2})}
\sum_{\substack{a | (\frac{m}{u}, \frac{u}{(u, v)}) \\ b | (\frac{n}{v}, \frac{v}{(u, v)})}}   
 \lambda_f\Big(\frac{m}{a^2 (u, v)}\Big)
 \lambda_f\Big(\frac{n}{b^2 (u, v)}\Big).
\end{multline*}
Now we can use the Hecke relations one final time (again the divisors are coprime to $M$), to give
\begin{multline*}
 \Delta_{N}(m,n) = c_\kappa \sum_{LM=N} \sum_{\ell | L^{\infty}}  \frac{\ell}{\nu (\ell)^2 \nu(L)} \sum_{d_1, d_2 | \ell} c_{\ell}(d_1) c_{\ell}(d_2) \sum_{f \in H_\kappa^*(M)} \frac{1}{\langle f, f\rangle_M}   
 \\
 \sum_{\substack{u|(m,L) \\ v | (n,L)}} \frac{ u v }{(u, v)} \frac{\mu(\frac{u v}{(u, v)^2})}{\nu(\frac{u v}{(u, v)^2})}
\sum_{\substack{a | (\frac{m}{u}, \frac{u}{(u, v)}) \\ b | (\frac{n}{v}, \frac{v}{(u, v)})}}   
\sum_{\substack{e_1 | (d_1, \frac{m}{a^2 (u,v)}) \\ e_2 | (d_2, \frac{n}{b^2 (u,v)})} }
 \lambda_f\Big(\frac{m d_1}{a^2 e_1^2 (u, v)}\Big)
 \lambda_f\Big(\frac{n d_2}{b^2 e_2^2 (u, v)}\Big).
\end{multline*}
This is precisely the desired formula \eqref{eq:PeterssonOldforms}, after a rearrangement.

\section{Inversion}
\label{section:sieve}
In this section, we show how to deduce \eqref{eq:DeltakN*Formula} from \eqref{eq:PeterssonOldforms}.  We work in greater generality than what is immediately required, which will be useful in Section \ref{section:PeterssonHybrid}.
Suppose that $F$ and $G$ are two arithmetic functions that we write in the form $F(m,N), G(m,N)$ where $N$ is a positive square-free integer, and $m = (m_1, \dots, m_d)$ is a tuple of integers.
We assume there is a relation of the form
\begin{equation}
\label{eq:InitialRelation}
 F(m, N) = \sum_{LM=N} \sum_{a| L^{\infty}} A(a, m_L) G(\alpha(a, m_L) \frac{m}{m_L}, M),
\end{equation}
where: $A$ is some multiplicative arithmetical function, $m_L$ denotes the part of $m$ having common factors with $L$, so $m_L  \mid  L^{\infty}$ and $(m/m_L, L) = 1$, and $\alpha$ is some integer-valued multiplicative function having the property that $\alpha(p^{i_1}, \dots, p^{i_k})  \mid  p^{\infty}$ for all primes $p$.  Furthermore, $a$ is shorthand for some tuple $(a_1, \dots, a_J)$, and the condition $a \mid L^{\infty}$ means that $a_i  \mid  L^{\infty}$ for all $i=1,\dots, J$.

We can derive that 
$F(m,1) = G(m,1)$ for all $m$, by taking $N=1$ in \eqref{eq:InitialRelation}.  The main topic of this section is to prove
\begin{mylemma} 
\label{lemma:inversion}
 For square-free $N$  we have the inversion formula
\begin{equation}
\label{eq:inversion}
 G(m, N) = \sum_{LM=N} \mu(L) \sum_{a | L^{\infty}} A(a, m_L) F(\alpha(a, m_L) \frac{m}{m_L}, M).
\end{equation}
 \end{mylemma}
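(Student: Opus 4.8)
The plan is to prove \eqref{eq:inversion} by inserting the hypothesized relation \eqref{eq:InitialRelation} into the right-hand side of \eqref{eq:inversion} and checking that the resulting double sum collapses to $G(m,N)$ by Möbius inversion over the square-free divisors of $N$. So I would start from
\[
\sum_{LM = N} \mu(L) \sum_{a \mid L^\infty} A(a, m_L)\, F\Big(\alpha(a, m_L)\tfrac{m}{m_L}, M\Big)
\]
and expand each $F(\cdot, M)$ using \eqref{eq:InitialRelation} at the ambient level $M = N/L$.

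The first step is a bookkeeping fact about how the ``$L'$-part'' of the shifted argument behaves. Set $n = \alpha(a, m_L)\tfrac{m}{m_L}$. Since $N$ is square-free and $L' \mid M = N/L$ we have $(L,L')=1$; moreover $a \mid L^\infty$ and $m_L \mid L^\infty$, and the hypothesis $\alpha(p^{i_1},\dots,p^{i_k}) \mid p^\infty$ forces $\alpha(a,m_L) \mid L^\infty$. Hence $n$ and $m$ have the same $L'$-part, i.e.\ $n_{L'} = m_{L'}$, and consequently $n/n_{L'} = \alpha(a,m_L)\tfrac{m}{m_{LL'}}$, using $m_{LL'} = m_L m_{L'}$. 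Feeding this into the expansion of $F$ produces an inner sum over $L'M'=M$ and $a' \mid L'^\infty$ with summand
\[
A(a,m_L)\,A(a',m_{L'})\, G\Big(\alpha(a',m_{L'})\,\alpha(a,m_L)\tfrac{m}{m_{LL'}},\, M'\Big).
\]

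Next I would reindex by $K = LL'$, a square-free divisor of $N$ with $M' = N/K$. For fixed $K$ the coprime factorizations $K = LL'$ are parametrized by $L \mid K$, and because $(L,L')=1$ a pair of tuples $(a,a')$ with $a \mid L^\infty$, $a' \mid L'^\infty$ is exactly a tuple $b \mid K^\infty$ split according to $K=LL'$. Multiplicativity of $A$ and of $\alpha$ then gives $A(a,m_L)A(a',m_{L'}) = A(b,m_K)$ and $\alpha(a',m_{L'})\alpha(a,m_L) = \alpha(b,m_K)$, so the summand becomes $A(b,m_K)\,G\big(\alpha(b,m_K)\tfrac{m}{m_K}, N/K\big)$, which depends only on $K$ and $b$. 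Hence the whole expression equals
\[
\sum_{K \mid N} \Big(\sum_{L \mid K}\mu(L)\Big) \sum_{b \mid K^\infty} A(b,m_K)\, G\Big(\alpha(b,m_K)\tfrac{m}{m_K},\, N/K\Big),
\]
and since $\sum_{L \mid K}\mu(L)$ vanishes unless $K=1$, only the term $K=1$ (with $b=1$, $m_K=1$, $A(1,1)=\alpha(1,1)=1$) survives, leaving $G(m,N)$.

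I expect the only real obstacle to be notational: pinning down the precise meaning of the tuple-valued symbols $a$, $m_L$, $\alpha$ and the exact sense in which $A$ and $\alpha$ are ``multiplicative'', so that the identity $n_{L'}=m_{L'}$ and the recombination of $(a,a')$ into $b$ are genuinely valid prime-by-prime. Once the coprime/local structure is made explicit, the collapse is the standard Möbius argument. An alternative would be induction on $\omega(N)$, peeling off one prime at a time, but the direct substitution is cleaner and avoids re-invoking the $N=1$ identity repeatedly.
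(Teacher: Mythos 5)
Your argument is correct, and it reaches the identity by a route that is organized differently from the paper's. The paper proves the lemma by induction on $\omega(N)$: it adjoins a prime $P$, isolates the $L=1$ term of \eqref{eq:InitialRelation}, applies the induction hypothesis to the lower-level $G(\cdot,M)$'s, and then shows that the resulting bracketed double sum collapses back to $F(m,NP)$, from which it solves for $G(m,NP)$. You instead verify the composition in the other order, substituting \eqref{eq:InitialRelation} directly into the right-hand side of \eqref{eq:inversion} and collapsing to $G(m,N)$ in one pass, with no induction. The computational heart is the same in both proofs: the observation that $\big(\alpha(a,m_L)\tfrac{m}{m_L}\big)_{L'}=m_{L'}$ because $(L,L')=1$ and $\alpha(a,m_L)\mid L^{\infty}$, the recombination of the pairs $(a,a')$ with $a\mid L^{\infty}$, $a'\mid L'^{\infty}$ into a single tuple $b\mid K^{\infty}$ via multiplicativity of $A$ and $\alpha$, and finally $\sum_{L\mid K}\mu(L)=\delta_{K=1}$. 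Your version is arguably cleaner, since it avoids re-invoking the inversion formula at lower levels and makes transparent that the two relations are inverse to each other; the paper's induction buys nothing extra here beyond matching the style of classical M\"obius-inversion proofs. The only points to make explicit in a write-up are the ones you already flag: the rearrangement of the (infinite) sums over $a\mid L^{\infty}$, and the normalizations $A(1,1)=\alpha(1,1)=1$ implicit in the multiplicativity hypotheses, both of which the paper also uses without comment.
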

 Lemma \ref{lemma:inversion} implies Theorem \ref{thm:PeterssonNewforms}, since \eqref{eq:InitialRelation} encompasses \eqref{eq:PeterssonOldforms}, as we now explain.  The tuple $(m_1, \dots m_d)$ in \eqref{eq:InitialRelation} takes the form $(m,n)$ in \eqref{eq:PeterssonOldforms}.
 The tuple $a$ appearing in \eqref{eq:InitialRelation} is of the form $(\ell, d_1, d_2, u, v, a, b, e_1, e_2)$, where note that all these entries divide $L^{\infty}$.  The arithmetical function $A(a,m_L)$ accounts for $\frac{1}{\nu(L)}, \frac{\ell}{\nu(\ell)^2}, \dots, \frac{1}{\nu(uv/(u,v)^2)}$, as well as all the summation conditions in \eqref{eq:PeterssonOldforms}, in which $(m, n)$ may be replaced by $(m_L, n_L)$.  Finally, we have $\alpha(a,(m_L, n_L)) = (\frac{m_L d_1}{a^2 e_1^2 (u,v)}, \frac{n_L d_2}{b^2 e_2^2 (u,v)})$, and $G(x,y,M) = \Delta_M^*(x,y)$.

\begin{proof}
 If $N=1$, then \eqref{eq:inversion} is true, by an easy calculation. 
 
 Now induct on the number of prime factors of $N$.  We replace $N$ by $NP$ with $P$ a prime (whence $(P,N) = 1$), giving
\begin{equation*}
 F(m,NP) = G(m,NP) +  \sum_{\substack{LM=NP \\ L \neq 1}} \sum_{a| L^{\infty}} A(a, m_L) G(\alpha(a, m_L) \frac{m}{m_L}, M).
\end{equation*}
Since $M$ has fewer prime factors than $NP$, we can use the induction hypothesis to give
\begin{multline*}
 F(m,NP) = G(m,NP) +  \Big[ \sum_{\substack{LM=NP \\ L \neq 1}} \sum_{a | L^{\infty}} A(a, m_L) 
 \\
  \times \sum_{CD = M} \mu(C) \sum_{b | C^{\infty}} A(b, m_C) F(\alpha(b, m_C) \frac{\alpha(a, m_L) \frac{m}{m_L}}{m_C}, D) \Big].
\end{multline*}
Here we have used that 
\begin{equation*}
 \Big( \alpha(a, m_L) \frac{m}{m_L} \Big)_C = m_C,
\end{equation*}
which follows from $(L,C) = 1$.
Next we put back $L=1$ and subtract it off again, giving
\begin{multline*}
 F(m,NP) = G(m,NP)  
 - \sum_{CD = NP} \mu(C) \sum_{b| C^{\infty}} A(b, m_C) F(\alpha(b, m_C) \frac{m}{m_C}, D) 
 \\
 + \Big[
 \sum_{\substack{LM=NP}} \sum_{a | L^{\infty}} A(a, m_L) 
  \sum_{CD = M} \mu(C) \sum_{b | C^{\infty}} A(b, m_C) F(\alpha(b, m_C) \frac{\alpha(a, m_L) \frac{m}{m_L}}{m_C}, D)  \Big].
\end{multline*}
We need to show that the term in square brackets equals $F(m,NP)$, since we can then solve for $G(m,NP)$, giving \eqref{eq:inversion}.
We have
\begin{equation*}
 [ \dots ] = \sum_{CDL = NP} \mu(C)
 \sum_{a | L^{\infty}} A(a, m_L)  \sum_{b | C^{\infty}} A(b, m_C)
 F(\alpha(b, m_C) \frac{\alpha(a, m_L) \frac{m}{m_L}}{m_C}, D) .
\end{equation*}
Using multiplicativity of $A$ and $\alpha$, and that $(C,L) = 1$, we get
\begin{equation*}
 [ \dots ] = \sum_{CDL = NP} \mu(C)
 \sum_{c | (LC)^{\infty}} A(c, m_{CL}) 
 F(\alpha(c, m_{CL}) \frac{m}{m_{CL}}, D) .
\end{equation*}
We can write this as
\begin{equation*}
 [ \dots ] = \sum_{D | NP}  \sum_{c | (NP/D)^{\infty}} A(c, m_{NP/D}) 
 F(\alpha(c, m_{NP/D}) \frac{m}{m_{NP/D}}, D) 
 \sum_{CL = NP/D} \mu(C).
\end{equation*}
The inner sum over $C$ gives $D= NP$, which simplifies as $[ \dots ] = F(m, NP)$, as desired.
 \end{proof}

\section{Hybrid formulas}
\label{section:PeterssonHybrid}
We desire a formula that is  intermediate between $\Delta_{N}(m,n)$ and $\Delta^*_N(m,n)$, in order to capture the weights appearing in Theorem \ref{thm:cubicmoment}.  See the discussion surrounding \eqref{eq:fakeNewformFormula} for motivation for this goal.

For square-free $AB$, 
define
\begin{equation}
\label{eq:DeltaTildeDefinition}
\widetilde{\Delta}_{A,B}(m,n) = c_\kappa \sum_{CD=B} \sum_{f \in H_\kappa^*(AD)} \frac{1}{\nu(C)} \frac{1}{\rho_f(C)} \frac{\lambda_f(m) \lambda_f(n)}{\langle f, f\rangle_{AD}}.
\end{equation}
Note that $\widetilde{\Delta}_{A,1}(m,n) = \Delta_{A}^*(m,n)$.  Provided $(mn,B) = 1$, we have $A_f(m,C) = \lambda_f(m)$ for $m$ coprime to $C$, and then by \eqref{eq:coprimeFormulaFromILS} and we get $\widetilde{\Delta}_{1,B}(m,n) = \Delta_{B}(m,n)$.  Hence $\widetilde{\Delta}_{A,B}(m,n)$ interpolates between $\Delta_{AB}(m,n)$ and $\Delta^*_{AB}(m,n)$, provided $(mn,B) = 1$.

Because of our application to the cubic moment problem, we are interested in the case where $qN$ is square-free, and $(mn,q) = 1$.  
In \eqref{eq:DeltakmnformulaWithRho} we substitute $N \rightarrow Nq$, and factor $L = L_N L_q$ with $L_N  \mid  N$ and $L_q  \mid  q$, giving
\begin{multline*}
 \Delta_{Nq}(m,n) = c_\kappa
\sum_{L_N | N}  \frac{1}{\nu(L_N)} 
\sum_{\substack{u|(m,L_N) \\ v | (n,L_N)}} \frac{u v}{(u, v)} \frac{\mu(\frac{u v}{(u, v)^2})}{\nu(\frac{u v}{(u, v)^2})}
\sum_{\substack{a | (\frac{m}{u}, \frac{u}{(u, v)}) \\ b | (\frac{n}{v}, \frac{v}{(u, v)})}}     
\\
\sum_{\substack{L_N L_q M=Nq \\ L_q | q}} 
  \sum_{f \in H_\kappa^*(M)} \frac{\lambda_f(\frac{m}{a^2 (u, v)})
 \lambda_f(\frac{n}{b^2 (u, v)})}{\nu(L_q) \rho_f(L_q) \rho_f(L_N) \langle f, f\rangle_M} 
 .
\end{multline*}
Here we used that $(mn, L_q) = 1$ to simplify the divisiblity conditions.  Next we use \eqref{eq:rhoinverseformula} on $\frac{1}{\rho_f(L_N)}$, and use the Hecke relation again, giving now
\begin{multline*}
\Delta_{Nq}(m,n) = 
\sum_{L_N | N} \sum_{\ell | L_N^{\infty}} \frac{\ell}{\nu(L_N) \nu(\ell)^2} 
\sum_{d_1, d_2 | \ell} c_{\ell}(d_1) c_{\ell}(d_2) 
\sum_{\substack{u|(m,L_N) \\ v | (n,L_N)}} \frac{u v }{(u, v)} \frac{\mu(\frac{u v}{(u, v)^2})}{\nu(\frac{u v}{(u, v)^2})}
\\
\sum_{\substack{a | (\frac{m}{u}, \frac{u}{(u, v)}) \\ b | (\frac{n}{v}, \frac{v}{(u, v)})}}     
\sum_{\substack{e_1 | (d_1, \frac{m}{a^2 (u,v)}) \\ e_2 | (d_2, \frac{n}{b^2 (u,v)})} }
c_\kappa
\sum_{L_q |q} 
  \sum_{f \in H_\kappa^*(\frac{N}{L_N} \frac{q}{L_q})} \frac{\lambda_f(\frac{m d_1}{a^2 e_1^2 (u, v)})
 \lambda_f(\frac{n d_2}{b^2 e_2^2 (u, v)})}{\nu(L_q) \rho_f(L_q)  \langle f, f\rangle_{\frac{Nq}{L_N L_q }}} 
 .
\end{multline*}
With $A= N/L_N$, $B = q$, $C=L_q$, and $D= q/L_q$, we can write the sum over $L_q  \mid  q$ above as $\widetilde{\Delta}_{N/L_N, q}(m', n')$, for $m'$ and $n'$ the obvious integers.  Therefore, this shows
\begin{multline*}
\Delta_{Nq}(m,n) = 
\sum_{L_N | N} \sum_{\ell | L_N^{\infty}} \frac{\ell}{\nu(L_N) \nu(\ell)^2} 
\sum_{d_1, d_2 | \ell} c_{\ell}(d_1) c_{\ell}(d_2) 
\sum_{\substack{u|(m,L_N) \\ v | (n,L_N)}} \frac{uv }{(u, v)} \frac{\mu(\frac{u v}{(u, v)^2})}{\nu(\frac{u v}{(u, v)^2})}
\\
\sum_{\substack{a | (\frac{m}{v}, \frac{u}{(u, v)}) \\ b | (\frac{n}{v}, \frac{v}{(u, v)})}} 
\sum_{\substack{e_1 | (d_1, \frac{m}{a^2 (u,v)}) \\ e_2 | (d_2, \frac{n}{b^2 (u,v)})} }
\widetilde{\Delta}_{\frac{N}{L_N}, q}\Big(
\frac{m d_1}{a^2 e_1^2 (u, v)}, 
 \frac{n d_2}{b^2 e_2^2 (u, v)}\Big).
\end{multline*}

At this point, we replace the condition $L_N  \mid  N$ by $LM = N$ (i.e. rename $L_N$ by just $L$, and then call $M$ the complementary divisor).  This gives
\begin{multline}
\label{eq:DeltaTildeRelationship}
\Delta_{Nq}(m,n) = 
\sum_{L M =  N} \sum_{\ell | L^{\infty}} \frac{\ell}{\nu(L) \nu(\ell)^2} 
\sum_{d_1, d_2 | \ell} c_{\ell}(d_1) c_{\ell}(d_2) 
\sum_{\substack{u|(m,L) \\ v | (n,L)}} \frac{uv }{(u, v)} \frac{\mu(\frac{u v}{(u, v)^2})}{\nu(\frac{u v}{(u, v)^2})}
\\
\sum_{\substack{a | (\frac{m}{u}, \frac{u}{(u, v)}) \\ b | (\frac{n}{v}, \frac{v}{(u, v)})}} 
\sum_{\substack{e_1 | (d_1, \frac{m}{a^2 (u,v)}) \\ e_2 | (d_2, \frac{n}{b^2 (u,v)})} }
\widetilde{\Delta}_{M, q}\Big(
\frac{m d_1}{a^2 e_1^2 (u, v)}, 
 \frac{n d_2}{b^2 e_2^2 (u, v)}\Big).
\end{multline}

Now we fix $q$, and suppress it in the following notation.  Set $F(m,n,N) = \Delta_{Nq}(m,n)$, and likewise $G(m,n,N) = \widetilde{\Delta}_{N, q}(m,n)$.  Then the relation \eqref{eq:DeltaTildeRelationship} is essentially the same formula as \eqref{eq:PeterssonOldforms}, and, more precisely, is encompassed by \eqref{eq:InitialRelation}.  By Lemma \ref{lemma:inversion}, we therefore have (recall $(mn,q) = 1$, and $Nq$ is square-free)
\begin{multline}
\widetilde{\Delta}_{N,q}(m,n) = 
\sum_{L M =  N} \frac{\mu(L)}{\nu(L)} \sum_{\ell | L^{\infty}} \frac{\ell}{ \nu(\ell)^2} 
\sum_{d_1, d_2 | \ell} c_{\ell}(d_1) c_{\ell}(d_2) 
\sum_{\substack{u|(m,L) \\ v | (n,L)}} \frac{u v }{(u, v)} \frac{\mu(\frac{u v}{(u, v)^2})}{\nu(\frac{u v}{(u, v)^2})}
\\
\sum_{\substack{a | (\frac{m}{u}, \frac{u}{(u, v)}) \\ b | (\frac{n}{v}, \frac{v}{(u, v)})}} 
\sum_{\substack{e_1 | (d_1, \frac{m}{a^2 (u,v)}) \\ e_2 | (d_2, \frac{n}{b^2 (u,v)})} }
\Delta_{M q}\Big(
\frac{m d_1}{a^2 e_1^2 (u, v)}, 
 \frac{n d_2}{b^2 e_2^2 (u, v)}\Big).
\end{multline}

\section{Formulas and estimates with Chebyshev coefficients}
\label{section:Chebyshev}

We begin with a combinatorial evaluation of
$c_{j,n}$.  From \eqref{eq:chebyshevCoefficientIntegralFormula}, combined with the formula $U_j(\cos \theta) = \frac{\sin((j+1) \theta)}{\sin \theta}$, we
have
\begin{equation}
 c_{j,n} = \tfrac12 \int_{-\pi}^{\pi} \sin((j+1)\theta) (2 \cos(\theta))^n \tfrac{2}{\pi} \sin \theta d \theta.
\end{equation}
Writing everything in terms of $e^{i\theta}$, we get
\begin{equation*}
 c_{j,n} = \frac{1}{\pi}  \frac{1}{(2i)^2} \int_{-\pi}^{\pi} \Big(\sum_{r=0}^{n} \binom{n}{r} e^{i \theta(n-2r)} ) (e^{i(j+2) \theta} - e^{ij\theta} - e^{-ij \theta} + e^{-i(j+2) \theta}\Big) d \theta.
\end{equation*}
At this point it is clear that $c_{j,n} = 0$ if $j \not \equiv n \pmod{2}$, so assume $j \equiv n \pmod{2}$.  We also see that $c_{j,n} = 0$ if $j > n$.  For $j=n$, it is easy to check that $c_{n,n} = 1$, since the only values of $r$ for which the integral does not vanish are $r=0$, and $r=n$.  If $j \leq n-2$, then we deduce that
\begin{equation}
\label{eq:cjnformula} 
 c_{j,n} = \binom{n}{\frac{n+j}{2}} - \binom{n}{\frac{n+j}{2} + 1},
\end{equation}
which in fact agrees with $c_{n,n}$ too, since $\binom{n}{n+1} = 0$.  One may find this sequence of Chebyshev coefficients in the OEIS \cite{OEIS} which thereby leads to interesting connections.  
For instance, the list (where we omit terms with $j \not \equiv n \pmod{2}$) $$c_{0,0}, c_{1,1}, c_{0,2}, c_{2,2}, c_{1,3}, c_{3,3}, c_{0,4}, c_{2,4}, c_{4,4}, \dots = 1,1,1,1,2,1,2,3,1,5,4,1, 5\dots$$ is the same as Catalan's triangle ordered along diagonals in reverse order.

From \eqref{eq:cjnformula}, we deduce $c_{j,n} \geq 0$ for all $j,n$, and we have
\begin{equation}
\label{eq:cjnsumTelescoping}
 \sum_{j=0}^{n} c_{j,n} = 
 \begin{cases}
   \binom{n}{n/2}, \qquad &n \text{ even}, \\
   \binom{n}{(n+1)/2}, \qquad &n \text{ odd},
 \end{cases}
\end{equation}
since the sum telescopes.  Let $\delta \in \{0,1\}$, $\delta \equiv n \pmod{2}$.
Note that Stirling's formula gives
\begin{equation}
\label{eq:binomialStirling}
 \binom{n}{\frac{n+\delta}{2}} \sim \frac{2^n}{\sqrt{\pi n/2}}. 
\end{equation}

\begin{mylemma}
\label{lemma:ChebyshevBound}
Let $c_{\ell}(d)$ be as defined in Section \ref{section:FourierManipulations}, and suppose $\gamma \geq 0$.  Then
\begin{equation*}
\sum_{d|\ell} c_{\ell}(d) d^{\gamma} \leq \prod_{p^n || \ell} (p^{-\gamma} + p^{\gamma})^{n}.
\end{equation*}
\end{mylemma}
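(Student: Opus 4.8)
The plan is to reduce the inequality to a single Euler factor using the joint multiplicativity of $c_\ell(d)$, and then to recognize the resulting one-variable inequality as an immediate consequence of the generating-function definition of the Chebyshev coefficients together with their nonnegativity.

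First, since $c_\ell(d) = \prod_{p^j || d,\ p^n || \ell} c_{j,n}$ is jointly multiplicative in the pair $(\ell,d)$ and $d \mapsto d^\gamma$ is completely multiplicative, the sum on the left factors as
\begin{equation*}
\sum_{d \mid \ell} c_\ell(d)\, d^\gamma = \prod_{p^n || \ell} \Big( \sum_{j=0}^n c_{j,n}\, p^{\gamma j} \Big),
\end{equation*}
where one uses that the factor attached to a prime $p \mid \ell$ with $p \nmid d$ is simply $c_{0,n}$. Hence it suffices to prove, for each prime $p$ and each integer $n \geq 0$, the local bound $\sum_{j=0}^n c_{j,n}\, p^{\gamma j} \leq (p^{-\gamma} + p^{\gamma})^n$.

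To establish this, put $t = \gamma \log p$, so that $t \geq 0$ because $\gamma \geq 0$; then $p^{\gamma j} = e^{jt}$ and $p^{-\gamma} + p^{\gamma} = 2\cosh t$. Substituting $x = 2\cosh t$ into the defining relation $x^n = \sum_{j=0}^n c_{j,n} U_j(x/2)$ gives $(p^{-\gamma}+p^{\gamma})^n = \sum_{j=0}^n c_{j,n} U_j(\cosh t)$. Since $c_{j,n} \geq 0$ by \eqref{eq:cjnformula}, it is enough to check $U_j(\cosh t) \geq e^{jt}$ termwise. Specializing $y = \cosh t$ in the generating function $(1 - 2yx + x^2)^{-1} = \sum_{j \geq 0} U_j(y) x^j$ factors the quadratic as $(1 - e^t x)(1 - e^{-t} x)$, whence $U_j(\cosh t) = \sum_{k=0}^j e^{(j-2k)t}$; for $t \geq 0$ this is a sum of $j+1$ nonnegative terms whose largest summand is $e^{jt}$, so $U_j(\cosh t) \geq e^{jt}$ as desired (the case $t = 0$ being trivial since $U_j(1) = j+1 \geq 1$). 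Reassembling the product over $p^n || \ell$ yields the claim.

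There is essentially no obstacle: the only ingredients are the multiplicativity of $c_\ell(d)$ from Section~\ref{section:FourierManipulations}, the positivity $c_{j,n} \geq 0$ coming from \eqref{eq:cjnformula}, and the elementary identity $U_j(\cosh t) = \sum_{k=0}^j e^{(j-2k)t}$. The only mildly delicate point is the bookkeeping in the multiplicative reduction — verifying that the expanded product over primes reproduces exactly $\sum_{d \mid \ell} c_\ell(d)\, d^\gamma$ — which is routine.
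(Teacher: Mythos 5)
Your proof is correct, but the key step is handled by a genuinely different argument than the one in the paper. Both proofs begin with the same Euler-product reduction to the local inequality $\sum_{j=0}^{n} c_{j,n}\, x^{j} \leq (x^{-1}+x)^{n}$. The paper then invokes the explicit formula \eqref{eq:cjnformula} to bound $c_{j,n} \leq \binom{n}{\frac{n+j}{2}}$ and observes that the resulting sum is exactly the positive-power part of the binomial expansion of $(x^{-1}+x)^n$, the negative-power terms being discarded as nonnegative. You instead evaluate the \emph{defining} relation $x^n = \sum_j c_{j,n} U_j(x/2)$ at $x = p^{\gamma}+p^{-\gamma} = 2\cosh t$, so that the right-hand side $(p^{-\gamma}+p^{\gamma})^n$ appears as an exact identity $\sum_j c_{j,n} U_j(\cosh t)$, and the only loss comes from replacing $U_j(\cosh t) = \sum_{k=0}^{j} e^{(j-2k)t}$ by its largest summand $e^{jt} = p^{\gamma j}$; this requires only $c_{j,n} \geq 0$. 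Your route is arguably cleaner: it makes transparent exactly where the inequality loses (in dropping the $e^{(j-2k)t}$ terms with $k \geq 1$, which is the analogue of the paper dropping the negative powers of $x$), and it uses the binomial formula \eqref{eq:cjnformula} only through the positivity of $c_{j,n}$, rather than through the quantitative bound $c_{j,n} \leq \binom{n}{\frac{n+j}{2}}$. The paper's version, on the other hand, stays entirely within elementary binomial manipulations and does not need the hyperbolic evaluation of $U_j$. Both are complete; the multiplicative bookkeeping you flag at the end (including the $c_{0,n}$ factors at primes $p \mid \ell$ with $p \nmid d$) matches the paper's convention and is handled correctly.
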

Remark.  For $\gamma=0$, this bound is slightly worse than that implied by
\eqref{eq:cjnsumTelescoping}, in view of \eqref{eq:binomialStirling}.
\begin{proof}
We have 
\begin{equation}
\sum_{d|\ell} c_{\ell}(d) d^{\gamma} = \prod_{p^n || \ell} \sum_{j=0}^{n} p^{\gamma j} c_{j,n},
\end{equation}
so it suffices to show for $x > 0$ that 
\begin{equation}
\label{eq:cjnxupperbound}
\sum_{j=0}^{n} c_{j,n} x^j \leq (x^{-1} + x)^n.
\end{equation}

From \eqref{eq:cjnformula}, we obviously have that
\begin{equation}
\label{eq:cjnupperbound}
 c_{j,n} 
\leq  \binom{n}{\frac{n+j}{2}}.
\end{equation}
From this, we shall deduce \eqref{eq:cjnxupperbound},
as we presently explain.  On the one hand, we have from \eqref{eq:cjnupperbound} that
\begin{equation}
\label{eq:cjnxupperbound2}
\sum_{j=0}^{n} c_{j,n} x^j \leq x^n + \binom{n}{n-1} x^{n-2} + \dots + \binom{n}{\frac{n+\delta}{2}} x^{\delta},
\end{equation}
and on the other hand, we have
\begin{equation}
\label{eq:binomial}
(x^{-1} + x)^n = x^n + \binom{n}{n-1} x^{n-2} + \dots + \binom{n}{\frac{n+\delta}{2}} x^{\delta} + \binom{n}{\frac{n+\delta}{2}-1} x^{\delta-2}
+ \dots + 
x^{-n}.
\end{equation}
In words, the positive powers of $x$ appearing in \eqref{eq:binomial} precisely match the upper bound on \eqref{eq:cjnxupperbound2}.
 \end{proof}

For later use, we require an estimate for the following expression:
\begin{equation}
\label{eq:SLYdefinition}
S(L,Y) := \sum_{\substack{\ell | L^{\infty} \\ \ell \leq Y}} \frac{\ell}{\nu(\ell)^2} (\sum_{d | \ell} c_{\ell}(d) d^{1/2})^2. 
\end{equation}
\begin{mylemma}
\label{lemma:SLYbound}
We have
\begin{equation}
\label{eq:SLYbound}
S(L,Y) 
\ll_{\varepsilon}  Y^{\varepsilon} \tau(L).
\end{equation}
\end{mylemma}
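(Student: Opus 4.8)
The plan is to bound $S(L,Y)$ by first controlling the inner sum $\sum_{d\mid\ell} c_\ell(d) d^{1/2}$ via Lemma~\ref{lemma:ChebyshevBound} with $\gamma = 1/2$, and then estimating the resulting sum over $\ell\mid L^\infty$ with the truncation $\ell\le Y$. Applying Lemma~\ref{lemma:ChebyshevBound} gives
\[
\Big(\sum_{d\mid\ell} c_\ell(d) d^{1/2}\Big)^2 \le \prod_{p^n\|\ell} (p^{-1/2}+p^{1/2})^{2n} = \prod_{p^n\|\ell} (p^{1/2}+p^{-1/2})^{2n},
\]
and $(p^{1/2}+p^{-1/2})^2 = p + 2 + p^{-1} \le 4p$ for $p\ge 2$, so this is at most $\prod_{p^n\|\ell} (4p)^n = 4^{\Omega(\ell)} \ell$, where $\Omega(\ell)$ counts prime factors with multiplicity. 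Combining with the factor $\ell/\nu(\ell)^2$ and using $\nu(\ell)^2 \ge \ell^2$ (since $\nu(p) = p+1 > p$), the $\ell$-th summand is $\ll 4^{\Omega(\ell)} \ell^2/\nu(\ell)^2 \le 4^{\Omega(\ell)}$.

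So I would reduce to showing $\sum_{\ell\mid L^\infty,\ \ell\le Y} 4^{\Omega(\ell)} \ll_\varepsilon Y^\varepsilon \tau(L)$. The point of the truncation $\ell\le Y$ is precisely that without it this sum diverges (the term $4^{\Omega(p^n)} = 4^n$ grows). With the truncation, I would factor $\ell = \prod_{p\mid L} p^{n_p}$ and note that the number of prime factors of $L$ is $\omega(L)$; for each such prime the exponent $n_p$ ranges over $0\le n_p \le \log_2 Y$ (since $p^{n_p}\le \ell\le Y$ forces $n_p \le \log_2 Y$). Then $4^{\Omega(\ell)} = \prod_p 4^{n_p}$, and dropping the joint constraint $\ell\le Y$ in favor of the per-prime constraint $p^{n_p}\le Y$ (which only enlarges the sum) gives
\[
\sum_{\substack{\ell\mid L^\infty\\ \ell\le Y}} 4^{\Omega(\ell)} \le \prod_{p\mid L} \sum_{0\le n\le \log_2 Y} 4^n \le \prod_{p\mid L} 4^{\log_2 Y + 1} = 4^{\omega(L)} Y^{2\omega(L)}.
\]
This is not good enough as stated because of the $Y^{2\omega(L)}$; instead I would keep the joint constraint and argue more carefully: write $4^{\Omega(\ell)} \le \tau_3(\ell)\,\tau(\ell)$ or simply use $4^{\Omega(\ell)}\le d_k(\ell)$ for suitable $k$, or better, observe $4^{\Omega(\ell)}\le \ell^\varepsilon \cdot C(\varepsilon)^{\omega(L)}$ when $\ell$ is composed only of primes dividing $L$ — but this still needs care. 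The cleanest route: since $4 = 2^2$, we have $4^{\Omega(\ell)} = 2^{2\Omega(\ell)} = \tau(\ell^2)\ll \tau(\ell)^2$, so it suffices to bound $\sum_{\ell\mid L^\infty,\ \ell\le Y}\tau(\ell)^2$; writing $\tau(\ell)^2 \le \ell^{\varepsilon}\cdot(\text{bounded per prime})$ and using that each prime contributes a factor $\sum_{n\ge 0}\tau(p^n)^2 p^{-n\delta}$ for any $\delta>0$ which converges to something like $(1-p^{-\delta})^{-3}$, bounded by an absolute constant for $p\ge 2$...

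The genuine obstacle is handling the interaction between the truncation $\ell\le Y$ and the multiplicativity: one cannot simply factor the sum into an Euler product because of the constraint $\ell\le Y$, yet without that constraint the Euler product diverges. The right fix is a Rankin--Tauberian trick: for any $\delta>0$,
\[
S(L,Y) \le Y^\delta \sum_{\ell\mid L^\infty} \frac{4^{\Omega(\ell)}}{\ell^{2\delta}} \cdot \frac{\ell^2}{\nu(\ell)^2} \le Y^\delta \prod_{p\mid L}\Big(\sum_{n\ge 0} \frac{4^n}{p^{2n\delta}}\Big) = Y^\delta \prod_{p\mid L}\frac{1}{1 - 4 p^{-2\delta}},
\]
valid once $4p^{-2\delta}<1$, i.e. $p^{2\delta}>4$; for the finitely many primes $p$ with $p^{2\delta}\le 4$ one bounds their local factors trivially using $\ell\le Y$ (they contribute at most $Y^{O_\delta(1)}$ collectively, but there are $O_\delta(1)$ of them, so this is fine after choosing $\delta$ small relative to $\varepsilon$). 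For the remaining primes, $1 - 4p^{-2\delta}\ge 1 - 4\cdot 2^{-2\delta}$ is bounded below, so the product over $p\mid L$ of the reciprocals is $\ll C_\delta^{\omega(L)} \ll_\varepsilon L^\varepsilon \ll \tau(L)^{O(1)}$; absorbing everything and taking $\delta = \delta(\varepsilon)$ small enough yields $S(L,Y)\ll_\varepsilon Y^\varepsilon \tau(L)$, where the final $\tau(L)$ (rather than a power) comes from being slightly more economical — e.g. using $\nu(\ell)^2 \ge \ell^2$ more aggressively, or noting $4^{\Omega(\ell)}\ell^2/\nu(\ell)^2$ has a convergent Euler product over \emph{all} primes once divided by $\ell^{2\delta}$ with the local factor at $p$ being $1 + O(p^{-1})$ for large $p$, so the product over $p\mid L$ telescopes against $\tau(L) = 2^{\omega(L)}$. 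I would write it out with this Rankin shift as the central device, treating the small primes separately, and keeping track of the $\omega(L)$ versus $\tau(L)$ bookkeeping at the end.
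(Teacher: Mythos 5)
Your opening reduction loses too much, and the loss is fatal rather than cosmetic. The whole point of the truncated sum (flagged in the remark that $\frac{p(p^{-1/2}+p^{1/2})^2}{(1+p)^2}=1$) is the exact identity $p\,(p^{1/2}+p^{-1/2})^2=(p+1)^2$: combining Lemma \ref{lemma:ChebyshevBound} with $\gamma=1/2$ with the factor $\ell/\nu(\ell)^2$ gives, \emph{termwise},
\begin{equation*}
\frac{\ell}{\nu(\ell)^2}\Big(\sum_{d\mid\ell}c_\ell(d)d^{1/2}\Big)^2\;\le\;\prod_{p^n\|\ell}\Big(\frac{p\,(p^{-1/2}+p^{1/2})^2}{(p+1)^2}\Big)^{\!n}=1,
\end{equation*}
after which a single Rankin step $1\le (Y/\ell)^{\varepsilon}$ and the estimate $\prod_{p\mid L}(1-p^{-\varepsilon})^{-1}\le C(\varepsilon)\tau(L)$ finish the proof. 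By instead bounding $(p^{1/2}+p^{-1/2})^2\le 4p$ and $\nu(\ell)^2\ge\ell^2$ you replace the summand bound $1$ by $4^{\Omega(\ell)}$, which can be as large as $\ell^2$. The resulting intermediate quantity $\sum_{\ell\mid L^{\infty},\,\ell\le Y}4^{\Omega(\ell)}$ is genuinely \emph{not} $\ll Y^{\varepsilon}\tau(L)$: already for $L=2$ the single term $\ell=2^{\lfloor\log_2 Y\rfloor}$ contributes about $Y^{2}$. So no amount of care downstream can recover the lemma from this starting point.

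Your proposed repair does not work either. In the Rankin product $\prod_{p\mid L}(1-4p^{-2\delta})^{-1}$ the ``finitely many'' exceptional primes with $p^{2\delta}\le 4$ each contribute a local factor of size roughly $Y^{2-2\delta}$ (again, $p=2$ alone gives $\sum_{2^n\le Y}4^n2^{-2n\delta}\asymp Y^{2-2\delta}$), and this gets \emph{worse}, not better, as $\delta\to 0$; counting how many such primes there are is irrelevant when each one costs a fixed positive power of $Y$. Your fallback claim that $4^{\Omega(\ell)}\ell^2/\nu(\ell)^2$ has local factor $1+O(p^{-1})$ is also false: the local coefficient is $4p^2/(p+1)^2\in[16/9,4)$, bounded away from $1$, so that Euler product diverges at every prime once $\delta$ is small. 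The missing ingredient throughout is that the true local coefficient $p(p^{1/2}+p^{-1/2})^2/(p+1)^2$ equals exactly $1$, and you must keep $\nu(\ell)^2=\prod(p+1)^{2n}$ and $(p^{1/2}+p^{-1/2})^{2n}$ together rather than bounding them separately.
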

\begin{proof}
Without the restriction $\ell \leq Y$, the estimate in Lemma \ref{lemma:ChebyshevBound} would barely fail to show the sum converges, since
\begin{equation*}
\frac{p(p^{-1/2}+p^{1/2})^2}{(1+p)^2} = 1.
\end{equation*}
However, using Rankin's trick and Lemma \ref{lemma:ChebyshevBound} we obtain
\begin{equation*}
S(L,Y) 
\leq \sum_{\ell | L^{\infty}} \Big(\frac{Y}{\ell}\Big)^{\varepsilon}
= 
Y^{ \varepsilon} \prod_{p|L} (1-p^{-\varepsilon})^{-1} 
\leq C(\varepsilon) Y^{\varepsilon} \tau(L),
\end{equation*}
where we have used the following:
\begin{equation}
\label{eq:miscellaneousproductOfPrimeDivisorsOfL}
 \prod_{p|L} (1-p^{-\varepsilon})^{-1} \leq \Big(\prod_{\substack{p|L \\ p^{\varepsilon} \geq 2}} 2 \Big) \Big(  \prod_{\substack{p|L \\ p^{\varepsilon} < 2}} (1-p^{-\varepsilon})^{-1} \Big) \leq \tau(L) C(\varepsilon),
\end{equation}
where $C(\varepsilon) = \prod_{\substack{p: p^{\varepsilon} < 2}} (1-p^{-\varepsilon})^{-1}$.
 \end{proof}

 We will additionally need the following bound. 
\begin{mylemma}
\label{lemma:cnubound}
 Let $c_{\ell}(d)$ be as defined in Section \ref{section:FourierManipulations}.  Then
 \begin{equation}
 \label{eq:celldsumbound}
 \frac{\sum_{d|\ell} c_{\ell}(d)}{\nu(\ell)} \leq \ell^{-\frac{\log(3/2)}{\log 2}} = \ell^{-0.5849\dots}.
\end{equation}
\end{mylemma}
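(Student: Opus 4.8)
The plan is to reduce the claimed inequality to a single elementary inequality for each prime, exploiting that both sides factor multiplicatively over the prime powers dividing $\ell$.

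First I would record the relevant multiplicativity. Since $c_{\ell}(d) = \prod_{p^j\|d,\ p^n\|\ell} c_{j,n}$ is jointly multiplicative, we have $\sum_{d\mid\ell} c_{\ell}(d) = \prod_{p^n\|\ell} \sum_{j=0}^{n} c_{j,n}$; and since $\nu$ is completely multiplicative with $\nu(p)=p+1$, we have $\nu(\ell) = \prod_{p^n\|\ell}(p+1)^n$. Hence
\[
\frac{\sum_{d\mid\ell} c_{\ell}(d)}{\nu(\ell)} = \prod_{p^n\|\ell} \frac{\sum_{j=0}^{n} c_{j,n}}{(p+1)^n}.
\]
By Lemma \ref{lemma:ChebyshevBound} with $\gamma = 0$ (equivalently, with a sharper constant, by the telescoping identity \eqref{eq:cjnsumTelescoping}), we have $\sum_{j=0}^{n} c_{j,n} \le 2^n$. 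Writing $c = \frac{\log(3/2)}{\log 2}$ and using $\ell^{-c} = \prod_{p^n\|\ell} p^{-cn}$, it therefore suffices to prove, for every prime $p$, the single-prime inequality $\frac{2}{p+1} \le p^{-c}$, i.e.\ $2p^{c} \le p+1$, i.e.\ $c \le g(p)$ where $g(x) := \dfrac{\log\!\big((x+1)/2\big)}{\log x}$.

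Next I would verify $c \le g(p)$. At $p = 2$ one has $g(2) = \frac{\log(3/2)}{\log 2} = c$, so this term is exactly tight (which is where the constant $0.5849\ldots$ comes from). For $p \ge 3$ it suffices to show $g$ is nondecreasing on $[2,\infty)$. The sign of $g'(x)$ is that of $h(x) := x\log x - (x+1)\log(x+1) + (x+1)\log 2$; one computes $h'(x) = \log\frac{2x}{x+1} \ge 0$ for $x \ge 1$, and $h(2) = 5\log 2 - 3\log 3 > 0$, so $h(x) > 0$ on $[2,\infty)$ and $g$ is increasing there. Thus $g(p) \ge g(2) = c$ for every prime $p$, giving $\frac{2}{p+1} \le p^{-c}$; raising to the $n$th power and multiplying over $p^n\|\ell$ yields $\frac{\sum_{d\mid\ell} c_{\ell}(d)}{\nu(\ell)} \le \prod_{p^n\|\ell} p^{-cn} = \ell^{-c}$, as claimed.

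I expect no genuine obstacle here: the only points requiring care are the bookkeeping in the multiplicative reduction and the elementary monotonicity of $g$; once the problem is reduced to the per-prime inequality $2p^c \le p+1$, everything is routine calculus.
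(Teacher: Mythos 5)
Your proof is correct and follows essentially the same route as the paper: both reduce via multiplicativity and Lemma \ref{lemma:ChebyshevBound} (with $\gamma=0$) to the per-prime inequality $2p^{c}\le p+1$ with $c=\tfrac{\log(3/2)}{\log 2}$, which is tight at $p=2$ and verified for larger $p$ by an elementary monotonicity argument (the paper phrases it as $f_\delta(x)=\tfrac{2x^\delta}{x+1}$ being decreasing with $f_\delta(2)\le 1$, which is equivalent to your statement that $g$ is increasing with $g(2)=c$).
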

\begin{proof}
By Lemma \ref{lemma:ChebyshevBound}, we have
 \begin{equation}
 \label{eq:miscellaneousproduct}
 \ell^{\frac{\log(3/2)}{\log 2}} \frac{\sum_{d|\ell} c_{\ell}(d)}{\nu(\ell)} \leq \prod_{p^n || \ell}  \Big( \frac{2}{(p+1)} p^{\frac{ \log(3/2)}{\log 2}} \Big)^n.
 \end{equation}
Let
\begin{equation*}
 f_{\delta}(x) = \frac{2 x^{\delta}}{x+1},
\end{equation*}
where $x \geq 2$ and $0 < \delta < 1$.  If $\delta = \frac{\log(3/2)}{\log{2}}$ is such that $f_{\delta}(x)$ is decreasing for $x \geq 2$, and $f_{\delta}(2) \leq 1$, then this will show that the product on the right hand side of \eqref{eq:miscellaneousproduct} is $\leq 1$, which suffices to prove the desired bound.
It is easy to check with basic calculus that the desired properties hold for $f_{\delta}(x)$.
 \end{proof}
Remark.  The exponent occurring in \eqref{eq:celldsumbound} is mainly controlled by the powers of $2$ dividing $\ell$.  If $\ell = 2^n$, and $n$ is even, then in fact $\sum_{d | \ell} c_{\ell}(d) = \binom{n}{n/2} \asymp 2^n/n^{1/2}$, while $\nu(\ell) = 3^n$, so if $\ell = 2^n$ then \eqref{eq:celldsumbound} is sharp up to the factor $n^{-1/2} = (\log_2{\ell})^{-1/2}$.  If $\ell$ has no small prime divisors  then the exponent can be improved.

\begin{mycoro}
\label{coro:ellsumTailBound}
 Let $\gamma_0 = \frac{\log(3/2)}{\log 2} - \frac12 =0.0849625\dots$, and suppose $\varepsilon > 0$ is small.  Then
 \begin{equation}
  \sum_{\substack{ \ell |L^{\infty} \\ \ell > Y}} \frac{\ell^{1+\varepsilon}}{\nu(\ell)^2} (\sum_{d | \ell} c_{\ell}(d))^2 \ll_{\varepsilon} Y^{-2\gamma_0 + 2\varepsilon} \tau(L).
 \end{equation}
\end{mycoro}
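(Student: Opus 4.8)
The plan is to bound the summand pointwise using the sharp estimate of Lemma~\ref{lemma:cnubound} and then sum the resulting series by Rankin's trick, exactly as in the proof of Lemma~\ref{lemma:SLYbound}.

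First, squaring the bound of Lemma~\ref{lemma:cnubound} gives $\big(\sum_{d|\ell} c_\ell(d)\big)^2 \leq \nu(\ell)^2\, \ell^{-2\log(3/2)/\log 2}$, so that
\begin{equation*}
\frac{\ell^{1+\varepsilon}}{\nu(\ell)^2}\Big(\sum_{d|\ell} c_\ell(d)\Big)^2 \;\leq\; \ell^{\,1+\varepsilon-2\log(3/2)/\log 2} \;=\; \ell^{-2\gamma_0+\varepsilon},
\end{equation*}
the last equality being just the definition $\gamma_0 = \log(3/2)/\log 2 - \tfrac12$.

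Next, I would take $\varepsilon$ small enough that $-2\gamma_0+2\varepsilon < 0$ (any $\varepsilon < \gamma_0 = 0.0849\dots$ suffices). Then for $\ell > Y$ one has $\ell^{-2\gamma_0+\varepsilon} = \ell^{-2\gamma_0+2\varepsilon}\ell^{-\varepsilon} \leq Y^{-2\gamma_0+2\varepsilon}\ell^{-\varepsilon}$, whence
\begin{equation*}
\sum_{\substack{\ell|L^{\infty} \\ \ell > Y}} \frac{\ell^{1+\varepsilon}}{\nu(\ell)^2}\Big(\sum_{d|\ell} c_\ell(d)\Big)^2 \;\leq\; Y^{-2\gamma_0+2\varepsilon}\sum_{\ell|L^{\infty}} \ell^{-\varepsilon} \;=\; Y^{-2\gamma_0+2\varepsilon}\prod_{p|L}\big(1-p^{-\varepsilon}\big)^{-1},
\end{equation*}
and the Euler product is $\ll_{\varepsilon}\tau(L)$ by \eqref{eq:miscellaneousproductOfPrimeDivisorsOfL}, which gives the claim.

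I do not expect any real obstacle here: the corollary is essentially immediate from Lemma~\ref{lemma:cnubound}. The only point requiring care is that $1-2\log(3/2)/\log 2 = -2\gamma_0$ exactly, so the factor $\ell/\nu(\ell)^2$ against $\big(\sum_{d|\ell}c_\ell(d)\big)^2 \le \nu(\ell)^2 \ell^{-2\log(3/2)/\log 2}$ produces precisely $\ell^{-2\gamma_0}$ with nothing to spare; this is why the spare $\ell^{\varepsilon}$ must be converted, via Rankin's trick on the tail $\ell>Y$, into the $Y^{-2\gamma_0+2\varepsilon}$ saving, rather than attempting to sum the (barely divergent) series directly.
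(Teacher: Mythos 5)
Your proof is correct and is essentially identical to the paper's own argument: both use Lemma~\ref{lemma:cnubound} to bound the summand pointwise by $\ell^{-2\gamma_0+\varepsilon}$, then apply Rankin's trick on the tail $\ell>Y$ together with \eqref{eq:miscellaneousproductOfPrimeDivisorsOfL} to obtain the factor $Y^{-2\gamma_0+2\varepsilon}\tau(L)$. No changes are needed.
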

\begin{proof}
By Lemma \ref{lemma:cnubound}, we have
\begin{equation*}
  \frac{\ell^{1+\varepsilon}}{\nu(\ell)^2} (\sum_{d | \ell} c_{\ell}(d))^2 \leq \ell^{\varepsilon - 2 \gamma_0}.
\end{equation*}
Then, we have by Rankin's trick and \eqref{eq:miscellaneousproductOfPrimeDivisorsOfL}, that
\begin{equation*}
 \sum_{\substack{\ell | L^{\infty} \\ \ell > Y}} \ell^{-2\gamma_0+\varepsilon} \leq Y^{-2\gamma_0 + 2\varepsilon} \sum_{\substack{\ell | L^{\infty}}} \ell^{-\varepsilon} = Y^{-2\gamma_0 + 2\varepsilon} \prod_{p | L} (1-p^{-\varepsilon})^{-1} \\ \ll_{\varepsilon} Y^{-2\gamma_0+2\varepsilon} \tau(L). 
\end{equation*}
\end{proof}

\section{Approximate Petersson formulas}
\label{section:PeterssonApproximate}
Our main Petersson formula, \eqref{eq:DeltakN*Formula}, has a technical problem arising from the fact that the sum over $\ell  \mid  L^{\infty}$ is not a finite sum.  
Iwaniec, Luo, and Sarnak encountered a similar difficulty in \cite{ILS}.  The idea is to truncate the sum at some large parameter $Y$, and estimate the tail trivially.  

To this end, we begin with some simple bounds.  
Throughout this section we assume the weight $\kappa$ is fixed, and do not display any $\kappa$-dependence in the error terms.
First, we claim the crude bound
\begin{equation}
\label{eq:DeltaNCrudeBound}
 |\Delta_N(m,n)| \ll_{\kappa} (m,N)^{1/2} (n,N)^{1/2} \tau_3(m) \tau_3(n),
\end{equation}
for fixed weight $\kappa$.
\begin{proof}
We use \eqref{eq:PeterssonWithPhis}.  Using \eqref{eq:fphiFourierCoefficients} and the Deligne bound, we have
\begin{equation*}
 |\lambda_{f_{\phi}}(m)| \leq \sum_{u|(m,L)} u^{1/2} \tau(m/u) \leq (m,L)^{1/2} \tau_3(m) \leq (m,N)^{1/2} \tau_3(m).
\end{equation*}
Therefore, by the fact that $\lambda_{f_{\phi}}(1) = \lambda_f(1) = 1$, we have
\begin{equation*}
 |\Delta_N(m,n)| \leq (m,N)^{1/2} (n,N)^{1/2} \tau_3(m) \tau_3(n) \Delta_N(1,1).
\end{equation*}
One can then apply the Kloosterman sum formula for $\Delta_N(1,1)$ to show (e.g.~see \cite[Corollary 2.3]{ILS})
\begin{equation*}
 \Delta_N(1,1) = 1 + O\Big(\frac{\tau(N)}{N^{3/2}}\Big) \ll 1. 
\end{equation*}
\end{proof}

Now we state an approximate version of Theorem \ref{thm:PeterssonNewforms}.
\begin{mytheo}
\label{thm:PeterssonNewformsTruncatedVersion}
Let $\gamma_0 = \frac{\log(3/2)}{\log 2} - \frac12 =0.0849625\dots$, and suppose $\varepsilon > 0$ is small. We have
  \begin{multline}
 \label{eq:DeltakN*Formula2}
 \Delta_{N}^*(m,n) = \sum_{LM=N}\frac{\mu(L)}{\nu(L)} \sum_{\substack{\ell | L^{\infty} \\ \ell \leq Y}}  \frac{\ell}{\nu(\ell)^2 } \sum_{d_1, d_2 | \ell} c_{\ell}(d_1) c_{\ell}(d_2) 
 \sum_{\substack{u|(m,L) \\ v | (n,L)}} \frac{u v }{(u, v)} \frac{\mu(\frac{u v}{(u, v)^2})}{\nu(\frac{u v}{(u, v)^2})}
 \\
\sum_{\substack{a | (\frac{m}{u}, \frac{u}{(u, v)}) \\ b | (\frac{n}{v}, \frac{v}{(u, v)})}}   
\sum_{\substack{e_1 | (d_1, \frac{m}{a^2 (u,v)}) \\ e_2 | (d_2, \frac{n}{b^2 (u,v)})} }
 \Delta_{M}\Big(
\frac{m d_1}{a^2 e_1^2 (u, v)}, 
 \frac{n d_2}{b^2 e_2^2 (u, v)}\Big)
 + O((mnNY)^{\varepsilon} N Y^{-2\gamma_0}).
\end{multline}
\end{mytheo}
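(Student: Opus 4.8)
The plan is to read off \eqref{eq:DeltakN*Formula2} from the exact identity \eqref{eq:DeltakN*Formula} by splitting the infinite sum over $\ell \mid L^\infty$ into the range $\ell \le Y$ and the tail $\ell > Y$. The terms with $\ell \le Y$ reproduce the displayed finite sum in \eqref{eq:DeltakN*Formula2} verbatim, so the whole theorem reduces to showing that the tail $\ell > Y$ contributes $O((mnNY)^\varepsilon N Y^{-2\gamma_0})$.

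For the tail I would bound everything by absolute values. Since $c_{j,n}\ge 0$ (Section \ref{section:Chebyshev}), one has $\sum_{d_1,d_2\mid\ell} c_\ell(d_1)c_\ell(d_2)=\big(\sum_{d\mid\ell}c_\ell(d)\big)^2$; the prefactor $\frac{uv}{(u,v)}\frac{\mu(uv/(u,v)^2)}{\nu(uv/(u,v)^2)}$ has absolute value at most $(u,v)$, because $t:=uv/(u,v)^2$ is squarefree so $\nu(t)\ge t$; and for the inner quantity I would invoke the crude bound \eqref{eq:DeltaNCrudeBound}, which gives
\[
\Big|\Delta_M\Big(\tfrac{md_1}{a^2e_1^2(u,v)},\,\tfrac{nd_2}{b^2e_2^2(u,v)}\Big)\Big|\ll (\alpha,M)^{1/2}(\beta,M)^{1/2}\,\tau_3(\alpha)\,\tau_3(\beta),
\]
where $\alpha,\beta$ denote the two arguments. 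Since $\alpha\mid md_1$ and $\beta\mid nd_2$ with $d_1,d_2\mid\ell$, the divisor factors $\tau_3(\alpha)\tau_3(\beta)$, as well as the total number of choices of $(u,v,a,b,e_1,e_2)$, are all $\ll (mn\ell)^\varepsilon$.

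The one subtle point is to see that the leftover factor $(u,v)$ is absorbed cleanly against the $M$-parts of the arguments, producing the factor $N$ and nothing worse. The summation conditions force $u\mid m$ and $v\mid n$, hence $(u,v)\mid m$ and $(u,v)\mid n$; on the other hand $(u,v)\mid L$, $(\alpha,M)\mid(m,M)$, $(\beta,M)\mid(n,M)$, and $(L,M)=1$, so $(u,v)(\alpha,M)$ and $(u,v)(\beta,M)$ are products of coprime divisors of $N$ that additionally divide $m$, resp.\ $n$. Therefore $(u,v)(\alpha,M)\mid(m,N)$ and $(u,v)(\beta,M)\mid(n,N)$, whence
\[
(u,v)\,(\alpha,M)^{1/2}(\beta,M)^{1/2}=\big[(u,v)(\alpha,M)\big]^{1/2}\big[(u,v)(\beta,M)\big]^{1/2}\le (m,N)^{1/2}(n,N)^{1/2}\le N.
\]

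Assembling the pieces, the tail is $\ll (mnNY)^\varepsilon\,N\sum_{LM=N}\frac{1}{\nu(L)}\sum_{\ell\mid L^\infty,\,\ell>Y}\frac{\ell^{1+\varepsilon}}{\nu(\ell)^2}\big(\sum_{d\mid\ell}c_\ell(d)\big)^2$, where the $\ell^\varepsilon$ absorbs the $\tau_3$'s and the tuple count. By Corollary \ref{coro:ellsumTailBound} the inner $\ell$-sum is $\ll Y^{-2\gamma_0+\varepsilon}\tau(L)$, and $\sum_{L\mid N}\nu(L)^{-1}\tau(L)\ll_\varepsilon N^\varepsilon$; after renaming $\varepsilon$ this is the error term in \eqref{eq:DeltakN*Formula2}. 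The main (and essentially only) obstacle is the bookkeeping of the previous paragraph — making sure the coefficient $(u,v)$ pairs up with the $(\,\cdot\,,M)$-factors rather than being wasted — since the remaining ingredients are a direct application of \eqref{eq:DeltaNCrudeBound} together with the Chebyshev-coefficient estimates of Sections \ref{section:Chebyshev}--\ref{section:PeterssonApproximate}.
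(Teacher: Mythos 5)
Your proposal is correct and follows essentially the same route as the paper: truncate the exact identity \eqref{eq:DeltakN*Formula}, bound the tail $\ell>Y$ termwise via \eqref{eq:DeltaNCrudeBound} and divisor estimates, and conclude with Corollary \ref{coro:ellsumTailBound}. Your pairing of $(u,v)$ with the $(\cdot,M)$-factors is a slightly more careful version of what the paper does (it simply uses $(u,v)\le L$ and $(M,\cdot)^{1/2}(M,\cdot)^{1/2}\le M$, so the product is $\le LM=N$), but both yield the same factor $N$.
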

\begin{proof}
 It suffices to bound the tail of the sum over $\ell$, namely the terms with $\ell > Y$.  Using \eqref{eq:DeltaNCrudeBound}, we have that the difference between $\Delta_N^*(m,n)$ and the main term sum on the right hand side of \eqref{eq:DeltakN*Formula2} is
 \begin{multline}
 \label{eq:PeterssonNewformErrorTerm1}
  \ll \sum_{LM=N} \frac{1}{\nu(L)} \sum_{\substack{\ell | L^{\infty} \\ \ell > Y}}  \frac{\ell}{\nu(\ell)^2 } \sum_{d_1, d_2 | \ell} c_{\ell}(d_1) c_{\ell}(d_2) 
 \sum_{\substack{u|(m,L) \\ v | (n,L)}} \frac{u v }{(u, v)} \frac{1}{\nu(\frac{u v}{(u, v)^2})}
 \\
\sum_{\substack{a | (\frac{m}{u}, \frac{u}{(u, v)}) \\ b | (\frac{n}{v}, \frac{v}{(u, v)})}}   
\sum_{\substack{e_1 | (d_1, \frac{m}{a^2 (u,v)}) \\ e_2 | (d_2, \frac{n}{b^2 (u,v)})} }
\Big(M, \frac{m d_1}{a^2 e_1^2 (u, v)}\Big)^{1/2} 
\Big(M, \frac{n d_2}{b^2 e_2^2 (u, v)} \Big)^{1/2}  \tau_3(md_1) \tau_3(nd_2).
 \end{multline}
We use the weak bound $(M,m') \leq M$ (for any integer $m'$), and use $\tau_3(md_1) \ll (m \ell)^{\varepsilon}$, and similarly for $\tau_3(nd_2)$, and trivially estimate the sums over $u$, $v$,  $a$,  $b$, $e_1$, $e_2$ to give that \eqref{eq:PeterssonNewformErrorTerm1} is
\begin{equation*}
 \ll (mn)^{\varepsilon} N \sum_{L | N} \frac{L^{\varepsilon}}{\nu(L)}  \sum_{\substack{\ell | L^{\infty} \\ \ell > Y}}  \frac{\ell^{1+\varepsilon}}{\nu(\ell)^2 } (\sum_{d | \ell} c_{\ell}(d) )^2.
\end{equation*}
The desired bound then follows from Corollary \ref{coro:ellsumTailBound}. 
 \end{proof}

The same method of proof applies verbatim to $\widetilde{\Delta}_{N,q}(m,n)$:
\begin{mytheo}
Suppose $(mnN,q) = 1$.
We have
 \begin{multline}
 \label{eq:DeltaTildePeterssonFormula}
\widetilde{\Delta}_{N,q}(m,n) = 
\sum_{L M =  N} \frac{\mu(L)}{\nu(L)} \sum_{\substack{\ell | L^{\infty} \\ \ell \leq Y}} \frac{\ell}{ \nu(\ell)^2} 
\sum_{d_1, d_2 | \ell} c_{\ell}(d_1) c_{\ell}(d_2) 
\sum_{\substack{u|(m,L) \\ v | (n,L)}} \frac{u v }{(u, v)} \frac{\mu(\frac{u v}{(u, v)^2})}{\nu(\frac{u v}{(u, v)^2})}
\\
\sum_{\substack{a | (\frac{m}{u}, \frac{u}{(u, v)}) \\ b | (\frac{n}{v}, \frac{v}{(u, v)})}} 
\sum_{\substack{e_1 | (d_1, \frac{m}{a^2 (u,v)}) \\ e_2 | (d_2, \frac{n}{b^2 (u,v)})} }
\Delta_{M q}\Big(
\frac{m d_1}{a^2 e_1^2 (u, v)}, 
 \frac{n d_2}{b^2 e_2^2 (u, v)}\Big)
 + O((mnNY)^{\varepsilon} N Y^{-2\gamma_0}).
\end{multline}
\end{mytheo}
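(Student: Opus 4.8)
The plan is to repeat the proof of Theorem~\ref{thm:PeterssonNewformsTruncatedVersion} essentially verbatim. The starting point is the exact identity for $\widetilde{\Delta}_{N,q}(m,n)$ obtained at the end of Section~\ref{section:PeterssonHybrid}: applying Lemma~\ref{lemma:inversion} to \eqref{eq:DeltaTildeRelationship} produces a formula identical to the main term of \eqref{eq:DeltaTildePeterssonFormula} except that the inner sum runs over all $\ell \mid L^{\infty}$ rather than only over $\ell \le Y$. Consequently the difference between $\widetilde{\Delta}_{N,q}(m,n)$ and the truncated expression in \eqref{eq:DeltaTildePeterssonFormula} is exactly the contribution of the terms with $\ell > Y$, and everything reduces to bounding that tail.

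To bound the tail I would invoke the crude bound \eqref{eq:DeltaNCrudeBound} at level $Mq$, namely
\begin{equation*}
|\Delta_{Mq}(m',n')| \ll_{\kappa} (m',Mq)^{1/2}(n',Mq)^{1/2}\tau_3(m')\tau_3(n'),
\end{equation*}
applied with $m' = m d_1/(a^2 e_1^2 (u,v))$ and $n' = n d_2/(b^2 e_2^2 (u,v))$. The key observation is that $(m',q)=(n',q)=1$: by hypothesis $(mn,q)=1$, and every $d_i$ divides $\ell \mid L^{\infty}$ with $L \mid N$, which is coprime to $q$; hence $(m',Mq)=(m',M)\le M$ and likewise $(n',Mq)\le M$, so the factor $q$ never enters the gcd and one gets the same bound $|\Delta_{Mq}(m',n')| \ll M\,\tau_3(md_1)\tau_3(nd_2)$ as in the newform case of Theorem~\ref{thm:PeterssonNewformsTruncatedVersion}.

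From here the estimation is line-for-line the same: bound $\tau_3(md_1)\ll (m\ell)^{\varepsilon}$ and $\tau_3(nd_2)\ll (n\ell)^{\varepsilon}$, estimate the sums over $u,v,a,b,e_1,e_2$ trivially (there are $O((mn\ell)^{\varepsilon})$ of them), use $\sum_{d_1,d_2\mid\ell}c_{\ell}(d_1)c_{\ell}(d_2)=\big(\sum_{d\mid\ell}c_{\ell}(d)\big)^2$, and reduce the tail to
\begin{equation*}
(mn)^{\varepsilon}N\sum_{L\mid N}\frac{L^{\varepsilon}}{\nu(L)}\sum_{\substack{\ell\mid L^{\infty}\\ \ell>Y}}\frac{\ell^{1+\varepsilon}}{\nu(\ell)^2}\Big(\sum_{d\mid\ell}c_{\ell}(d)\Big)^2.
\end{equation*}
Corollary~\ref{coro:ellsumTailBound} bounds the inner $\ell$-sum by $\ll_{\varepsilon} Y^{-2\gamma_0+2\varepsilon}\tau(L)$, and since $\sum_{L\mid N}L^{\varepsilon}\tau(L)/\nu(L)\ll N^{\varepsilon}$ this yields the claimed error term $O((mnNY)^{\varepsilon}NY^{-2\gamma_0})$.

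There is no genuine obstacle in this argument; the only point deserving a line of care is the verification that the strengthened coprimality hypothesis $(mnN,q)=1$ is precisely what guarantees that replacing $\Delta_M$ by $\Delta_{Mq}$ in the crude bound introduces no extra $q$ into the gcd factors, so that the error term retains exactly the same shape $N Y^{-2\gamma_0}$ (in particular carrying no $q$-dependence) as in the newform case. Every other step is a direct repetition of the proof of Theorem~\ref{thm:PeterssonNewformsTruncatedVersion}.
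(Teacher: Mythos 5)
Your proposal is correct and is essentially the paper's own argument: the paper simply states that the proof of Theorem~\ref{thm:PeterssonNewformsTruncatedVersion} applies verbatim, and your write-up carries that out, including the one point worth making explicit, namely that $(mn,q)=1$ forces $(m',Mq)=(m',M)\le M$ in the crude bound \eqref{eq:DeltaNCrudeBound} so that no factor of $q$ enters the error term.
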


In our desired application, we shall take $Y$ to be a very large power of the level, in which case the error term is very small.  For this reason, we made no attempt to optimize the error term.

\section{Initial Structural Steps}
\label{section:Structure}
\subsection{Invariants of the twisted $L$-functions}
\label{section:rootnumber}
We begin by calculating the root number and conductor of $L(s,f \otimes \chi_q)$, which is apparently somewhat difficult to locate in the literature.  Our proof of Theorem \ref{thm:cubicmoment} does not require any formula for the root number of the twisted $L$-function, but it is helpful for interpreting Corollary \ref{coro:subconvexity}.

More generally, suppose that $g$ is a weight $\kappa$ newform of level $N$ with trivial central character.  Also recall the definition of the Atkin-Lehner operators \eqref{eq:Wqdef}.  
Then $g$ is an eigenform for the $W_d$, and we write its eigenvalue as $g\vert_{W_d} = \eta_d(g) g$. Then the sign of the functional equation for $\Lambda(s,g)$ is given by $i^\kappa \eta_N(g)$.  Since the Atkin-Lehner operators
satisfy $g|_{W_{d_1}} |_{W_{d_2}} = g|_{W_{d_1 d_2}}$ for $(d_1, d_2) = 1$, it suffices to consider the eigenvalues of Atkin-Lehner operartors $\eta_{Q}(g)$ where $Q$ is a prime power dividing the conductor of $g$.  

Let $\chi_q$ be a primitive quadratic Dirichlet character of conductor $q=q_oq_e$ with $q_o$ odd and $q_e$ a power of $2$. Explicitly, $\chi_q(n) = \legen{n}{q_o} \chi_{q_e}(n)$ where $\legen{n}{q_o}$ is the Jacobi symbol and $\chi_{q_e}(n)$ is either $1,\chi_4$, or one of the two primitive quadratic characters of conductor $8$. 
Recall we set $\tilde{q} = \operatorname{rad}(q)$ the largest square-free divisor of $q$.  Let $f$ be a newform of square-free level $rq'$, where $(r,q)=1$ and $q' \mid \tilde{q}$.  We also take $q''   \mid q$ to be such that $q'' \mid q'^\infty$ and $(q/q'',q')=1$. Let us write $f \otimes \chi_q = (f \otimes \chi_{q''}) \otimes \chi_{q/q''}$.  We have by \cite[Theorem 4.1]{AL78} and e.g. \cite[Proposition 14.20]{IK} that $f \otimes \chi_{q''}$ and $f \otimes \chi_q$ are newforms of conductors $rq''^2$ and $rq^2$, respectively. 

We have by \cite[(5.5.1)]{De73} that for each $p \mid r$ that \est{ \eta_p(f\otimes \chi_{q''}) = \chi_{q''}(p)\eta_p(f)} where in Deligne's notation $a(V)=1$ by our square-free hypothesis on $r$ and $\dim(V)=2$.  We found the exposition by Pacetti \cite{Pacetti} particularly helpful for these calculations.    
For each $p \mid q''$ we write $P$ for the power of $p$ dividing $q''$.  Now we have 
\est{ \eta_{P^2}(f \otimes \chi_{q''}) = \chi_P(-1),} 
by Atkin-Li Theorem 4.1 \cite{AL78}, and where we have written $\chi_{q''} = \prod_{p | q'} \chi_P$.  Therefore we have shown that 
\est{\eta_{rq''^2}(f \otimes \chi_{q''}) = \eta_r(f) \chi_{q''}(r) \chi_{q''}(-1).}  Now by Section 3 of Li \cite{L75} or Proposition 14.20 of \cite{IK} we have since $\chi_{q/q''}$ is real that 
\est{\eta_{rq^2} (f\otimes \chi_q) & = \chi_{q/q''}(-rq''^2)\eta_{rq''^2}(f \otimes \chi_{q''}) \\ & =  \chi_{q/q''}(-rq''^2) \chi_{q''}(r) \chi_{q''}(-1) \eta_r(f) \\ & =  \chi_q(-r)\eta_r(f).} 
 Note $\eta_r(f)$ is the eigenvalue of the Atkin-Lehner operator $W_r$ on $f$.  In our case, $f$ is of trivial central character and square-free conductor $rq'$.  In this case one can compute for each $p \mid rq'$ that \begin{equation}
 \label{eq:etapfFormula}
 \eta_p(f) = -p^{1/2}\lambda_f(p),
 \end{equation} 
for which see the proof of Theorem 2.1 of \cite{AL78}.  

In summary, if we let $\epsilon_g$ denote the root number of a newform $g$, this shows
\begin{equation}
\label{eq:rootnumber}
\begin{split}
 \epsilon_{f \otimes \chi_q} = & i^\kappa \chi_q(-r) \mu(r)r^{1/2}\lambda_f(r) \\ 
 = & \chi_q(-r) \mu(q') {q'}^{1/2} \lambda_f(q')\epsilon_f,
 \end{split}
 \end{equation}
 where recall $\lambda_f(n)$ is normalized to be bounded by the divisor function of $n$.

Now let 
\begin{equation}
\omega_f := c_\kappa \frac{1}{\nu(\tilde{q}/q') \rho_f(\tilde{q}/q') \langle f,f \rangle_{rq'}}, 
\end{equation}
where $\rho_f$ was defined in \eqref{eq:rhof}, and $\omega_f$ in particular satisfies 
\begin{equation*}
\omega_f = (rq)^{-1 + o(1)},
\end{equation*}
since by \cite[Lemma 2.5]{ILS} \cite{HoffsteinLockhart} \cite{IwaniecSmallEigenvalues} we have
\begin{equation*}
 \langle f, f \rangle_{rq'} = (rq')^{1+o(1)}.
\end{equation*}

Note that with these weights we have 
\es{\label{Deltatildeformula} \sum_{\substack{f \in H^*_\kappa(rq') \\ q' | \tilde{q}}} \omega_f \lambda_f(m)\lambda_f(n) = \widetilde{\Delta}_{r,\tilde{q}}(m,n),}
where recall $\widetilde{\Delta}_{r,q}(m,n)$ was defined in \eqref{eq:DeltaTildeDefinition}.

\subsection{Approximate functional equation}
Recall that our goal is the bound \eqref{eq:cubicmoment}, which we write as
\begin{equation}
\mathcal{M}(r,q) := \sum_{\substack{ f \in H^*_\kappa(rq') \\ q' | \tilde{q}}} \omega_f L(1/2,f \otimes \chi_q)^3 \ll (qr)^{\varepsilon}.
\end{equation}

We have for $\real(s)>1$ that $L(s,f \otimes \chi_q) = \sum_{n \geq 1} \frac{\chi_q(n)\lambda_f(n)}{n^s}$ and
\begin{multline*}
 L(s,f\otimes \chi_q)^2  = \sum_{m \geq 1} \sum_{n \geq 1} \frac{\lambda_f(m)\lambda_f(n)  \chi_q(mn)}{(mn)^s} = \sum_{m \geq 1} \sum_{n \geq 1} \frac{ \chi_q(mn)}{(mn)^s} 
  \sum_{ \substack{d | (m,n) \\ (d,qr)=1} } \lambda_f\Big(\frac{mn}{d^2}\Big)
 \\ = \sum_{\substack{ (d,qr)=1}} \frac{1}{d^{2s}} \sum_{m \geq 1} \sum_{n \geq 1}\frac{\lambda_f(mn)\chi_q(mn)}{(mn)^s} = \sum_{ \substack{ (d,qr)=1} } \sum_{n \geq 1} \frac{ \tau(n)\chi_q(n) \lambda_f(n)}{(d^2n)^s} .
\end{multline*}
Then we have by standard approximate functional equations 
that
\est{L(1/2,f\otimes \chi_q) = (1+\epsilon_{f\otimes \chi_q} ) \sum_{n\geq 1} \frac{\lambda_f(n) \chi_q(n)}{n^{1/2}} V_1\left(\frac{n}{q \sqrt{r}}\right),} and \est{ L(1/2,f \otimes \chi_q)^2 = 2 \sum_{ \substack{(d,qr)=1}}\sum_{m\geq 1} \frac{\lambda_f(m) \tau(m) \chi_q(m)}{d\sqrt{m}}V_2\left(\frac{d^2m}{rq^2}\right),} where $V_1$ and $V_2$ are certain  bounded  smooth functions of rapid decay (see \eqref{eq:Videf}, \eqref{eq:Widef} below for formulas).   
Therefore,
\begin{equation}
\label{eq:Mrqprotoformula}
 \mathcal{M}(r,q) = 2\sum_{\substack{ f \in H^*_\kappa(R) \\ r | R | r\tilde{q}}} \omega_f   (1+\epsilon_{f\otimes \chi_q}) \sum_{ \substack{d,m,n \geq 1 \\ (d,qr)=1} }     \frac{\lambda_f(m) \tau(m) \lambda_f(n) \chi_q(mn) }{d\sqrt{mn}} 
 V_1\left(\frac{n}{q \sqrt{r}}\right)
 V_2\left(\frac{d^2m}{ q^2 r}\right).
\end{equation}
In \eqref{eq:Mrqprotoformula}, we may replace $(1+ \epsilon_{f \otimes \chi_q})$ by $2$, because if $\epsilon_{f \otimes \chi_q} = -1$, then the other factor $L(1/2, f \otimes \chi_q)^2$ vanishes anyway.  Using this and \eqref{Deltatildeformula}, we derive
\begin{equation}
\label{eq:MrqInTermsOfDelta}
 \mathcal{M}(r,q) = 4 \sum_{\substack{ (d,qr)=1} }\frac{1}{d} \sum_{n\geq 1}  \sum_{m\geq 1} \frac{ \tau(m) \chi_q(mn)}{\sqrt{m n}}  V_1\left(\frac{n}{q \sqrt{r}}\right)  V_2\left(\frac{d^2m}{ q^2 r}\right) \widetilde{\Delta}_{r,\tilde{q}}(m,n).
\end{equation}
The contribution from $m \gg r^{1+\eps}q^{2+\varepsilon} d^{-2}$ or $n \gg r^{1/2+\eps} q^{1+\varepsilon}$ is very small by trivial bounds.

\subsection{Exercises with arithmetical functions}
Equation \eqref{eq:MrqInTermsOfDelta} gives
\begin{equation}
\label{eq:MrqFormula}
 \mathcal{M}(r,q) = 4 \sum_{ \substack{ (d,qr)=1}} \frac{1}{d} 
 \mathcal{B}_{r,q}(\alpha, \beta) + O((qr)^{-A}),
\end{equation}
where
\est{
 \mathcal{B}_{r,q}(\alpha, \beta)  
 = 
 \sum_{\substack{m, n \geq 1}} 
 \alpha_m \beta_n \widetilde{\Delta}_{r,\tilde{q}}(m,n),
} 
with
\begin{equation}
 \alpha_m = \frac{\tau(m) \chi_q(m)}{\sqrt{m}} V_2\Big(\frac{d^2 m}{rq^2}\Big), \qquad \beta_n = \frac{\chi_q(n)}{\sqrt{n}} V_1\Big(\frac{n}{q \sqrt{r}} \Big).
\end{equation}
Now we work in more generality than what is required.
\begin{myprop}\label{prop:setup}
 Let $\alpha$ and $\beta$ be two sequences of complex numbers 
 of rapid decay,  
 and let $Y$ be some large power of $qr$.  Then
 \begin{multline} \label{eq:BtoB'}
\mathcal{B}_{r,q}(\alpha,\beta) = 
  \sum_{L R =  r} \frac{\mu(L)}{\nu(L)} \sum_{\substack{\ell | L^{\infty} \\ \ell \leq Y}} \frac{\ell}{ \nu(\ell)^2} 
\sum_{d_1, d_2 | \ell} c_{\ell}(d_1) c_{\ell}(d_2) 
\sum_{\substack{u|L \\ v |L }} \frac{u v }{(u, v)} \frac{\mu(\frac{u v}{(u, v)^2})}{\nu(\frac{u v}{(u, v)^2})}
\\
\sum_{\substack{a |  \frac{u}{(u, v)} \\ b | \frac{v}{(u, v)} }} 
\sum_{\substack{e_1 | d_1 \\ e_2 | d_2} } \mathcal{B}'
+ O(\|\alpha_m m^{\varepsilon} \|_1 \| \beta_n n^{\varepsilon} \|_1 (rq)^{-100}),
\end{multline}
where
\begin{equation}
 \mathcal{B}' = \sum_{ m \geq 1} \sum_{ n \geq 1 } \alpha_{au \frac{e_1}{(e_1,\frac{u}{a(u,v)})}m} \beta_{bv \frac{e_2}{(e_2,\frac{v}{b(u,v)})}n}
 \Delta_{R \tilde{q}}\Big(  \frac{d_1}{e_1} \frac{u}{(u, ae_1(u, v))}m ,  \frac{d_2}{e_2}  \frac{v}{(v,be_2(u, v))}n \Big)
\end{equation}
\end{myprop}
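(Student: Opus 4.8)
The plan is to feed the truncated hybrid Petersson formula \eqref{eq:DeltaTildePeterssonFormula} into the definition $\mathcal{B}_{r,q}(\alpha,\beta)=\sum_{m,n}\alpha_m\beta_n\,\widetilde{\Delta}_{r,\tilde{q}}(m,n)$, applying it with $N=r$ and with $q$ there replaced by $\tilde{q}$. The coprimality hypothesis $(mnr,\tilde{q})=1$ needed for \eqref{eq:DeltaTildePeterssonFormula} is automatic on the support of the weights: the character $\chi_q$ built into $\alpha_m$ and $\beta_n$ forces $(mn,q)=1$, hence $(mn,\tilde{q})=1$, while $(r,q)=1$ gives $(r,\tilde{q})=1$. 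The error term $O((mnrY)^\varepsilon r Y^{-2\gamma_0})$ of \eqref{eq:DeltaTildePeterssonFormula}, summed against $|\alpha_m\beta_n|$, contributes $\ll r^{1+\varepsilon}Y^{\varepsilon-2\gamma_0}\|\alpha_m m^\varepsilon\|_1\|\beta_n n^\varepsilon\|_1$; since $\gamma_0>0$ is a fixed positive constant, choosing $Y$ a sufficiently large fixed power of $rq$ makes this $\ll \|\alpha_m m^\varepsilon\|_1\|\beta_n n^\varepsilon\|_1 (rq)^{-100}$, which is the error term appearing in \eqref{eq:BtoB'}.

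After this substitution every sum is finite (there are finitely many $L$ with $LR=r$, finitely many $\ell\le Y$, etc.) and the sequences $\alpha,\beta$ decay rapidly, so I would freely interchange the order of summation, moving the sum over $m,n$ inside the sums over $L$, $\ell\le Y$, $d_1,d_2\mid\ell$, $u,v$, $a,b$, and $e_1,e_2$. The prefactors $\tfrac{\mu(L)}{\nu(L)}$, $\tfrac{\ell}{\nu(\ell)^2}$, $c_\ell(d_1)c_\ell(d_2)$, and $\tfrac{uv}{(u,v)}\tfrac{\mu(uv/(u,v)^2)}{\nu(uv/(u,v)^2)}$ are carried along unchanged, and it remains only to rewrite, for fixed values of all the parameters, the inner sum $\sum_{m,n}\alpha_m\beta_n\,\Delta_{R\tilde{q}}\!\big(\tfrac{md_1}{a^2e_1^2(u,v)},\tfrac{nd_2}{b^2e_2^2(u,v)}\big)$ — taken over those $m,n$ meeting the divisibility constraints $u\mid m$, $a\mid m/u$, $e_1\mid m/(a^2(u,v))$ and the mirror conditions for $n$ — as the displayed sum with $u\mid L$, $v\mid L$, $a\mid\tfrac{u}{(u,v)}$, $b\mid\tfrac{v}{(u,v)}$, $e_1\mid d_1$, $e_2\mid d_2$ and inner object $\mathcal{B}'$.

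The one genuinely computational step is solving these nested divisibility conditions. Note that $u\mid m$, $a\mid m/u$ and $a\mid\tfrac{u}{(u,v)}$ together force $a^2(u,v)\mid m$, with $\tfrac{m}{a^2(u,v)}=\tfrac{u}{a(u,v)}\cdot\tfrac{m}{ua}$, so the condition $e_1\mid\tfrac{m}{a^2(u,v)}$ is equivalent to $\tfrac{e_1}{(e_1,\,u/(a(u,v)))}\mid\tfrac{m}{ua}$; hence these conditions are parametrized bijectively by $m=au\,\tfrac{e_1}{(e_1,\,u/(a(u,v)))}\,m'$ with $m'$ ranging over all positive integers, turning $\alpha_m$ into $\alpha_{au\frac{e_1}{(e_1,u/(a(u,v)))}m'}$. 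Using the gcd identity $(u,ae_1(u,v))=a(u,v)\,(e_1,\tfrac{u}{a(u,v)})$, which holds because $a(u,v)\mid u$, one finds $\tfrac{md_1}{a^2e_1^2(u,v)}=\tfrac{d_1}{e_1}\,\tfrac{u}{(u,ae_1(u,v))}\,m'$. The symmetric substitution $n=bv\,\tfrac{e_2}{(e_2,\,v/(b(u,v)))}\,n'$ handles the $n$-side and the second entry of $\Delta_{R\tilde{q}}$, and relabelling $m',n'$ as $m,n$ produces exactly $\mathcal{B}'$, which gives \eqref{eq:BtoB'}. The main obstacle is thus purely the careful bookkeeping of these gcd and divisibility manipulations; the analytic input — interchanging finite sums and bounding the truncation tail — is routine.
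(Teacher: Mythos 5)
Your proposal is correct and follows essentially the same route as the paper: substitute the truncated hybrid formula \eqref{eq:DeltaTildePeterssonFormula} into $\mathcal{B}_{r,q}$, absorb the truncation tail into the $(rq)^{-100}$ error by taking $Y$ a large power of $qr$, rewrite the nested divisibility conditions as a single congruence $m\equiv 0\ (\mathrm{mod}\ au\,e_1/(e_1,\tfrac{u}{a(u,v)}))$ (and its mirror for $n$), and change variables. Your gcd manipulations, including $(u,ae_1(u,v))=a(u,v)\,(e_1,\tfrac{u}{a(u,v)})$, match the paper's computation exactly.
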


\begin{proof}
Using \eqref{eq:DeltaTildePeterssonFormula}, and pulling the sums over $m$ and $n$ to the inside, we obtain  
\begin{multline*}
\mathcal{B}_{r,q}(\alpha,\beta) = 
\sum_{L R =  r} \frac{\mu(L)}{\nu(L)} \sum_{\substack{\ell | L^{\infty} \\ \ell \leq Y}} \frac{\ell}{ \nu(\ell)^2} 
\sum_{d_1, d_2 | \ell} c_{\ell}(d_1) c_{\ell}(d_2) 
\sum_{\substack{u|L \\ v |L }} \frac{u v }{(u, v)} \frac{\mu(\frac{u v}{(u, v)^2})}{\nu(\frac{u v}{(u, v)^2})}
\\
\sum_{\substack{a |  \frac{u}{(u, v)} \\ b | \frac{v}{(u, v)} }} 
\sum_{\substack{e_1 | d_1 \\ e_2 | d_2} }
\sum_{\substack{ m \equiv 0 \shortmod{ au \frac{e_1}{(e_1,\frac{u}{a(u,v)})}} \\ n \equiv 0 \shortmod{ bv \frac{e_2}{(e_2,\frac{v}{b(u,v)})} } }}  \alpha_m \beta_n \Delta_{R \tilde{q}}\Big(
\frac{m d_1}{a^2 e_1^2 (u, v)}, 
 \frac{n d_2}{b^2 e_2^2 (u, v)}\Big) 
\\ 
 + O\Big(\sum_{m \geq 1} \sum_{n \geq 1} |\alpha_m m^{\varepsilon}|  |\beta_n n^{\varepsilon}|  (rq)^{1+\eps}  Y^{-2\gamma_0 + \varepsilon}\Big),  
 \end{multline*}
where we have used the following elementary observations: 
 we have \est{ \begin{cases} u | m \\ a | \frac{ m}{u} \end{cases} \Leftrightarrow m \equiv 0 \pmod{au} } and for any integers $a,b,x$ we have $ax \equiv 0 \pmod{b}$ if and only if $x \equiv 0 \pmod{\frac{b}{(a,b)}}$ so that 
 \begin{gather*}
  \frac{m}{a^2(u,v)} \equiv 0 \pmod {e_1}  \Leftrightarrow  \frac{m}{au}\frac{u}{a(u,v)} \equiv 0 \pmod {e_1} \\ \Leftrightarrow  \frac{m}{au} \equiv 0 \pmod {\frac{e_1}{(e_1,\frac{u}{a(u,v)})}}  \Leftrightarrow  m \equiv 0 \pmod{ au \frac{e_1}{(e_1,\frac{u}{a(u,v)})} }.
 \end{gather*}

We now make the change of variables \est{ m \mapsto au \frac{e_1}{(e_1,\frac{u}{a(u,v)})}m \,\,\,\,\,\,\, n \mapsto bv \frac{e_2}{(e_2,\frac{v}{b(u,v)})}n,}
which gives the desired formula. \end{proof}

Continuing with our more general set-up, let $\gamma_1, \gamma_2, \delta_1, \delta_2$ be positive integers that divide $L^{\infty}$, and set 
\begin{equation}
 \mathcal{B}'_{\gamma_1, \gamma_2, \delta_1, \delta_2} = 
 \sum_{m,n \geq 1} \alpha_{\gamma_1 m} \beta_{\gamma_2 n} \Delta_{R\tilde{q}}(\delta_1 m, \delta_2 n).
\end{equation}
In our application of interest, we have
\begin{equation}\label{eq:gamma12def}
 \gamma_1 = au \frac{e_1}{(e_1,\frac{u}{a(u,v)})}, \qquad \gamma_2 = bv\frac{e_2}{(e_2,\frac{v}{b(u,v)})},
\end{equation}
and
\begin{equation} \label{eq:delta12def}
 \delta_1 = \frac{d_1}{e_1} \frac{u}{(u, ae_1(u, v))}, \qquad \delta_2 =   \frac{d_2}{e_2}  \frac{v}{(v,be_2(u, v))}.
\end{equation}

We now use the $\alpha$ and $\beta$ specific to our situation.  In anticipation of some future maneuvers, we use a Hecke relation on the divisor function implicit in $\alpha$, namely $\tau(\gamma m) = \sum_{h| (\gamma, m)} \tau(\gamma/h) \tau(m/h) \mu(h)$.  This gives (for an arbitrary  function $f$ such that the sums converge absolutely)
\begin{align*}
\sum_{m \geq 1} \alpha_{\gamma m} f(m) &= \sum_{m \geq 1} \frac{\tau(\gamma m) \chi_q(\gamma m)}{\sqrt{\gamma m}} V_2\Big(\frac{d^2 \gamma m}{q^2 r}\Big) f(m)
\\
&= \frac{\chi_q(\gamma)}{\sqrt{\gamma}} \sum_{h | \gamma} \frac{\tau(\frac{\gamma}{h}) \mu(h) \chi_q( h)}{\sqrt{ h}} \sum_{m \geq 1} \frac{\chi_q(m) \tau(m)}{\sqrt{m}} 
V_2\Big(\frac{d^2 \gamma h m}{q^2 r}\Big) f(hm).
\end{align*}
With this, and by inserting the definition of $\beta$, we have
\begin{equation}\label{eq:BtoB''}
\mathcal{B}'_{\gamma_1, \gamma_2, \delta_1, \delta_2} = 
\frac{\chi_q(\gamma_1 \gamma_2)}{\sqrt{\gamma_1 \gamma_2}} \sum_{\gamma_3 | \gamma_1} \frac{\tau(\frac{\gamma_1}{\gamma_3}) \mu(\gamma_3) \chi_q( \gamma_3)}{\sqrt{ \gamma_3}} 
\mathcal{B}'',
\end{equation}
where
\begin{equation}
\mathcal{B}'' = \sum_{m,n \geq 1}  \frac{\chi_q(mn) \tau(m)}{\sqrt{mn}} 
V_1\left(\frac{  \gamma_2 n}{q \sqrt{r}}\right)  
V_2\Big(\frac{d^2 \gamma_1 \gamma_3 m}{q^2 r}\Big) \Delta_{R\tilde{q}}(\delta_1 \gamma_3 m,\delta_2 n).
\end{equation}

Applying the Petersson formula \eqref{eq:PeterssonFormulaWithKloosterman}, we obtain
\begin{equation*}
 \mathcal{B}'' = \mathcal{D}'' + 2\pi i^{-\kappa} \mathcal{S},
\end{equation*}
where 
\begin{equation}\label{eq:D''}
 \mathcal{D}'' = \sum_{\delta_1 \gamma_3 m = \delta_2 n}  \frac{\chi_q(mn) \tau(m)}{\sqrt{mn}} 
V_1\left(\frac{\gamma_2 n}{q \sqrt{r}}\right)  
V_2\Big(\frac{d^2 \gamma_1 \gamma_3 m}{q^2 r}\Big) ,
\end{equation}
and
\begin{multline}
\label{eq:SmathcalDef}
 \mathcal{S} =  \sum_{m,n \geq 1}  \frac{\chi_q(mn) \tau(m)}{\sqrt{mn}} 
V_1\left(\frac{ \gamma_2 n}{q \sqrt{r}}\right)  
V_2\Big(\frac{d^2 \gamma_1 \gamma_3 m}{q^2 r}\Big) \\ \times
\sum_{c \equiv 0 \shortmod{\tilde{q}R}} \frac{S(Am, Bn;c)}{c} J_{\kappa-1}\Big(\frac{4 \pi \sqrt{ABmn}}{c}\Big),
\end{multline}
with
\begin{equation*}
 A = \delta_1 \gamma_3, \qquad B = \delta_2.
\end{equation*}
According to this, we write $\mathcal{B}' = \mathcal{D}' + \mathcal{B}'_{\mathcal{S}}$, and similarly, $\mathcal{B} = \mathcal{D} + \mathcal{B}_{\mathcal{S}}$. 
It may be helpful to record that $c \equiv 0 \pmod{\tilde{q}R}$, that $(qR,L) = 1$, and that $AB | L^{\infty}$, so that $(AB,qR) = 1$.

The main technical result proved in the rest of the paper is the following
\begin{myprop}
\label{prop:B''bound}
 With $\alpha$ and $\beta$ as above, we have
 \begin{equation*}
  \mathcal{S} \ll \Big(\frac{\sqrt{AB}}{\sqrt{R}} + \frac{r^{3/4}}{\sqrt{q} R} \Big)(qr)^{\varepsilon}.
 \end{equation*}
\end{myprop}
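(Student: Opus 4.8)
The plan is to prove Proposition~\ref{prop:B''bound} by adapting the Conrey--Iwaniec treatment of the cubic moment \cite{CI,Petrow,Young}, carrying the extra parameters $A,B,R,\gamma_1,\gamma_2,\gamma_3$ through the argument. First I would dyadically localize $m$, $n$, $c$ (each over a dyadic interval): the rapid decay of $V_1,V_2$ confines $m\ll(qr)^\varepsilon q^2r/(d^2\gamma_1\gamma_3)$ and $n\ll(qr)^\varepsilon q\sqrt r/\gamma_2$, while $J_{\kappa-1}(x)\ll_\kappa\min(x^{\kappa-1},x^{-1/2})$ together with $c\equiv 0\pmod{\tilde qR}$ restricts $c$ to $\tilde qR\le c\ll(qr)^\varepsilon\sqrt{ABmn}$. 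I would record at the outset that $AB=\delta_1\delta_2\gamma_3$ divides $L^\infty$ and is coprime to $qR$, and that $q/\tilde q\mid 4$, so that the modulus $\tilde qR$ of every Kloosterman sum in $\mathcal S$ is a multiple of the essential conductor of $\chi_q$. As explained around \eqref{eq:fakeNewformFormula}, this is exactly what makes Poisson summation useful here: the joint modulus of $\chi_q(\cdot)\,S(A\cdot,B\cdot;c)$ is still $c$ up to a bounded factor.

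Next I would open $S(Am,Bn;c)=\sumstar_{x\bmod c}e((Amx+Bn\bar x)/c)$ and apply Poisson summation in the variable $n$ (the one twisted by $\chi_q$ but not carrying $\tau$) to modulus $[q,c]$. Because $\chi_q$ is quadratic, the resulting complete sum in $n$ collapses to a Gauss-sum factor of modulus $\ll q^{1/2}$, supported on the dual variable $\tilde n$ in a fixed progression, times a phase linear in $\bar x$; the smooth weight turns into a Hankel-type transform of $y^{-1/2}V_1(\gamma_2 y/(q\sqrt r))J_{\kappa-1}(4\pi\sqrt{ABmy}/c)$, which I would analyse by stationary phase, finding it negligible unless $|\tilde n|\ll(qr)^\varepsilon qc/n$ and of controlled size there. (A reciprocity step may be convenient to lower the modulus of the surviving additive character.) The remaining sum over $m$, of the shape $\sum_m\tau(m)\chi_q(m)\,e(hm/c)$ against a smooth weight with $h$ built from $\tilde n$ and $\bar x$, I would transform by the Voronoi summation formula for the divisor function --- equivalently, by writing $\tau(m)=\sum_{m_1m_2=m}1$ and applying Poisson in $m_1,m_2$ --- producing a dual sum over $\tilde m\ll(qr)^\varepsilon c^2/m$ carrying $\tau(\tilde m)$ and a Bessel kernel, plus a polar term from the double pole of $\zeta(s)^2$ which I would isolate.

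In the resulting triple sum over $c,\tilde m,\tilde n$ the additive phases from the two Poisson steps combine, and the sum over $x\bmod c$ collapses to a product of a Gauss sum and a Kloosterman sum $S(\ast,\ast;c)$ in $\tilde m,\tilde n$, which I bound by Weil's estimate $\ll c^{1/2+\varepsilon}(\ast,\ast,c)^{1/2}$ (and by Deligne's bound should the \cite{CI}-type hyper-Kloosterman sums arise when summing over $c$). Inserting the stationary-phase size of the weights and summing over $c$ and over the short ranges of $\tilde m,\tilde n$ should give, after the bookkeeping with the admissible sizes of $m,n,c$, the ``generic'' contribution $\ll(qr)^\varepsilon\sqrt{AB}/\sqrt R$. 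The ``degenerate'' frequencies must be peeled off and estimated by hand: the Voronoi polar term and the $\tilde n=0$ frequency of the first Poisson step produce only Ramanujan/Gauss sums that force $c$ down near $\tilde qR$, and a direct estimate there yields the second term $\ll(qr)^\varepsilon r^{3/4}/(\sqrt qR)$. These degenerate pieces are genuinely of this size and do not feed the main term $R_{r,q}$, which comes from $\mathcal D$ and is treated separately in Section~\ref{section:Diagonalterms}.

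I expect the main obstacle to be twofold. First, carrying out both summation formulas \emph{uniformly} in $A,B,R,\gamma_1,\gamma_2,\gamma_3,d,q$, and in particular propagating the divisibility $\tilde qR\mid c$ and the bounded factor $q/\tilde q\mid 4$ correctly into the dual sums: any slackness there loses powers of the parameters and spoils the exponent. Second, and more delicate, is the analysis of the complete exponential sums over $c$ after both transforms --- isolating exactly the degenerate ranges where the Gauss sum or the Kloosterman sum drops in size is where the Conrey--Iwaniec conductor-lowering phenomenon enters, and extracting both the correct power of $r$ and the saving in $q$ there requires sharp evaluations of Gauss, Kloosterman and Sali\'e-type sums and of their sums over the modulus. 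While Proposition~\ref{prop:B''bound} itself concerns only $\mathcal S$ for fixed auxiliary parameters, summing this bound over all the parameters to deduce Theorem~\ref{thm:cubicmoment} is a separate ``recombination'' step relying on the Chebyshev-coefficient estimates of Section~\ref{section:Chebyshev}.
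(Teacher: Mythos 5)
There is a genuine gap at the decisive quantitative step. Your plan is to dualize ($n$ by Poisson modulo $[q,c]$, then $m$ by Voronoi for $\tau\chi_q$), bound the resulting complete sums over $x \bmod c$ pointwise by Weil/Deligne, and then sum over $c$ and the dual ranges $\tilde m,\tilde n$ with absolute values. That last step cannot produce the generic bound $\sqrt{AB}/\sqrt{R}$: pointwise square-root cancellation in the complete sums, followed by trivial summation over the dual variables and the modulus, loses a power of $q$ (in the model case $r=R=A=B=1$ it gives roughly $q^{1/2}$--$q$ instead of $q^{\varepsilon}$), which at the level of Theorem \ref{thm:cubicmoment} only recovers convexity/Burgess-type strength — consistent with the fact that ``Poisson plus Weil'' is essentially the amplified-second-moment technology of Blomer--Harcos, not the Weyl-quality cubic moment. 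The saving that the proposition encodes comes from \emph{additional} cancellation in the joint sum over $\tilde m_1,\tilde m_2,\tilde n$ and $c$, and your passing remark that Deligne might be invoked ``when summing over $c$'' does not supply a mechanism for it.

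What the paper actually does (following \cite{CI}) is: open $\tau$ and apply Poisson in all three variables simultaneously modulo $[c,q]$, extract by stationary phase the archimedean factor $e\bigl(-m_1m_2m_3c^2/(AB[c,q]^3)\bigr)$, and observe that this phase combines with the complete sum $G_{A,B}(m_1,m_2,m_3;c)$ so that the product decomposes into a short linear combination of multiplicative characters $\psi(m_1m_2m_3\overline{s})$ (Lemmas \ref{lemma:GevalConreyIwaniec}--\ref{lemma:GevalPowerof2}; Deligne's bound enters only through $g(\chi_{D_2},\psi)$). The sums over the dual variables and over the modulus then assemble into products of four Dirichlet $L$-functions, i.e.\ the multiple Dirichlet series $Z_{\delta,R,q}$ of \eqref{eq:ZRqdefinition}, and the crucial bound (Proposition \ref{prop:Zproperties}) is obtained from H\"older together with the large-sieve fourth moment \eqref{eq:fourthmoment} and its hybrid version — a family average, not a pointwise estimate. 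Replacing that step by pointwise bounds on the central values (or, equivalently, by Weil on the dual Kloosterman sums and trivial summation, as you propose) inflates $|Z'|$ from $q^{3/2}(AB)^{1/2}$ to about $q^{5/2}(AB)^{1/2}$ and destroys the result. So to repair your argument you would need, after your two dual summations, to re-organize the complete sums into Dirichlet characters and prove an analogue of Proposition \ref{prop:Zproperties}, at which point you are essentially reconstructing the paper's route. (A smaller inaccuracy: in the paper the term $r^{3/4}/(\sqrt{q}R)$ comes from the principal-character pieces $\mathcal{T}_0,\mathcal{U}_0$ of the nonzero frequencies, while the genuinely degenerate terms with some $m_i=0$ only contribute $O(R^{-1}(qr)^{\varepsilon})$; your attribution of this term solely to the polar/zero frequencies is therefore slightly off, though this is minor compared with the main issue.)
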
 
From Proposition \ref{prop:B''bound}, we deduce bounds on $\mathcal{B}'_{\mathcal{S}}$, then $\mathcal{B}_{\mathcal{S}}$.  We have
\begin{equation*}
\mathcal{B}'_{\mathcal{S}} \ll \frac{1}{\sqrt{\gamma_1 \gamma_2}} \sum_{\gamma_3 | \gamma_1} \frac{1}{\sqrt{\gamma_3}} 
\Big(\frac{\sqrt{\delta_1 \delta_2 \gamma_3}}{\sqrt{R}} + \frac{r^{3/4}}{q^{1/2} R}\Big) (qr)^{\varepsilon} 
\ll 
\Big(\frac{\sqrt{\delta_1 \delta_2}}{\sqrt{R}} + \frac{r^{3/4}}{q^{1/2} R}\Big) \frac{(qr)^{\varepsilon}}{\sqrt{\gamma_1 \gamma_2}}.
\end{equation*}
Therefore, we get the following bound on $\mathcal{B}_{\mathcal{S}}$:
 \begin{multline*} 
\mathcal{B}_{\mathcal{S}} \ll (qr)^{\varepsilon}
  \sum_{L R =  r} \frac{1}{\nu(L)} \sum_{\substack{\ell | L^{\infty} \\ \ell \leq Y}} \frac{\ell}{ \nu(\ell)^2} 
\sum_{d_1, d_2 | \ell} c_{\ell}(d_1) c_{\ell}(d_2) 
\\
\times
\sum_{\substack{u|L \\ v |L }} \frac{u v }{(u, v)} \frac{|\mu(\frac{u v}{(u, v)^2})|}{\nu(\frac{u v}{(u, v)^2})}
\sum_{\substack{a |  \frac{u}{(u, v)} \\ b | \frac{v}{(u, v)} }} 
\sum_{\substack{e_1 | d_1 \\ e_2 | d_2} } 
\Big(\frac{\sqrt{\delta_1 \delta_2}}{\sqrt{R}} + \frac{r^{3/4}}{q^{1/2} R}\Big) 
\frac{1}{\sqrt{\gamma_1 \gamma_2}}.
\end{multline*}

 Note that
\begin{equation*}
 \frac{\delta_1}{\gamma_1} = \frac{d_1}{e_1} \frac{u}{(u,ae_1 (u,v))} \frac{(e_1, \frac{u}{a(u,v)})}{aue_1} = \frac{d_1}{e_1^2 a^2 (u,v)},
\end{equation*}
and so by symmetry
\begin{equation*}
 \frac{\delta_2}{\gamma_2} =  \frac{d_2}{e_2^2 b^2 (u,v)},
\end{equation*}
and thus
\begin{equation*}
 \Big(\frac{\delta_1 \delta_2}{\gamma_1 \gamma_2}\Big)^{1/2} = \frac{(d_1 d_2)^{1/2}}{e_1 e_2 ab (u,v)}.
\end{equation*}
We also use $\frac{1}{\sqrt{\gamma_1 \gamma_2}} \leq \frac{1}{\sqrt{uv}} \leq \frac{1}{(u,v)}$.
With these observations, we have 
\begin{equation*}
\sum_{\substack{a |  \frac{u}{(u, v)} \\ b | \frac{v}{(u, v)} }} 
\sum_{\substack{e_1 | d_1 \\ e_2 | d_2} } 
\Big(\frac{\sqrt{\delta_1 \delta_2}}{\sqrt{R}} + \frac{r^{3/4}}{q^{1/2} R}\Big) \frac{1}{\sqrt{\gamma_1 \gamma_2}}
\ll \frac{(qr)^{\varepsilon}}{(u,v)} 
\Big( \frac{(d_1 d_2)^{1/2}}{R^{1/2}} + \frac{r^{3/4}}{q^{1/2} R}\Big).
\end{equation*}

The inner sum over $u$ and $v$ gives a divisor bound, so now we get
\begin{equation*} 
\mathcal{B}_{\mathcal{S}} \ll (qr)^{\varepsilon}
  \sum_{L R =  r} \frac{1}{\nu(L)} \sum_{\substack{\ell | L^{\infty} \\ \ell \leq Y}} \frac{\ell}{ \nu(\ell)^2} 
\sum_{d_1, d_2 | \ell} c_{\ell}(d_1) c_{\ell}(d_2)  \Big(\frac{(d_1 d_2)^{1/2}}{R^{1/2}} + \frac{r^{3/4}}{q^{1/2} R} \Big)
.
\end{equation*}
By Lemma \ref{lemma:cnubound}, we bound the second part of this sum by
\begin{multline*}
(qr)^{\varepsilon} \frac{r^{3/4}}{q^{1/2}} \sum_{L R =  r} \frac{1}{\nu(L) R} \sum_{\substack{\ell | L^{\infty} \\ \ell \leq Y}} \frac{\ell}{ \nu(\ell)^2} 
\sum_{d_1, d_2 | \ell} c_{\ell}(d_1) c_{\ell}(d_2) 
\\
\ll 
(qr)^{\varepsilon} \frac{r^{3/4}}{q^{1/2}}
\sum_{L R =  r} \frac{1}{\nu(L)R} \sum_{\substack{\ell | L^{\infty} \\ \ell \leq Y}} \ell^{-2 \gamma_0} \ll q^{-1/2+\varepsilon} r^{-1/4+\varepsilon}.
\end{multline*}
Recalling the definition \eqref{eq:SLYdefinition} and using Lemma \ref{lemma:SLYbound} we have
 \begin{equation*}
  \mathcal{B}_{\mathcal{S}} \ll (qr)^{\varepsilon}\Big(\sum_{LR=r} \frac{S(L,Y)}{\nu(L) R^{1/2}} +  r^{-1/4} q^{-1/2}  \Big) 
  \ll (qr)^{\varepsilon} (r^{-1/2} + r^{-1/4} q^{-1/2}).
 \end{equation*}

Finally, we have from \eqref{eq:MrqFormula} that
\begin{equation*}
 \mathcal{M}(r,q) = \mathcal{M}_0(r,q) + O((qr)^{\varepsilon}(  r^{-1/4} q^{-1/2}  + r^{-1/2})),
\end{equation*}
where $\mathcal{M}_0(r,q)$ is the contribution to $\mathcal{M}(r,q)$ from the diagonal term $\mathcal{D}$.  It is easy to see that $\mathcal{M}_0(r,q) \ll (rq)^{\varepsilon}$, following the proof of the bounds on $\mathcal{B}_S$.

We summarize this discussion with
\begin{mycoro}
 Proposition \ref{prop:B''bound} implies Theorem \ref{thm:cubicmoment}.
\end{mycoro}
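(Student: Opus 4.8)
The plan is to combine the reductions carried out in this section with the bound of Proposition \ref{prop:B''bound}, and then pass from the weighted cubic moment $\mathcal{M}(r,q)=\sum \omega_f L(1/2,f\otimes\chi_q)^3$ back to the unweighted sum in \eqref{eq:cubicmoment} at the end. Via the approximate functional equation (and the identity \eqref{Deltatildeformula} linking the weighted sum to $\widetilde{\Delta}_{r,\tilde{q}}$), equation \eqref{eq:MrqFormula} writes $\mathcal{M}(r,q)$, up to negligible error, as $4\sum_{(d,qr)=1}d^{-1}\mathcal{B}_{r,q}(\alpha,\beta)$ with $\alpha_m,\beta_n$ built from $V_1,V_2$; the rapid decay of $V_2$ truncates the $d$-sum at $d\ll(q^2r)^{1/2}$, costing only $(qr)^{\varepsilon}$. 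First I would apply Proposition \ref{prop:setup} to unfold $\mathcal{B}_{r,q}$ into the nested arithmetic sum over $LR=r$, over $\ell\mid L^{\infty}$ with $\ell\le Y$, over $d_1,d_2\mid\ell$, over $u,v\mid L$ and over $a,b,e_1,e_2$, of the shifted bilinear sums $\mathcal{B}'$ (the tail $\ell>Y$ and the error term of Proposition \ref{prop:setup} are negligible since $Y$ is a large fixed power of $qr$ and $\alpha,\beta$ decay rapidly). The Hecke relation for the divisor function hidden in $\alpha$ turns each $\mathcal{B}'=\mathcal{B}'_{\gamma_1,\gamma_2,\delta_1,\delta_2}$ into a short sum over $\gamma_3\mid\gamma_1$ of the sums $\mathcal{B}''$ of \eqref{eq:BtoB''}, and the classical Petersson formula \eqref{eq:PeterssonFormulaWithKloosterman} splits $\mathcal{B}''=\mathcal{D}''+2\pi i^{-\kappa}\mathcal{S}$, with $\mathcal{D}''$ as in \eqref{eq:D''} and $\mathcal{S}$ as in \eqref{eq:SmathcalDef}. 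Collecting terms, $\mathcal{B}_{r,q}=\mathcal{D}+\mathcal{B}_{\mathcal{S}}$, so $\mathcal{M}(r,q)=\mathcal{M}_0(r,q)+\mathcal{M}_{\mathcal{S}}(r,q)$, where $\mathcal{M}_0$ collects the diagonal terms $\mathcal{D}$ and $\mathcal{M}_{\mathcal{S}}$ the off-diagonal terms coming from $\mathcal{S}$.

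For the off-diagonal contribution I would insert $\mathcal{S}\ll(\sqrt{AB}/\sqrt{R}+r^{3/4}/(\sqrt{q}R))(qr)^{\varepsilon}$ from Proposition \ref{prop:B''bound} and propagate it outward through $\mathcal{B}'_{\mathcal{S}}$ to $\mathcal{B}_{\mathcal{S}}$. Using $\delta_1/\gamma_1=d_1/(e_1^2a^2(u,v))$ and $\delta_2/\gamma_2=d_2/(e_2^2b^2(u,v))$, which follow from \eqref{eq:gamma12def}--\eqref{eq:delta12def}, together with $1/\sqrt{\gamma_1\gamma_2}\le 1/\sqrt{uv}\le 1/(u,v)$, the sums over $a,b,e_1,e_2$ and then over $u,v$ collapse into divisor bounds, leaving $\mathcal{B}_{\mathcal{S}}$ bounded by
\begin{equation*}
(qr)^{\varepsilon}\sum_{LR=r}\frac{1}{\nu(L)}\sum_{\substack{\ell \mid L^{\infty} \\ \ell \leq Y}}\frac{\ell}{\nu(\ell)^{2}}\sum_{d_1,d_2\mid\ell}c_{\ell}(d_1)c_{\ell}(d_2)\Big(\frac{(d_1d_2)^{1/2}}{R^{1/2}}+\frac{r^{3/4}}{q^{1/2}R}\Big).
\end{equation*}
The first summand is recognized via \eqref{eq:SLYdefinition} as $R^{-1/2}\sum_{LR=r}\nu(L)^{-1}S(L,Y)$, so Lemma \ref{lemma:SLYbound} together with an elementary sum over $LR=r$ gives $\ll(qr)^{\varepsilon}r^{-1/2}$; for the second summand, Lemma \ref{lemma:cnubound} yields $\ell\nu(\ell)^{-2}(\sum_{d\mid\ell}c_{\ell}(d))^{2}\le\ell^{-2\gamma_0}$ with $\gamma_0>0$, so the $\ell$-sum converges and the total is $\ll(qr)^{\varepsilon}r^{-1/4}q^{-1/2}$. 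Hence $\mathcal{B}_{\mathcal{S}}\ll(qr)^{\varepsilon}(r^{-1/2}+r^{-1/4}q^{-1/2})$, and summing over the truncated $d$-range preserves this, so $\mathcal{M}_{\mathcal{S}}(r,q)\ll(qr)^{\varepsilon}(r^{-1/2}+r^{-1/4}q^{-1/2})$.

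For the diagonal term $\mathcal{M}_0(r,q)$, built from $\mathcal{D}''$ with the constraint $\delta_1\gamma_3m=\delta_2n$, the same bookkeeping applies: the diagonal relation pins $n$ to $m$, the cutoffs $V_1,V_2$ localize $m$ to a range over which $\sum_m\tau(m)/m\ll(qr)^{\varepsilon}$, and the arithmetic sums over $\ell,d_1,d_2,u,v,a,b,e_1,e_2$ are again controlled by Lemmas \ref{lemma:ChebyshevBound}, \ref{lemma:cnubound} and \ref{lemma:SLYbound}, giving $\mathcal{M}_0(r,q)\ll(qr)^{\varepsilon}$. Therefore $\mathcal{M}(r,q)=\mathcal{M}_0(r,q)+O((qr)^{\varepsilon}(r^{-1/2}+r^{-1/4}q^{-1/2}))\ll(qr)^{\varepsilon}$. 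Finally, since the weights satisfy $\omega_f\gg(qr)^{-1-\varepsilon}$ and the central values $L(1/2,f\otimes\chi_q)$ are nonnegative, the unweighted sum in \eqref{eq:cubicmoment} is $\ll(qr)^{1+\varepsilon}\mathcal{M}(r,q)\ll(qr)^{1+\varepsilon}$, with the dependence on $\kappa$ polynomial since it enters only through $c_{\kappa}$ and the implied constants in the Petersson formula. The main obstacle here is not a single deep estimate---Proposition \ref{prop:B''bound} carries that weight---but the disciplined accounting of the interlocking divisor sums, in particular checking that the Chebyshev-coefficient bounds of Section \ref{section:Chebyshev} are strong enough (that $\gamma_0>0$ so the $\ell$-tail converges, and that the exponent $\log(3/2)/\log 2$ in Lemma \ref{lemma:cnubound} exceeds $\tfrac12$) to absorb all the arithmetic factors without loss.
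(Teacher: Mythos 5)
Your proposal is correct and follows essentially the same route as the paper: unfold $\mathcal{M}(r,q)$ via the approximate functional equation and Proposition \ref{prop:setup}, split off $\mathcal{S}$ with the Petersson formula, insert Proposition \ref{prop:B''bound}, and control the arithmetic sums with the identities $\delta_i/\gamma_i = d_i/(e_i^2 a^2(u,v))$ and Lemmas \ref{lemma:SLYbound}, \ref{lemma:cnubound}, obtaining $\mathcal{B}_{\mathcal{S}} \ll (qr)^{\varepsilon}(r^{-1/2}+r^{-1/4}q^{-1/2})$ and $\mathcal{M}_0(r,q)\ll (qr)^{\varepsilon}$. The only differences are cosmetic: you spell out the $d$-sum truncation, the diagonal bookkeeping, and the passage from the weighted to the unweighted sum via positivity of $\omega_f$ and nonnegativity of the central values, all of which the paper leaves implicit.
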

This is appealing because it lets us reduce the number of variables to consider from this point onward.

\subsection{Diagonal terms}
\label{section:Diagonalterms}
In this section, we evaluate $\mathcal{M}_0(r,q)$ which along with Proposition \ref{prop:B''bound} leads to \eqref{eq:cubicmomentasymptotic}.  

The functions $V_1$ and $V_2$, are given explicitly by
\begin{equation}
\label{eq:Videf}
 V_i(y) = \frac{1}{2\pi i }\int_{(2)} W_i(u_i) y^{-u_i}\,du_i,
\end{equation}
where 
\begin{equation}
\label{eq:Widef}
W_1(u) = (2\pi)^{-u} \frac{\Gamma(u+\frac{\kappa}{2})}{\Gamma(\frac{\kappa}{2})u}, \qquad  W_2(u) = (2\pi)^{-2u} \frac{\Gamma(u+\frac{\kappa}{2})^2}{\Gamma(\frac{\kappa}{2})^2u}.
\end{equation}
Then recalling \eqref{eq:MrqFormula}, \eqref{eq:BtoB'}, \eqref{eq:BtoB''}, and \eqref{eq:D''} 
we have that 
\begin{equation*}
 \mathcal{M}_0(r,q)  =     \int_{(1)} \int_{(1)} W_1(u_1)W_2(u_2) (q^2 r)^{\frac{u_1}{2}+u_2} \zeta(1+2u_2) \zeta(1+u_1 +u_2)^2  F_{r,q,Y}(u_1, u_2) \frac{4 du_1 du_2}{(2\pi i )^2}  ,
\end{equation*}
where 
\begin{multline*}
 F_{r,q,Y}(u_1,u_2)=\sum_{L R =  r} \frac{\mu(L)}{\nu(L)}  \sum_{\substack{\ell | L^{\infty} 
 \ell \leq Y}} \frac{\ell}{ \nu(\ell)^2} 
\sum_{d_1, d_2 | \ell} c_{\ell}(d_1) c_{\ell}(d_2) \\
\sum_{\substack{u|L \\ v |L }} \frac{u v }{(u, v)} \frac{\mu(\frac{u v}{(u, v)^2})}{\nu(\frac{u v}{(u, v)^2})}
\sum_{\substack{a |  \frac{u}{(u, v)} \\ b | \frac{v}{(u, v)} }}  
\sum_{\substack{e_1 | d_1 \\ e_2 | d_2} }
\frac{\chi_q(\gamma_1 \gamma_2)}{\gamma_1^{1/2+u_2} \gamma_2^{1/2+u_1}} 
\sum_{\gamma_3 | \gamma_1} \frac{\tau(\frac{\gamma_1}{\gamma_3}) 
\mu(\gamma_3) \chi_q( \gamma_3)}{\gamma_3^{1/2+u_2}} \\
\frac{\prod_{p|qr} (1-p^{-1-2u_2})}{\zeta(1+u_1+u_2)^{2}} 
\sum_{\delta_1 \gamma_3 m = \delta_2 n}  \frac{\chi_q(mn) \tau(m)}{m^{1/2+u_2} n^{1/2+u_1}} 
,
\end{multline*}
and 
 where recall \eqref{eq:gamma12def} and \eqref{eq:delta12def} for the definitions of $\gamma_1,\gamma_2, \delta_1, \delta_2$, which depend on $a,b,u,v,e_1, e_2, d_1, d_2$.

 Our plan is to shift the contours past the poles.  We claim $F_{r,q,Y}(u_1, u_2)$ is holomorphic in the region $\text{Re}(u_i) = \sigma_i \geq -1/2$, for $i=1,2$, and satisfies the bound
 \begin{equation}
 \label{eq:FrqBound}
 |F_{r,q,Y}(u_1, u_2)| \ll (qr)^{\varepsilon}.
\end{equation}
\begin{proof}
By a simple argument with Euler factors, it is not hard to see that we have the bound
\begin{equation}
\label{eq:lamp}
\zeta(1+u_1+u_2)^{-2}
\sum_{\delta_1 \gamma_3 m = \delta_2 n}  \frac{\chi_q(mn) \tau(m)}{m^{1/2+u_2} n^{1/2+u_1}} \ll (qr)^{\varepsilon},
\end{equation}
and that the left hand side of \eqref{eq:lamp}, and hence $F_{r,q,Y}$, is holomorphic in the desired region.

 Using divisor-type bounds on the inner sums, we have
 \begin{equation*}
  |F_{r,q, Y}(u_1,u_2)| \ll (qr)^{\varepsilon} 
  \sum_{L R =  r} \frac{1}{\nu(L)}  \sum_{\substack{\ell | L^{\infty} 
 \\ 
 \ell \leq Y}} \frac{\ell}{ \nu(\ell)^2} 
\sum_{d_1, d_2 | \ell} c_{\ell}(d_1) c_{\ell}(d_2).
 \end{equation*}
By Lemma \ref{lemma:cnubound}, we have
\begin{equation*}
 \sum_{\substack{\ell | L^{\infty}  \\
 \ell \leq Y}} \frac{\ell}{ \nu(\ell)^2} 
\sum_{d_1, d_2 | \ell} c_{\ell}(d_1) c_{\ell}(d_2) \ll L^{\varepsilon},
\end{equation*}
and hence \eqref{eq:FrqBound} follows.
 \end{proof}
The proof given above, combined with Corollary \ref{coro:ellsumTailBound}, shows that $$F_{r,q,Y} = \lim_{Y \rightarrow \infty} F_{r,q,Y} + O((qr)^{\varepsilon} Y^{-2\gamma_0 + 2\varepsilon}),$$ so for the rest of the calculation of $\mathcal{M}_0(r,q)$ we take $Y = \infty$, and define $F_{r,q} = \lim_{Y \rightarrow \infty} F_{r,q,Y}$.

Rather than attempting to obtain the strongest error term, we take the easiest path that gives some power saving.  We begin by taking $\sigma_1 = 1/2 + \varepsilon$, and $\sigma_2 = 1/2$. Next we shift $u_2$ to the line $\sigma_2 = -1/2$, crossing a double pole at $u_2 = 0$ only.
On the new line, we have
\begin{multline*}
\int_{(\sigma_1)} \int_{(\sigma_2)} |W_1(u_1)W_2(u_2)(q^2 r)^{u_1/2+u_2} \zeta(1+u_1+u_2)^2 \zeta(1+2u_2) \\
 F_{r,q}(u_1,u_2) |
|du_1 du_2 |
\ll (r q^2)^{-1/4 + \varepsilon}.
 \end{multline*}
Some thought shows that 
\begin{multline}
\text{Res}_{u_2 = 0} W_2(u_2) (q^2 r)^{u_2} \zeta(1+u_1 + u_2)^2 \zeta(1+2u_2) F_{r,q}(u_1, u_2) 
\\
= 
F_{r,q}(u_1,0) P_1(\log q^2 r) \zeta^2(1+u_1) + c \zeta'(1+u_1) \zeta(1+u_1) F_{r,q}(u_1,0) \\ + c' \zeta^2(1+u_1) F_{r,q}^{(0,1)}(u_1, 0),
\end{multline} 
where $c,c'$ are constants and $P_1$ is a degree $1$ polynomial.

The residue is now a single integral over $u_1$, and we shift this contour to $\sigma_1 = -1/2 + \varepsilon$.   The new integral is bounded by $(q^2 r)^{-1/4 + \varepsilon}$, again.  The residue at $u_1 =0$ takes the form
\begin{equation}
R_{r,q} := \sum_{\substack{0 \leq i \leq 2 \\ 0 \leq j \leq 1}} P_{i,j}(\log q^2 r) F_{r,q}^{(2-i,1-j)}(0,0),
\end{equation}
where $P_{i,j}$ is a polynomial of degree $\leq i + j$.

Gathering this discussion together, we have shown
\begin{equation*}
\mathcal{M}_0(r,q) = R_{r,q} + O((q^2 r)^{-1/4+\varepsilon}).
\end{equation*}
It would be better to study the main terms in the style of \cite{CFKRS} using shifts, which for the sake of brevity we leave for another occasion.

\subsection{Dyadic subdivisions}
We return to estimating $\mathcal{S}$ defined by \eqref{eq:SmathcalDef}.
Next, we open the divisor function $\tau(m) = \sum_{n_1 n_2 = m} 1$ and apply a dyadic partition of unity to the sums over $n_1, n_2, n=n_3$, and $c$.  This gives
\begin{equation*}
 \mathcal{S}   =   \sum_{N_1, N_2, N_3, C} \frac{1}{(N_1 N_2 N_3)^{1/2} C} \mathcal{S}_{N_1, N_2, N_3, C} +  O((qr)^{-10}),
\end{equation*}
where 
$N_1, N_2, N_3, C$ run over dyadic numbers  
and where
\begin{multline}
\label{eq:Sdef}
 \mathcal{S}_{N_1, N_2, N_3, C}
 =
 \sum_{c \equiv 0 \shortmod{\tilde{q}R}} w_C(c)
 \sum_{n_1, n_2,n_3 \geq 1}   \chi_q(n_1 n_2 n_3) \\
S(An_1 n_2, Bn_3;c) J_{\kappa-1}\Big(\frac{4 \pi \sqrt{ABn_1 n_2 n_3}}{c}\Big)
w_{N_1, N_2, N_3}(n_1, n_2, n_3).
\end{multline}
Here the weight functions $w_C(x)$ and $w_{N_1, N_2, N_3}(x_1, x_2, x_3)$ satisfy
\begin{equation*}
  w_C^{(j)}(x) \ll x^{-j}, \qquad w_{N_1, N_2, N_3}^{(j_1, j_2, j_3)}(x_1, x_2, x_3) \ll x_1^{-j_1} x_2^{-j_2} x_3^{-j_3},
\end{equation*}
and are supported on $x \asymp C$, $x_i \asymp N_i$, $i=1,2,3$.

By the Weil bound, and using $J_{\kappa - 1}(x) \ll x $, the contribution to $\mathcal{S}$ from $c \geq C$ is
\begin{equation*}
 \ll \frac{(qr)^{\varepsilon}}{\sqrt{N_1 N_2 N_3} C} \Big( \frac{\sqrt{AB N_1 N_2 N_3}}{C} \Big) \frac{C^{3/2}}{qR} N_1 N_2 N_3  = \frac{\sqrt{AB} N_1 N_2 N_3}{C^{1/2} q R} (qr)^{\varepsilon}.
\end{equation*}
This is satisfactory for Proposition \ref{prop:B''bound}  for $C \gg \frac{(N_1 N_2 N_3)^2}{q^2 R}$.  Thus we may restrict the variables by
\begin{equation}
\label{eq:Csize}
qR \ll C \ll 
\frac{(N_1 N_2 N_3)^2}{q^2 R},  
\qquad N_1 N_2 \ll \frac{(q^2 r )^{1+\varepsilon}}{d^2 \gamma_1 \gamma_3}, \qquad N_3 \ll \frac{(q r^{1/2})^{1+\varepsilon}}{\gamma_2}.
\end{equation}

Let us also write  \eqref{eq:Sdef} as 
\begin{equation}
\label{eq:S'def}
 \mathcal{S}_{N_1, N_2, N_3, C}  = \sum_{c \equiv 0 \shortmod{\tilde{q}R}} w_C(c) \mathcal{S}'_{N_1, N_2, N_3, c}.
\end{equation}

\subsection{Poisson summation}
Let $[c,q] = \lcm(c,q)$.  We have
\begin{equation}
\label{eq:S'Poisson}
 \mathcal{S}'_{N_1, N_2, N_3, c} = 
 \sum_{m_1, m_2, m_3 \in \mathbb{Z}} G_{A,B}(m_1, m_2, m_3;c) K(m_1, m_2, m_3;c),
\end{equation}
where
\begin{multline}\label{eq:GABdef}
 G_{A,B}(m_1, m_2, m_3;c)  =  \frac{1}{[c,q]^3} \sum_{x_1, x_2, x_3 \shortmod{[c,q]}} \chi_q(x_1 x_2 x_3) \\ S(A x_1 x_2, Bx_3;c)  e\Big(\frac{x_1 m_1 + x_2 m_2 + x_3 m_3}{[c,q]}\Big),
\end{multline}
and
\begin{multline}
\label{eq:Kdef}
 K(m_1, m_2, m_3;c) 
 = \int_{\mathbb{R}^3} J_{\kappa-1}\Big(\frac{4 \pi \sqrt{AB t_1 t_2 t_3}}{c}\Big) e\Big(\frac{-m_1 t_1 - m_2 t_2 - m t_3}{[c,q]}\Big) \\
 w_{N_1, N_2, N_3}(t_1, t_2, t_3)  dt_1 dt_2 dt_3.
\end{multline}
When $A=B=1$ and $q$ is odd and square-free, this is precisely as in \cite{CI} (though the reader should be aware of our slightly different normalization of $G$ by $[c,q]^{-3}$), so this   appears at first glance to be a fairly minor generalization of their work, however the calculations become rather intricate.

\section{Arithmetic Part}
Let $(\epsilon_1,\epsilon_2,\epsilon_3) \in \{\pm 1\}^3$, $\delta = 1,2,4$, or $8$, and write $q_e$ for the even part of $q$.  Let \begin{equation}
\label{eq:ZRqdefinition}
 Z_{\delta,R,q}^{(\epsilon_1,\epsilon_2,\epsilon_3)}(s_1, s_2, s_3, s_4) =  \sum_{\substack {\epsilon_i m_i \geq 1 \\ i=1,2,3}} \sum_{\substack{c \equiv 0 \shortmod{R\tilde{q}} \\ (c,q_e)=\delta}} \frac{cq G_{A,B}(m_1, m_2, m_3;c) e_{AB[c,q]^3/c^2}(-m_1 m_2 m_3)}{|m_1|^{s_1} |m_2|^{s_2} |m_3|^{s_3} (\frac{c}{R\tilde{q}})^{s_4}}.
\end{equation}
One of the key ingredients of the Conrey-Iwaniec method (when $A=B=1$ and $q$ is odd) is that the additive character $e_{AB[c,q]^3/c^2}(-m_1 m_2 m_3)$ nicely combines with $G(m_1, m_2, m_3;c)$, allowing for an efficient decomposition into multiplicative characters.  

To avoid over-burdening the already burdened notation we only give proofs in the case $(\epsilon_1,\epsilon_2,\epsilon_3)= (1,1,1)$ and denote this case simply $Z_{\delta,R,q}$, the other cases being treated similarly.  Note that we have $(AB) \ll (qr)^L$ for some fixed but possibly large $L > 0$ (see Proposition \ref{prop:setup}) so that $(AB)^{\varepsilon} \ll (qr)^{\varepsilon'}$.

The main goal of this section is the following proposition.
 
\begin{myprop}
 \label{prop:Zproperties}
For each choice of $(\epsilon_1,\epsilon_2,\epsilon_3)$ and $\delta$ there is a decomposition $Z_{\delta,R,q} = Z^{(\epsilon_1,\epsilon_2,\epsilon_3)}_{\delta,R,q}(s_1, s_2, s_3, s_4) = Z_0 + Z'$, where $Z_0$ and $Z'$ have the following properties.
Here
$Z_0$ is analytic in $\text{Re}(s_i) \geq 1 + \sigma$ for $i=1,2,3,4$, $\sigma > 0$ and in this region 
it satisfies the bound
\begin{equation}
 \label{eq:Z00boundRightOfCriticalStrip}
Z_{0} \ll_{\sigma, \varepsilon}  \frac{(qr)^\eps}{AB}.
\end{equation}
The function $Z'$ is analytic for $\text{Re}(s_i) \geq \frac12 + \sigma$ for $i=1,2,3,4$, any $\sigma > 0$, and in this region satisfies the bound 
  \begin{equation}
|Z'| \ll_{\sigma, \varepsilon} q^{3/2} (AB)^{1/2} (qr)^{\varepsilon} 
\prod_{j=1}^{4} (1+|s_j|)^{1/4+\varepsilon}.
\end{equation}  
  Moreover, if $s_j = 1/2 + \varepsilon + i(y_j + t)$ for $j=1,2,3$, $s_4 = 1/2 + \varepsilon + i(y_4 -t)$, and $y_j \ll (qr)^{\varepsilon}$ for $j=1,2,3,4$, then we have
  \begin{multline}
  \label{eq:Z'integralbound}
   \int_{|t| \leq T} |Z'(1/2 + \varepsilon + i(y_1 + t), 1/2 + \varepsilon + i(y_2 + t), 1/2 + \varepsilon + i(y_3 + t), 1/2 + \varepsilon + i(y_4 - t))| dt 
   \\
   \ll q^{3/2} (AB)^{1/2} T (qrT)^{\varepsilon}.
  \end{multline}
 \end{myprop}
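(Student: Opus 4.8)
The plan is to carry out the arithmetic analysis of Conrey and Iwaniec \cite{CI}, but now with the parameters $A,B,R$ present and with $q$ allowed to be even, which makes the bookkeeping considerably heavier. \textbf{First}, one evaluates the exponential sum $G_{A,B}(m_1,m_2,m_3;c)$ of \eqref{eq:GABdef}: open the Kloosterman sum $S(Ax_1x_2,Bx_3;c)$, execute the three sums over $x_1,x_2,x_3$ modulo $[c,q]$ (each a Gauss-type sum), and factor by the Chinese Remainder Theorem into local contributions at the primes dividing $[c,q]=\lcm(c,q)$, treating the primes dividing $q$ -- in particular those dividing the even part $q_e$, which is why the residue class $\delta$ must be tracked -- separately from the primes dividing $c$ alone. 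The entire purpose of the auxiliary factor $e_{AB[c,q]^3/c^2}(-m_1m_2m_3)$ in \eqref{eq:ZRqdefinition} is that, after this reduction, the variables $m_1,m_2,m_3$ enter only through pure multiplicative characters: one obtains an identity expressing $cq\, G_{A,B}(m_1,m_2,m_3;c)\, e_{AB[c,q]^3/c^2}(-m_1m_2m_3)$ as a short sum of terms of the form (a Gauss-sum factor depending only on $c,q,A,B,\delta$) times $\psi_1(m_1)\psi_2(m_2)\psi_3(m_3)$, with the $\psi_i$ ranging over a controlled family of Dirichlet characters of bounded conductor. The genuinely degenerate configurations -- where some $x_i$-sum forces extra divisibility by a factor of $AB$, or a Kloosterman sum collapses to a Ramanujan sum -- are split off: their total contribution is declared to be $Z_0$, the forced divisibility producing the saving $1/(AB)$, and since that piece is an absolutely convergent Dirichlet series in $\text{Re}(s_i)>1$ the bound \eqref{eq:Z00boundRightOfCriticalStrip} is immediate.

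\textbf{Second}, substitute this evaluation into \eqref{eq:ZRqdefinition}. The sums over $m_1,m_2,m_3$ become Dirichlet $L$-functions $L(s_i,\psi_i)$, while the remaining sum over $c$ (subject to $c\equiv 0\shortmod{R\tilde{q}}$ and $(c,q_e)=\delta$) becomes, after a routine Euler-product rearrangement, a finite product of Dirichlet $L$-functions in $s_4$ times an Euler product that converges absolutely and is $\ll (qr)^\varepsilon$ in a half-plane $\text{Re}(s_4)\ge \tfrac12-\delta_0$. Thus the generic part $Z'$ is exhibited as a finite $\mathbb{C}$-linear combination of products
\[
 \tau(\psi_1)\,\tau(\psi_2)\,\tau(\psi_3)\; L(s_1,\psi_1)\,L(s_2,\psi_2)\,L(s_3,\psi_3)\;\mathcal{L}(s_4)
\]
multiplied by Euler products holomorphic and $\ll (qr)^\varepsilon$ on $\text{Re}(s_i)\ge\tfrac12+\sigma$. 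All potential poles -- a $\zeta$-factor among the $L(s_i,\psi_i)$ when some $\psi_i$ is principal, or a pole of the $s_4$-factor $\mathcal{L}$ at $s_4=1$ -- lie on $\text{Re}=1$, so they do not obstruct holomorphy of $Z'$ on $\text{Re}(s_i)\ge\tfrac12+\sigma$; this is where $\sigma>0$ enters. The Gauss-sum prefactor is $\ll q^{3/2}(AB)^{1/2}(qr)^\varepsilon$ (three Gauss sums whose moduli divide the relevant divisors of $q$, together with their coprime $AB$-parts), and, absorbing the conductors of the $\psi_i$ into this prefactor, the convexity bound $L(\tfrac12+\varepsilon+it,\psi)\ll(1+|t|)^{1/4+\varepsilon}(qr)^\varepsilon$ applied to each of the four $L$-factors yields the stated pointwise bound on $Z'$.

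\textbf{Third}, for the integral estimate \eqref{eq:Z'integralbound} one cannot simply integrate the pointwise bound, which would lose a full power of $T$ (namely $T^{2+\varepsilon}$ in place of $T^{1+\varepsilon}$). Instead, with the coupled shifts $s_j=\tfrac12+\varepsilon+i(y_j+t)$ for $j=1,2,3$ and $s_4=\tfrac12+\varepsilon+i(y_4-t)$, apply H\"older's inequality in $t$ to separate the four $L$-factors and invoke the classical fourth-moment bound $\int_{|t|\le T}|L(\tfrac12+\varepsilon+it,\psi)|^4\,dt\ll (qrT)^{1+\varepsilon}$ (equivalently, pair the $L$-factors and apply the mean-value theorem to each pair); since the conductor dependence has already been extracted into the prefactor $q^{3/2}(AB)^{1/2}$, this replaces $T^{2+\varepsilon}$ by $T^{1+\varepsilon}$ and gives \eqref{eq:Z'integralbound}. \textbf{The main obstacle} is the first step: evaluating $G_{A,B}$ with complete uniformity in $c$, $q$ (including the case $2\mid q$), $A$, and $B$, and in particular verifying that the twisting factor $e_{AB[c,q]^3/c^2}(-m_1m_2m_3)$ genuinely linearises the $m_i$-dependence into multiplicative characters of conductor not growing with $c$. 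This is the heart of the Conrey--Iwaniec mechanism, and each additional prime factor of $A$, $B$, $R$, or of $2$ in $q$ forces a new local case that must be reconciled with the others; once $G_{A,B}$ is understood, the passage to $L$-functions and the analytic bounds are routine.
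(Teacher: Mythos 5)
Your overall architecture matches the paper's: CRT factorization of $G_{A,B}$ into the odd part coprime to $AB$ (Conrey--Iwaniec), the part supported on $(AB)^{\infty}$, and the $2$-part; the observation that the twist $e_{AB[c,q]^3/c^2}(-m_1m_2m_3)$ linearises the $m_i$-dependence into Dirichlet characters of conductor dividing $q_o$ (times bounded $2$-power and $AB$-local data); the split $Z=Z_0+Z'$ according to whether the resulting character is principal; and H\"older in $t$ for \eqref{eq:Z'integralbound}. (One caveat on $Z_0$: the saving $1/(AB)$ is not ``immediate'' from absolute convergence --- it requires the sharper principal-character bounds on the local factors, cf.\ \eqref{eq:gammaboundPrincipalCharacterCase} and \eqref{eq:G*bound2}, together with executing the sums over $m_i''\mid(AB)^{\infty}$ and $c_2\mid(AB)^{\infty}$.)

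The genuine gap is in your treatment of the conductor aspect of the four $L$-factors. The bound you quote, $L(\tfrac12+\varepsilon+it,\psi)\ll(1+|t|)^{1/4+\varepsilon}(qr)^{\varepsilon}$, is not the convexity bound: convexity in the conductor aspect carries an extra $D_2^{1/4+\varepsilon}$ per factor, where $D_2\mid q_o$ is the conductor of $\psi$. Tracking the prefactors, the non-principal terms carry $|g_{D_1,D_2,h,k,\psi}|\ll D_2^{3/2+\varepsilon}$ inside an average $\varphi(D_2)^{-1}\sum_{\psi\bmod D_2}$; applying genuine convexity to each of the four $L$-factors individually yields $D_2^{3/2}\cdot D_2^{4\cdot 1/4}=D_2^{5/2}$, i.e.\ $q^{5/2}$ rather than the claimed $q^{3/2}$, and that extra factor of $q$ propagates through Section \ref{section:recombination} (where the $q^{3/2}$ exactly cancels against $q\cdot(C/(qR))^{1/2}$-type factors) and destroys the Weyl exponent. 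What is actually needed --- and what the paper does --- is Lindel\"of \emph{on average over the family}: H\"older over $\psi\bmod D_2$ combined with the large-sieve fourth moment $\varphi(Q)^{-1}\sum_{\chi\bmod Q}|L(s,\chi)|^4\ll Q^{\varepsilon}(1+|s|)^{1+\varepsilon}$, so that the family average contributes only $D_2^{\varepsilon}$. The same issue recurs in your third step: the single-character bound $\int_{|t|\le T}|L(\tfrac12+\varepsilon+it,\psi)|^4\,dt\ll(qrT)^{1+\varepsilon}$ again loses the crucial division by $\varphi(D_2)$; you need the hybrid large sieve $\int_{|t|\le T}\sum_{\chi\bmod q}|L(\tfrac12+it,\chi)|^4\,dt\ll(qT)^{1+\varepsilon}$ (Gallagher), applied to the whole character family jointly with the $t$-integral.
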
 
 
We begin by reducing the evaluation of $G_{A, B}$ into cases.  First, write $c=c_1c_2$ with $c_2 | (AB)^{\infty}$, and $(c_1,AB)=1$.  As $r=RL$ is square-free and $(r,q)=1$ we have $(qR,AB)=1$, hence $(qR,c_2)=1$.  By a calculation with the Chinese remainder theorem, we have
\begin{multline}
\label{eq:GABchineseremaindertheorem}
G_{A,B}(m_1, m_2, m_3 ;c_1 c_2) = \chi_q(AB) G_{1,1}(m_1, m_2, \overline{AB c_2} m_3;c_1) \\ G_{A,B}(m_1, m_2, \overline{[c_1, q]}^3c_1^2 m_3 ;c_2),
\end{multline}
where in the definition of $G_{A,B}(m_1,m_2,m_3;c_2)$ for $c_2 \mid (AB)^\infty$ we implicitly take $q=1$.
Write $q = q_o q_e$ where $q_o$ is odd and $q_e \in \{1,4,8\}$.  
We further decompose $c_1$ by $c_1 = c_o c_e$ where $c_o$ is odd and $c_e$ is a power of $2$.  Another short calculation with the Chinese remainder theorem shows
\begin{equation}\label{eq:G11crt}
 G_{1,1}(\ell_1, \ell_2, \ell_3 ;c_o c_e) = 
 G_{1,1}(\ell_1, \ell_2,  \overline{[c_e,q_e]}^3c_e^2  \ell_3 ;c_o )
G_{1,1}(\ell_1, \ell_2, \overline{c_o} \ell_3 ;c_e ).
 \end{equation}

 Next we evaluate the three types of $G_{A,B}$ sums in a form most relevant for our further calculations.  The case with modulus $c_o$ was derived by \cite[\S 10]{CI}.  Following the notation found in \cite{CI}, write
\begin{equation*}
c_o = q_o s_o. 
\end{equation*} 
 \begin{mylemma}[Conrey and Iwaniec]
 \label{lemma:GevalConreyIwaniec}
 We have for $q_o,s_o \in \mn$ with $(q_o,2)=1$ that 
\begin{multline}
\label{eq:GevalConreyIwaniec}
c_o q_o e\Big(\frac{-a_1 a_2 a_3}{c_o} \Big) G_{1,1}(a_1, a_2, a_3 ;c_o) = 
 \sum_{\substack{D_1 D_2 hk = q_o \\ h = (q_o,s_o) \\ k = (a_1 a_2 a_3, q_o) \\ (h, a_1 a_2) = 1}} 
 \frac{1}{\varphi(D_2)} \\ \sum_{\psi \shortmod{D_2}} g_{D_1, D_2, h, k, \psi} \psi(a_1 a_2 a_3) \overline{\psi}(s_o) R_k(a_1) R_k(a_2) R_k(a_3),
\end{multline}
where $R_k(n) = S(n,0;k)$ is the Ramanujan sum,
and $g$ is some function satisfying
\begin{equation}
 |g_{D_1, D_2, h, k, \psi}| \ll D_2^{3/2+\varepsilon},
\end{equation}
and where in addition we must have $(a_3, s_o) = 1$, otherwise $G$ vanishes.
In case $\psi$ is principal, then $|g_{D_1, D_2, h, k, \psi}| \ll D_2^{\varepsilon}$.
 \end{mylemma}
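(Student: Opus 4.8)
The plan is to reduce directly to the computation of Conrey and Iwaniec. With $A=B=1$ and modulus $c_o$ coprime to $2$, and with the understanding that the character in \eqref{eq:GABdef} is $\leg{\cdot}{q_o}$ (here $q_o\mid c_o$, so $[c_o,q_o]=c_o$), the sum $G_{1,1}(a_1,a_2,a_3;c_o)$ is exactly the object evaluated in \cite[\S 10]{CI}. Thus the proof is to import their evaluation; I indicate the structure below so that the shape of the right-hand side of \eqref{eq:GevalConreyIwaniec} and the bounds on $g$ become transparent.

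First I would open the Kloosterman sum, turning $c_o^3\,G_{1,1}(a_1,a_2,a_3;c_o)$ into
\[
\sumstar_{d \bmod c_o}\ \sum_{x_1,x_2,x_3 \bmod c_o}\leg{x_1x_2x_3}{q_o}\,e\!\left(\frac{x_1(x_2 d+a_1)+x_2 a_2+x_3(\bar d+a_3)}{c_o}\right),
\]
and then carry out the sums over $x_3$, $x_1$, and $x_2$ in turn. The $x_3$- and $x_1$-sums are twisted Gauss sums $\sum_{x\bmod c_o}\leg{x}{q_o}e(xn/c_o)$, which factor by the Chinese remainder theorem over the primes $p\mid q_o$ (distinct, since $q_o$ is squarefree in our application); writing $p^a\|c_o$, so that $p^{a-1}\|s_o$, the local factor splits into a sum over $x\bmod p$ and over $x\bmod p^{a-1}$ and equals $0$ unless $p^{a-1}\mid n$, and otherwise $p^{a-1}$ times a quadratic Gauss sum in $n/p^{a-1}$. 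These divisibility constraints are precisely what produce $h=(q_o,s_o)$; at primes dividing $(h,a_1a_2)$ the $x_1$- or $x_2$-sum vanishes, giving the condition $(h,a_1a_2)=1$, and likewise one needs $(a_3,s_o)=1$. After the $x_3$- and $x_1$-sums one is left with a sum over $x_2$ and $d$ of a product of two Legendre symbols against an additive character, i.e.\ Sali\'e/Kloosterman-type sums to prime-power moduli dividing $q_o$: the squarefree part $k=(a_1a_2a_3,q_o)$ comes out as $R_k(a_1)R_k(a_2)R_k(a_3)$, the residual character content reorganizes into Dirichlet characters $\psi$ to moduli $D_2$ with $D_1D_2hk=q_o$, and the leftover $d$-sum is absorbed into the coefficient $g_{D_1,D_2,h,k,\psi}$.

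For the size of $g$: in general the three $x_i$-sums each contribute a Gauss sum of modulus dividing $D_2$, hence a factor $\ll D_2^{1/2}$, while the residual $d$-sum is $\ll D_2^{\varepsilon}$ by the Weil bound (or an explicit Sali\'e evaluation), giving $|g_{D_1,D_2,h,k,\psi}|\ll D_2^{3/2+\varepsilon}$. When $\psi$ is principal the three Gauss-sum factors degenerate into Ramanujan sums, exactly the $R_k$'s already pulled in front, rather than genuine Gauss sums, so the three $D_2^{1/2}$ factors are absent and one obtains $|g|\ll D_2^{\varepsilon}$.

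The only genuine difficulty is bookkeeping: keeping the four divisors $D_1,D_2,h,k$ of $q_o$ and the interplay of $q_o$, $s_o$, and the contents $(a_i,q_o)$ consistent through the triple Chinese-remainder factorization, and checking that the three $x_i$-reductions impose mutually compatible conditions so that $D_1D_2hk=q_o$ emerges cleanly. Since Conrey and Iwaniec carry this out in precisely the relevant case in \cite[\S 10]{CI}, the cleanest route is to cite their computation; I would reproduce only the short local Gauss-sum evaluation above, so that the vanishing conditions and the bound on $g_{D_1,D_2,h,k,\psi}$ are made explicit.
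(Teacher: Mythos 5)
Your overall route -- reducing $G_{1,1}$ to the sums computed by Conrey and Iwaniec and importing their evaluation -- is exactly what the paper does: its proof consists of quoting \cite[Lemma 10.2]{CI} (formula \eqref{eq:G11CIformula}), the decompositions \cite[(11.7), (11.9)]{CI}, and then bookkeeping, with the vanishing conditions $(h,a_1a_2)=1$, $(a_3,s_o)=1$ and the extraction of $R_k(a_1)R_k(a_2)R_k(a_3)$ coming straight from that lemma. The problem is your stated justification of the central estimate $|g_{D_1,D_2,h,k,\psi}|\ll D_2^{3/2+\varepsilon}$, which you say you would reprove rather than cite. Your accounting -- three one-variable Gauss sums of size $D_2^{1/2}$ plus ``a residual $d$-sum $\ll D_2^{\varepsilon}$ by the Weil bound (or an explicit Sali\'e evaluation)'' -- does not give the bound. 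First, a one-variable Weil bound for a complete sum of length $D_2$ yields $D_2^{1/2+\varepsilon}$, never $D_2^{\varepsilon}$. Second, and more importantly, after the $x_1$- and $x_3$-sums only \emph{one} genuine Gauss sum factor $\tau(\overline{\psi})\ll D_2^{1/2}$ can be extracted from the coefficient; what remains is Conrey--Iwaniec's $g(\chi_{D_2},\psi)$, a \emph{complete two-variable} character sum modulo $D_2$ (the $x_2$- and $d$-variables jointly, after the multiplicative-character expansion in $w=\overline{s_ohk}\,a_1a_2a_3$), of trivial size $D_2^{2}$. The required bound \eqref{eq:gchipsiBound}, i.e.\ $|g(\chi_{D_2},\psi)|\ll D_2^{1+\varepsilon}$, is square-root cancellation in both variables simultaneously; the presence of the arbitrary character $\psi$ destroys any Sali\'e-type explicit evaluation, and bounding the inner $(x_2)$-sum by Weil and then summing trivially (or by another one-variable Weil bound) in $d$ gives only $D_2^{2+\varepsilon}$ for $g(\chi_{D_2},\psi)$, hence $D_2^{5/2+\varepsilon}$ for the coefficient. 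This is precisely why the paper emphasizes that the estimate ``requires the Riemann Hypothesis for varieties, i.e., Deligne's bound''; your elementary substitute would fail, and with it the Weyl-quality exponent.

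A related normalization point: because the expansion carries the factor $1/\varphi(D_2)$, Fourier inversion gives $g_{D_1,D_2,h,k,\psi}$ as a full sum over $w\bmod D_2$ of the left-hand side, so even a sharp \emph{pointwise} bound on $c_oq_o\,e(-a_1a_2a_3/c_o)G_{1,1}$ (which your Gauss-sum analysis does deliver) only yields $g\ll D_2^{5/2}$; the stated $D_2^{3/2+\varepsilon}$ needs additional cancellation in the $w$-average, which is again the two-variable Deligne input. Finally, your explanation of the principal-character case is off: there the saving comes from $\tau(\psi_0)=\mu(D_2)$ together with the explicit evaluation $|g(\chi_{D_2},\psi_0)|\le d(D_2)$ (per prime, $g(\chi_p,\psi_0)=2$ or $0$ according to $p\bmod 4$, a point where the paper corrects a formula of \cite{CI}), not from the Ramanujan sums $R_k$ already pulled out front, whose modulus $k$ is coprime to $D_2$. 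The fix is simple: cite \cite[Lemma 10.2 and \S 11]{CI} for both the structural identity and the bound on $g(\chi_{D_2},\psi)$, rather than attempting an elementary re-derivation of the latter.
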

Conrey and Iwaniec in fact give a more precise formula that we describe within the proof.

Next we evaluate the case with modulus $c_2 | (AB)^{\infty}$.  
\begin{mylemma}
\label{lemma:GABevalSimplifiedForm}
Suppose $c_2 | (AB)^{\infty}$.  Suppose that $a_1a_2a_3 \neq 0$ and write $a_i = u_i v_i$ where $(u_i, AB) =1$ and $v_i | (AB)^{\infty}$.  
Then
\begin{multline}
\label{eq:GABevalSimplifiedForm}
c_2 e\Big(\frac{-a_1 a_2 a_3}{c_2 AB}\Big) G_{A,B}(a_1, a_2, a_3; c_2) 
=  
\delta((A,c_2) | v_1) \delta((A,c_2) | v_2) \delta((B,c_2) | v_3)
\\
\times
\sum_{\substack{g_1 g_2 | \frac{c_2}{(A,c_2)} \\ g_1 = (\frac{v_1}{(A,c_2)}, \frac{c_2}{(A,c_2)}) \\ g_2 = (\frac{v_2}{(A,c_2)}, \frac{c_2}{g_1 (A,c_2)})}}
 \sum_{D | \frac{c_2 AB}{(c_2, A)^2 (c_2, B)}} \frac{1}{\varphi(D)}
 \sum_{\eta \shortmod{D}} \gamma_\eta \eta(u_1 u_2 u_3)
,
\end{multline}
were $\gamma_\eta = \gamma_{v_1, v_2, g_1, g_2, c_2, A, B, D, \eta}$ is some function satisfying the bound
\begin{equation}
\label{eq:gammabound}
 |\gamma_{v_1, v_2, g_1, g_2, c_2, A, B, D, \eta}| 
 \ll  (A,c_2)(B,c_2)  D^{1/2}.
\end{equation}
In case $\eta$ is principal then with $A = (A,c_2) A'$ and $B = (B,c_2) B'$, we have
\begin{equation}
\label{eq:gammaboundPrincipalCharacterCase}
 \frac{|\gamma_\eta|}{D} \ll (qr)^{\varepsilon} (A,c_2)(B,c_2) \frac{(\frac{v_1 v_2 v_3}{(A,c_2)^2 (B,c_2)}, A'B')}{A' B'}.
\end{equation}
\end{mylemma}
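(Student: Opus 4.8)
The plan is to compute $G_{A,B}(a_1,a_2,a_3,c_2)$ directly, in the spirit of Lemma~\ref{lemma:GevalConreyIwaniec}. With the convention $q=1$ the definition \eqref{eq:GABdef} reads
\begin{equation*}
 G_{A,B}(a_1,a_2,a_3,c_2) = \frac{1}{c_2^3} \sum_{x_1, x_2, x_3 \shortmod{c_2}} S(Ax_1 x_2, Bx_3; c_2)\, e\Big( \frac{x_1 a_1 + x_2 a_2 + x_3 a_3}{c_2}\Big),
\end{equation*}
a complete exponential sum modulo $c_2$. First I would open the Kloosterman sum as $S(Ax_1x_2,Bx_3;c_2)=\sum_{(d,c_2)=1} e((Ax_1x_2 d + Bx_3\overline d)/c_2)$ and carry out the sums over $x_1$, $x_2$, $x_3$ in turn. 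The sums over $x_1$ and $x_3$ are linear and collapse to the congruences $A x_2 d\equiv -a_1$ and $B\overline d\equiv -a_3\pmod{c_2}$; their solvability forces $(A,c_2)\mid a_1$ and $(B,c_2)\mid a_3$, which (since $(u_1 u_3,AB)=1$ and $c_2\mid(AB)^\infty$) is the same as $(A,c_2)\mid v_1$ and $(B,c_2)\mid v_3$, producing two of the indicator functions in \eqref{eq:GABevalSimplifiedForm}. The remaining sum over $x_2$, now constrained to a coset modulo $c_2/(A,c_2)$, is again linear: it forces $(A,c_2)\mid a_2$, hence $(A,c_2)\mid v_2$ (the third indicator), contributes a multiplicity $(A,c_2)$, and leaves a phase of the shape $e\big(-a_1^\ast a_2^\ast\,\overline{(A/(A,c_2))\,d}\,/(c_2/(A,c_2))\big)$ with $a_i^\ast=a_i/(A,c_2)$.

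One is then left with a single complete sum over the units $d$ modulo $c_2$ lying in a fixed coset modulo $c_2/(B,c_2)$, of a phase that is essentially an additive character in $\overline d$; the external factor $e(-a_1a_2a_3/(c_2 AB))$ in the statement is tailored precisely, as in \cite[\S10]{CI}, to combine with this $d$-sum into a clean Gauss-sum shape. To make this visible I would introduce the auxiliary quantities $g_1=(v_1/(A,c_2),c_2/(A,c_2))$ and $g_2=(v_2/(A,c_2),c_2/(g_1(A,c_2)))$, which record how much further the primes of $c_2$ divide $v_1$ and $v_2$ beyond the forced factor $(A,c_2)$; together with $(B,c_2)$ these determine the exact modulus $D\mid c_2 AB/((c_2,A)^2(c_2,B))$ of the surviving character sum. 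Separating the dependence of the phase on the parts $u_i$ of $a_i$ coprime to $AB$ from its dependence on the $v_i$, and detecting the residual coset condition on $d$ by the orthogonality relation $\mathbf{1}[d\equiv d_0\shortmod{D}]=\varphi(D)^{-1}\sum_{\eta\shortmod{D}}\eta(d)\overline\eta(d_0)$, produces exactly the shape $\sum_D \varphi(D)^{-1}\sum_\eta \gamma\,\eta(u_1u_2u_3)$, where $\gamma$ absorbs the Gauss-sum-type $d$-sum, the multiplicities $(A,c_2)$ and $(B,c_2)$ from the collapsed $x_2$- and $d$-sums, and all remaining dependence on the $v_i$.

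The bound $|\gamma|\ll (A,c_2)(B,c_2)D^{1/2}$ then follows from square-root cancellation in the one-variable twisted Gauss/Kloosterman sum that remains after the leading term has been absorbed (indeed it is of size $|\tau(\eta)|=D^{1/2}$ when $\eta$ is primitive modulo $D$), the two gcd factors accounting for the solutions collapsed in the earlier steps. In the principal-character case this $d$-sum degenerates into a Ramanujan sum $R_D(\cdot)$, whose modulus is controlled by a divisor rather than by $D^{1/2}$, and which moreover vanishes unless a squarefree-kernel condition holds; tracking that the argument of this Ramanujan sum reduces modulo $A'B'$ (with $A'=A/(A,c_2)$, $B'=B/(B,c_2)$) to $v_1v_2v_3/((c_2,A)^2(c_2,B))$ yields the sharper estimate \eqref{eq:gammaboundPrincipalCharacterCase}.

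I expect the main obstacle to be exactly this greatest-common-divisor bookkeeping. On the one hand, one must handle the $2$-adic places carefully: there the solvability of the linear congruences above, the precise coset of $d$, and the evaluation of the relevant Gauss sums are all more delicate, and some additional coprimality (``support'') condition analogous to the ``$(a_3,s_o)=1$'' in Lemma~\ref{lemma:GevalConreyIwaniec} must be isolated. On the other hand, the principal-character estimate has to be made sharp, which requires correctly identifying both the vanishing conditions of the Ramanujan sum and the exact gcd factor $(v_1v_2v_3/((c_2,A)^2(c_2,B)),A'B')$ governing its size. The reduction to prime-power $c_2$ by the Chinese remainder theorem (applied to the $x_i$ and to the internal variable $d$) is routine but needs a little care, since the external phase has modulus $c_2 AB$, whose prime support exceeds that of $c_2$.
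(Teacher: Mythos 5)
Your opening moves coincide with the paper's proof: open the Kloosterman sum, collapse the $x_1$- and $x_3$-sums into congruences, pick up the factor $(A,c_2)$ and the condition $(A,c_2)\mid a_2$ from the $x_2$-sum, and arrive at a sum over units modulo $c_2$ restricted to a coset modulo $c_2'=c_2/(B,c_2)$ of an additive character of modulus $c_2''=c_2/((A,c_2)g_1g_2)$. (Two small corrections already at this stage: the $x_3$-sum forces the \emph{equality} $(B,c_2)=(a_3,c_2)$, not merely $(B,c_2)\mid a_3$, since the dual variable is a unit; and no reduction to prime-power $c_2$ or special treatment of $p=2$ is needed here --- the dyadic delicacies belong to the separate lemma for $c_e$.) The genuine gap is at exactly the point you flag as ``the main obstacle'' and then leave unresolved. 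The device you propose --- detect the coset condition on $d$ by orthogonality of characters modulo $D$ and read off $\eta(u_1u_2u_3)$ --- cannot produce the stated shape: characters detecting the coset live modulo $c_2'$ and would only ever see $u_3$ (through $\overline{\eta}(d_0)$ with $d_0\equiv-\overline{B'}\widetilde{a_3}$), while $u_1u_2$ sits inside an additive phase of the \emph{incomparable} modulus $c_2''$; there is no single modulus $D$ from which $\eta(u_1u_2u_3)$ emerges this way, and the bound $|\gamma|\ll (A,c_2)(B,c_2)D^{1/2}$ does not follow.

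What the paper actually does is factor $c_2=c_zc_fc_g$ according to whether $1\le\nu_p(c_2')<\nu_p(c_2'')$, or $\nu_p(c_2'')\le\nu_p(c_2')$, or $\nu_p(c_2')=0<\nu_p(c_2'')$, and evaluate the restricted unit sum in closed form on each factor: the $c_z$-part forces $G$ to vanish unless $c_z=1$; the $c_f$-part contributes $\varphi(c_f)/\varphi(c_f')$ times the explicit phase $e\bigl(\overline{A'B'}\,\widetilde{a_1}\widetilde{a_2}\widetilde{a_3}\,\overline{c_g''}/c_f''\bigr)$; the $c_g$-part is a Ramanujan sum $R_{c_g}(c_g/c_g'')$. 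Only \emph{after} this evaluation is the external factor $e(-a_1a_2a_3/(ABc_2))$ brought in: by reciprocity it converts the $c_f''$-phase into a phase of modulus $c_g''A'B'$, and it is \emph{that} additive character which is expanded into multiplicative characters $\eta$ modulo $D=c_g''A'B'/g_3$ via the Gauss-sum identity, yielding $\eta(u_1u_2u_3)$ together with $\tau(\overline{\eta})\ll D^{1/2}$ (whence \eqref{eq:gammabound}) and $\tau(\overline{\eta_0})=\mu(D)$ in the principal case (whence \eqref{eq:gammaboundPrincipalCharacterCase}, after bounding $g_3\le c_g''\bigl(\tfrac{v_1v_2v_3}{(A,c_2)^2(B,c_2)},A'B'\bigr)$). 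So your overall strategy is the correct one and you have located the hard step precisely, but the mechanism you propose for it fails, and the actual evaluation of the incomparable-moduli coset sum --- the heart of the lemma --- is missing from your argument.
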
 
Again the point is that we get a short linear combination of multiplicative functions.

Finally, we consider the case of $c_{e}$.  For this, we have
\begin{mylemma}
\label{lemma:GevalPowerof2}
Suppose $c_{e}$ is a power of $2$.  Suppose $a_1a_2a_3\neq 0$ and write each $a_i = e_i f_i$ where $e_i$ is a power of $2$, and $f_i$ is odd. Then
\begin{equation}\label{eq:GABevalEven}
q_e c_e e\Big(\frac{-a_1 a_2 a_3}{[c_e, q_e]^3/c_e^2}\Big) G_{1,1}(a_1, a_2, a_3; c_e)
=\sum_{\Delta | 64} \frac{1}{\varphi(\Delta)} \sum_{\chi \shortmod{\Delta}} \mathfrak{g}_{\chi} \chi(f_1f_2f_3), 
\end{equation}
where 
$\mathfrak{g}_\chi =  \mathfrak{g}_{e_1, e_2, e_3, q_e, c_e, \chi,\Delta}$ is bounded by an absolute constant.
\end{mylemma}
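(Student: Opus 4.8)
The argument parallels the proof of Lemma~\ref{lemma:GevalConreyIwaniec} (that is, \cite[\S10]{CI}) and of Lemma~\ref{lemma:GABevalSimplifiedForm}: I would open the Kloosterman sum inside $G_{1,1}(a_1,a_2,a_3;c_e)$, combine it with the three linear phases, and carry out the $x_1,x_2,x_3$-sums, which turn into (twisted and coupled) Gauss sums over $\mathbb{Z}/2^k\mathbb{Z}$. The one feature distinguishing the present lemma from Lemma~\ref{lemma:GevalConreyIwaniec} is that $c_e$ and $q_e$ are \emph{both} powers of $2$, so one cannot invoke the Chinese remainder theorem to split the Kloosterman modulus $c_e$ from the conductor of $\chi_{q_e}$; instead one computes directly, noting that $M := \lcm(c_e,q_e) = \max(c_e,q_e)$ is a power of $2$ divisible by both $c_e$ and $q_e$.

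Concretely, I would substitute $S(x_1x_2,x_3;c_e) = \sum_{y \bmod c_e}^{*} e\big((x_1x_2 y + x_3\bar y)/c_e\big)$ and bring all phases over the common denominator $M$, so that $G_{1,1}(a_1,a_2,a_3;c_e)$ becomes $M^{-3}$ times a sum over $y \bmod c_e$ coprime to $2$ of the product of a genuine one-dimensional Gauss sum in $x_3$ and a coupled $x_1,x_2$-double sum. Each one-dimensional sum $\sum_{x \bmod M}\chi_{q_e}(x)e(xb/M)$ is evaluated by writing $x = x_0 + q_e t$ with $x_0$ over a reduced system modulo $q_e$ and $t$ modulo $M/q_e$: the $t$-sum forces $(M/q_e)\mid b$, and the remaining $x_0$-sum equals $g(\chi_{q_e})\,\overline{\chi_{q_e}}\big(b/(M/q_e)\big)$ with $|g(\chi_{q_e})| \leq q_e^{1/2} \leq 2\sqrt{2}$, using that $\chi_{q_e}$ is primitive modulo $q_e \mid 8$. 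The coupled $x_1,x_2$-sum is handled by first executing the $x_1$-sum this way (after which the surviving $x_2$-sum is a short Jacobi-type sum over $\mathbb{Z}/2^k\mathbb{Z}$ whose dependence on the outer parameters enters through Dirichlet characters of conductor dividing $q_e^2 \mid 64$).

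Reassembling, the remaining sum over the Kloosterman variable $y \bmod c_e$ against the accumulated congruence conditions collapses to a bounded-length $2$-adic Ramanujan-type sum, and the normalizing phase $e\big(-a_1a_2a_3/([c_e,q_e]^3/c_e^2)\big)$ is exactly what cancels the diagonal exponential thrown off by this collapse (just as $e(-a_1a_2a_3/c_o)$ does in Lemma~\ref{lemma:GevalConreyIwaniec}). What remains depends on the odd parts $f_1,f_2,f_3$ of $a_1,a_2,a_3$ only through Dirichlet characters of conductor dividing $64$; expanding this dependence by orthogonality, $\mathbf{1}_{f\equiv \ast\,(\Delta)} = \varphi(\Delta)^{-1}\sum_{\chi\bmod\Delta}\chi(f)\overline{\chi}(\ast)$ for $\Delta \mid 64$, puts the answer in the form \eqref{eq:GABevalEven}. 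The bound $\mathfrak{g} = O(1)$ then follows by a magnitude count: the $M^{-3}$ is matched by three factors $M/q_e$ from the $x_i$-sums together with the length of the reduced $y$-sum, and the residual factor $q_e c_e$ is absorbed against the Gauss-sum sizes ($\leq q_e^{1/2}$ each), the Ramanujan-sum size, and the $1/\varphi(\Delta)$.

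I expect the main obstacle to be the purely $2$-adic bookkeeping: because $(\mathbb{Z}/2^k\mathbb{Z})^{\times}$ is not cyclic and the conductor-$8$ characters $\chi_4,\chi_8$ behave slightly differently from odd-modulus characters, one must track carefully which congruence conditions on $y$ and on the $a_i$ survive, and verify that the leftover diagonal phase matches $[c_e,q_e]^3/c_e^2$ exactly. The saving grace is that $q_e \leq 8$ is bounded, so every surviving character sum has $O(1)$ length and conductor dividing $64$; this is what both pins the final modulus at $64$ and forces $\mathfrak{g}$ to be bounded by an absolute constant.
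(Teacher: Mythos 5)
Your overall strategy -- evaluate the $x_1,x_2,x_3$-sums and the Kloosterman variable directly as $2$-power-modulus Gauss/Ramanujan sums, and observe that because $q_e\le 8$ every surviving conductor divides $64$ and every surviving sum has $O(1)$ length -- is the same idea that underlies the paper's proof, but the paper organizes and finishes the computation differently. It splits into three regimes: (i) $q_e\nmid c_e$, (ii) $q_e\mid c_e$ with $q_e^2\mid c_e$, and (iii) $q_e\mid c_e$ but $q_e\nmid s_e$ where $s_e=c_e/q_e$. In regimes (i) and (iii) the values of $q_e,c_e$ and of the $a_i$ modulo the relevant $2$-power range over finitely many configurations, and the paper simply verifies the resulting closed forms (\eqref{eq:qenmidce1}, \eqref{eq:qenmidce2}, Lemma \ref{lem:CI10.1even}) by a SAGE enumeration; only in regime (ii) does it carry out the Conrey--Iwaniec-style evaluation of $H_{s_e}$ by hand. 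Your proposal, by contrast, never actually performs these evaluations: the ``purely $2$-adic bookkeeping'' you defer is exactly the content of the lemma, so as written the argument is a plan rather than a proof.

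There is also one concrete structural error. You assert that the normalizing phase $e\bigl(-a_1a_2a_3/([c_e,q_e]^3/c_e^2)\bigr)$ ``is exactly what cancels the diagonal exponential thrown off'' by the collapse of the $y$-sum, in parallel with $e(-a_1a_2a_3/c_o)$ in Lemma \ref{lemma:GevalConreyIwaniec}. That cancellation occurs only when $q_e\mid c_e$, where $[c_e,q_e]=c_e$ and $G_{1,1}$ produces the factor $e(a_1a_2a_3/c_e)$ as in \eqref{eq:G11evenprep}. When $q_e\nmid c_e$ (so $c_e<q_e$, e.g.\ $c_e\in\{1,2\}$ with $q_e\in\{4,8\}$), the sum $G_{1,1}(a_1,a_2,a_3,c_e)$ is a pure multiplicative character times a Gauss sum with no diagonal phase at all, and the external additive character $e_{q_e^3/c_e^2}(-a_1a_2a_3)$ must instead be expanded separately into multiplicative characters via its own Gauss-sum decomposition (the paper's \eqref{eq:qeexp}). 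Your argument as stated misses this case, and with it the verification that the resulting character moduli still divide $64$; you would need to add that expansion and then check the finitely many remaining configurations to complete the proof.
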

As in the previous two cases, we have a much more precise formula for $G_{1,1}$, which we shall describe within the proof.

\begin{proof}[Proof of Lemma \ref{lemma:GevalConreyIwaniec}]
First we note that our $G_{1,1}$ is scaled differently from $G$ defined by \cite{CI}, precisely $G_{1,1}(a_1, a_2, a_3 ;c_o) = c_o^{-3} G(a_1, a_2, a_3;c_o)$, as in \cite[(8.2)]{CI}.  In the notation of \cite{CI}, make the definitions $c_o = q_o s_o$, $h = (q_o, s_o)$, $k = (a_1 a_2 a_3, q_o)$, $D = \frac{q_o}{hk}$.  The sum $G_{1,1}$ vanishes unless $(h, a_1 a_2 ) = 1$ and $(s_0,  a_3 ) = 1$, in which case 
by \cite[Lemma 10.2]{CI}, we have
\begin{equation}
\label{eq:G11CIformula}
 G_{1,1}(a_1, a_2,  a_3 ;c_o) = e\Big(\frac{a_1 a_2 a_3}{c_o}\Big)  \frac{ h \chi_{kD}(-1)}{c_o q_o \phi(k)} 
 R_k(a_1) R_k(a_2) R_k(a_3)  H(\overline{s_o hk} a_1 a_2 a_3;D).
\end{equation}
We do not need the exact formula for $H$, but rather the 
fact that it essentially depends on the variables as a block, and the
decomposition into character sums.  
Specifically, Conrey and Iwaniec \cite[(11.7), (11.9)]{CI} showed
\begin{equation}\label{eq:hdef}
 H(w;D) = \sum_{D_1 D_2 = D} 
 \mu(D_1) \chi_{D_1}(-1) H^*(\overline{D_1} w;D_2),
\end{equation}
and
\begin{equation}
 H^*(w;D_2) = \frac{1}{\varphi(D_2)} \sum_{\psi \shortmod{D_2}} \tau(\overline{\psi}) g(\chi_{D_2}, \psi) \psi(w).
\end{equation}
The crucial fact about $g(\chi_{D_2},\psi)$ is that
\begin{equation}
\label{eq:gchipsiBound}
|g(\chi_{D_2}, \psi)| \ll D_2^{1+\varepsilon},
\end{equation}
which requires the Riemann Hypothesis for varieties, i.e., Deligne's bound.

From here it is a matter of bookkeeping to derive \eqref{eq:GevalConreyIwaniec}.

In case $\psi = \psi_0$ is the principal character, then $|g(\chi_{D_2}, \psi_0)| \leq d(D_2)$ (the divisor function) and $\tau(\psi_0) = \mu(D_2)$.
Indeed, one may show that if $\psi = \psi_0$ is the principal character modulo an odd prime $p$, then $g(\chi_p, \psi_0) = 2$ if $p \equiv 1 \pmod{4}$, and $=0$ if $p \equiv 3 \pmod{4}$.  Furthermore, $g(\chi_q, \psi_0)$ is multiplicative in $q$.\footnote{This corrects a claimed formula for $g(\chi_q, \psi_0)$ of \cite[p.1212]{CI}.}
 \end{proof}

\begin{proof}[Proof of Lemma \ref{lemma:GABevalSimplifiedForm}]
We will evaluate $G_{A,B}$ in precise terms.  We will not use the assumption $a_1 a_2 a_3 = 0$ until indicated later in the proof.
Since $c_2 | (AB)^{\infty}$ and $(q,AB) = 1$, the quadratic character is not present in the sum, and specifically we have
\begin{equation*}
 G_{A,B}(a_1, a_2,  a_3;c_2)  =  \frac{1}{c_2^3} \thinspace \sumstar_{u \shortmod{c_2}} \sum_{x_1, x_2, x_3 \shortmod{c_2}}   e\Big(\frac{A x_1 x_2 \overline{u} + B x_3 u + x_1 a_1 + x_2 a_2 + x_3  a_3}{c_2}\Big).
\end{equation*}
Summing over $x_1$, we detect the congruence $A x_2 \overline{u} \equiv - a_1 \pmod{c_2}$, while the sum over $x_3$ detects $B u \equiv -  a_3 \pmod{c_2}$.  Therefore,
\begin{equation*}
G_{A,B}(a_1, a_2, a_3;c_2) = \frac{1}{c_2}
\thinspace
\sumstar_{B u \equiv - a_3 \shortmod{c_2}} \sum_{A x_2 \overline{u} \equiv -a_1 \shortmod{c_2}}   e\Big(\frac{ x_2 a_2}{c_2}\Big).
\end{equation*}
Note that $Bu \equiv -a_3 \pmod{c_2}$ 
and 
$Ax_2\overline{u} \equiv -a_1 \pmod{c_2}$
are solvable if and only if
\begin{equation}
\label{eq:Bc2Ac2coprimalityConditions}
 (B,c_2) = (a_3, c_2), \qquad \text{ and } \qquad (A,c_2) | a_1.
\end{equation}
By symmetry, we expect that in addition that we will require $(A,c_2) | a_2$, and indeed we will recover this condition later in the analysis.  From now on, we assume the conditions \eqref{eq:Bc2Ac2coprimalityConditions} hold, otherwise the sum is $0$. 

Next we make the definitions
\begin{gather}
\label{eq:A'B'def}
 A = (A,c_2) A', \qquad B = (B, c_2) B', \qquad c_2 = (B,c_2) c_2', \qquad  a_3 = (a_3, c_2)  \widetilde{a_3}.
\end{gather}

Now the congruence $Bu \equiv -a_3 \pmod{c_2}$ is equivalent to
$u \equiv -\overline{B' }\widetilde{a_3} \pmod{c_2'}$.  
Next write $x_2 = -\overline{A'} \frac{a_1}{(A,c_2)} u + \frac{c_2}{(A,c_2)}t$ with $t$ running mod $(A,c_2)$.  Inserting this into the exponential sum, we obtain
\begin{equation*}
G_{A,B}(a_1, a_2, a_3;c_2) =  \frac{1}{c_2}\sumstar_{\substack{u \shortmod{c_2} \\ u \equiv - \overline{B'} \widetilde{a_3} \shortmod{c_2'}}} \sum_{t \shortmod{(A,c_2)}}   e\Big(\frac{ a_2(- \overline{A'}  \frac{a_1}{(A,c_2)} u + \frac{c_2}{(A,c_2)} t)}{c_2}\Big).
\end{equation*}
The sum over $t$ vanishes unless $(A,c_2) | a_2$, in which case we obtain
\begin{equation*}
 G_{A,B}(a_1, a_2, a_3;c_2) = \frac{(A,c_2)}{c_2} \sumstar_{\substack{u \shortmod{c_2} \\ u \equiv - \overline{B' }\widetilde{a_3} \shortmod{c_2'}}} e\Big(\frac{ -\overline{A'}  \frac{a_1}{(A,c_2)} \frac{a_2}{(A,c_2)} u }{\frac{c_2}{(A,c_2)}}\Big).
\end{equation*}

To proceed further, we make some additional definitions, namely
\begin{gather*}
g_1 = \Big(\frac{a_1}{(A,c_2)}, \quad  \frac{c_2}{(A,c_2)}\Big), \quad a_1 = g_1 (A,c_2) \widetilde{a_1}, \quad g_2 = \Big(\frac{a_2}{(A,c_2)}, \frac{c_2}{g_1 (A,c_2)}\Big),
\\
 a_2 = g_2 (A,c_2) \widetilde{a_2},
 \qquad 
 c_2'' = \frac{c_2}{(A,c_2) g_1 g_2}.
\end{gather*}
Thus $(\widetilde{a_1}, \frac{c_2}{g_1 (A,c_2)}) = 1$, and $ (A' \widetilde{a_1} \widetilde{a_2}, c_2'') = 1$, and with this notation the formula becomes
\begin{equation*}
 G_{A,B}(a_1, a_2, a_3;c_2) = \frac{(A,c_2)}{c_2} \sumstar_{\substack{u \shortmod{c_2} \\ u \equiv - \overline{B' } \widetilde{a_3} \shortmod{c_2'}}} e\Big(\frac{ -\overline{A'} \widetilde{a_1} \widetilde{a_2} u }{c_2''}\Big).
\end{equation*}

The tricky part in the analysis is that there is no apparent divisibility relationship between $c_2' = \frac{c_2}{(B,c_2)}$ and $c_2'' = \frac{c_2}{(A,c_2)g_1 g_2}$, and so it is necessary to proceed by cases.  Although it is not globally true that $c_2' | c_2''$, or vice versa, we may factor the moduli corresponding to which prime power of $c_2'$ or $c_2''$ is larger, which motivates the forthcoming factorization.
For $p$ a prime and $n$ a nonzero integer, define $\nu_p(n) = d$ if $p^d || n$.
Then we set  $c_2 = c_z c_f c_g$ where
\begin{equation*}
c_z =  \prod_{\substack{p^{\beta} || c_2 \\ 1 \leq \nu_p(c_2') < \nu_p(c_2'')}} p^{\beta},
\qquad 
 c_f = \prod_{\substack{p^{\beta} || c_2 \\ \nu_p(c_2'') \leq \nu_p(c_2')}} p^{\beta}, \qquad c_g = \prod_{\substack{p^{\beta} || c_2 \\ \nu_p(c_2') = 0 \\  \nu_p(c_2'') \geq 1}} p^{\beta}.
\end{equation*}
According to this factorization, we also write $c_2' = c_z' c_f' c_g'$ and $c_2'' = c_z'' c_f'' c_g''$ where $c_{*}' = (c_*, c_2')$ and $c_{*}'' = (c_*, c_2'')$ with $* = z,f,g$.  Note from the definitions that $c_z,c_f,c_g$ are all pairwise relatively prime, and also that $c_f'' | c_f'$, and $c_g'=1$.

Using the Chinese remainder theorem, we  factor $G_{A,B}$ as
\begin{multline*}
 G_{A,B}(a_1, a_2, a_3;c_2) = \frac{(A,c_2)}{c_2}
 \sumstar_{\substack{u_e \shortmod{c_z} \\ u_e \equiv - \overline{B' }\widetilde{a_3} \shortmod{c_z'}}} e\Big(\frac{ -\overline{A'} \widetilde{a_1} \widetilde{a_2} u_e (c_f c_g) \overline{c_fc_g} }{c_z''c_f'' c_g'' }\Big) 
 \\
 \sumstar_{\substack{u_f \shortmod{c_f} \\ u_f \equiv - \overline{B' } \widetilde{a_3} \shortmod{c_f'}}} e\Big(\frac{ -\overline{A'} \widetilde{a_1} \widetilde{a_2} u_f (c_z c_g) \overline{c_z c_g} }{c_z''c_f'' c_g'' }\Big) \\
 \sumstar_{\substack{u_g \shortmod{c_g} \\ u_g \equiv - \overline{B' } \widetilde{a_3} \shortmod{c_g'}}} e\Big(\frac{ -\overline{A'} \widetilde{a_1} \widetilde{a_2} u_g (c_z c_f) \overline{c_z c_f} }{c_z''c_f'' c_g'' }\Big).
\end{multline*}
Let us examine each of these three sums in turn.  We begin by writing the sum over $u_e$ more suggestively as follows:
\begin{equation*}
 \sumstar_{\substack{u \shortmod{c_z} \\ u \equiv - \overline{B' } \widetilde{a_3} \shortmod{c_z'}}} e\Big(\frac{ -\overline{A'} \widetilde{a_1} \widetilde{a_2} u (\frac{c_f c_g}{c_f'' c_g''}) \overline{c_f c_g} }{ c_z''}\Big).
\end{equation*}
For each prime $p$ dividing $c_z$, we have $p|c_z'$, and $p|\frac{c_z''}{c_z'}$.  Therefore, we may write $u = -\overline{B' } \widetilde{a_3} + c_z' t$, where $t$ runs over \emph{all} residue classes modulo $c_z''/c_z'$.  But then the sum over $t$ vanishes, because the factor in the numerator is relatively prime to the denominator.  Thus we obtain that $G$ vanishes unless $c_z=1$.  We henceforth make this assumption in the next computations of the $c_f$ and $c_g$ moduli sums.  

For the sum over $u_f \pmod{c_f}$, since $c_f'' | c_f'$, the congruence uniquely determines $u_f$ modulo $c_f''$, so we get, using $(c_g/c_g'') \overline{c_g} = \overline{c_g''}$:
\begin{equation*}
 \sumstar_{\substack{u \shortmod{c_f} \\ u \equiv - \overline{B'}\widetilde{a_3} \shortmod{c_f'}}} e\Big(\frac{ -\overline{A'} \widetilde{a_1} \widetilde{a_2} u c_g \overline{c_g} }{c_f'' c_g'' }\Big) = e\Big(\frac{ \overline{A' B' } \widetilde{a_1} \widetilde{a_2} \widetilde{a_3}   \overline{c_g''} }{c_f'' }\Big) \sumstar_{\substack{u \shortmod{c_f} \\ u \equiv - \overline{B' } \widetilde{a_3} \shortmod{c_f'}}} 1.
\end{equation*}
We also have
\begin{equation*}
 \sumstar_{\substack{u \shortmod{c_f} \\ u \equiv - \overline{B' } \widetilde{a_3} \shortmod{c_f'}}} 1 = \frac{\varphi(c_f)}{\varphi(c_f')},
\end{equation*}
as can be checked as follows.  Firstly, we see that both sides of the purported identity are multiplicative, so it suffices to check this on prime powers.  If $c_f' = 1$, then the identity follows easily.  If $c_f' = p^{\beta'}$, and $c_f = p^{\beta}$, and $\beta' \geq 1$, then the left hand side is $p^{\beta-\beta'} = \frac{\varphi(p^{\beta})}{\varphi(p^{\beta'})}$, as desired.

Finally, we examine the sum modulo $c_g$.  We have $c_g'=1$ (directly from the definition, as remarked earlier), so the congruence condition is vacuous.  The sum then simplifies as $R_{c_g}(\frac{c_g}{c_g''})$, using that $c_f$ and everything else in the numerator in the exponential is relatively prime to $c_g$.  

Putting all these calculations together, we have shown that if $c_2=c_fc_g$, $(B,c_2)=(a_3,c_2)$, and $(A,c_2)\mid (a_1,a_2)$, then 
\begin{equation}
\label{eq:GABformulaWithTildeVariables}
G_{A,B}(a_1, a_2, a_3;c_2) = \frac{(A,c_2)}{c_2} R_{c_g}\Big(\frac{c_g}{c_g''}\Big) \frac{\varphi(c_f)}{\varphi(c_f')} e\Big(\frac{ \overline{A' B' } \widetilde{a_1}\widetilde{a_2}\widetilde{a_3}   \overline{ c_g''} }{c_f'' }\Big), 
\end{equation}
and otherwise $G_{A,B}$ vanishes.  So far the assumption $a_1 a_2 a_3 \neq 0$ was not used.

To estimate this expression for $G$, we have  $c_f' = (c_2', c_f) =  (\frac{c_2}{(B,c_2)},\frac{c_2}{c_g})$ and $c_g \mid (B,c_2)$, so in fact $c_f'=\frac{c_2}{(B,c_2)}=c_2'$.  Then
\begin{equation}
\label{eq:RcgBound}
\Big|R_{c_g}\left( \frac{c_g}{c_g''}\right) \Big| \frac{\varphi(c_f)}{\varphi(c_f')}\leq \varphi(c_g)\frac{\varphi(c_f)}{\varphi(c_f')} = \frac{\varphi(c_2)}{\varphi(c_2/(c_2,B))} \leq (B, c_2). 
\end{equation}

Our goal now is to use Dirichlet characters to decompose $$e_{c_2}(-a_1 a_2 a_3) G_{A,B}(a_1, a_2, a_3;c_2).$$  Switching to the new notation used in \eqref{eq:GABformulaWithTildeVariables}, we have
\begin{equation*}
e\Big(\frac{-a_1 a_2 a_3}{ABc_2}\Big) = e\Big(\frac{-\widetilde{a_1} \widetilde{a_2} \widetilde{a_3}}{A'B'c_f'' c_g''}\Big),
\end{equation*}
and by reciprocity, we have
\begin{equation*}
c_2 e\Big(\frac{-a_1 a_2 a_3}{ABc_2}\Big) G_{A,B}(a_1, a_2, a_3;c_2)
=
(A,c_2) R_{c_g}\Big(\frac{c_g}{c_g''}\Big) \frac{\varphi(c_f)}{\varphi(c_f')} e\Big(\frac{ - \widetilde{a_1}\widetilde{a_2}\widetilde{a_3}   \overline{ c_f''} }{c_g'' A' B'}\Big).
\end{equation*}
Let
\begin{equation*}
g_3 = (\widetilde{a_1}\widetilde{a_2}\widetilde{a_3}, c_g'' A' B').
\end{equation*}
Then we have
\begin{multline*}
c_2 e\Big(\frac{-a_1 a_2 a_3}{ABc_2}\Big) G_{A,B}(a_1, a_2, a_3 ; c_2)
\\
=
(A,c_2) R_{c_g}\Big(\frac{c_g}{c_g''}\Big) \frac{\varphi(c_f)}{\varphi(c_f')} 
\frac{1}{ \varphi\Big(\frac{c_g'' A' B'}{g_3}\Big)}\sum_{\eta \shortmod{\frac{c_g'' A' B'}{g_3}}} \tau(\overline{\eta})
\eta( - \widetilde{a_1}\widetilde{a_2}\widetilde{a_3} \overline{ c_f''}/g_3 ).
\end{multline*}

Finally, we argue that this expression is of the desired form for Lemma \ref{lemma:GABevalSimplifiedForm}.  Recall we write $a_i = u_i v_i$ where $(u_i, AB) =1$ and $v_i |(AB)^{\infty}$.  As originally written, the $g_i$ depend on the $a_i$, but in fact they only depend on the $v_i$ since the $g_i$ are divisors of $c_2$, and $c_2 | (AB)^{\infty}$.  By writing the dependence of the $g_i$ on the $v_i$ explicitly as summation conditions, we see the presence of the first sum in \eqref{eq:GABevalSimplifiedForm}.  A careful scrutiny of the changes of variables throughout the proof shows that the variables $c_g, c_g'', c_f, c_f', A', B', g_3$ are functions of $c_2$, the $v_i$ and $g_1, g_2$, and are independent of the $u_i$.  We may also extract from $\widetilde{a_1} \widetilde{a_2} \widetilde{a_3}$ the factor $u_1 u_2 u_3$.  We obtain the bound on $\gamma_\eta$ by \eqref{eq:RcgBound}, and using the standard bound on the Gauss sum.  We note that
\begin{equation*}
\frac{c_g'' A' B'}{g_3}  | c_2'' A' B', \quad \text{and} \quad c_2'' A' B' | \frac{c_2 AB}{(A,c_2)^2 (B,c_2)},
\end{equation*}
which gives the divisibility condition on $D$.  

The only remaining statement to prove is \eqref{eq:gammaboundPrincipalCharacterCase}.  In this case, the Gauss sum is bounded by $1$, and by \eqref{eq:RcgBound} we have
\begin{equation*}
 (A,c_2) \Big| R_{c_g}\Big(\frac{c_g}{c_g''}\Big) \Big| \frac{\varphi(c_f)}{\varphi(c_f')} 
\frac{1}{ \varphi\Big(\frac{c_g'' A' B'}{g_3}\Big)} \ll (qr)^{\varepsilon} (A,c_2) (B,c_2) \frac{g_3}{c_g'' A' B'}.
\end{equation*}
By tracing back the definitions, we see that \begin{equation*} g_3 = (\frac{v_1 v_2 v_3}{g_1 g_2 (A,c_2)^2 (B,c_2)}, c_g'' A' B') \leq c_g'' (\frac{v_1 v_2 v_3}{(A,c_2)^2 (B,c_2)},   A'B'), \end{equation*} which implies the bound \eqref{eq:gammaboundPrincipalCharacterCase}.
 \end{proof}

\begin{proof}[Proof of Lemma \ref{lemma:GevalPowerof2}]
Now we evaluate $G_{1,1}(a_1,a_2, a_3;c_e)$.  To do so we break into cases.  First assume that $q_e \nmid c_e$.  Under the condition $q_e \nmid c_e$ there are only finitely many possibilities for $q_e,c_e, a_1,a_2,a_3$.  A brute force computation with Sage \cite{SAGE} then shows that if $c_e=1,2$ then
\es{\label{eq:qenmidce1}G_{1,1}(a_1,a_2,a_3;c_e) = \frac{1}{q_e^2} \chi_{q_e}(-1) \tau(\chi_{q_e}) \chi_{q_e}(a_1a_2a_3),} 
and if $q_e=8$ and $c_e=4$ then 
\es{\label{eq:qenmidce2}G_{1,1}(a_1,a_2,a_3;c_e) = \frac{1}{32} \chi_{q_e}(-1) \tau(\chi_{q_e}\chi_4) \chi_{q_e}\chi_4(a_1a_2a_3).}
Now we assume that $q_e \mid c_e$.  We write $c_e=q_e s_e$ where $s_e $ is a power of $2$.  Following the same steps as \cite[Section 10]{CI} we have that if $q_e \mid c_e$ then 
\begin{equation}\label{eq:G11evenprep}
G_{1,1}(a_1,a_2,a_3;c_e) =  \frac{\delta_{(a_3,s_e)=1} \chi_{q_e}(-1)}{q_e^2 s_e} e\left(\frac{a_1 a_2 a_3  }{c_e}\right) H_{s_e}(a_1, a_2,a_3,q_e), 
\end{equation}
where $H_s$ is defined in \cite[(10.2)]{CI}.  Assume now both $q_e\mid c_e$ and $q_e \mid s_e$ (so that in fact $q_e^2 \mid c_e$).  Following the proof of Lemma 10.1 of \cite{CI} we find in this case that \es{H_{s_e}(a_1,a_2,a_3,q_e) = \chi_{q_e}(a_1 a_2 a_3)^2\tau(\chi_{q_e})^2.}  Having dealt with this case, we may now assume that $q_e\mid c_e$ and $q_e \nmid s_e$.  Now there are only finitely many choices for  $q_e,c_e, a_1,a_2,a_3$ which permits us to conclude the following lemma by another 
Sage computation.
\begin{mylemma}\label{lem:CI10.1even} Suppose that $q_e \mid c_e$ and let $s_e = c_e/q_e$.   
If $s_e=1$ we have \es{\label{eq:Hs1}H_{s_e}(a_1,a_2,a_3,q_e) = 
\begin{cases} 
\frac{1}{2} R_{q_e}(a_1)R_{q_e}(a_2)R_{q_e}(a_3) & \text{ if } q_e=4 
\\ 
\frac{1}{4} R_{q_e}(a_1)R_{q_e}(a_2)R_{q_e}(a_3)  & 
\text{ if } q_e = 8 \text{ and } 4| a_1, a_2, a_3, 
\\
 16i \chi_4\left(\frac{a_1 a_2 a_3 }{8}\right) & \text{ if } 
 q_e=8 \text{ and } 2 || a_1, a_2,a_3, 
 \\ 
 0 & \text{ otherwise}. 
\end{cases} } 
If $2 \mid s_e$ then 
\es{\label{eq:Hs2} H_s(a_1,a_2,a_3,q_e) = \begin{cases} \chi_{q_e}(a_1 a_2 a_3)^2\tau(\chi_{q_e})^2 & \text{ if } q_e \mid s_e \\ -\chi_{q_e}(a_1 a_2 a_3)^2\tau(\chi_{q_e})^2 & \text{ if } 2s_e = q_e \\ i \tau(\chi_{q_e})^2\chi_4(a_1 a_2 a_3) & \text{ if } s_e=2 \text{ and } q_e=8 . \end{cases}}
\end{mylemma}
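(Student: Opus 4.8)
The plan is to reduce the lemma to a finite verification. The case $q_e \mid s_e$ has in fact already been handled in the discussion preceding the lemma (following the proof of Lemma 10.1 of \cite{CI}), where, by tracking the definition of $H_s$ from \cite[(10.2)]{CI}, we found $H_{s_e}(a_1,a_2,a_3,q_e) = \chi_{q_e}(a_1a_2a_3)^2\tau(\chi_{q_e})^2$; this gives the first line of \eqref{eq:Hs2}. So it remains to treat $q_e \mid c_e$ with $q_e \nmid s_e$. Since $\chi_{q_e}$ is a primitive quadratic character of conductor lying in $\{1,4,8\}$ and $s_e$ is a power of $2$, the two conditions $q_e\mid c_e$ and $q_e\nmid s_e$ force $q_e\in\{4,8\}$ and leave exactly the finitely many pairs $(q_e,s_e)\in\{(4,1),(8,1),(4,2),(8,2),(8,4)\}$ (the values $s_e=1$ and $2s_e=q_e$ and $s_e=2,q_e=8$ appearing in the statement).

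For each such pair, $H_{s_e}(a_1,a_2,a_3,q_e)$ is given by the explicit finite sum \cite[(10.2)]{CI}, whose value depends on $(a_1,a_2,a_3)$ only through a bounded amount of $2$-adic data (their residues modulo a fixed power of $2$). Thus in each of the five cases, evaluating $H_{s_e}$ amounts to computing a bounded collection of concrete finite exponential sums, one per admissible residue tuple. First I would enumerate these residue classes, organizing them by the $2$-adic valuations $\nu_2(a_i)$: as the shape of the claimed answer already signals, $H_{s_e}$ is controlled by whether each $a_i$ is odd, satisfies $2\|a_i$, or is divisible by $4$ — this is precisely why Ramanujan sums $R_{q_e}(a_i)$ and the twist by $\chi_4$ appear in \eqref{eq:Hs1}. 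A direct computation in each case, which we carry out with SAGE \cite{SAGE}, then produces the formulas \eqref{eq:Hs1} and \eqref{eq:Hs2}; combined with the identity for $q_e\mid s_e$ noted above, this proves the lemma.

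There is no analytic obstacle here; the work is entirely bookkeeping. The one point requiring care is that SAGE most naturally computes the full exponential sum $G_{1,1}(a_1,a_2,a_3,c_e)$, so one must back-solve for $H_{s_e}$ using \eqref{eq:G11evenprep}, correctly accounting for the prefactor $\delta_{(a_3,s_e)=1}\chi_{q_e}(-1)/(q_e^2 s_e)$ and the phase $e(a_1a_2a_3/c_e)$, and then check that the extracted value is consistent across the residue classes making up each case (in particular compatible with the symmetry asserted in the statement). I expect this disentangling, together with getting the case split on the $\nu_2(a_i)$ exactly right, to be the most error-prone step, but it is routine.
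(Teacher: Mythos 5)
Your proposal matches the paper's own argument: the case $q_e\mid s_e$ is obtained by following the proof of Lemma 10.1 of \cite{CI}, and the remaining finitely many pairs $(q_e,s_e)$ with $q_e\nmid s_e$ are settled by a direct SAGE verification, since $H_{s_e}$ depends on the $a_i$ only through their residues modulo a fixed power of $2$. Your remark about back-solving for $H_{s_e}$ from $G_{1,1}$ via \eqref{eq:G11evenprep} is a sensible implementation detail but does not change the argument.
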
  
 If $q_e \mid c_e$ then the additive character on the left hand side of Lemma \ref{lemma:GevalPowerof2} cancels identically with the additive character appearing in \eqref{eq:G11evenprep}.  On the other hand, if $q_e \nmid c_e$ then the additive character $e_{[c_e,q_e]^3/c_e^2}(-a_1a_2a_3)= e_{q_e^3/c_e^2}(-a_1a_2a_3)$ must be expressed in terms of multiplicative characters. Recall, if $a_1a_2a_3\neq 0$ we factor $a_i=e_if_i$ with $e_i$ a power of $2$ and $f_i$ odd. We have 
\begin{equation} 
 \label{eq:qeexp} e_{q_e^3/c_e^2}(-a_1a_2a_3) =   \frac{1}{\varphi(q_e^3/c_e^2(q_e^3/c_e^2,a_1a_2a_3))} \sum_{\theta \shortmod{\frac{q^3_e}{c_e^2(q_e^3/c_e^2,a_1a_2a_3)}}} \tau(\overline{\theta})  \theta\left(-\frac{a_1a_2a_3}{(q_e^3/c_e^2,a_1a_2a_3)}\right), 
 \end{equation} where by convention we take $\theta \pmod{1}$ to be identically $1$.
 
Having computed $G_{1,1}(a_1,a_2, a_3;c_e)$ we now argue that the resulting expressions are of the desired form for Lemma \ref{lemma:GevalPowerof2}.  In similar fashion to the proof of Lemma \ref{lemma:GABevalSimplifiedForm}, note that for each fixed $q_e,c_e$ we have that $c_eq_e G'(a_1,a_2,a_3;c_e)$ is of the form \eqref{eq:GABevalEven} by inspecting \eqref{eq:qeexp}, \eqref{eq:qenmidce1}, \eqref{eq:qenmidce2}, \eqref{eq:Hs1} and \eqref{eq:Hs2}.  Now allowing the function $\mathfrak{g}$ to depend on $q_e,c_e$ and seeing that $\frac{q^3_e}{c_e^2(q_e^3/c_e^2,a_1a_2a_3)} \mid 64$ in all cases, we conclude the statement of Lemma \ref{lemma:GevalPowerof2}.
 \end{proof}

\begin{proof}[Proof of Proposition \ref{prop:Zproperties}]
Let 
\begin{equation*}
 G'_{A,B}(m_1, m_2, m_3;c) = G_{A,B}(m_1, m_2, m_3;c) e\Big(\frac{-m_1 m_2 m_3}{ AB [c,q]^3/c^2}\Big).
\end{equation*}
Then by \eqref{eq:GABchineseremaindertheorem} and \eqref{eq:G11crt} we have  
\begin{multline*}
 cq G_{A,B}'(m_1, m_2, m_3;c) = \chi_q(AB)
 c_o  q_o G_{1,1}'(m_1, m_2, \overline{AB c_2} \overline{[c_e,q_e]}^3c_e^2  m_3;c_o)   
\\ 
\times c_2 G_{A,B}'(m_1, m_2, \overline{c_o}\overline{[c_e,q_e]}^3c_e^2 m_3 ;c_2)
 c_eq_eG_{1,1}'(m_1,m_2,\overline{ABc_2c_o}m_3;c_e).
\end{multline*}

Now we factor $m_i = m_i'm_i''$, where $(m_i',AB)=1$ and $m_i'' \mid (AB)^\infty$, and then further factor $m_i' = m_i^o m_i^e$ where $m_i^o$ is odd and $m_i^e$ is a power of $2$.  By Lemmas \ref{lemma:GevalConreyIwaniec},  \ref{lemma:GABevalSimplifiedForm}, and \ref{lemma:GevalPowerof2}, we have 
\begin{multline}
\label{eq:G'GreatBigFormula}
cq G_{A,B}'(m_1,m_2,m_3;c) = \\ \delta((A,c_2) \mid  (m_1'',m_2''))\delta((B,c_2) \mid m_3'') \sum_{\substack{g_1 g_2 | \frac{c_2}{(A,c_2)} \\ g_1 = (\frac{m_1''}{(A,c_2)}, \frac{c_2}{(A,c_2)}) \\ g_2 = (\frac{m_2''}{(A,c_2)}, \frac{c_2}{g_1 (A,c_2)})}} \sum_{\substack{D_1 D_2 hk = q_o  \\ h = (q_o,s_o) \\ k = (m^o_1 m_2^o m_3^o, q) \\ (h, m_1^o m_2^o) = 1 \\ (m_3^o, c_o/q_o)=1}}   
 \frac{1}{\varphi(D_2)} \\
  \sum_{\psi \shortmod{D_2}} \sum_{\Delta | 64 } \frac{1}{\varphi(\Delta)} \sum_{\chi \shortmod{\Delta}} \sum_{D | \frac{c_2 AB}{(c_2, A)^2 (c_2, B)}} \frac{1}{\varphi(D)} \sum_{\eta \shortmod{D}} G_* \\
 (\psi\chi\eta)(m_1^om_2^om_3^o\overline{s_o})R_k(m_1^o) R_k(m_2^o) R_k(m_3^o),
\end{multline}
where $G_*$ is the product of the $g,\gamma$, and $\mathfrak{g}$ arising in Lemmas \ref{lemma:GevalConreyIwaniec}, \ref{lemma:GABevalSimplifiedForm}, and \ref{lemma:GevalPowerof2} along with various miscellaneous factors of unit size, such as $\chi_q(AB)$.  The exact form of $G_*$ is not important.  Rather, all that matters is a bound on its absolute value, and the fact that it does \emph{not} depend on $m_1^o,$ $m_2^o,$ $m_3^o,$ $s_o,$ $q_o,$ $c_o$.  Specifically, we have it is of size 
\es{
\label{eq:G*bound1}
G_* \ll D^{1/2} (A,c_2)(B,c_2) D_2^{3/2+\eps} 
} 
and if 
$\psi,$ $\chi,$ and $\eta$ are all the principal character then with $A = (A,c_2) A'$ and $B = (B,c_2) B'$ 
\es{
\label{eq:G*bound2}
\frac{G_*}{D} \ll (qr)^\eps (A,c_2)(B,c_2) \frac{\Big(\frac{m_1'' m_2'' m_3''}{(A,c_2)^2 (B,c_2)}, A'B'\Big)}{A' B'} .}  

Now we are ready to sum $G_{A,B}'$ over the $m_i$ and $c$.  We must break into two cases to handle the condition $(c,q_e)=\delta$.  

First suppose that $q$ is odd i.e., $q_e=1$.  Then since $(c,q_e)=1$ we have that the sum over $c$ is empty unless $\delta =1$, and if $\delta =1$ the the condition $(c,q_e)=\delta$ is true for all $c$.  We factor $R=R_eR_o$ where $R_o$ is odd and $R_e$ is a power of $2$ and write $c_o = t_oqR_o$ and $c_e = t_eR_e$.  Then for any function $f$ for which the sums converge absolutely we have \est{ \sum_{\substack{c \equiv 0 \shortmod{\tilde{q}R} \\ (c,q_e)=\delta}}f(c) = \delta(\delta=1)  \sum_{c_2 | (AB)^\infty} \sum_{\substack{t_e | 2^\infty \\ (t_e,AB)=1}} \sum_{(t_o,2AB)=1} f(c_2t_oqR_ot_eR_e).} Now we suppose that $q_e=4$  or $8$.  Then $R$ must be odd as $(R,q)=1$.  Recall that $(AB,qR)=1$ so that $(c_2,R\tilde{q})=1$, and also that $\tilde{q}/q_o = 2$.  Then we write $c_o = t_oq_oR$ and $c_e=2t_e$.  Then we have \est{  \sum_{\substack{c \equiv 0 \shortmod{\tilde{q}R} \\ (c,q_e)=\delta}}f(c) = \delta(\delta \mid q_e)\sum_{c_2 | (AB)^\infty} \sum_{\substack{t_e | 2^\infty \\ (t_e,AB)=1 \\ \delta | 2 t_e \\ (\frac{2t_e}{\delta},\frac{q_e}{\delta})=1}} \sum_{(t_o,2AB)=1} f(2c_2t_oq_oRt_e),} 
for any $f$ for which the sums converge absolutely.   
We treat only this last case for the remainder of the section, as the other cases are strictly simpler.  
Applying this decomposition to $G'_{A,B}$ we find if $\delta \mid q_e$ that 
\begin{multline}
\label{eq:greatbigZformula}
Z_{\delta,R,q} = 
\Big[
\sum_{c_2 | (AB)^\infty} \sum_{\substack{t_e | 2^\infty \\ (t_e,AB)=1 \\ \delta | 2 t_e \\ (\frac{2t_e}{\delta},\frac{q_e}{\delta})=1}} \sum_{\substack{m_1'',m_2'',m_3'' | (AB)^\infty }} \sum_{\substack{m_1^e,m_2^e,m_3^e | 2^\infty \\ (m_1^em_2^em_3^e,AB)=1}} 
\\
\sum_{(t_o,2AB)=1} \sum_{(m_1^om_2^om_3^o,2AB)=1}\frac{cq G'_{A,B}(m_1,m_2,m_3;c) }{m_1^{s_1}m_2^{s_2}m_3^{s_3}(c_2t_et_o)^{s_4}}
\Big]
 \\
=
\Big[
\sum_{\substack{c_2, t_e, m_1'', m_2'', m_3'', m_1^e, m_2^e, m_3^e \\ (A,c_2) | (m_1'',m_2''), \thinspace (B,c_2)|m_3'', \thinspace (\dots)}}   
 \sum_{\substack{g_1 g_2 | \frac{c_2}{(A,c_2)} \\ g_1 = (\frac{m_1''}{(A,c_2)}, \frac{c_2}{(A,c_2)}) \\ g_2 = (\frac{m_2''}{(A,c_2)}, \frac{c_2}{g_1 (A,c_2)})}} 
 \sum_{\Delta | 64 } \frac{1}{\varphi(\Delta)} \sum_{\chi \shortmod{\Delta}}
\\
\times 
 \sum_{D_1 D_2 hk = q_o }  \frac{1}{\varphi(D_2)} \sum_{\psi \shortmod{D_2}}  \sum_{D | \frac{c_2 AB}{(c_2, A)^2 (c_2, B)}} \frac{1}{\varphi(D)} \sum_{\eta \shortmod{D}} G_*' Y
\Big] 
 ,
\end{multline}
where $G_*'$ satisfies \eqref{eq:G*bound1} and \eqref{eq:G*bound2}, the conditions $(\dots)$ in the first (large) summand following the second equals sign are the same conditions as on the first line of \eqref{eq:greatbigZformula}, and 
\est{Y= \sum_{\substack{ (m_1^om_2^om_3^ot_o,2AB)=1 \\ h = (q_o,R_ot_o) \\ k=(m_1^om_2^om_3^o,q_o) \\ (h,m_1^om_2^o)=1 \\ (m_3^o,R_ot_o)=1}} \frac{(\psi\chi\eta)(m_1^om_2^om_3^o\overline{t_o}) R_k(m_1^o)R_k(m_2^o)R_k(m_3^o)}{(m_1^o)^{s_1} (m_2^o)^{s_2} (m_3^o)^{s_3}t_o^{s_4}}.}

Our next goal is to obtain meromorphic continuation of $Y$ inside the critical strip, and a bound on $Y$ both slightly to the right of the critical lines, and slightly to the right of the edge of absolute convergence.
First we note the following formal combinatorial identity:
\begin{equation*}
\sum_{(n_1 n_2 n_3, q_o) =k} f(n_1, n_2, n_3) =  \sum_{k_1 k_2 k_3 = k} \sum_{(n_1, \frac{q_o}{k_1})=1} \sum_{(n_2, \frac{q_o}{k_1 k_2}) = 1} \sum_{(n_3, \frac{q_o}{k_1 k_2 k_3}) = 1} f(k_1 n_1, k_2 n_2, k_3 n_3).
\end{equation*}
With this, we have (with some minor simplifications arising from $(q_o,AB) = 1$ which means for instance that $(k,AB) = 1$)
\begin{multline}
\label{eq:ZtildeSomekindofFormula}
Y = 
\sum_{\substack{k_1 k_2 k_3 = k }} \frac{( \psi \chi \eta)(k_1 k_2 k_3)}{k_1^{s_1} k_2^{s_2} k_3^{s_3} }
\sum_{\substack{(t_o, k_3 AB) = 1   \\ (t_o,q_o ) = h}}  
\\
\sum_{\substack{(m_1^o, hAB \frac{q_o }{k_1}) = 1 \\ (m_2^o, hAB \frac{q_o }{k_1 k_2}) = 1 \\ (m_3^o, R_ot_o AB \frac{q_o }{k}) = 1 }}
\frac{     (\psi \chi \eta)(m_1^o m_2^o m_3^o \overline{t_o}) }{(m_1^o)^{s_1} (m_2^o)^{s_2} (m_3^o)^{s_3} t_o^{s_4}   }
  R_k(k_1 m_1^o) R_k(k_2 m_2^o) R_k(k_3 m_3^o).
\end{multline}
The condition $(k_1 k_2, h) = 1$ is automatic, because $k_1 k_2 k_3 =k$, $(s_o,q_o) = h$, $hk \mid  q_o$, and $q_o$ is square-free, so $(h,k) = 1$.

Now let $a,b\in \mathbb{N}$ and suppose that $a''  \mid  a' \mid  a$, with $a'$ square-free, and $\chi$ is a Dirichlet character mod $a$.  Then for $\real(s)>1$ we have \es{\label{eq:aDS} \sum_{(n,b)=1} \frac{R_{a'}(a''n)\chi(n)}{n^s} = \mu(a'/a'')\phi( a'' )L_b(s,\chi),} where $L_b(s,\chi)$ is the Dirichlet $L$-function with Euler factors at primes dividing $b$ omitted.  
To see this, observe that if $a'$ is square-free then $\mu(a')R_{a'}(n)$ is a multiplicative function of $n$, and that the summand on the left side of \eqref{eq:aDS} vanishes whenever $(n,a'')\neq 1$ because $a'' \mid a$.  One can then factor out a Ramanujan sum from the left hand side and use the fact that $\mu(a')R_{a'}(n)=1$ if $(n,a')=1$.  

From this, we 
easily get the meromorphic continuation of $Y$ to, say $\text{Re}(s_i) > 1/2$, $i=1,2,3,4$.  Moreover, $Y$ is analytic except for possible poles at $s_i = 1$ in case $\eta \psi \chi$ is the principal character (which then implies all of $\chi$, $\eta$, and $\psi$ are principal, since their respective moduli are coprime).  Assuming $\text{Re}(s_i) = \sigma > 1/2$ for all $i=1,2,3,4$, and $\sigma \neq 1$, we have
\begin{equation}
|Y| \ll_{\sigma} k^{1-\sigma} h^{-\sigma} (qr)^{\varepsilon} |L(s_1, \psi \chi \eta) L(s_2, \psi \chi \eta) L(s_3, \psi \chi \eta) L(s_4, \overline{\psi \chi \eta})|.
\end{equation}

Now let $Z_{\delta,R,q} = Z_0 + Z'$ where $Z_0$ corresponds to the terms with $\eta \chi \psi$ principal, and $Z'$ corresponds to the terms with $\eta \chi \psi$ nonprincipal.

Taking $\sigma = 1 + \varepsilon$, we bound $Z_0$ as follows:
\begin{multline*}
|Z_0| \ll (qr)^{\varepsilon}
\sum_{\substack{ m_1'', m_2'', m_3'', c_2 | (AB)^{\infty} \\ (A,c_2) | (m_1'',m_2'') \\ (B,c_2) | m_3''}} \frac{1}{(m_1''  m_2''  m_3''  c_2 )^{\sigma}}
\sum_{\substack{D_1 D_2 hk = q_o   }} \sum_{\substack{g_1 g_2 | \frac{c_2}{(A,c_2)} \\ g_1 = (\frac{m_1''}{(A,c_2)}, \frac{c_2}{(A,c_2)}) \\ g_2 = (\frac{m_2''}{(A,c_2)}, \frac{c_2}{g_1 (A,c_2)})}}
  \\
  \times 
\frac{1}{\varphi(D_2)} 
\sum_{D | \frac{c_2 AB}{(c_2, A)^2 (c_2, B)}} 
(A,c_2)(B,c_2) \frac{(\frac{m_1'' m_2'' m_3''}{(A,c_2)^2 (B,c_2)}, A'B')}{A' B'} k^{1-\sigma} h^{-\sigma}.
\end{multline*}
Next we change variables $m_i'' = n_i (A,c_2)$ for $i=1,2$ and $m_3'' = (B,c_2) n_3$.  We have
\begin{equation}
\label{eq:SumOverDivisorsofABwithGCDcondition}
 \sum_{n | (AB)^{\infty}} \frac{(n,Q)}{n} = \prod_{p | AB} \sum_{j=0}^{\infty} \frac{(p^j,Q)}{p^j} \ll (ABQ)^{\varepsilon}.
\end{equation}
Using this successively on $n_1, n_2, n_3$, and trivially summing over $g_1, g_2, D$, we obtain
\begin{equation*}
 |Z_0| \ll (qr)^{\varepsilon} 
 \sum_{c_2 | (AB)^{\infty}} \frac{1}{c_2^{\sigma}}
\sum_{\substack{D_1 D_2 hk = q_o   }} 
\frac{1}{\varphi(D_2)} 
\frac{(A,c_2)(B,c_2)}{(A,c_2)^2 (B,c_2)} \frac{1}{A' B'} k^{1-\sigma} h^{-\sigma}.
\end{equation*}
We use the estimate \eqref{eq:SumOverDivisorsofABwithGCDcondition} again on the sum over $c_2$  to get 
\begin{equation*}
|Z_0| \ll \frac{(qr)^{\varepsilon}}{AB} \sum_{D_1 D_2 hk = q_o} k^{1-\sigma} h^{-\sigma} D_2^{-1} \ll \frac{(qr)^{\varepsilon}}{AB}.
\end{equation*}
This proves the bound \eqref{eq:Z00boundRightOfCriticalStrip}, as desired.

Next we turn to $Z'$.  For this, we use the large sieve inequality to give a bound on the $4$th moment of Dirichlet $L$-functions.  Following e.g. \cite[Lemma 8]{Petrow} we find that for $\sigma = 1/2 + \varepsilon$, we have
\begin{equation}
\label{eq:fourthmoment}
 \frac{1}{\phi(Q)} \sum_{\chi \shortmod{Q}} |L(s, \chi)|^4 \ll Q^{\varepsilon} (1 + |s|)^{1 + \varepsilon}. 
\end{equation}
Using H\"{o}lder's inequality, we have for $\sigma_i = 1/2 + \varepsilon$ for $i=1,2,3,4$, that
\begin{multline*}
|Z'| \ll 
(qr)^{\varepsilon} \sum_{\substack{ m_1'', m_2'', m_3'', c_2 | (AB)^{\infty} \\ (A,c_2) | (m_1'',m_2'') \\ (B,c_2) | m_3''}} \frac{1}{(m_1''  m_2''  m_3''  c_2)^{1/2 + \varepsilon}}
\sum_{\substack{D_1 D_2 hk = q   }} \sum_{\substack{g_1 g_2 | \frac{c_2}{(A,c_2)} \\ g_1 = (\frac{m_1''}{(A,c_2)}, \frac{c_2}{(A,c_2)}) \\ g_2 = (\frac{m_2''}{(A,c_2)}, \frac{c_2}{g_1 (A,c_2)})}}
  \\
\sum_{D | \frac{c_2 AB}{(c_2, A)^2 (c_2, B)}} 
(A,c_2) (B,c_2) D^{1/2} 
D_2^{3/2}
 k^{1-\sigma} h^{-\sigma}
\prod_{j=1}^{4} (1+ |s_j|)^{1/4+\varepsilon} 
 . 
\end{multline*}
Using similar methods to estimate the sums over the $m_i''$ and $c_2$ as in the bound on $Z_0$, we obtain
\begin{equation*}
|Z'| \ll (qr)^{\varepsilon} q^{3/2+\varepsilon} (AB)^{1/2} \prod_{j=1}^{4} (1+ |s_j|)^{1/4+\varepsilon} .
\end{equation*}
Finally, we show \eqref{eq:Z'integralbound}.  The proof is essentially the same as before, except we use a hybrid large sieve in place of \eqref{eq:fourthmoment}, as in \cite{Gallagher}, namely
\begin{equation*}
 \int_{|t| \leq T} \sum_{\chi \shortmod{q}} |L(1/2 + it, \chi)|^4 dt \ll (qT)^{1+\varepsilon}. 
\end{equation*} 
\end{proof}

We conclude this section by studying $G_{A,B}(m_1, m_2, m_3;c)$ when some $m_i=0$.  The formulas greatly simplify.

\begin{mylemma}\label{lemma:GboundOther}
Suppose some $a_j=0$. If $a_i \neq 0$ write $a_i = a''_ia'_i$ where $a''_i \mid (2AB)^\infty$ and $(a'_i,2AB)=1$, and if $a_i=0$ write $a''_i=a'_i= 0$. Then $$ cq\phi(q) G_{A,B}(a_1,a_2,a_3;c) = g(a''_1,a''_2,a''_3, A,B,c, q) R_{q_o}(a'_1)R_{q_o}(a'_2)R_{q_o}(a'_3),$$ where $g$ is a function satisfying the bound $$|g(a''_1,a''_2,a''_3, A,B,c, q)| \leq 64(A,c)(B,c).$$
\end{mylemma}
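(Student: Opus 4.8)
The plan is to exploit the Chinese Remainder Theorem factorisation already used in the proof of Proposition~\ref{prop:Zproperties}, which reduces the statement to three independent and much simpler evaluations. As there, write $c=c_1c_2$ with $c_2\mid(AB)^\infty$ and $(c_1,AB)=1$, and then $c_1=c_oc_e$ with $c_o$ odd and $c_e$ a power of $2$. By \eqref{eq:GABchineseremaindertheorem} and \eqref{eq:G11crt}, $G_{A,B}(a_1,a_2,a_3;c)$ factors, up to the unit $\chi_q(AB)$, as a product of $G_{1,1}(a_1,a_2,\,\cdot\,;c_o)$, $G_{1,1}(a_1,a_2,\,\cdot\,;c_e)$ and $G_{A,B}(a_1,a_2,\,\cdot\,;c_2)$, in which the first two arguments are literally $a_1,a_2$ and the third is $a_3$ multiplied by a unit modulo the relevant modulus. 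Consequently, if some $a_j=0$ then the corresponding argument of every factor is still $0$, and it suffices to evaluate each factor under the hypothesis that one of its three arguments vanishes.

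For the modulus $c_o=q_os_o$ I would use the Conrey--Iwaniec evaluation \eqref{eq:G11CIformula}. If one argument is $0$ then the additive character there is trivial and $k=(a_1a_2a_3,q_o)=q_o$, forcing $h=(q_o,s_o)=1$ and $D=q_o/(hk)=1$; hence $H(\,\cdot\,;1)=1$ and \eqref{eq:G11CIformula} collapses to
\[
 G_{1,1}(a_1,a_2,a_3;c_o)=\frac{\chi_{q_o}(-1)}{c_o\,q_o\,\phi(q_o)}\,R_{q_o}(a_1)R_{q_o}(a_2)R_{q_o}(a_3)
\]
when the built-in non-vanishing conditions hold ($(h,a_1a_2)=1$ being automatic), and $G_{1,1}(a_1,a_2,a_3;c_o)=0$ otherwise, in which case $G_{A,B}$ vanishes and the lemma holds trivially with $g=0$. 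Since $(q_o,2AB)=1$ we have $(q_o,a_i'')=1$, and Ramanujan sums to a squarefree modulus are unchanged when the argument is multiplied by a unit; so $R_{q_o}(a_i)=R_{q_o}(a_i')$, and this factor contributes exactly $R_{q_o}(a_1')R_{q_o}(a_2')R_{q_o}(a_3')$ together with the scalar $(c_oq_o\phi(q_o))^{-1}$ and the unit $\chi_{q_o}(-1)$.

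For the two remaining factors I would rerun the computations in the proofs of Lemmas~\ref{lemma:GevalPowerof2} and~\ref{lemma:GABevalSimplifiedForm} with one argument set to $0$. In each case the oscillatory part disappears: for $c_e$, a zero argument annihilates every $\chi_{q_e}(a_1a_2a_3)$ and every term of Lemma~\ref{lem:CI10.1even} with $2\mid s_e$, and the remaining contributions are elementary expressions in Ramanujan sums to moduli dividing $c_e$ together with a few absolute constants; for $c_2$, formula \eqref{eq:GABformulaWithTildeVariables} — which, as remarked there, holds without the assumption $a_1a_2a_3\neq0$ — has trivial exponential and reduces to $\tfrac{(A,c_2)}{c_2}R_{c_g}(c_g/c_g'')\tfrac{\phi(c_f)}{\phi(c_f')}$, bounded via \eqref{eq:RcgBound}. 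In particular both factors carry no oscillating character sum and depend only on $a_1'',a_2'',a_3''$ (and on $A,B,c,q$), and one obtains $|c_eG_{1,1}(\,\cdot\,;c_e)|\ll1$ and $|c_2G_{A,B}(\,\cdot\,;c_2)|\ll(A,c_2)(B,c_2)$; here $(A,c_2)=(A,c)$ and $(B,c_2)=(B,c)$ because $(c_1,AB)=1$.

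Finally I would assemble the three pieces. Writing $cq\phi(q)=\bigl[c_oq_o\phi(q_o)\bigr]\cdot c_ec_2\cdot q_e\phi(q_e)$ and multiplying the factorisation through by $cq\phi(q)$, the bracket is cancelled by the $c_o$-evaluation above, leaving the clean factor $R_{q_o}(a_1')R_{q_o}(a_2')R_{q_o}(a_3')$, and everything else, namely $\chi_q(AB)\chi_{q_o}(-1)\cdot q_e\phi(q_e)\cdot\bigl(c_eG_{1,1}(\,\cdot\,;c_e)\bigr)\bigl(c_2G_{A,B}(\,\cdot\,;c_2)\bigr)$, is the function $g=g(a_1'',a_2'',a_3'',A,B,c,q)$. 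The bound $|g|\le64\,(A,c)(B,c)$ then follows from $q_e\phi(q_e)\le32$, the estimate $\ll(A,c)(B,c)$ for the $c_2$-piece, and the explicit small constant ($\le2$) for the $c_e$-piece when $q_e\in\{1,4,8\}$. The only genuine work is the pair of degenerate evaluations in the third paragraph; these are routine — direct specialisations of the arguments already given for Lemmas~\ref{lemma:GevalPowerof2} and~\ref{lemma:GABevalSimplifiedForm}, together with a short finite SAGE check as in the proof of Lemma~\ref{lemma:GevalPowerof2} — but one must track the numerical constants to land exactly at $64$.
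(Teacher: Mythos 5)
Your proposal is correct and follows essentially the same route as the paper's proof: the Chinese remainder factorisation into the $c_o$, $c_e$, $c_2$ pieces, the degenerate Conrey--Iwaniec evaluation with $h=1$, $k=q_o$, $D=1$ for the odd part, and the observation that \eqref{eq:GABformulaWithTildeVariables} and the even-modulus computation remain valid (and lose all oscillation) when some $a_j=0$. Your explicit remark that $R_{q_o}(a_i)=R_{q_o}(a_i')$ because $(a_i'',q_o)=1$ is a small detail the paper leaves implicit.
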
 
\begin{proof}

We have according to \eqref{eq:GABchineseremaindertheorem} and \eqref{eq:G11crt} that 
\begin{multline*}
G_{A,B}(a_1,a_2,a_3;c) = \chi_q(AB) 
 G_{1,1}(a_1,a_2, \overline{ABc_2}\overline{[c_e,q_e]}^3c_e^2a_3;c_o) \\ 
\times G_{1,1}(a_1,a_2, \overline{ABc_2c_o}a_3;c_e)  G_{A,B}(a_1,a_2, \overline{[c_1,q]}^3c_1^2a_3;c_2).
\end{multline*}
We treat each of these in turn. Let us begin with  $G_{1,1}(a_1,a_2, \overline{ABc_2}\overline{[c_e,q_e]}^3c_e^2a_3;c_o)$. The evaluation \eqref{eq:G11CIformula} of $G_{1,1}$ did not require $a_1a_2 a_3 \neq 0$. Inspecting \eqref{eq:G11CIformula}, and noting $h=1$, $k=q_o$ under the assumption $a_1 a_2 a_3 = 0$,  we have \begin{multline*} G_{1,1}(a_1,a_2, \overline{ABc_2}\overline{[c_e,q_e]}^3c_e^2a_3;c_o) = \\
\begin{cases} \frac{\chi_{q_o}(-1)}{c_oq_o\phi(q_o)} R_{q_o}(a_1)R_{q_o}(a_2)R_{q_o}(a_3) & \text{ if } (\frac{c_o}{q_o} ,a_3)=1 \text{ and } (q_o, \frac{c_o}{q_o} , a_1a_2)=1 \\
0 & \text{ else.}\end{cases}\end{multline*}

Next consider $G_{1,1}(a_1,a_2, \overline{ABc_2c_o}a_3;c_e) $. An inspection of the proof of Lemma \ref{lemma:GevalPowerof2} shows that when $a_1a_2a_3=0$ that $$G_{1,1}(a_1,a_2, \overline{ABc_2c_o}a_3;c_e) = \begin{cases} \frac{1}{q_ec_e\phi(q_e)} \chi_{q_e}(-1) R_{q_e}(a_1)R_{q_e}(a_2)R_{q_e}(a_3) & \text{ if } q_e=c_e \\ 
\frac{1}{q_ec_e \phi(q_e)} \delta_{(a_3,c_e)=1} & \text{ if } q_e = 1 \\ 
0 & \text{ else,}\end{cases}$$
where note that in the case $q_e = c_e= 8$, $a_1 a_2 a_3 = 0$, the third case in \eqref{eq:Hs1} may be discarded, and in the second case of \eqref{eq:Hs1}, the condition $4|a_1, a_2, a_3$ may be dropped since the Ramanujan sum vanishes otherwise.
This function only depends on $q_e$,$c_e$, and the $2$-part of $a_1,a_2,a_3$ and is bounded above by $64/q_ec_e\phi(q_e)$, since each Ramanujan sum is bounded by $4$ in absolute value.

Lastly, consider $G_{A,B}(a_1,a_2, \overline{[c_1,q]}^3c_1^2a_3;c_2)$.   As mentioned in the proof of Lemma \ref{lemma:GABevalSimplifiedForm},  \eqref{eq:GABformulaWithTildeVariables} is valid without the assumption that $a_1a_2a_3\neq 0$. We have that $G_{A,B}(a_1,a_2, \overline{[c_1,q]}^3c_1^2a_3,c_2)$ only depends on $a_1'',a_2'',a_3'', A,B$, and $c_2$. In particular, by \eqref{eq:RcgBound} we have $$ |G_{A,B}(a_1,a_2, \overline{[c_1,q]}^3c_1^2a_3;c_2)| \leq \frac{(A,c_2)(B,c_2)}{c_2}.$$

Therefore $$G_{A,B}(a_1,a_2,a_3;c) = g(a''_1,a''_2,a''_3, A,B,c, q)\frac{1}{cq\phi(q)} R_q(a'_1)R_q(a'_2)R_q(a'_3),$$ and $| g(a''_1,a''_2,a''_3, A,B,c, q)| \leq (A,c_2)(B,c_2).$
 \end{proof}

\section{Weight functions}
\subsection{Inert functions}
\label{section:inert}
We begin this section by quoting a definition of \cite{KiralPetrowYoung}.

Let $\mathcal{F}$ be an index set and $X=X_T: \mathcal{F} \to \R_{\geq 1}$ be a function of $T \in \mathcal{F}$.
\begin{mydefi}\label{inert}
A family $\{w_T\}_{T\in \mathcal{F}}$ of smooth  
functions supported on a product of dyadic intervals in $\R_{>0}^d$ is called $X$-inert if for each $j=(j_1,\ldots,j_d) \in \mathbb{Z}_{\geq 0}^d$ we have 
\begin{equation}
\label{eq:inert}
C(j_1,\ldots,j_d):=  \sup_{T \in \mathcal{F}} \sup_{(x_1, \ldots, x_d) \in \R_{>0}^d} X_T^{-j_1- \cdots -j_d}\left| x_{1}^{j_1} \cdots x_{d}^{j_d} w_T^{(j_1,\ldots,j_d)}(x_1,\ldots,x_d) \right|  < \infty.
\end{equation}
\end{mydefi}

We often abbreviate the sequence of constants $C(j_1,\ldots, j_d)$ associated to a family of inert functions by $C_{\mathcal{F}}$.  We will most often use the definition of $X$-inert with $X=X_T=1$,
although occasionally (e.g., in the proof of Lemma \ref{lemma:KboundNonOscillatory}) $X$ will be slightly larger than $1$, with $X \ll (qr)^{\varepsilon}$.

The purpose of this definition is to encode natural conditions on a weight function that lets us separate variables efficiently.  For instance, if $w_T$ satisfies \eqref{eq:inert}, then by Mellin inversion,
\begin{equation}
\label{eq:wTseparation}
 w_T(x_1, \dots, x_d) = \frac{1}{(2\pi )^d} \int_{ \mathbb{R}^d} \widetilde{w_T}(it_1, \dots it_d) x_1^{-it_1} \dots x_d^{-it_d} dt_1 \dots dt_d,
\end{equation}
where
\begin{equation*}
 \widetilde{w_T}(s_1, \dots, s_d) = \int_{(0,\infty)^d} w_T(x_1, \dots, x_d) x_1^{s_1} \dots x_d^{s_d} \frac{d x_1}{x_1} \dots \frac{d x_d}{x_d}.
\end{equation*}
Integrating by parts shows for any choices of $j_1,\ldots, j_d = 0,1, \ldots$ , we have
\begin{equation*}
 \widetilde{w_T}(s_1, \dots, s_d) =  \Big(\prod_{a=1}^{d} \prod_{b=0}^{j_a-1} \frac{-1}{s_a + b} \Big) \int_{(0,\infty)^d} w_T^{(j_1, \dots, j_d)}
 (x_1, \dots, x_d)  x_1^{s_1+j_1} \dots x_d^{s_d+j_d} \frac{d x_1}{x_1} \dots \frac{d x_d}{x_d}.
\end{equation*}
Therefore, by \eqref{eq:inert}, we have
\begin{equation*}
 |\widetilde{w_T}(it_1, \dots, it_d)| \leq 
 \Big(\frac{X_{T}}{|t_1|}\Big)^{j_1} \dots \Big(\frac{X_{T}}{|t_d|}\Big)^{j_d} C(j_1, \dots, j_d) (\log 2)^d.
\end{equation*}
If $|t_i| \geq X_{T}$, then we take $j_i$ as unspecified (arbitrarily large), while if $|t_i| < X_{T}$, we choose $j_i = 0$.  In this way, we obtain
\begin{equation}
 |\widetilde{w_T}(it_1, \dots, it_d)| \leq \Big(1 + \frac{|t_1|}{X_{T}}\Big)^{-j_1} \dots \Big(1 + \frac{|t_d|}{X_{T}}\Big)^{-j_d} C'(j_1, \dots, j_d),
\end{equation}
where $C'$ is some other sequence depending only on $C$.  Our interpretation of this estimate combined with \eqref{eq:wTseparation} is that $w_T$ can have its variables separated ``at cost'' $X_{T}^d$, meaning that each integral has essential length $\ll X_{T}$.

\subsection{Integration by parts}
Often an integral can be shown to be small by repeated integration by parts.  For this, we quote \cite[Lemma 8.1]{BKY} with some slight changes of notation and terminology.
\begin{mylemma}[\hspace{1sp}\cite{BKY}]
\label{lemma:exponentialintegral}
 Suppose that $w = w_T(t)$ is a family of $X$-inert functions, 
 with compact support on $[Z, 2Z]$, so that
$w^{(j)}(t) \ll (Z/X)^{-j}$.  Also suppose that $\phi$ is smooth and satisfies $\phi^{(j)}(t) \ll \frac{Y}{Z^j}$ for some $Y/X^2 \geq R \geq 1$ and all $t$ in the support of $w$.  Let
\begin{equation*}
 I = \intR w(t) e^{i \phi(t)} dt.
\end{equation*}
 If $|\phi'(t)| \gg \frac{Y}{Z}$ for all $t$ in the support of $w$, then $I \ll_A Z R^{-A}$ for $A$ arbitrarily large.
\end{mylemma}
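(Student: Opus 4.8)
The plan is the standard non-stationary phase argument: since $\phi'$ does not vanish on $\supp(w)$, one repeatedly integrates by parts using $e^{i\phi} = (i\phi')^{-1}\tfrac{d}{dt}e^{i\phi}$, each time absorbing the factor $1/\phi'$ into the amplitude. I will show that after $A$ steps the amplitude has shrunk by a factor $(X/Y)^{A}$ while remaining $X$-inert and supported on $[Z,2Z]$, so that a trivial estimate of the resulting integral finishes the proof. Write $\psi := \phi'$; the hypotheses give $|\psi(t)| \gg Y/Z$ and $|\psi^{(j)}(t)| \ll Y/Z^{j+1}$ for all $j \geq 0$ and all $t$ in the support. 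Since $w$ is compactly supported there are no boundary terms, and
\begin{equation*}
 I = \intR (Lw)(t)\, e^{i\phi(t)}\, dt, \qquad Lg := -\frac{1}{i}\Big(\frac{g'}{\psi} - \frac{g\,\psi'}{\psi^{2}}\Big),
\end{equation*}
where $Lg$ is understood to vanish off $\supp(g)$, so it is again supported on $[Z,2Z]$ and the formula makes sense ($\psi$ being non-vanishing there).

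The key bookkeeping step is that $L$ preserves the class of amplitudes of the form ``$G$ times an $X$-inert function supported on $[Z,2Z]$'', sending size $G$ to size $\ll GX/Y$. Indeed, the derivative bounds on $\psi$ show that $1/\psi$ and $1/\psi^{2}$ are $X$-inert of sizes $Z/Y$ and $(Z/Y)^{2}$, that $\psi'$ is $X$-inert of size $Y/Z^{2}$, and that if $g$ is $X$-inert of size $G$ then $g'$ is $X$-inert of size $GX/Z$; here one uses $X \geq 1$, so that any function of size $G$ whose $j$-th derivative is $\ll G Z^{-j}$ is in particular $X$-inert of size $G$. Since a product of $X$-inert functions is $X$-inert with sizes multiplying (Leibniz rule, with constants depending only on the number of derivatives taken), both terms of $Lg$ are $X$-inert of size $\ll GX/Y$. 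Iterating from $w$, which is $X$-inert of size $1$, shows $L^{A}w$ is $X$-inert of size $\ll_{A} (X/Y)^{A}$ and supported on $[Z,2Z]$, whence
\begin{equation*}
 |I| = \Big|\intR (L^{A}w)(t)\, e^{i\phi(t)}\, dt\Big| \ll_{A} Z\,(X/Y)^{A}.
\end{equation*}
Finally, $Y/X^{2} \geq R$ gives $X/Y \leq 1/(XR) \leq 1/R$, so $|I| \ll_{A} Z R^{-A}$, the desired bound; all implied constants are uniform over $T \in \mathcal{F}$ since the inert constants of $w_T$ and the implied constants in the derivative bounds on $\phi$ are.

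The only mildly delicate point is the inductive bookkeeping in the second paragraph --- verifying that one differentiation together with one division by $\psi$ matches the inert scaling exactly --- but this is routine given $X \geq 1$ and the uniform derivative bounds. There is no analytic obstacle, since the hypothesis $|\phi'| \gg Y/Z$ on the whole support rules out stationary points, so no oscillatory-integral machinery beyond plain integration by parts is required.
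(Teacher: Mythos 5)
Your argument is correct: the lemma is quoted in the paper from \cite{BKY} without proof, and your repeated integration by parts with the operator $Lg = -\tfrac{1}{i}\bigl(\tfrac{g'}{\psi} - \tfrac{g\psi'}{\psi^2}\bigr)$, together with the bookkeeping that $L$ maps $X$-inert amplitudes of size $G$ supported on $[Z,2Z]$ to ones of size $\ll GX/Y \leq G/R$, is exactly the standard non-stationary-phase proof given there. The delicate points (no boundary terms, $1/\psi$ being $1$-inert of size $Z/Y$ on the support, and $X\geq 1$ to absorb the non-differentiated term) are all handled.
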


\subsection{Stationary phase} \label{subsec:stationaryPhase}
Now we quote the main theorem from \cite{KiralPetrowYoung} which extends \cite[Proposition 8.2]{BKY}.
\begin{mytheo}[\hspace{1sp}\cite{KiralPetrowYoung}]
 \label{thm:exponentialintegral}
 Suppose $w_T$ is $X$-inert in $t_1, \dots t_d$, supported on $t_1 \asymp Z$ and $t_i \asymp X_i$ for $i=2,\dots, d$.  Suppose that on the support of $w_T$, $\phi = \phi_T$ satisfies
\begin{equation}
\label{eq:phiderivatives}
 \frac{\partial^{a_1 + a_2 + \dots + a_d}}{\partial t_1^{a_1} \dots \partial t_d^{a_d}} \phi(t_1, t_2, \dots, t_d) \ll_{C_\mathcal{F}} \frac{Y}{Z^{a_1}} \frac{1}{X_2^{a_2} \dots X_d^{a_d}},
\end{equation}
for all $a_1, \ldots, a_d \in \mathbb{Z}_{\geq 0}$.
Suppose $ \phi''(t_1, t_2, \dots, t_d) \gg \frac{Y}{Z^2}$, (here and below, $\phi'$  and $\phi''$ denote the derivative with respect to $t_1$) for all $t_1, t_2, \dots, t_d$ in the support of $w_T$, and there exists $t_0 \in \mr$ such that $\phi'(t_0, t_2,\dots,t_d) = 0$ with some $t_0 \in \R$ depending on $t_2,\dots,t_d$ (note $t_0$ is necessarily unique). 
Suppose that $Y/X^2 \geq R \geq 1$.  Then  
\begin{equation}
\label{eq:IasymptoticMainThm}
I = \int_{\mathbb{R}} e^{i \phi(t_1, \dots, t_d)} w_T(t_1, \dots, t_d) dt_1 =  \frac{Z}{\sqrt{Y}} e^{i \phi(t_0, t_2, \dots, t_d)} W_T(t_2, \dots, t_d)  + O_{A}(ZR^{-A}),
\end{equation}
for some $X$-inert family of functions $W_T$, and where $A>0$ is arbitrarily large.
The implied constant in \eqref{eq:IasymptoticMainThm} depends only on $A$ and $C_{\mathcal{F}}$.
\end{mytheo}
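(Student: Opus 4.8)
The plan is to treat $t_\bullet := (t_2, \dots, t_d)$ as parameters throughout, to reduce the $t_1$-integral to a one-dimensional Fresnel integral by a smooth (Morse-type) change of variables that linearizes $\phi - \phi(t_0)$, and then to extract the main term from the Fresnel integral while estimating the non-stationary part by Lemma \ref{lemma:exponentialintegral}; the delicate point will be that this change of variables and the resulting amplitude must be shown to stay inert, uniformly over the family. First I would record what the hypotheses give about the stationary point. Since $\phi'' \gg Y/Z^2 > 0$ on the support, $\phi'$ is strictly increasing in $t_1$, so $t_0 = t_0(t_\bullet)$ is the unique zero of $\phi'$, and it is smooth in $t_\bullet$ by the implicit function theorem. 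If $t_0$ lies outside a fixed dilate of the $t_1$-support of $w_T$, then $|\phi'| \gg Y/Z$ on the support and $I = O_A(ZR^{-A})$ by a dyadic use of Lemma \ref{lemma:exponentialintegral} (as in the tail estimate below), so we may take $W_T \equiv 0$; hence assume $t_0 \asymp Z$ lies in (a fixed dilate of) the support. Differentiating $\phi'(t_0(t_\bullet), t_\bullet) = 0$ and using \eqref{eq:phiderivatives} together with $\phi'' \asymp Y/Z^2$ gives $\partial_{t_i}^{a_i} t_0 \ll Z/X_i^{a_i}$ for $i \geq 2$, so $t_0$ is $X$-inert (indeed $1$-inert) in $t_\bullet$.

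Next I would change variables. By Hadamard's lemma $\phi(t_1, t_\bullet) - \phi(t_0, t_\bullet) = (t_1 - t_0)^2 g(t_1, t_\bullet)$ with $g$ smooth and $g(t_0) = \tfrac12\phi''(t_0) \asymp Y/Z^2$; since $\phi'' > 0$ everywhere on the support, $\rho(t_1, t_\bullet) := (t_1 - t_0)\sqrt{2 g(t_1, t_\bullet)}$ is a smooth function of $t_1$ with $\partial_{t_1}\rho(t_0) = \sqrt{\phi''(t_0)} \asymp \sqrt{Y}/Z$ and defines a valid change of variables on the whole support. Setting $s = \rho(t_1, t_\bullet)$ and inverting to $t_1 = t_1(s, t_\bullet)$, the identities $\rho\,\partial_{t_1}\rho = \phi'$ and $\phi'(t_0) = 0$ yield $\partial_{t_1}\rho \asymp \sqrt{Y}/Z$, $\partial_s t_1 = (\partial_{t_1}\rho)^{-1} \asymp Z/\sqrt{Y}$, and (using $\phi'(t_0) = 0$ to compute $\partial_{t_i}\phi(t_0, t_\bullet)$ from \eqref{eq:phiderivatives}) also $\partial_{t_i}\rho \ll \sqrt{Y}/X_i$, hence $\partial_{t_i} t_1 \ll Z/X_i$, with the analogous bounds for higher mixed derivatives of $t_1(s, \cdot)$. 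Thus
\begin{equation*}
I = e^{i\phi(t_0, t_\bullet)} \int_{\mathbb{R}} e^{i s^2/2}\, \widetilde{w}(s, t_\bullet)\, ds, \qquad \widetilde{w}(s, t_\bullet) := w_T(t_1(s, t_\bullet), t_\bullet)\, |\partial_s t_1(s, t_\bullet)|,
\end{equation*}
and a chain-rule computation shows $\widetilde{w}$ is supported on $|s| \ll \sqrt{Y}$, has size $\asymp (Z/\sqrt{Y})\,|w_T|$, is $X$-inert in $t_\bullet$, and satisfies $\partial_s^k \widetilde{w} \ll R^{-k/2}\,|\widetilde{w}|$ (using $X/\sqrt{Y} = R^{-1/2}$).

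It then remains to analyze $J := \int_{\mathbb{R}} e^{i s^2/2}\,\widetilde{w}(s, t_\bullet)\, ds$. I would split $\widetilde{w} = \widetilde{w}_0 + \sum_S \widetilde{w}_S$ by a dyadic partition of unity in $s$, with $\widetilde{w}_0$ supported on $|s| \ll R^{1/2}$ and $\widetilde{w}_S$ on $|s| \asymp S$ for dyadic $R^{1/2} \ll S \ll \sqrt{Y}$. To each tail piece Lemma \ref{lemma:exponentialintegral} applies with support length $S$, phase parameter $S^2$, inert parameter $S/R^{1/2}$, and $|\phi'| = |s| \asymp S$, so its gain parameter may be taken to be $S^2/(S/R^{1/2})^2 = R \geq 1$ and the piece contributes $\ll_A S R^{-A}(Z/\sqrt{Y})|w_T|$; summing over $S$ this is $\ll_A ZR^{-A}$. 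For the bulk piece, rescaling $s = R^{1/2}\sigma$ turns the integral into a classical stationary-phase integral with large parameter $R$, giving
\begin{equation*}
\int_{\mathbb{R}} e^{i s^2/2}\,\widetilde{w}_0(s, t_\bullet)\, ds = \sqrt{2\pi}\, e^{i\pi/4} \sum_{0 \leq k < A} \frac{(i/2)^k}{k!}\, \widetilde{w}_0^{(2k)}(0, t_\bullet) + O_A\big(R^{-A}(Z/\sqrt{Y})|w_T|\big),
\end{equation*}
the error being admissible since $\widetilde{w}_0^{(j)} \ll R^{-j/2}|\widetilde{w}|$. Collecting terms proves the asymptotic with
\begin{equation*}
W_T(t_\bullet) = \frac{\sqrt{Y}}{Z}\,\sqrt{2\pi}\, e^{i\pi/4} \sum_{0 \leq k < A} \frac{(i/2)^k}{k!}\, \widetilde{w}_0^{(2k)}(0, t_\bullet),
\end{equation*}
and I would then verify $W_T$ is $X$-inert: its size is $\asymp (\sqrt{Y}/Z)\,\widetilde{w}_0(0) \asymp |w_T(t_0)|$, each $\partial_s$ hitting $\widetilde{w}_0$ costs $R^{-1/2} \leq 1$, and, by the chain rule together with the derivative bounds on $t_0$, $t_1(s, \cdot)$, $\partial_s t_1$ and $w_T$ obtained above, each $\partial_{t_i}$ hitting $\widetilde{w}_0$ costs $\ll X/X_i$; hence $|\partial_{t_i}^{j} W_T| \ll (X/X_i)^j |W_T|$, with implied constants depending only on $C_{\mathcal{F}}$ and the constants in \eqref{eq:phiderivatives}, and $W_T$ inherits the support condition $t_i \asymp X_i$.

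I expect the main obstacle to be precisely this last bookkeeping: propagating $X$-inertness of $w_T$ through the nonlinear, parameter-dependent change of variables $s = \rho(t_1, t_\bullet)$, and verifying uniformly over the family that the stationary point $t_0$ and the Jacobian $\partial_s t_1$ inherit the correct inert bounds in $t_\bullet$, so that $W_T$ comes out $X$-inert with the \emph{same} parameter $X$. The two oscillatory inputs --- Lemma \ref{lemma:exponentialintegral} on the dyadic tails and the classical stationary-phase expansion on the bulk --- are routine once the geometry has been normalized in this way.
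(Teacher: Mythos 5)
This theorem is not proved in the paper at all: it is quoted verbatim from \cite{KiralPetrowYoung} (extending \cite[Proposition 8.2]{BKY}), so there is no in-paper argument to compare against. Judged on its own terms, your proposal is a correct and essentially complete strategy, and it isolates the right key point: the genuinely new content over classical one-dimensional stationary phase is the uniformity in the parameters $t_2,\dots,t_d$, which reduces to the derivative bounds $\partial_{t_i}^{a_i} t_0 \ll Z/X_i^{a_i}$ obtained by implicit differentiation of $\phi'(t_0(t_\bullet),t_\bullet)=0$, and to propagating inertness through the resulting composition. Your one-dimensional engine differs from the one in \cite{KiralPetrowYoung}/\cite{BKY}: you linearize the phase by the Morse-type substitution $s=(t_1-t_0)\sqrt{2g}$ and expand a pure Fresnel integral at $s=0$, whereas they complete the square, absorb the cubic-and-higher remainder $e^{iH}$ into the amplitude, and invoke an Olver-style expansion whose coefficients $p_n$ are then shown to be $X$-inert. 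The two routes buy essentially the same thing; yours front-loads the work into verifying that $\rho$, $t_1(s,\cdot)$ and the Jacobian are inert, theirs into verifying that the expansion coefficients are.

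Two places where your write-up leans on assertions that the hypotheses as stated do not immediately deliver. First, the reduction when $t_0$ lies outside a fixed dilate of the $t_1$-support: since $\phi''\gg Y/Z^2$ is only assumed \emph{on the support}, you cannot conclude $|\phi'|\gg Y/Z$ there by integrating $\phi''$ from $t_0$; one must either assume the derivative bounds on a neighbourhood of the support (as is implicit in the applications and in \cite{KiralPetrowYoung}) or handle a near-boundary stationary point separately, using that $w_T$ vanishes to infinite order at the endpoints. The same caveat affects your use of Hadamard's lemma, which needs $\phi''>0$ on the segment joining $t_0$ to $t_1$. Second, the ``analogous bounds for higher mixed derivatives'' of $\rho$, of $t_1(s,t_\bullet)$ and of $\partial_s t_1$ are exactly the Fa\`a di Bruno bookkeeping that constitutes the bulk of the actual proof in \cite{KiralPetrowYoung}; your sketch correctly identifies the first-order cases ($\partial_{t_1}\rho\asymp\sqrt{Y}/Z$, $\partial_{t_i}\rho\ll\sqrt{Y}/X_i$, each $\partial_s$ costing $X/\sqrt{Y}\le R^{-1/2}$ and each $\partial_{t_i}$ costing $X/X_i$), and the general case follows the same pattern, but it is asserted rather than carried out. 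Neither point is a conceptual gap; both are the routine-but-necessary parts of a complete proof.
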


\subsection{The integral transform}
Here we obtain useful expressions for $K$, which was defined in \eqref{eq:Kdef}.  The key is not an exact formula for $K$, but rather a Mellin formula with the variables separated.  Throughout the remainder of this section, $w_T$ will denote a member of a  $1$-inert family of functions, which may change from line-to-line without explicit mention.  
We also recall that $[c,q] = c \frac{q_e}{(c,q_e)}$, where $\frac{q_e}{(c,q_e)}$ takes the possible values $1,2,4$.

\begin{mylemma}[Oscillatory Case]
 \label{lemma:Kbound}
Suppose that $|m_i| \asymp M_i$ for $i=1,2,3$, and $c \asymp C$, with $\tilde{q} | c$.  
 Suppose that
 \begin{equation}
 \label{eq:OscCondition}
  \frac{\sqrt{ABN_1 N_2 N_3}}{C} \gg (qr)^{\varepsilon}.
 \end{equation}

Then
\begin{multline}
\label{eq:Kseparated2}
K(m_1, m_2, m_3, c) = \frac{C^{3/2} (N_1 N_2 N_3)^{1/2} e(\frac{-m_1 m_2 m_3c^2}{AB[c,q]^3})}{(M_1 M_2 M_3)^{1/2}}  L(m_1, m_2, m_3, c)
\\
+ O((qr)^{-1/\varepsilon} \prod_{i=1}^{3} (1+ |m_i|)^{-2} ),
\end{multline}
where $L$ has the following properties.  Firstly, $L$ vanishes (meaning $K$ is very small) unless
\begin{equation}
\label{eq:MsizeOscillatoryRange}
M_i \asymp \frac{(AB N_1 N_2 N_3)^{1/2}}{N_i}, \quad i=1,2,3,
\end{equation}
and all the $m_i$ have the same sign.
Moreover, we have with
\begin{equation}
\label{eq:Pdef}
P = \frac{M_1 M_2 M_3}{ABC}.
\end{equation}
that
\begin{multline}
\label{eq:LIntegralFormulaOscillatoryRange}
L(m_1, m_2, m_3, c) = \frac{1}{P^{1/2}} \int_{|{\bf u}| \ll (qr)^{\varepsilon}} \int_{|y| \ll (qr)^{\varepsilon}} F({\bf u};y) \Big(\frac{|m_1 m_2 m_3| c^2}{[c,q]^3}\Big)^{iy} 
\\
\Big(\frac{M_1}{|m_1|}\Big)^{u_1} \Big(\frac{M_2}{|m_2|}\Big)^{u_2} \Big(\frac{M_3}{|m_3|}\Big)^{u_3} \Big(\frac{C}{c}\Big)^{u_4} d{\bf u} dy,
\end{multline}
where $F = F_{A,B, C, N_1, N_2, N_3, M_1, M_2, M_3}$ is entire in terms of ${\bf u}$, and satisfies $F({\bf u};y) \ll_{\text{Re}(\bf{u})} (1+|{\bf u}|)^{-J} (1+|y|)^{-J}$, for $J$ arbitrarily large.
Here $F$ additionally depends on the choice of signs of the $m_i$, and on the values $q_e$, $(c,q_e)$.
\end{mylemma}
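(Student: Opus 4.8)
The plan is to evaluate $K$ by iterated stationary phase, in the spirit of \cite{CI} but organized via the inert-function calculus of Sections~\ref{section:inert}--\ref{subsec:stationaryPhase}. First I would insert the standard large-argument expansion of the Bessel function: for $x\gg 1$ one has $J_{\kappa-1}(x)=x^{-1/2}\sum_{\pm}e^{\pm ix}W_{\pm}(x)$ with $W_{\pm}$ smooth and $x^{j}W_{\pm}^{(j)}(x)\ll_{j}1$. Hypothesis \eqref{eq:OscCondition} guarantees that on the support of $w_{N_1,N_2,N_3}$ the argument $\tfrac{4\pi\sqrt{ABt_1t_2t_3}}{c}$ is $\asymp Y_0:=\tfrac{\sqrt{ABN_1N_2N_3}}{C}\gg(qr)^{\varepsilon}$, so this is legitimate. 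Rescaling $t_i=N_ix_i$ (and keeping $c$), the factor $(t_1t_2t_3)^{-1/4}$ becomes $(N_1N_2N_3)^{-1/4}$ times a $1$-inert function of the $x_i$, and $W_{\pm}$ likewise becomes $1$-inert in the $x_i$ (each $x_i$-derivative of the argument costs $Y_0$, matched by the symbol bound). One is left, for each sign, with a three-dimensional oscillatory integral with $1$-inert amplitude and phase
\[
\phi(x_1,x_2,x_3)=\pm\,4\pi\,\frac{\sqrt{AB N_1N_2N_3}}{c}\,\sqrt{x_1x_2x_3}\;-\;2\pi\,\frac{m_1N_1x_1+m_2N_2x_2+m_3N_3x_3}{[c,q]}.
\]

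Next I would treat this integral one variable at a time. We have $\partial_{x_i}\phi$ equal to $\pm 2\pi\sqrt{ABN_1N_2N_3}\,\sqrt{x_1x_2x_3}/(cx_i)$ minus $2\pi m_iN_i/[c,q]$, and the second term is of size $\asymp M_iN_i/C$. If the two terms are of different orders of magnitude, or if $\sgn(m_i)$ is opposite to the chosen sign, then $|\partial_{x_i}\phi|\gg\max(Y_0,M_iN_i/C)\gg(qr)^{\varepsilon}$ throughout the support, and repeated integration by parts (Lemma~\ref{lemma:exponentialintegral}) bounds the integral by $O((qr)^{-1/\varepsilon}\prod_{i=1}^{3}(1+|m_i|)^{-2})$; this gives the vanishing of $L$ outside \eqref{eq:MsizeOscillatoryRange} and forces all $m_i$ to share a sign. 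In the remaining range $M_i\asymp (ABN_1N_2N_3)^{1/2}/N_i$ with the $m_i$ of the correct sign, $\phi$ has for each $i$ a unique nondegenerate stationary point in $x_i$ with $|\partial_{x_i}^2\phi|\gg Y_0$, and the mixed-derivative bounds \eqref{eq:phiderivatives} hold with $Y\asymp Y_0$ and all other scales $\asymp 1$, so Theorem~\ref{thm:exponentialintegral} applies successively in $x_1$, then $x_2$, then $x_3$ --- at each stage the reduced phase (previous variables frozen at their stationary values) must be checked to satisfy the hypotheses, with the saving parameter $R\asymp Y_0\gg(qr)^{\varepsilon}$ so the error terms are negligible. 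This produces three factors $Y_0^{-1/2}$ and a new $1$-inert amplitude $W_T$ in $m_1,m_2,m_3,c$, supported on \eqref{eq:MsizeOscillatoryRange}.

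The crux is the value of $\phi$ at the joint stationary point. Writing $\phi=a\sqrt{x_1x_2x_3}-b_1x_1-b_2x_2-b_3x_3$ with $a=4\pi\sqrt{ABN_1N_2N_3}/c$ and $b_i=2\pi m_iN_i/[c,q]$, the stationary relations $\partial_{x_i}\phi=0$ read $b_ix_i=\tfrac{a}{2}\sqrt{x_1x_2x_3}$, hence $b_1x_1=b_2x_2=b_3x_3$, and a one-line computation gives the stationary value $\phi=-4b_1b_2b_3/a^{2}=-2\pi\,m_1m_2m_3c^{2}/(AB[c,q]^{3})$; this is exactly the clean multiplicative phase $e(-m_1m_2m_3c^2/(AB[c,q]^3))$ pulled out in \eqref{eq:Kseparated2}, which is what later cancels the additive character in $Z_{\delta,R,q}$ of \eqref{eq:ZRqdefinition} (the bounded factor $c/[c,q]=(c,q_e)/q_e$ must be tracked through this). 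Collecting the prefactors --- $C^{1/2}(AB)^{-1/4}(N_1N_2N_3)^{-1/4}$ from the Bessel expansion, $N_1N_2N_3$ from the Jacobian, and $Y_0^{-3/2}$ from the three stationary-phase steps --- and re-expressing via $M_i\asymp(ABN_1N_2N_3)^{1/2}/N_i$ and $P=M_1M_2M_3/(ABC)$ gives the normalization $\tfrac{C^{3/2}(N_1N_2N_3)^{1/2}}{(M_1M_2M_3)^{1/2}}P^{-1/2}$ in front of \eqref{eq:LIntegralFormulaOscillatoryRange}. Finally, the amplitude $W_T$ is smooth with support in dyadic boxes bounded away from $0$ and $\infty$, hence a symbol in $m_1,m_2,m_3,c$; Mellin inversion as in Section~\ref{section:inert} writes it as $\int F(\mathbf u;y)\,\Big(\tfrac{|m_1 m_2 m_3|c^2}{[c,q]^3}\Big)^{iy}\prod_i\Big(\tfrac{M_i}{|m_i|}\Big)^{u_i}\Big(\tfrac{C}{c}\Big)^{u_4}\,d\mathbf u\,dy$ with $F$ entire in $\mathbf u$ and of rapid decay in $\mathbf u$ and $y$, the extra variable $y$ recording the overall dilation in the homogeneous combination $|m_1m_2m_3|c^2/[c,q]^3$; the contours may be truncated at height $(qr)^{\varepsilon}$ up to negligible error. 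This is \eqref{eq:LIntegralFormulaOscillatoryRange}.

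The main obstacle is the bookkeeping in the iterated stationary phase: after each application of Theorem~\ref{thm:exponentialintegral} the phase and amplitude change, and one must verify \emph{uniformly} over the ranges of $m_i$, $c$, $A$, $B$, $N_i$ (and the finitely many values of $q_e$ and $(c,q_e)$) that the derivative hypotheses \eqref{eq:phiderivatives}, the second-derivative lower bound, and the existence and uniqueness of the stationary point persist for the reduced phase, and that the saving parameter stays $\gg(qr)^{\varepsilon}$ --- this last point being exactly where \eqref{eq:OscCondition} is used. Disentangling the inertness of $W_T$ in the individual variables $m_i$ (their stationary points are coupled through the product $m_1m_2m_3$) and matching the stationary value to the precise clean phase also require care, but are routine once the structure above is in place.
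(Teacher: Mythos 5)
Your overall strategy (large-argument Bessel asymptotics, localization to \eqref{eq:MsizeOscillatoryRange} and to equal signs via Lemma \ref{lemma:exponentialintegral}, stationary phase, then Mellin separation of the resulting inert amplitude) is sound, your computation of the stationary value $-4b_1b_2b_3/a^2 = -2\pi m_1m_2m_3c^2/(AB[c,q]^3)$ is correct and is indeed the cleanest way to see where the distinguished phase comes from, and the prefactor bookkeeping $N\cdot Y_0^{-1/2}\cdot Y_0^{-3/2} = C^2/(AB) = C^{3/2}N^{1/2}M^{-1/2}P^{-1/2}$ checks out. However, there is a concrete gap at the central step: the successive application of Theorem \ref{thm:exponentialintegral} ``in $x_1$, then $x_2$, then $x_3$'' fails at the second stage. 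In your notation, after eliminating $x_1$ at its stationary point $x_1^0 = a^2x_2x_3/(4b_1^2)$ the reduced phase is
\begin{equation*}
\phi(x_1^0,x_2,x_3) \;=\; \frac{a^2 x_2 x_3}{4b_1} \;-\; b_2 x_2 \;-\; b_3 x_3,
\end{equation*}
which is bilinear in $(x_2,x_3)$: its second derivative in $x_2$ alone vanishes identically, so the hypothesis $\phi''\gg Y/Z^2$ of Theorem \ref{thm:exponentialintegral} is violated, and for generic fixed $x_3$ there is no stationary point in $x_2$ (nor is $\partial_{x_2}\phi$ uniformly large, so Lemma \ref{lemma:exponentialintegral} does not apply either). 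The full $3\times 3$ Hessian is nondegenerate --- the reduced critical point in $(x_2,x_3)$ is a hyperbolic saddle --- but that does not license one-variable-at-a-time stationary phase in these coordinates. This degeneracy is exactly what the paper's substitution $t_3 = uN_1N_2/(t_1t_2)$ is designed to avoid: with the product $t_1t_2t_3$ frozen the Bessel phase depends only on $u$, the inner $t_1$- and $t_2$-integrals then have genuinely nondegenerate one-dimensional stationary points, and the square-root coupling is only confronted in the final $u$-integral, where it is again nondegenerate. To repair your argument you need either such a substitution (e.g.\ passing to $x_2x_3$ and $x_2/x_3$ in the reduced two-variable integral), or a genuinely multidimensional stationary phase lemma.

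Two smaller points of comparison. The paper does not use the asymptotic expansion of $J_{\kappa-1}$ for the main evaluation but the exact Fourier integral $J_n(x)=\sum_{\pm}\tfrac{\pm1}{\pi i}\int_0^{\pi/2}\sin(nv)e^{\pm ix\sin v}\,dv$; the price is a residual $v$-integral $K_{00}(x)$ in the single variable $x=m_1m_2m_3c^2/(AB[c,q]^3)$, whose analysis near the endpoint $v=\pi/2$ is where the factor $P^{-1/2}$ and the Mellin variable $y$ in \eqref{eq:LIntegralFormulaOscillatoryRange} actually originate. In your version there is no $v$-integral and no mechanism producing $y$: your $L$ would be a pure four-fold Mellin integral in ${\bf u}$, which is formally a special case of \eqref{eq:LIntegralFormulaOscillatoryRange} and suffices for the later application, but your description of $y$ as ``recording the overall dilation'' glosses over the fact that in your setup nothing generates it.
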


\begin{mylemma}[Non-oscillatory case]
\label{lemma:KboundNonOscillatory}
Suppose that $|m_i| \asymp M_i$ for $i=1,2,3$,  $c \asymp C$, and
\begin{equation}
\label{eq:NonOscCondition}
 \frac{\sqrt{AB N_1 N_2 N_3}}{C} \ll (qr)^{\varepsilon}.
\end{equation}
Then
\begin{multline}
\label{eq:KintegralformulaNonOscillatory}
 K(m_1, m_2, m_3, c) = \\
N_1 N_2 N_3 \Big(\frac{\sqrt{ABN_1 N_2 N_3}}{C}\Big)^{\kappa -1} e\Big(\frac{-m_1 m_2 m_3c^2}{AB[c,q]^3}\Big)   
\int_{|{\bf u}| \ll (qr)^{\varepsilon}} F({\bf u})
\int_{|t| \ll (qr)^{\varepsilon} + P} \\ f(t) \Big(\frac{|m_1 m_2 m_3| c^2}{[c,q]^3}\Big)^{it}  
\Big(\frac{M_1}{|m_1|}\Big)^{u_1} \Big(\frac{M_2}{|m_2|}\Big)^{u_2} \Big(\frac{M_3}{|m_3|}\Big)^{u_3} \Big(\frac{C}{c}\Big)^{u_4} dt d{\bf u}
 \\
  + O((qr)^{-1/\varepsilon} \prod_{i=1}^{3} (1+ |m_i|)^{-2} ),
\end{multline}
where $P$ is given by \eqref{eq:Pdef}, $f(t) \ll (1+|t|)^{-1/2}$, and $F({\bf u}) \ll_{J, \text{Re}({\bf u})} \prod_{\ell=1}^{4} (1+\frac{(qr)^{\varepsilon}}{|u_{\ell}|})^{-J}$.  Moreover, $f$ vanishes (meaning, $K$ is small) unless
\begin{equation}
\label{eq:smallkiMi}
 \frac{M_1 N_1}{C} \ll (qr)^{\varepsilon}, \qquad  \frac{M_2 N_2}{C} \ll (qr)^{\varepsilon}, \qquad  \frac{M_3 N_3}{C} \ll (qr)^{\varepsilon}.
\end{equation}
If $P \gg (qr)^{\varepsilon}$, the function $f$ may be chosen to have support on $|t| \asymp P$.
\end{mylemma}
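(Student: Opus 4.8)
\textbf{Proof proposal for Lemma \ref{lemma:KboundNonOscillatory}.}

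The plan is to analyze $K(m_1, m_2, m_3, c)$ directly from its defining integral \eqref{eq:Kdef} by applying the Mellin--Barnes machinery of Sections \ref{section:inert}--\ref{subsec:stationaryPhase} and exploiting the non-oscillatory hypothesis \eqref{eq:NonOscCondition} to control the Bessel function. First I would insert the Mellin separation \eqref{eq:wTseparation} for $w_{N_1, N_2, N_3}(t_1, t_2, t_3)$ at cost $1$ (the weight is $1$-inert), which decouples the three $t_i$-integrals and, after the rescaling $t_i \mapsto N_i t_i$, produces the prefactor $N_1 N_2 N_3$ and integrals of length $\ll (qr)^{\varepsilon}$ in the Mellin variables $u_1, u_2, u_3$. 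This reduces matters to understanding a single three-dimensional integral against $J_{\kappa-1}\big(\tfrac{4\pi\sqrt{AB t_1 t_2 t_3}}{c}\big)$ and a product of additive characters with frequencies $m_i/[c,q]$.

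The heart of the argument is then the treatment of the Bessel factor. Under \eqref{eq:NonOscCondition} the argument of $J_{\kappa-1}$ is $\ll (qr)^{\varepsilon}$ on the support of $w$, so I would use the power series expansion $J_{\kappa-1}(x) = (x/2)^{\kappa-1}\sum_{k\geq 0} c_k (x/2)^{2k}$; the leading term supplies the factor $\big(\tfrac{\sqrt{ABN_1N_2N_3}}{C}\big)^{\kappa-1}$, and the tail is negligible up to the stated error $O((qr)^{-1/\varepsilon}\prod (1+|m_i|)^{-2})$ after integrating by parts in the $t_i$ (using that the phase from the $e(\cdot)$ factors has large derivative when some $|m_i|$ is large, via Lemma \ref{lemma:exponentialintegral}). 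Writing the remaining monomial weight $t_i^{\kappa-1+2k}$ times $w$ as a new $1$-inert family and re-separating by Mellin, I would be left with a single one-dimensional integral over the variable conjugate to the product $t_1 t_2 t_3$; introducing the Mellin transform of $J_{\kappa-1}$ (or equivalently a stationary-phase/Mellin analysis of $\int e(-\xi t) t^{\beta-1} dt$ in the combined variable) produces the $t$-integral with the factor $(|m_1 m_2 m_3| c^2/[c,q]^3)^{it}$ and the extracted main exponential $e(-m_1 m_2 m_3 c^2/(AB[c,q]^3))$. The claim that $f$ vanishes unless \eqref{eq:smallkiMi} holds, and has support $|t|\asymp P$ when $P\gg(qr)^{\varepsilon}$, follows by locating the stationary point of the combined phase in $t_1 t_2 t_3$ versus the oscillation $e(-m_i t_i/[c,q])$: integration by parts in each $t_i$ individually (Lemma \ref{lemma:exponentialintegral}) kills the contribution unless $M_i N_i/C \ll (qr)^{\varepsilon}$, and the bound $f(t)\ll (1+|t|)^{-1/2}$ comes from the square-root gain of stationary phase (Theorem \ref{thm:exponentialintegral}) applied once in the $t_1 t_2 t_3$-direction.

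The decay $F(\mathbf{u}) \ll \prod_{\ell}(1+(qr)^{\varepsilon}/|u_\ell|)^{-J}$ is automatic from the $1$-inertness of the weight and the standard consequence of repeated integration by parts in Mellin transforms recorded in Section \ref{section:inert}; I would simply track constants through the rescalings. The main obstacle I anticipate is bookkeeping rather than conceptual: one must carefully handle the interplay between the three additive oscillations $e(-m_i t_i/[c,q])$ and the single Bessel oscillation in the product variable $t_1 t_2 t_3$, ensuring that the integration-by-parts savings in the ``large $m_i$'' regime and the stationary-phase analysis in the product direction are compatible and that the error terms genuinely combine into the clean shape $O((qr)^{-1/\varepsilon}\prod(1+|m_i|)^{-2})$; keeping the role of $[c,q]$ (versus $c$) correct throughout, especially since $q_e/(c,q_e)\in\{1,2,4\}$ interacts with the extracted exponential, also requires care. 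The oscillatory companion Lemma \ref{lemma:Kbound} would be proved by the same scheme but with full stationary phase in all three $t_i$ variables rather than the power-series expansion of $J_{\kappa-1}$.
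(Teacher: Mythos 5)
Your overall architecture (Mellin separation of the inert weight, writing $J_{\kappa-1}(x)=x^{\kappa-1}\times(\text{smooth factor})$ under \eqref{eq:NonOscCondition}, and integration by parts to force \eqref{eq:smallkiMi}) matches the paper up to that point, but there is a genuine gap in how you propose to produce the factor $e\bigl(-m_1m_2m_3c^2/(AB[c,q]^3)\bigr)$ and the accompanying $t$-integral. You describe these as emerging from ``a stationary-phase/Mellin analysis in the combined variable $t_1t_2t_3$,'' with the $|t|^{-1/2}$ decay of $f$ coming from stationary phase ``applied once in the $t_1t_2t_3$-direction.'' That derivation would fail in this regime: by \eqref{eq:NonOscCondition} and \eqref{eq:smallkiMi} every phase in the integral \eqref{eq:Kdef} (both the Bessel oscillation and the additive characters) has all derivatives bounded by $(qr)^{\varepsilon}$ relative to the lengths of integration, so there is no stationary point and no asymptotic of stationary-phase type; the entire non-oscillatory integral is already the ``main term,'' and no exponential factor is generated by the analysis.

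The missing idea is that in the non-oscillatory case the exponential must be inserted \emph{artificially}: one multiplies and divides $K$ by $e\bigl(m_1m_2m_3c^2/(AB[c,q]^3)\bigr)$ --- this factor is needed only so that $K$ matches the structure of $G'_{A,B}$ and the Dirichlet series $Z$ in \eqref{eq:ZRqdefinition}, where the additive character combines with the arithmetic sum \`a la Conrey--Iwaniec --- and then expands the compensating factor $e^{ix}w(x/P)$ (with $x=2\pi m_1m_2m_3c^2/(AB[c,q]^3)\asymp P$) by Mellin inversion. The properties of $f$ are then properties of this one-dimensional Mellin transform $f(t)=\tfrac{1}{2\pi}\int_0^\infty e^{ix}w(x/P)x^{it-1}\,dx$: repeated integration by parts in $x$ shows $f$ is negligible unless $|t|\asymp P$ (when $P\gg(qr)^{\varepsilon}$), and a one-dimensional stationary phase bound in the $x$-variable of that Mellin integral --- not in the $t_1t_2t_3$-direction of $K$ --- gives $f(t)\ll|t|^{-1/2}$; when $P\ll(qr)^{\varepsilon}$ the inserted exponential is itself non-oscillatory and one simply has $f(t)\ll 1$ with $|t|\ll(qr)^{\varepsilon}$. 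Without this multiply-and-divide step your proposal cannot reach the stated form \eqref{eq:KintegralformulaNonOscillatory}, and the bound $f(t)\ll(1+|t|)^{-1/2}$ has no source.
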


\begin{mylemma}[Other cases]
\label{lemma:KboundOther}
Suppose some $m_i = 0$.  If \eqref{eq:OscCondition} holds, then $K$ is small.  If \eqref{eq:NonOscCondition} holds, then $K$ is small unless $|m_j| \ll \frac{C}{N_j} (qr)^{\varepsilon}$ for $j=1,2,3$, in which case
\begin{equation}
\label{eq:KboundOther}
K(m_1, m_2, m_3;c) \ll  \Big(\frac{\sqrt{ABN_1 N_2 N_3}}{C}\Big)^{\kappa-1} N_1 N_2 N_3.
\end{equation}
If say $m_3 = 0$ but $m_1 m_2 \neq 0$, then with $N = N_1 N_2 N_3$, we have a Mellin formula
\begin{multline}
\label{eq:Kformulawithm3zero}
K(m_1, m_2, 0 ;c) = \Big(\frac{\sqrt{ABN}}{C}\Big)^{\kappa-1} N 
\int_{|v_1| \ll (qr)^{\varepsilon}} \int_{|v_2| \ll (qr)^{\varepsilon}} \Big(\frac{C}{N_1 |m_1|}\Big)^{v_1} \\  \Big(\frac{C}{N_2 |m_2| }\Big)^{v_2} R(v_1, v_2, c) dv_1 dv_2 + O((qr)^{-1/\varepsilon} \prod_{i=1}^{3} (1+|m_i|)^{-2}),
\end{multline}
where $R(v_1, v_2,c)$ is analytic in $\text{Re}(v_i) > 0$ for $i=1,2$, and satisfies the bound
\begin{equation*}
R(v_1, v_2, c) \ll_{J, \text{Re}(v_1), \text{Re}(v_2)} \prod_{j=1}^{2} (1+\frac{(qr)^{\varepsilon}}{|v_j|})^{-J}.
\end{equation*}
Here $R$ depends on the choices of sign of the $m_i$, but we suppress it from the notation.  Similar formulas hold when $m_1=0$ or $m_2=0$.

If two $m_i = 0$ but the other $m_i$ is nonzero, then a formula similar to \eqref{eq:Kformulawithm3zero} holds, but with one of the integrals omitted.
\end{mylemma}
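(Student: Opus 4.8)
The plan is to reuse the Mellin-separation analysis from Lemmas~\ref{lemma:Kbound} and~\ref{lemma:KboundNonOscillatory}, degenerating the relevant integral in the variable(s) $t_i$ corresponding to the vanishing frequency. Recall that $K(m_1,m_2,m_3;c)$ is the three-dimensional integral \eqref{eq:Kdef}, with Bessel factor $J_{\kappa-1}(4\pi\sqrt{AB t_1 t_2 t_3}/c)$ and the weight $w_{N_1,N_2,N_3}$ supported on $t_i\asymp N_i$. First I would separate the Bessel function's argument multiplicatively via its Mellin transform, exactly as in the proofs of the previous two lemmas, writing $J_{\kappa-1}(x)$ through $\int \widetilde{J}(s)x^{-s}\,ds$ with rapid decay of $\widetilde{J}$ on vertical lines; this turns $K$ into a product of three one-dimensional exponential integrals $\int e(-m_i t_i/[c,q])\, t_i^{s/2}\, w(t_i)\,dt_i$ tied together by the $s$-integral. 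When $m_i=0$, the corresponding one-dimensional integral has no oscillation at all and is simply $\int t_i^{s/2}w(t_i)\,dt_i \asymp N_i^{1+s/2}$, which is exactly why that variable drops out of the stationary-phase bookkeeping.

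The three assertions are then handled case by case. For the claim that $K$ is small when \eqref{eq:OscCondition} holds and some $m_j=0$: in the oscillatory regime the Mellin variable $s$ is effectively pinned (as in Lemma~\ref{lemma:Kbound}) to the range where $\sqrt{AB N_1N_2N_3}/C$ governs the phase; but with $m_j=0$ there is no stationary point available in the $t_j$-integral to match, so integration by parts in $t_j$ via Lemma~\ref{lemma:exponentialintegral} (the phase has no critical point and derivative of size $\gg \sqrt{ABN_1N_2N_3}/C \gg (qr)^\varepsilon$ coming from the Bessel asymptotic) wins arbitrary savings — hence $K$ is negligible. For \eqref{eq:NonOscCondition}: here one uses the power-series (non-oscillatory) expansion of $J_{\kappa-1}$, $J_{\kappa-1}(x)\ll x^{\kappa-1}$ on the relevant range, the surviving $t_i$ integrals with $m_i\neq 0$ force $|m_i|\ll C N_i^{-1}(qr)^\varepsilon$ by integration by parts (otherwise negligible), and the $t_j$ integral with $m_j=0$ contributes $N_j$; collecting gives \eqref{eq:KboundOther} upon noting the Bessel factor is $\big(\sqrt{ABN_1N_2N_3}/C\big)^{\kappa-1}$ in size. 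Finally, for the mixed formula \eqref{eq:Kformulawithm3zero} with $m_3=0$ but $m_1m_2\neq 0$: one keeps the Mellin separation but now only the two $t_1,t_2$ integrals retain oscillation; performing the same analysis as in Lemma~\ref{lemma:KboundNonOscillatory} but with only two oscillatory variables yields a Mellin representation in two variables $v_1,v_2$ with the claimed analyticity and decay, the function $R$ absorbing the $t_3$-integral (a harmless smooth factor of size $N_3$) together with the $s$-integral and the residual smooth dependence on $c$. The two-zeros case is identical with one further integral collapsed.

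The main obstacle I expect is bookkeeping rather than anything deep: one must track the precise powers of $N_i$, $C$, $AB$, and $[c,q]$ through the degenerate Mellin integrals so that the final size $\big(\sqrt{ABN_1N_2N_3}/C\big)^{\kappa-1}N_1N_2N_3$ and the normalizations in \eqref{eq:Kformulawithm3zero} come out exactly right, and one must verify that the phase $-m_1m_2m_3c^2/(AB[c,q]^3)$ appearing in the nonzero-frequency lemmas genuinely disappears when any $m_i=0$ (it does, since that phase is linear in each $m_i$). A secondary point requiring a little care: in the non-oscillatory regime the constraint $\sqrt{ABN_1N_2N_3}/C\ll(qr)^\varepsilon$ must be combined correctly with the surviving integration-by-parts constraints $|m_i|\ll C N_i^{-1}(qr)^\varepsilon$ to ensure the error terms $O((qr)^{-1/\varepsilon}\prod(1+|m_i|)^{-2})$ dominate everything outside the stated ranges; this is the same mechanism already used in Lemma~\ref{lemma:KboundNonOscillatory}, so it transfers with only notational changes.
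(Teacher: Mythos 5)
Your proposal is correct and in substance follows the paper's own proof: the smallness statements and the constraints $|m_j|\ll C N_j^{-1}(qr)^{\varepsilon}$ come from the same truncation/integration-by-parts analysis (Lemma \ref{lemma:exponentialintegral}) already carried out at the start of the proofs of Lemmas \ref{lemma:Kbound} and \ref{lemma:KboundNonOscillatory}, the bound \eqref{eq:KboundOther} is the trivial estimate via $J_{\kappa-1}(x)\ll x^{\kappa-1}$, and \eqref{eq:Kformulawithm3zero} is obtained exactly as in the non-oscillatory lemma by Mellin-separating only the $m_1,m_2$ (and $c$) dependence of the resulting inert weight, the key simplification being that the trilinear phase $e(-m_1m_2m_3c^2/(AB[c,q]^3))$ is identically $1$ when $m_3=0$. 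The only inaccuracy is your opening framing: the earlier lemmas do not Mellin-transform $J_{\kappa-1}$ itself, and its Mellin transform has only polynomial (not rapid) decay on vertical lines in a limited strip; since your case-by-case execution instead invokes the Bessel asymptotics $e^{\pm ix}W_{\pm}(x)$ in the oscillatory regime and the representation $J_{\kappa-1}(x)=x^{\kappa-1}W(x)$ absorbed into the inert weight in the non-oscillatory regime, this framing is dispensable and harmless.
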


\begin{proof}
We prove all three lemmas.

\textbf{Truncations.}
As our first step, we integrate by parts three times in each of the $t_i$ for which $m_i \neq 0$, allowing us to obtain a crude bound of the form
\begin{equation*}
 K(m_1, m_2, m_3, c) \ll P(q, r, N_1, N_2, N_3, c) \prod_{i=1}^{3} (1+ |m_i|)^{-3},
\end{equation*}
where $P$ is some fixed polyomial (which may as well be taken to be some polynomial in $qr$ by \eqref{eq:Csize}).  This bound is sufficient for the lemmas when some $|m_i|$ is $\gg (qr)^{A'}$ for some large $A'$ depending polynomially on $1/\varepsilon$.
For the rest of the proof, suppose that $|m_i| \ll (qr)^{A'}$ for some $A'$, and each $i$.

In the oscillatory case (i.e., if \eqref{eq:OscCondition} holds), then using the fact that $J_{\kappa-1}(x) = e^{ix} W_{+}(x) + e^{-ix} W_{-}(x)$ where $W_{\pm}(x)$ have controlled derivatives (cf.~Watson \cite[Page 205]{Watson}), we see that repeated integration by parts (Lemma \ref{lemma:exponentialintegral}) shows that $K$ is very small unless \eqref{eq:MsizeOscillatoryRange} holds, and moreover, all the $m_i$ must have the same sign.
Similarly, in the non-oscillatory case where \eqref{eq:NonOscCondition} holds, then repeated integration by parts shows that $K$ is small unless \eqref{eq:smallkiMi} holds.

\textbf{Proof of Lemma \ref{lemma:Kbound}.}
Now we show the expression \eqref{eq:Kseparated2}, with $L$ given by \eqref{eq:LIntegralFormulaOscillatoryRange}.
Using the Fourier integral (valid for $n$ an odd integer)
\begin{equation*}
 J_{n}(x) = 
 \sum_{\pm} \frac{\pm 1}{\pi i} \int_0^{\pi/2} \sin(n v) e^{\pm ix \sin v} dv,
 \end{equation*}
we have
\begin{multline*}
 K = K(m_1, m_2, m_3;c)  
 = \\
\sum_{\pm}
 \frac{\pm 1}{  \pi i} 
 \int_{0}^{\pi/2} \sin( (\kappa-1)v)
 \int_{\mathbb{R}^3} e\Big(\frac{\pm 2 \sqrt{AB t_1 t_2 t_3}}{c} \sin(v) \Big) \\ e\Big(\frac{-m_1 t_1 - m_2 t_2 - m_3 t_3}{[c,q]}\Big) 
 w_{T}(t_1,t_2, t_3)  dt_1 dt_2 dt_3 dv .
\end{multline*}

Next we change variables $t_3 = u\frac{ N_1 N_2}{t_1 t_2}$, giving
\begin{equation*}
K =  \sum_{\pm}
 \frac{\pm 1}{  \pi i} \int_{0}^{\pi/2} \sin( (\kappa-1)v) \int_0^{\infty}
 e\Big(\frac{\pm 2 \sqrt{AB u N_1 N_2}}{c} \sin(v)\Big)  I(u) du dv,
 \end{equation*}
 where
\begin{equation*}
 I(u) = \int_{\mathbb{R}^2}
 e\Big(\frac{-m_1 t_1 - m_2 t_2 - m_3 \frac{u N_1 N_2}{t_1 t_2}}{[c,q]}\Big)
 w_{T}(t_1,t_2, u) dt_1 dt_2 .
\end{equation*}
The conditions \eqref{eq:OscCondition} and \eqref{eq:MsizeOscillatoryRange} imply that
\begin{equation*}
 \frac{M_i N_i}{C} \asymp \frac{\sqrt{AB N_1 N_2 N_3}}{C} \gg (qr)^{\varepsilon}.
\end{equation*}
Recall that we already showed, in the paragraphs following the statement of Lemma \ref{lemma:KboundOther}, that under the assumption of \eqref{eq:OscCondition}, $K$ is small unless \eqref{eq:MsizeOscillatoryRange} holds. 
The conditions are now in place to analyze the inner $t_1, t_2$ integrals using stationary phase.  Evaluating the $t_1$-integral first, we find a stationary point at $t_1^0 = (\frac{m_3 u N_1 N_2}{m_1 t_2})^{1/2}$.  Following this, the stationary point in terms of $t_2$ occurs at $t_2^0 = m_2^{-2/3} (m_1 m_3 u N_1 N_2)^{1/3}$.  We therefore deduce
\begin{multline}
 I(u) = \frac{C}{(N_1 N_2 M_1 M_2)^{1/2}} e\Big( \frac{-3(m_1 m_2 m_3 u N_1 N_2)^{1/3}}{[c,q]}\Big) w_T(u, m_1, m_2, m_3, c) 
 \\
 + O((qr)^{-1/\varepsilon} \prod_{i=1}^{3} (1+|m_i|)^{-2}),
\end{multline}
where $w_T$ is $1$-inert in all variables and $T$ stands for the tuple $$(M_1, M_2, M_3, N_1, N_2, N_3, C, q_e, (c,q_e))$$ and supported on $u \asymp N_3$.  The condition that $m_1, m_2, m_3$ must all have the same sign may be encoded in the support of the inert function.  If all three terms are negative, we naturally interpret the expression $(m_1 m_2 m_3)^{1/3}$ as  $- (|m_1 m_2 m_3|)^{1/3}$.
Therefore 
\begin{multline}
 K = \Big[\sum_{\pm} \frac{\pm 1}{\pi i} \frac{C}{(N_1 N_2 M_1 M_2)^{1/2}}\int_{0}^{\pi/2} \sin((\kappa - 1) v) \\
 \int_0^{\infty}
 e\Big(\frac{ \pm 2 \sqrt{AB u N_1 N_2}}{c} \sin(v)\Big)  
  e\Big( \frac{-3(u m_1 m_2 m_3 N_1 N_2)^{1/3}}{[c,q]}\Big) w_T(u, \cdot) du dv\Big] \\
 + O((qr)^{-1/\varepsilon} \prod_{i=1}^{3} (1+|m_i|)^{-2}).
\end{multline}
Here we use the notation $w_T(u, \cdot)$ to denote a function where we are currently focusing on the variable $u$ only, and so do not display the other variables.

Finally, we study
\begin{multline*}
K_0(v) := \frac{C}{(N_1 N_2 M_1 M_2)^{1/2}} \int_0^{\infty}  e(\frac{\pm 2  \sqrt{ABu N_1 N_2}}{c} \sin(v))\\
e\Big( \frac{-3(u m_1 m_2 m_3 N_1 N_2)^{1/3}}{[c,q]}\Big) w_T(u, \cdot) du.
\end{multline*}
We will presently show that
\begin{equation}
 K_0(v) = \frac{C^{3/2} (N_1 N_2 N_3)^{1/2}}{(M_1 M_2 M_3)^{1/2}} e\Big(\frac{-m_1 m_2 m_3c^2}{AB [c,q]^3 \sin^2 v}\Big) w_{T}(\sin v, \cdot)  + O((qr)^{-1/\varepsilon} \prod_{i=1}^{3} (1+|m_i|)^{-2}),
\end{equation}
where $w_{T}(\sin v, \cdot)$ is part of a $1$-inert family of functions of $\sin v, m_1, m_2, m_3, c$, with $T$ as before but in addition depending on the choice of $\pm$ sign.

This integral defining $K_0(v)$ is small unless there is a stationary point (by Lemma \ref{lemma:exponentialintegral} again), which occurs at
\begin{equation*}
 u = u_0 = \frac{(m_1 m_2 m_3 )^2c^6}{N_1 N_2 (AB)^3 \sin^6 v [c,q]^6},
\end{equation*}
under the additional assumption that the choice of $\pm$ sign matches the sign of $m_1 m_2 m_3$ (which in turn has the same sign as each individual $m_i$).  Note that for this stationary point to lie inside the support of the inert function, we need
\begin{equation}
\label{eq:Vsize}
\sin v \asymp \frac{(M_1 M_2 M_3)^{1/3}}{(AB)^{1/2} (N_1 N_2 N_3)^{1/6}} =: V.
\end{equation}

Thus we obtain, in both cases of $\pm$ sign, that
\begin{equation*}
K_0(v) =  (\text{scaling factor}) e\Big(\frac{-m_1 m_2 m_3c^2}{AB  \sin^2 v [c,q]^3}\Big) w_{T}(\sin v, \cdot)  + O((qr)^{-1/\varepsilon} \prod_{i=1}^{3} (1+|m_i|)^{-2}).
\end{equation*}
A short calculation shows the scaling factor is $\asymp \frac{C^{3/2} (N_1 N_2 N_3)^{1/2}}{(M_1 M_2 M_3)^{1/2}}$, proving the claimed expression for $K_0(v)$.

Thus we obtain 
\begin{equation*}
K = \frac{C^{3/2} (N_1 N_2 N_3)^{1/2}}{(M_1 M_2 M_3)^{1/2}} \int_{0}^{\pi/2} 
 \sin((\kappa -1)v)  e\Big(\frac{-m_1 m_2 m_3c^2}{AB  \sin^2 v [c,q]^3}\Big) w_T(\sin v, \cdot) dv,
\end{equation*}
plus an error of size $O((qr)^{-1/\varepsilon} \prod_i (1 + |m_i|)^{-2})$,  
where the support of the inert function is given by \eqref{eq:Vsize}.

Next we factor out the desired exponential, giving now
\begin{multline*}
K = e\Big(\frac{-m_1 m_2 m_3 c^2}{AB [c,q]^3}\Big) \frac{C^{3/2} (N_1 N_2 N_3)^{1/2}}{(M_1 M_2 M_3)^{1/2}} 
\\
\times 
\int_{0}^{\pi/2} 
 \sin((\kappa - 1)v)  e\Big(\frac{m_1 m_2 m_3c^2}{AB [c,q]^3}\Big(1-\frac{1}{\sin^2 v}\Big)\Big) w_T(\sin v, \cdot) dv,
\end{multline*}
plus a small error term. 
Define
\begin{equation}
 K_{00}(x) := \int_{0}^{\pi/2} 
 \sin((\kappa - 1)v)  e\Big(x(-1+\frac{1}{\sin^2 v})\Big) w_T(\sin v, \cdot) dv.
\end{equation}
Note that for our particular values of the parameters, we have
\begin{equation*}
 \frac{x}{V^2}  \gg (qr)^{\varepsilon}.
\end{equation*}

Now we asymptotically evaluate $K_{00}(x)$.  First we dispense with the case where the support of the inert function is so that the integrand vanishes unless  $v \leq \frac{\pi}{2} - \frac{\pi}{100}$, say.  Thus $\cos(v) \asymp 1$ and $\sin(v) \asymp v \asymp V$ in the support of the integrand.  We claim that $K_{00}(x) \ll (qr)^{-A}$  in this case.  To see this, we first note that it suffices to bound
\begin{equation*}
\int_{-\infty}^{\infty} w_T(\sin v,\cdot) e(\phi(v) ) dv, 
\quad \text{where} \quad 
\phi(v) = \pm \frac{\kappa-1}{2\pi} v + x \frac{\cos^2{v}}{\sin^2{v}}.
\end{equation*}
We have
\begin{equation*}
\phi(v) = \pm \frac{\kappa -1}{2 \pi } v + \frac{x}{v^2}(1 + c_2 v^2 + c_4 v^4 + \dots),
\end{equation*}
for certain constants $c_i$.  Then we have
\begin{equation*}
\phi'(v) \asymp \frac{x/v^2}{v}, \qquad \phi^{(j)}(v) \ll \frac{x/v^2}{v^j}, \quad j=2,3,\dots,
\end{equation*}
using $\frac{x}{v^3} \gg \frac{x}{v^2} \gg (qr)^{\varepsilon} \gg \kappa -1$, since $\kappa$ is fixed.
By Lemma \ref{lemma:exponentialintegral} yet again, the integral is very small.
If the inert function has support on an interval containing $\pi/2$, then the above argument breaks down.  So now suppose that the inert function has support on $v \geq \pi/4$, so in particular $V \asymp 1$ and $x \gg (qr)^{\varepsilon}$.  Change variables $v = \pi/2-u$, giving
\begin{equation}
K_{00}(x) = (-1)^{\frac{\kappa-2}{2}} \int_0^{\pi/4} \cos((\kappa -1)u) e\Big(x \frac{\sin^2 u}{\cos^2 u}\Big) w_T(\cos u, \cdot) du.
\end{equation}

Next we argue that the main part of this integral comes from $u \ll x^{-1/2} (qr)^{\varepsilon}$, provided we use a smooth truncation.  Let us apply a partition of unity, and consider the portion of the integral with $u \asymp U$ (with $U \ll 1$), which we denote $K_{00}^{U}(x)$.  By Lemma \ref{lemma:exponentialintegral} yet again, if $xU^2 \gg (qr)^{\varepsilon}$, then $K_{00}^{U}(x)$ is small.  We may also use that the integrand is even to extend to $-\pi/4$ to $\pi/4$, giving
\begin{equation*}
K_{00}(x) = \int_{-\pi/4}^{\pi/4} W(u) e(x \tan^2 u) du,
\end{equation*}
plus a small error term, 
where $W$ has support on $|u| \ll U$ with $U = x^{-1/2+\varepsilon}$.   

We may now derive an asymptotic expansion of $K_{00}$, with leading term $c_0 W(0) x^{-1/2}$, where $c_0$ is some absolute constant.  By developing this expansion carefully, we have that for $x \gg (qr)^{\varepsilon}$, $K_{00}^{(j)}(x) \ll x^{-1/2-j}$, and so by Mellin inversion, we have that
\begin{equation*}
K_{00}(x) = x^{-1/2} \intR f(t) x^{it} dt,
\end{equation*}
plus a small error term, where $|f(t)| \ll_A (1 +|t|)^{-A}$, with $A > 0$ arbitrarily large.  In our application, we may thus truncate at $|t| \ll (qr)^{\varepsilon}$.

The previous discussion gives a formula for $K$ of the form \eqref{eq:Kseparated2}, where $L$ takes the form
\begin{equation*}
 L(m_1, m_2, m_3, c) = \frac{1}{P^{1/2}} \int_{|t| \ll (qr)^{\varepsilon}}  w_T(t, m_1, m_2, m_3, c) \Big(\frac{|m_1 m_2 m_3| c^2}{[c,q]^3}\Big)^{it} dt,
\end{equation*}
where $w_T$ is 1-inert in the variables $m_1,m_2,m_3,c$ and has rapid decay in $t$, uniformly in all other parameters.  We may then write
\begin{equation*}
 w_T(t,m_1, m_2, m_3, c) = \int F({\bf u}; t) \Big(\frac{M_1}{|m_1|}\Big)^{u_1} \Big(\frac{M_2}{|m_2|}\Big)^{u_2} \Big(\frac{M_3}{|m_3|}\Big)^{u_3} \Big(\frac{C}{c}\Big)^{u_4} d{\bf u},
\end{equation*}
where the integral is over four vertical lines in the complex plane, one for each of the $u_i$, $i=1,2,3,4$.  By the rapid decay of $F$ beyond $(qr)^{\varepsilon}$, due to the fact that $w_T$ is inert, we may truncate the integrals at $|{\bf u}| \ll (qr)^{\varepsilon}$.
This expression gives \eqref{eq:LIntegralFormulaOscillatoryRange}, and so completes the proof of Lemma \ref{lemma:Kbound}.

\textbf{Proof of Lemma \ref{lemma:KboundNonOscillatory}}.
Now suppose \eqref{eq:NonOscCondition} holds.  As previously mentioned in the paragraphs following Lemma \ref{lemma:KboundOther}, $K$ is small unless \eqref{eq:smallkiMi} holds, a condition that we assume henceforth.  Assuming $x \ll X = 1+ \frac{\sqrt{ABN_1 N_2 N_3}}{C}$, we
have that $J_{\kappa-1}(x) = x^{\kappa - 1} W(x)$ where $W$ is a smooth function satisfying $x^{j} W^{(j)}(x) \ll X^j$.  That is, $W$ satisfies the same derivative bounds as an $X$-inert function, and so it may be absorbed into the definition of the inert function.  Therefore, by the separation of variables discussion from Section \ref{section:inert}, we have
\begin{multline}
\label{eq:KformulaseparatedVariables}
K = N_1 N_2 N_3 \Big(\frac{\sqrt{ABN_1 N_2 N_3}}{C}\Big)^{\kappa -1} \int_{|{\bf u}| \ll (qr)^{\varepsilon}} F({\bf u}) 
\Big(\frac{M_1}{|m_1|}\Big)^{u_1} \Big(\frac{M_2}{|m_2|}\Big)^{u_2} \\ \Big(\frac{M_3}{|m_3|}\Big)^{u_3} \Big(\frac{C}{c}\Big)^{u_4} d{\bf u}
+ O((qr)^{-A} \prod_{i=1}^{3} (1+ |m_i|)^{-2}),
\end{multline}
where $F({\bf u}) \ll_{J, \text{Re}({\bf u})} \prod_{\ell=1}^{4} (1+\frac{X}{|u_{\ell}|})^{-J}$.

We also want to factor out the exponential term $e_{AB[c,q]^3/c^2}(-m_1 m_2 m_3 )$.
It is not clear whether 
\begin{equation*}
P:= \frac{M_1 M_2 M_3}{AB C} 
\end{equation*}
is $\gg 1$ or $\ll 1$, so we treat both cases separately.  

If $P \ll (qr)^{\varepsilon}$, then essentially the exponential term $e_{AB[c,q]^3/c^2}( m_1 m_2 m_3)$ is not oscillatory, so by Mellin inversion there exists a simple formula of the form
\begin{multline}
\label{eq:exponentialMellinSeparationFormula}
 e\Big(\frac{m_1 m_2 m_3 c^2}{AB[c,q]^3}\Big)w(\pm \frac{m_1m_2m_3c^2/(AB[c,q]^3)}{P})  = \\ \int_{|t| \ll (qr)^{\varepsilon}} \Big(\frac{|m_1 m_2 m_3| c^2}{[c,q]^3}\Big)^{it} f(t) dt + O((qr)^{-1/\varepsilon}),
\end{multline}
where $f_{A,B, C, M_1,M_2,M_3, \pm}(t) = f(t) \ll 1$ and $w(t)$ is a smooth function of compact support on $(0,\infty)$, that is identically $1$ on the support of the inert function $w_T$ in the definition of $K$.

If $P \gg (qr)^{\varepsilon}$, then we claim that a formula like \eqref{eq:exponentialMellinSeparationFormula} holds, but with $|t| \asymp P$ and $f(t) \ll |t|^{-1/2}$.  For this, we argue as follows.  By Mellin inversion, we have
$e^{ix} w(x/P) = \intR f(t) x^{it} dt$, where $f(t) = \frac{1}{2 \pi} \int_0^{\infty} e^{ix} w(x/P) x^{it-1} dx$.  Since the support on $w$ causes $x \asymp P$, repeated integration by parts shows $f(t)$ is very small except when $|t| \asymp P$.  A standard stationary phase bound gives $f(t) \ll |t|^{-1/2}$.

In either case, we obtain \eqref{eq:KintegralformulaNonOscillatory}.

\textbf{Proof of Lemma \ref{lemma:KboundOther}.}
The claims that if \eqref{eq:OscCondition} holds, then $K$ is small, and that if \eqref{eq:NonOscCondition} holds, $K$ is small unless $|m_i| \ll \frac{C}{N_i} (qr)^{\varepsilon}$ for $i=1,2,3$ have already been shown in the previous analysis.  It remains to show the integral formula.

Suppose that $m_3=0$, $m_1, m_2 \neq 0$, and \eqref{eq:NonOscCondition} holds.  
The idea is to apply an analog of \eqref{eq:KformulaseparatedVariables}, but only in the $m_1, m_2$ variables.  This case is easier because the exponential factor simplifies as $1$, so there is no need to separate out $e_{AB[c,q]^3}(m_1 m_2 m_3 c^2)$.
By taking a Mellin transform in the $m_1, m_2$ variables, we get
\begin{multline*}
K(m_1, m_2, 0 ;c) = 
\Big(\frac{\sqrt{ABN_1 N_2 N_3}}{C}\Big)^{\kappa-1} N_1 N_2 N_3 \int_{|v_1| \ll (qr)^{\varepsilon}} \int_{|v_2| \ll (qr)^{\varepsilon}} \Big(\frac{C}{N_1 |m_1|}\Big)^{v_1} \Big(\frac{C}{N_2 |m_2| }\Big)^{v_2} \\ R(v_1, v_2, c) dv_1 dv_2
+ O(q^{-A} \prod_{i=1}^{3} (1+|m_i|)^{-2}),
\end{multline*}
where $R(v_1, v_2,c)$ is analytic in $\text{Re}(v_i) > 0$ for $i=1,2$, and satisfies the bound
\begin{equation*}
R(v_1, v_2, c) \ll_{J, \varepsilon, \text{Re}(v_1), \text{Re}(v_2)} \prod_{j=1}^{2} (1 + \frac{(qr)^{\varepsilon}}{|v_j|})^{-J}.
\end{equation*}
This bound on $R$ is precisely analogous to the bound on $F(\bf{u})$ in the proof of Lemma \ref{lemma:KboundNonOscillatory}.
This completes the proof.
 \end{proof}

\section{Recombination}
\label{section:recombination}
Now we prove Proposition \ref{prop:B''bound}.  Recall formulas \eqref{eq:Sdef}, \eqref{eq:S'def}, and \eqref{eq:S'Poisson}.
Let us write $\mathcal{S} = \mathcal{S}_0 + \mathcal{S}_{1}$ where $\mathcal{S}_0$ corresponds to the terms with some $m_i = 0$, while $\mathcal{S}_{1}$ corresponds to the terms with all $m_i \neq 0$.

\subsection{Bounding $\mathcal{S}_0$}
\begin{mylemma} 
We have $\mathcal{S}_0 \ll R^{-1} (qr)^\eps.$
\end{mylemma}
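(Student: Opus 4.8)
The plan is to unfold $\mathcal{S}_0$ through the dyadic subdivision and the Poisson formula \eqref{eq:S'Poisson}: writing $N=N_1N_2N_3$ for brevity,
\[
\mathcal{S}_0=\sum_{N_1,N_2,N_3,C}\frac{1}{N^{1/2}C}\sum_{c\equiv 0\shortmod{\tilde qR}}w_C(c)\sum_{\substack{(m_1,m_2,m_3)\in\mathbb{Z}^3\\ m_1m_2m_3=0}}G_{A,B}(m_1,m_2,m_3;c)\,K(m_1,m_2,m_3;c),
\]
and then to estimate the three subcases — all $m_i=0$, exactly one $m_i=0$, exactly two $m_i=0$ — using Lemma~\ref{lemma:GboundOther} for $G_{A,B}$ and Lemma~\ref{lemma:KboundOther} for $K$. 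First I would discard the oscillatory range: by Lemma~\ref{lemma:KboundOther}, when \eqref{eq:OscCondition} holds $K$ is negligible on every term with some $m_i=0$, so only \eqref{eq:NonOscCondition} contributes, where $X:=\sqrt{ABN}/C\ll(qr)^\varepsilon$, each nonzero $m_j$ may be truncated to $|m_j|\ll \tfrac{C}{N_j}(qr)^\varepsilon$, and $|K|\ll X^{\kappa-1}N$. Two elementary facts will then be used repeatedly: since $\kappa\geq 2$, $N^{1/2}X^{\kappa-1}C^{-1}=X^\kappa(AB)^{-1/2}\ll(qr)^\varepsilon(AB)^{-1/2}$; and, factoring $c=c_1c_2$ with $c_2\mid(AB)^\infty$ and $(c_1,AB)=1$ (whence $(A,c)=(A,c_2)$, $(B,c)=(B,c_2)$, and, since $(AB,qR)=1$, $\tilde qR\mid c\Leftrightarrow\tilde qR\mid c_1$), one gets $\sum_{c\asymp C,\ \tilde qR\mid c}(A,c)(B,c)c^{-1}\ll (A,B)(\tilde qR)^{-1}(qr)^\varepsilon\ll (AB)^{1/2}(qR)^{-1}(qr)^\varepsilon$, using $(A,B)\leq(AB)^{1/2}$ and $q/\tilde q=O(1)$.

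For the term $m_1=m_2=m_3=0$ I would simply take absolute values. Lemma~\ref{lemma:GboundOther} gives $R_{q_o}(0)=\varphi(q_o)$, hence $|G_{A,B}(0,0,0;c)|\ll \frac{(A,c)(B,c)}{c}\cdot\frac{\varphi(q_o)^3}{q\varphi(q)}\ll \frac{(A,c)(B,c)\,q}{c}$. Multiplying by $|K|\ll X^{\kappa-1}N$, summing over $c$, and then over the $O((qr)^\varepsilon)$ relevant dyadic quadruples $(N_1,N_2,N_3,C)$, the total contribution is $\ll N^{1/2}X^{\kappa-1}C^{-1}\cdot q\cdot (AB)^{1/2}(qR)^{-1}(qr)^\varepsilon\ll R^{-1}(qr)^\varepsilon$, the two powers of $(AB)^{1/2}$ and the two factors of $q$ cancelling.

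The terms with one or two $m_i=0$ are the delicate part, and here the quadratic twist enters in an essential way. Take for instance $m_3=0$ and $m_1m_2\neq0$ (the two-zero case being the same with one fewer variable). A crude bound $\sum_{m_1}|K|\ll X^{\kappa-1}N\cdot C/N_1$ would be fatal, replacing the harmless $q$ above by $C\gg qR$. Instead I would insert the Mellin formula \eqref{eq:Kformulawithm3zero} for $K$, peel off the signs of the nonzero $m_i$ and their $(2AB)$-parts $m_i''$ (writing $m_i=m_i'm_i''$, $m_i''\mid(2AB)^\infty$, $(m_i',2AB)=1$), apply Lemma~\ref{lemma:GboundOther} in the form $cq\varphi(q)G_{A,B}(m_1,m_2,0;c)=g\,R_{q_o}(m_1')R_{q_o}(m_2')\varphi(q_o)$ with $|g|\leq 64(A,c)(B,c)$, and recognize that the surviving sums over $m_1',m_2'$ are the Dirichlet series $\sum_{(n,2AB)=1}R_{q_o}(n)n^{-v}=\zeta(v)\prod_{p\mid q_o}(p^{1-v}-1)\prod_{p\mid 2AB}(1-p^{-v})$. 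Because $\chi_q$ is primitive of conductor $q>1$, the pole of $\zeta$ at $v=1$ is cancelled by the zero of $\prod_{p\mid q_o}(p^{1-v}-1)$ (and if $q_o=1$ one has instead the entire $L(v,\chi_{q_e})$ up to Euler factors), so this series is holomorphic for $\operatorname{Re}(v)>0$ and is $\ll q(1+|v|)^{1/2}(qr)^\varepsilon$ on $\operatorname{Re}(v)=\varepsilon$ by the convexity bound for $\zeta$. Shifting the Mellin contours to $\operatorname{Re}(v_j)=\varepsilon$ and estimating trivially yields $|\sum_{m\neq0}G_{A,B}(m;c)K(m;c)|\ll \frac{(A,c)(B,c)\,q}{c}X^{\kappa-1}N(qr)^\varepsilon$, the same shape as in the all-zero case; summing over $c$ and over the dyadic quadruples then gives $\ll R^{-1}(qr)^\varepsilon$ exactly as before.

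The main obstacle, then, is this one/two-zero analysis: one must see that the correct saving comes not from the length of the $m_i$-sum but from the \emph{absence of a polar main term} in $\sum R_{q_o}(n)n^{-v}$ (equivalently, square-root cancellation in partial sums of Ramanujan sums), which is precisely where the primitivity of $\chi_q$ is used; the degenerate case $q=1$ requires a separate, simpler treatment. The rest is the routine bookkeeping verifying that all the auxiliary factors ($AB$, $(A,B)$, $q$, $\tilde q$) cancel to leave exactly $R^{-1}(qr)^\varepsilon$.
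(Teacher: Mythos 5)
Your argument is essentially the paper's own proof: the same decomposition of $\mathcal{S}_0$ by the number of vanishing $m_i$, the same reduction to the non-oscillatory range via Lemma \ref{lemma:KboundOther}, and the same key step of forming the Dirichlet series $Q_{q_o}(v)=\sum_{(n,2AB)=1}R_{q_o}(n)n^{-v}$ over the parts of the nonzero $m_i$ coprime to $2AB$ and exploiting its holomorphy in $\mathrm{Re}(v)>0$ (when $q_o>1$) to shift the Mellin contours to $\mathrm{Re}(v)=\varepsilon$, with only a cosmetic difference in how the $c$-sum of $(A,c)(B,c)$ is estimated. The one caveat is your parenthetical for $q_o=1$: the series there is $\zeta^{(2AB)}(v)$, not an $L$-function of $\chi_{q_e}$, so it genuinely has a pole at $v=1$; the paper sidesteps exactly this by assuming $q>8$ (hence $q_o>1$) without loss of generality.
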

\begin{proof}

From Lemma \ref{lemma:KboundOther}, we see that $K$, and hence $\mathcal{S}_0$, is small unless \eqref{eq:NonOscCondition} holds. By \eqref{eq:NonOscCondition} and \eqref{eq:Csize} we have that $A,B$, and $C$ are each bounded by $(qr)^{10}$, say. This allows us to replace several factors of $(AB)^\eps$ and $C^\eps$ by $(qr)^\eps$ in the following.

Let us further decompose $\mathcal{S}_0 = \mathcal{S}_{00} + \mathcal{S}_{01}+\mathcal{S}_{02}$ where $\mathcal{S}_{02}$ corresponds to the terms with exactly two of the $m_i \neq 0$, $\mathcal{S}_{01}$ corresponds to the terms with precisely one of the $m_i\neq 0$, and $\mathcal{S}_{00}$ corresponds to the terms with $m_1 = m_2 = m_3 = 0$.

We first bound $\mathcal{S}_{02}$.
 Suppose for the sake of argument that $m_3=0$, and $m_1, m_2 \geq 1$, the other cases being similar, and let $\mathcal{S}_{02}^{+}$ denote these terms.  
For $i=1,2$, let $m_i = m_i'm_i''$ where $(m_i',2AB)=1$ and $m_i'' \mid (2AB)^\infty$. Then by Lemmas \ref{lemma:GboundOther} and \ref{lemma:KboundOther} we have \begin{multline}\label{eq:S02plusstart} |\mathcal{S}_{02}^+| \ll \sum_{N_1,N_2,N_3, C} \frac{\sqrt{N} }{C}  \left(\frac{ \sqrt{ABN}}{C}\right)^{\kappa -1 } \\  \sum_{\substack{c \equiv 0 \shortmod{\tilde{q}R}\\ c \asymp C}}\frac{1}{qc} 
 \Big|
 \sum_{m_1, m_2 \geq 1} g(m_1'',m_2'', 0, A,B,c,q)  
  R_{q_o}(m_1')R_{q_o}(m_2') \\ \int_{|v_1| \ll (qr)^\eps}  \int_{|v_2| \ll (qr)^\eps}   \left( \frac{C}{N_1m_1}\right)^{v_1} \left( \frac{C}{N_2m_2}\right)^{v_2} R(v_1,v_2,c) \,dv_1\,dv_2 \Big|,\end{multline}
plus a small error term.

 The goal is to form Dirichlet series over $m'_1,m'_2$. Since $q_o$ is square-free and coprime to $2AB$, we have that $$Q_{q_o}(s):=\sum_{(m,2AB)=1} \frac{R_{q_o}(m)}{m^s} = \mu(q_o) \zeta^{(2AB)}(s) \prod_{p \mid {q_o}}(1-p^{1-s}).$$ 
 Then factoring $m_i=m''_im'_i$, and using the bound on $g$ from Lemma \ref{lemma:GboundOther} we have
 \begin{multline}\label{eq:S01plusformula}  |\mathcal{S}_{02}^+| \ll \sum_{N_1,N_2,N_3, C} \frac{\sqrt{N} }{C}  \left(\frac{ \sqrt{ABN}}{C}\right)^{\kappa -1 }  \sum_{\substack{c \equiv 0 \shortmod{\tilde{q}R}\\ c \asymp C}}\frac{(A,c)(B,c)}{qc}  \sum_{m_1'',m_2'' \mid (2AB)^\infty}    \\
 \Bigg|  \iint_{\substack{|v_1| \ll (qr)^\eps \\ |v_2| \ll (qr)^\eps}}   \left( \frac{C}{N_1m''_1}\right)^{v_1} \left( \frac{C}{N_2m''_2}\right)^{v_2} 
 Q_{q_o}(v_1) Q_{q_o}(v_2)   R(v_1,v_2,c) \,dv_1\,dv_2 \Bigg|,\end{multline}
 plus a small error term.

We may assume without loss of generality that $q>8$, so that $q_o>1$. Note that under the assumption that $q_o>1$, the function $Q_{q_o}(s)$ is holomorphic in $\real(s)>0$. On the $\real(s)= \eps$ line, it satisfies $Q_{q_o}(s) \ll (1+|s|)^{1/2} \phi(q_o)(qr)^\eps.$
So we can move the vertical contour integrals to the lines $\real(v_1)=\real(v_2)= \eps$. The short horizontal segments created in doing so are extremely small by the bounds on $R(v_1,v_2,c)$ from Lemma \ref{lemma:KboundOther}. From the vertical lines of integration we obtain $S_{02}(N_1,N_2,N_3,c) \ll  (qr)^{\eps} \phi(q)^2(A,c)(B,c)$. Therefore we have 
\begin{equation}\label{eq:S01plusbound} |\mathcal{S}_{02}^+| \ll (qr)^{\eps}\sum_{N_1,N_2,N_3, C} \frac{\sqrt{N}}{C}  \left(\frac{ \sqrt{ABN}}{C}\right)^{\kappa -1 }  \sum_{\substack{c \equiv 0 \shortmod{\tilde{q}R}\\ c \asymp C}}\frac{\phi(q)^2}{qc}   (A,c)(B,c) ,\end{equation}
plus a small error term.

Using $(qR, AB) = 1$, Cauchy's inequality, and
\begin{equation}
 \sum_{n \leq x} (d,n )^2 \ll x \tau(d) d, 
\end{equation}
we derive
\begin{equation*}
 \sum_{\substack{c \equiv 0 \shortmod{\tilde{q}R} \\ c \asymp C}}
   (A,c)(B,c) \ll \frac{C}{qR} (AB)^{1/2} (qr)^{\varepsilon}.
\end{equation*}
Therefore by \eqref{eq:Csize} and \eqref{eq:NonOscCondition}
\begin{equation}
|\mathcal{S}_{02}^{+}| \ll 
\sum_{N_1, N_2, N_3, C} (qr)^{\varepsilon}   \frac{NAB}{ C^2 R}   \ll \frac{(qr)^{\varepsilon}}{R},
\end{equation}
which is sufficient for the bound in the statement of the lemma.
By a symmetry argument, this shows the desired bound on $\mathcal{S}_{02}$.

Similarly to the method used to bound $\mathcal{S}_{02}$, we have in analogy with \eqref{eq:S01plusformula}, the bound
\begin{multline*}
 |\mathcal{S}_{01}^{+}| \ll 
\sum_{N_1, N_2, N_3, C}  \frac{(qr)^{\varepsilon}}{\sqrt{N} C}  \Big(\frac{\sqrt{ABN}}{C}\Big)^{\kappa-1} N 
\sum_{\substack{c \equiv 0 \shortmod{\tilde{q}R} \\ c \asymp C}} \frac{(A,c) (B,c)\phi(q)}{cq} 
\\
 \sum_{m_1'' \mid (2AB)^\infty} \Big|
  \int_{|v| \ll (qr)^{\varepsilon}}  \Big(\frac{C}{N_1m''_1}\Big)^{v}
   Q_{q_o}(v)  R(v, c)  dv \Big|, 
\end{multline*}
where $Q_{q_o}(v) R(v,c) $ is analytic in $\text{Re}(v) > 0$, and satisfies the bound $$Q_{q_o}(v) R(v,c)  \ll_{\text{Re}(v)} (1+|v|)^{1/2} (qr)^{\varepsilon}\phi(q_o).$$  We move the contour to the line $\text{Re}(v) = \varepsilon$.  The short horizontal segment created in doing so is extremely small by the bounds on $R(v,c)$ from Lemma \ref{lemma:KboundOther}. From the vertical segment we get
\begin{equation}
 |\mathcal{S}_{01}^{+}| \ll 
\sum_{N_1, N_2, N_3, C}  \frac{(qr)^{\varepsilon}}{\sqrt{N} C}  \Big(\frac{\sqrt{ABN}}{C}\Big)^{\kappa-1} N
\sum_{\substack{c \equiv 0 \shortmod{\tilde{q}R} \\ c \asymp C}} \frac{(A,c) (B,c)}{c q } \phi(q)^2
\end{equation}
plus a small error term.
This is precisely the same bound as \eqref{eq:S01plusbound}, and so our final bound on $\mathcal{S}_{01}$ is identical to the bound on $\mathcal{S}_{02}$.

Finally, we bound $\mathcal{S}_{00}$, which is the easiest case.  Using only the upper bound \eqref{eq:KboundOther} and the upper bound from Lemma \ref{lemma:GboundOther}, we obtain a bound on $\mathcal{S}_{00}$ of the exact same shape as \eqref{eq:S01plusbound}, so the proof is complete.
 \end{proof}

\subsection{Bounding $\mathcal{S}_1$}
Here we show the desired bound 
\begin{equation}
\label{eq:S1bound}
\mathcal{S}_1 \ll (qr)^{\varepsilon} \Big(\frac{(AB)^{1/2} }{R^{1/2}} +  \frac{r^{3/4}}{q^{1/2} R}\Big). 
\end{equation}
Let us write $\mathcal{S}_1 = \sum_{\epsilon_1, \epsilon_2, \epsilon_3 \in \{\pm \} } \mathcal{S}_1^{\epsilon_1, \epsilon_2, \epsilon_3}$ where this sum is restricted to $\epsilon_i m_i > 0$ for $i=1,2,3$.  The same method will apply to each of these terms, so for simplicity we estimate $\mathcal{S}_1^{+, +, + }$, which we denote with shorthand by $\mathcal{S}_1^{+}$.  
 
We have
 \begin{multline*}
  \mathcal{S}_1^{+} = \sum_{\substack{M_1, M_2, M_3 \\ N_1, N_2, N_3, C}} \frac{1}{ N^{1/2} C} 
  \sum_{m_i \asymp M_i} \sum_{c \equiv 0 \shortmod{\tilde{q}R}} w_C(c) \\ \Big(e\Big(\frac{-m_1 m_2 m_3}{AB[c,q]^3/c^2}\Big)  G_{A,B}(m_1, m_2, m_3;c) \Big)
 \Big(e\Big(\frac{ m_1 m_2 m_3}{AB[c,q]^3/c^2}\Big) K(m_1, m_2, m_3;c) \Big). 
 \end{multline*}
There are two main cases to consider, depending on if \eqref{eq:OscCondition} holds, or if \eqref{eq:NonOscCondition} holds, and we correspondingly write $\mathcal{S}_1^{+} = \mathcal{T} + \mathcal{U}$.

\textbf{Case} $\mathcal{T}$. 
By Lemma \ref{lemma:Kbound}, we have (with shorthand $M = M_1 M_2 M_3$, and $M_i$ satisfying \eqref{eq:MsizeOscillatoryRange})
\begin{multline*}
\mathcal{T} \ll \sum_{\substack{N_1, N_2, N_3, C \\ M_1, M_2, M_3}} \Big| \frac{C^{3/2} N^{1/2}}{C^2 q \sqrt{M N}} \sum_{\substack{c \equiv 0 \shortmod{\tilde{q}R} \\c \asymp C}} \\ \sum_{m_1, m_2, m_3}  \Big(e\Big(\frac{-m_1 m_2 m_3}{AB[c,q]^3/c^2}\Big) cq G_{A,B}(m_1, m_2, m_3;c) \Big)
\frac{1}{P^{1/2}} \int_{|{\bf u}| \ll (qr)^{\varepsilon}} \int_{|y| \ll (qr)^{\varepsilon}} \\ F({\bf u};y) \Big(\frac{m_1 m_2 m_3 c^2}{[c,q]^3}\Big)^{iy} 
\Big(\frac{M_1}{m_1}\Big)^{u_1} \Big(\frac{M_2}{m_2}\Big)^{u_2} \Big(\frac{M_3}{m_3}\Big)^{u_3} \Big(\frac{C}{c}\Big)^{u_4} d{\bf u} dy \Big|,
\end{multline*} 
plus a small error term.
 Assume that $\text{Re}(u_i)  = 1+ \eps $ for all $i=1,2,3,4$.  Then by \eqref{eq:ZRqdefinition}, we have 
 \begin{multline*}
\mathcal{T} \ll  \sum_{N_1, N_2, N_3, C} \Big| \frac{C^{-1/2}}{q(PM)^{1/2}} 
 \int_{|{\bf u}| \ll (qr)^{\varepsilon}} 
\int_{|y| \ll (qr)^{\varepsilon}} F({\bf u};y) \sum_{\substack{(\epsilon_1,\epsilon_2,\epsilon_3)\in \{\pm 1\}^3 \\ \delta \in \{1,2,4,8\}}} \left(\frac{\delta^3}{\tilde{q}Rq_e^3}\right)^{iy} 
\\
Z_{\delta,R,q}^{(\epsilon_1,\epsilon_2,\epsilon_3)}(u_1-iy, u_2-iy, u_3-iy, u_4+iy)
M_1^{u_1} M_2^{u_2} M_3^{u_3}  \Big(\frac{C}{\tilde{q}R}\Big)^{u_4} d{\bf u} dy \Big|,
\end{multline*} 
plus a small error term.
Now we decompose further by $Z = Z_0 + Z'$, as in Proposition \ref{prop:Zproperties}, and write $\mathcal{T} = \mathcal{T}_0 + \mathcal{T}'$.  For $\mathcal{T}_0$ we have
\begin{equation*}
\mathcal{T}_0 \ll \frac{C^{1/2}}{(PM)^{1/2}} \frac{M}{q^2 R AB} (qr)^{\varepsilon}.
\end{equation*}
 Recall $P = \frac{M}{ABC}$,  $M \asymp (AB)^{3/2} N^{1/2}$, $C \ll \sqrt{NAB} (qr)^{\varepsilon}$, and $N \ll (q^2 r)^{3/2 + \varepsilon}$ giving
\begin{equation}
\label{eq:T0bound}
\mathcal{T}_0 \ll \frac{ C^{}}{q^2 R (AB)^{1/2}}  (qr)^{\varepsilon} \ll \frac{N^{1/2}}{q^2 R} (qr)^{\varepsilon} \ll \frac{1}{\sqrt{q}} \frac{r^{3/4}}{R} (qr)^{\varepsilon}.
\end{equation}

For $\mathcal{T}'$, we move the contours to $\text{Re}(u_i) = 1/2 + \varepsilon$ for $i=1,2,3,4$.  The short horizontal segments created in doing so are extremely small by the bounds on $F({\bf u};y)$ from Lemma \ref{lemma:Kbound}. Then we obtain
\begin{equation*}
\mathcal{T}' \ll (qr)^{\varepsilon} \sum_{N_1, N_2, N_3, C}  \frac{C^{-1/2}}{q(PM)^{1/2}} 
q^{3/2}\sqrt{AB} M^{1/2} \Big(\frac{C}{qR}\Big)^{1/2}  
\ll (qr)^{\varepsilon} 
\sum_{N_1, N_2, N_3, C} \frac{\sqrt{AB}}{P^{1/2} R^{1/2}}
.
\end{equation*} 
Since $P \gg 1$, we have
\begin{equation*}
\mathcal{T}' \ll (qr)^{\varepsilon} \frac{(AB)^{1/2}}{R^{1/2}},
\end{equation*} 
 as desired.
 
\textbf{Case $\mathcal{U}$}. 
Here we obtain from Lemma \ref{lemma:KboundNonOscillatory}  the bound
\begin{multline}
\label{eq:UIntegralBound}
\mathcal{U} \ll \sum_{\substack{N_1, N_2, N_3, C \\ M_1, M_2, M_3}} \Big| \frac{N \Big(\frac{\sqrt{ABN}}{C}\Big)^{\kappa-1}}{C^2 q \sqrt{ N}} 
 \int_{|{\bf u}| \ll (qr)^{\varepsilon}} \int_{|y| \ll (qr)^{\varepsilon} + P} F({\bf u}) f_P(y) \\ \sum_{\substack{(\epsilon_1,\epsilon_2,\epsilon_3)\in \{\pm 1\}^3 \\ \delta \in \{1,2,4,8\}}} \left(\frac{\delta^3}{\tilde{q}Rq_e^3}\right)^{iy} 
Z_{\delta,R,q}^{(\epsilon_1,\epsilon_2,\epsilon_3)}(u_1-iy, u_2-iy, u_3-iy, u_4+iy) \\
M_1^{u_1} M_2^{u_2} M_3^{u_3}  \Big(\frac{C}{\tilde{q}R}\Big)^{u_4} d{\bf u} dy
 \Big|,
\end{multline}  
plus a small error term.
As in the case of $\mathcal{T}$, write $\mathcal{U} = \mathcal{U}_0 + \mathcal{U}'$.  Consider first the case $\mathcal{U}'$. As in case $\mathcal{T}$, we move the contours to $\real(u_i)=1/2+\eps$. The short horizontal segments created in doing so are extremely small by the bounds on $F({\bf u};y)$ from Lemma \ref{lemma:KboundNonOscillatory}.  Using \eqref{eq:Z'integralbound}, we get
\begin{multline*}
\mathcal{U}' \ll (qr)^{\varepsilon} \frac{N \sqrt{AB}}{C^3 q} \sqrt{AB}q^{3/2} M^{1/2} \Big(\frac{C}{qR}\Big)^{1/2} (1+P^{1/2}) \\
\ll 
(qr)^{\varepsilon} \frac{NAB M^{1/2}}{R^{1/2} C^{5/2}} \Big(1 + \Big(\frac{M}{ABC}\Big)^{1/2}\Big).
\end{multline*} 
Now $M \ll \frac{C^3}{N} (qr)^{\varepsilon}$, and \eqref{eq:NonOscCondition} holds, so after simplification this leads to \begin{equation*} \mathcal{U}' \ll (AB)^{1/2} R^{-1/2} (qr)^{\varepsilon},\end{equation*} as desired.

Finally, we turn to the case of $\mathcal{U}_0$.  
To start, we suppose that $\text{Re}(u_i) = 1+\varepsilon$ for $i=1,2,3,4$.

Consider the case where $P \gg (qr)^{\varepsilon}$ with sufficiently large implied constant.  Then we shift contours to $\text{Re}(u_i) = 1/2 + \varepsilon$, for all $i$.  No poles are encountered in doing so, since $|u_i| < |y|$ throughout the integral \eqref{eq:UIntegralBound} if $f_P(y)$ has support $|y| \asymp P$. The short horizontal segments created in preforming this contour shift are extremely small by the bounds on $F({\bf u})$ from Lemma \ref{lemma:KboundNonOscillatory}.  The contribution to $\mathcal{U}_0$ of the integral along the  vertical segments is certainly bounded by the same bound we obtained on $\mathcal{U}'$, since the bound on $Z_0$ appearing in Proposition \ref{prop:Zproperties}  is much stronger than the bound on $Z'$.  Therefore, $\mathcal{U}_0$ is bounded in a satisfactory way for $P \gg (qr)^{\varepsilon}$.  

Now suppose $P \ll (qr)^{\varepsilon}$.  
 Then by \eqref{eq:Z00boundRightOfCriticalStrip}, we have
\begin{equation}
\label{eq:UquadrupleResidueBound}
\mathcal{U}_{0} \ll (qr)^{\varepsilon}  \sum_{\substack{N_1, N_2, N_3, C \\ M_1, M_2, M_3}}  \frac{N \Big(\frac{\sqrt{ABN}}{C}\Big)^{\kappa-1}}{C^2 q \sqrt{ N}} 
  \frac{MC}{q R AB} 
\ll 
(qr)^{\varepsilon}  \sum_{\substack{N_1, N_2, N_3, C \\ M_1, M_2, M_3}}  N^{1/2} \Big(\frac{\sqrt{ABN}}{C}\Big)^{} 
  \frac{M}{AB C q^2 R}.
\end{equation}  
Since $P = \frac{M}{ABC} \ll (qr)^{\varepsilon}$ now, and $\frac{\sqrt{ABN}}{C} \ll (qr)^{\varepsilon}$ too, we obtain
\begin{equation*}
\mathcal{U}_{0}  \ll (qr)^{\varepsilon} N^{1/2} q^{-2} R^{-1} \ll (qr)^{\varepsilon} (q^3 r^{3/2})^{1/2} q^{-2} R^{-1} \ll (qr)^{\varepsilon} q^{-1/2} \frac{r^{3/4}}{R}.
\end{equation*}
This is the same bound as \eqref{eq:T0bound}, which completes the proof of \eqref{eq:S1bound}.

\end{document}